\title{Weight elimination in Serre-type conjectures}
\author{Daniel Le}
\address{Department of Mathematics,
University of Toronto,
40 St. George Street,
Toronto, ON M5S 2E4, Canada}
\email{le@math.toronto.edu}
\author{Bao V. Le Hung}
\address{Mathematics Department,
Northwestern University, 
2033 Sheridan Road,
Evanston, Illinois 60208, USA}
\email{lhvietbao@googlemail.com}
\author{Brandon Levin}
\address{Department of Mathematics,
University of Arizona, 
617 N. Santa Rita Avenue, 
Tucson, Arizona 85721, USA}
\email{bwlevin@math.arizona.edu}
\begin{document}

\begin{abstract} We prove the weight elimination direction of the Serre weight conjectures as formulated by \cite{herzig-duke} for forms of $U(n)$ which are compact at infinity and split at places dividing $p$ in generic situations.  That is, we show that all modular weights for a mod $p$ Galois representation are contained in the set predicted by Herzig.  Under some additional hypotheses, we also show modularity of all the ``obvious'' weights.  
\end{abstract} 

\maketitle

\tableofcontents

\section{Introduction}

Let $p$ be a prime. In 1973, Serre conjectured that every irreducible odd 2-dimensional $\overline{\F}_p$-representation $\rbar$ of $\Gal(\overline{\Q}/\Q)$ comes from a modular form.   He later refined the conjecture into the strong form which asserts that every such $\overline{r}$ arises from a modular form of a specific minimal weight and prime to $p$ level determined by the local properties of $\overline{r}$ \cite{serre-duke}. The recipe for the minimal weight is more subtle than the minimal level, and, as Serre suggested at the time, reflects the deeper structures of a ``mod $p$ Langlands philosophy.''  The landmark proof of Serre's original conjecture due to Kisin and Khare--Wintenberger relies crucially on knowing that the weak form implies the strong form. 

The first comprehensive conjecture for Hilbert modular forms is due to Buzzard--Diamond--Jarvis (BDJ) \cite{BDJ}.  The weight  $k \geq 2$ is replaced by the notion of \emph{Serre weights}, irreducible representations of $\GL_2(\F_q)$ in this case.  Furthermore, there is no longer a notion of minimal weight, rather, BDJ define a collection of Serre weights for which a given $\overline{r}$ should be modular.  The weight part (weak $\implies$ strong) of the BDJ conjecture is now a theorem due to work of many people \cite{gee-kisin, newton13, GLS, GLS15}. 

Building on work of Ash--Doud--Pollack \cite{ADP} and others, Herzig formulated a vast generalization of the weight part of Serre's conjecture to tame $n$-dimensional Galois representations which was further extended by Gee--Herzig--Savitt \cite{herzig-duke, GHS}.  In our earlier work with S. Morra \cite{LLLM, LLLM2}, we establish the weak implies strong conjecture for tame $3$-dimensional Galois representations (and for definite unitary groups unramified at $p$).  However, results for $n > 3$ were limited to a few partial results \cite{BLGG,gee-geraghty, Gao}.  In this paper, we establish the weight elimination direction of the weight part of Serre's conjecture for $n$-dimensional Galois representations in generic situations, namely, the set of modular weights is a subset of the set of weights predicted by \cite{herzig-duke}.

To describe these conjectures, let $F$ be an imaginary CM number field unramified at $p$.  Let $F^+$ be the maximal totally real subfield.  Assume $F^+ \neq \Q$ and that all primes of $F^+$ above $p$ split in $F$.   Let $G$ be a unitary group over $F^+$ which is isomorphic to $U(n)$ at each infinite place and split at each prime above $p$. Let $\overline{r}:\Gal(\overline{F}/F) \ra \GL_n(\overline{\F}_p)$ be an irreducible odd continuous representation.   

 A \emph{global Serre weight} is an irreducible representation $V$ of $\GL_n(\cO_{F^+, p})$ which are all of the form $\otimes_{v \mid p} V_v$ with $V_v$ an irreducible representation of $\GL_n(k_v)$ where $k_v$ is the residue field of $F^+$ at $v$.  In Definition \ref{defn:rbarmod}, we define what it means for $\overline{r}$ to be \emph{modular} of weight $V$.  Roughly speaking, this means the Hecke eigensystem associated to $\overline{r}$ appears in a space of automorphic forms for $G$ of weight $V$.  For each place $v$, fix a place $\tld{v}$ of $F$ dividing $v$, and define $\rhobar_v := \overline{r}|_{\Gal(\overline{F}_{\tld{v}}/F_{\tld{v}})}$.   We now state the main theorem.
\begin{thm} \label{ThmWE}  Let $\overline{r}:\Gal(\overline{F}/F) \rightarrow \GL_n(\overline{\mathbb{F}}_p)$ be an irreducible odd representation.  Assume $p$ is unramified in $F$ and that, for all places $v \mid p$ of $F^+$, $\rhobar_v$ is $(6n-2)$-generic (cf. Definition \ref{defi:generic}).  Then,
$$
\overline{r} \text{ is modular of weight } \otimes_{v \mid p} V_v   \implies  V_v \in W^{?}(\rhobar^{\mathrm{ss}}_v) \text{ for all } v \mid p
$$ 
where $W^{?}(\rhobar^{\mathrm{ss}}_v)$ is defined by \cite{herzig-duke}. 
\end{thm}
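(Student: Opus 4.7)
The plan is to reduce Theorem~\ref{ThmWE} to a local assertion at each place $v \mid p$ and then exploit integral $p$-adic Hodge theory. Unwinding the definition of modularity of weight $\otimes_{v\mid p} V_v$, standard Taylor--Wiles patching together with inertial local Langlands produces, for each $v \mid p$, a potentially crystalline lift $\rho_v$ of $\rhobar_v$ with parallel Hodge--Tate weights $\eta = (n-1, n-2, \ldots, 0)$ and some tame inertial type $\tau$ such that $V_v$ is a Jordan--H\"older constituent of the reduction of the Deligne--Lusztig representation $\sigma(\tau)$ attached to $\tau$. It therefore suffices to prove the local statement: under the genericity hypothesis, the existence of such a $\rho_v$ forces $V_v \in W^{?}(\rhobar_v^{\mathrm{ss}})$.

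Next, I would analyze these lifts via Breuil--Kisin modules with tame descent data. To $\rho_v$ one attaches a Kisin module $\mathfrak{M}$ equipped with Frobenius and descent action, whose \emph{shape} is an element $\tld{w}$ of the admissible set $\Adm(\eta)$ inside the extended affine Weyl group of type $\tld{A}_{n-1}$. The $5n$-genericity assumption should force $\mathfrak{M}$ into the ``lowest alcove'' regime, where the local model of the corresponding Galois deformation ring admits a simple product-of-$\SL_2$-type normal form and the shape is pinned down by the pair $(\tau, V_v)$. In dimension three such a combinatorial dictionary is built in \cite{LLLM, LLLM2}; for general $n$ the argument should proceed through a uniform geometric description on the affine flag variety, together with bounds on shapes coming from Deligne--Lusztig theory mod $p$.

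The final step is to read off $\rhobar_v^{\mathrm{ss}}$ from the shape. The reduction of $\mathfrak{M}$ modulo the uniformizer yields an \'etale $\varphi$-module recovering $\rhobar_v|_{I_{F_{\tld{v}}}}$; genericity places the diagonal Frobenius entries in the lowest alcove, where they become an explicit function of $(\tau, \tld{w})$. Matching the resulting formula for the tame inertia weights of $\rhobar_v^{\mathrm{ss}}$ against Herzig's combinatorial recipe for $W^{?}(\rhobar_v^{\mathrm{ss}})$ in \cite{herzig-duke} then forces $V_v$ into the predicted set, completing the proof.

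The main obstacle is controlling shapes in $\Adm(\eta)$ for general $n$: the admissible set is combinatorially large and contains many non-regular elements, so one cannot reduce to small-rank situations, and the link between $\tld{w}$, the Serre weight $V_v$, and the tame inertia weights must be made completely explicit. The $5n$-genericity is precisely the hypothesis that ensures every relevant shape lies in the lowest alcove and that the corresponding local model is formally smooth there, which is what makes the explicit matching with Herzig's recipe tractable in arbitrary dimension.
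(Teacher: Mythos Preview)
Your proposal has the right ingredients but a genuine logical gap in how they are assembled.

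You fix \emph{one} tame type $\tau$ with $V_v \in \JH(\overline{\sigma}(\tau))$ and then claim that the shape $\tld{w}$ of the resulting Kisin module is ``pinned down by the pair $(\tau, V_v)$''. This is false: the shape is an invariant of $(\rhobar_v, \tau)$, not of $(\tau, V_v)$. Two different Serre weights in $\JH(\overline{\sigma}(\tau))$ give the same $\tau$ and the same $\rhobar_v$, hence the same shape. Conversely, knowing only that $\tld{w}(\rhobar_v,\tau)^* \in \Adm(\eta)$ for \emph{one} $\tau$ containing $V_v$ tells you that $W^?(\rhobar_v^{\mathrm{ss}}) \cap \JH(\overline{\sigma}(\tau))$ is nonempty, but it does not single out $V_v$ inside that intersection. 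So the final ``matching'' step cannot be carried out from a single type.

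The paper's argument runs the logic contrapositively and, crucially, over a \emph{family} of types. If $F(\lambda) \notin W^?(\rhobar_v^{\mathrm{ss}})$, one considers the $|W(\un{G})|$ types $\tau(s,\tld{w}_h\cdot\lambda+\eta)$ for $s\in W(\un{G})$, each of which contains $F(\lambda)$ as a Jordan--H\"older factor. The combinatorial core (Lemma~\ref{lemma:we}) is that admissibility of the shape for \emph{all} of these $s$ forces $F(\lambda)\in W^?(\rhobar_v^{\mathrm{ss}})$; hence some $s$ gives a non-admissible shape, and for that $\tau$ there is no potentially crystalline lift, contradicting modularity. This ``vary $s$ and find a bad type'' step is the missing idea in your outline. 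A secondary point: your description of what $5n$-genericity buys is off---formal smoothness of the local model is used only for the converse direction (obvious weights), not for elimination; here the genericity is needed to ensure the types $\tau(s,\tld{w}_h\cdot\lambda+\eta)$ are themselves generic and to invoke \cite{Enns} for weights close to alcove walls.
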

The set $W^{?}(\rhobar^{\mathrm{ss}}_v)$ is an explicit collection of irreducible representations of $\GL_n(k_v)$ given by a representation-theoretic recipe.   \cite[Conjecture 1.1]{herzig-duke} predicts that the reverse implication should also be true when $\rhobar_v$ is semisimple for all $v$  (assuming $\rbar$ is modular). In fact, we prove a partial converse which shows the modularity of a subset of the predicted weights as discussed below (Theorem \ref{thmobvintro}).  

\begin{rmk} 
\begin{enumerate} 
\item When $n =3$ and $p$ splits completely in $F^+$, this result is due to \cite{EGH, HLM, MP,LMP}.
\item When $\rhobar_v$ is not semisimple, there is no explicit conjecture but one expects there to be a strict subset $W^?(\rhobar_v) \subset W^?(\rhobar^{\mathrm{ss}}_v)$ which predicts the modular weights.  When $n =3$, this will be taken up in \cite{LLLM3}. 
\item Our methods are purely local and so at least with some technical assumptions a version of Theorem \ref{ThmWE} should hold in other global setups as well, for example, unitary Shimura varieties.   
\end{enumerate} 
\end{rmk}


A key feature of the BDJ conjecture and a motivation for the generalizations is the relation between the weight recipe and $p$-adic Hodge theory properties of the local representation $\rhobar_v$.  Let $K/\Qp$ be a finite unramified extension and consider $\rhobar:G_K \ra \GL_n(\overline{\F}_p)$.  When $n =2$, the weight recipe is in terms of existence of the crystalline lifts of $\rhobar$ in small  (between $[0, p]$) Hodge--Tate weights.   In Herzig's conjecture, one expects  (at least for $\rhobar$ semisimple) that $W^{?}(\rhobar)$ should also be predicted by the existence of crystalline lifts in small weights (cf. \S 1.5 or \S 5 of \cite{GHS} for a detailed discussion).  However, the range of ``small'' Hodge--Tate weights is now $[0, (n-1)p]$ and to determine reductions of $n$-dimensional crystalline representations in this range is still well-beyond the current technology in $p$-adic Hodge theory.  

We consider another local problem, namely, reductions of tamely crystalline representations with fixed (parallel) Hodge--Tate weights $\eta := (n-1,n-2, \ldots, 0)$.   A \emph{tamely crystalline} representation is a  $\overline{\Q}_p$-representation of $G_K$ which becomes crystalline when restricted to $G_L$ for $L/K$ a tame extension.  The descent from $L/K$ is then encoded in a tame inertial type $\tau$.  For short, we will call these representations of type $(\eta, \tau)$.  For generic $\tau$, we give a complete description of the semisimple Galois representations which are reductions mod $p$ of representations of type $(\eta, \tau)$ (cf. Theorem \ref{thm:admcrit} and Theorem \ref{thm:lift}).  

If $\rho$ is a Galois stable lattice in a representation of type $(\eta, \tau)$, then by deep results in integral $p$-adic Hodge theory due to Kisin \cite{KisinFcrys} building on work of Breuil, one can associate a semilinear algebra object, a Kisin module $\fM$ with Hodge type $\eta$ and descent data of type $\tau$.   A. Caraiani and  the third author \cite{CL} construct a moduli stack of Kisin modules $Y^{\eta, \tau}$ with Hodge type $\leq \eta$ and tame inertial type $\tau$.  An upper bound on which $\rhobar$ can arise as the reduction of a representation of type $(\eta, \tau)$ comes from a description of the special fiber of $Y^{\eta, \tau}$.   To describe the special fiber, \cite{CL} relates $Y^{\eta, \tau}$ to a local model $M(\eta)$ constructed by Pappas and Zhu \cite{PZ}.  The special fiber of $M(\eta)$ is a closed subscheme of an affine flag variety.   The coherence conjecture of Pappas and Rapoport, proved by Zhu \cite{Zhu}, allows \cite{PZ} to describe the special fiber as a union of affine Schubert cells resolving a deep and long-standing question in the subject.

By classifying mod $p$ Kisin modules of type $(\eta,\tau)$, we arrive at a combinatorial upper bound for the semisimple $\rhobar$ that arise as the reduction of a representation of type $(\eta, \tau)$ in terms of a subset $\Adm(\eta) \subset \widetilde{W}$ of the extended affine Weyl group of $\GL_n$ called the $\eta$-admissible elements originally introduced by Kottwitz and Rapoport. More precisely, we assign to any semisimple $\rhobar$ a relative position $\tld{w}(\rhobar, \tau)^* \in \widetilde{W}$ called the (\emph{dual}) \emph{shape} (Definitions \ref{affineadjoint}, \ref{defn:shape}, and \ref{defn:shaperhobar}). (Technically, the shape is a data associated of a  Kisin module with descent data, but in generic situations, there is a unique Kisin module which corresponds to $\rhobar$.)  


\begin{thm}\label{thm:liftint} $($Theorem $\ref{thm:lift})$
A $(6n-2)$-generic semisimple Galois representation $\rhobar$ has a lift of type $(\eta,\tau)$ if and only if $\tld{w}(\rhobar,\tau)^* \in \Adm(\eta)$.
\end{thm}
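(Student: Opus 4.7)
The plan is to translate existence of a Galois-theoretic lift into existence of a lift of Kisin modules with descent data, and then invoke the local model theory of Caraiani--Levin to reduce the problem to the combinatorics of the admissible locus in the affine flag variety.

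Concretely, by Kisin's theorem (extended to the tamely ramified setting, as discussed in the excerpt after \cite{KisinFcrys}), a Galois stable lattice $\rho$ in a tamely crystalline representation of type $(\eta,\tau)$ is equivalent to a Kisin module $\fM$ with Hodge type $\leq\eta$ and descent data of type $\tau$; reduction modulo $p$ produces an object $\ovl{\fM}$ in the special fiber of $Y^{\eta,\tau}$. Under the genericity hypothesis, the first task is to check that $\rhobar$ determines $\ovl{\fM}$ up to isomorphism, so that the shape $\tld{w}(\rhobar,\tau)^* \in \tld{W}$ is an honest invariant of $\rhobar$; once that is done, existence of a lift of $\rhobar$ of type $(\eta,\tau)$ becomes equivalent to existence of a lift of $\ovl{\fM}$ to a characteristic zero Kisin module of the same type.

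For the necessity direction, a lift $\fM$ defines an $\cO$-point of $Y^{\eta,\tau}$ whose reduction lies in the special fiber. By \cite{CL}, $Y^{\eta,\tau}$ is smooth-locally isomorphic (after a twist by descent data) to the Pappas--Zhu local model $M(\eta)$, and by Zhu's proof of the coherence conjecture \cite{Zhu}, the underlying reduced special fiber of $M(\eta)$ is exactly the $\eta$-admissible locus, a union of Schubert cells indexed by $\Adm(\eta)$ inside the affine flag variety of $\GL_n$. Since the shape is by construction the Schubert stratum containing $\ovl{\fM}$, one immediately obtains $\tld{w}(\rhobar,\tau)^* \in \Adm(\eta)$.

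Sufficiency is more delicate. Given $\tld{w}(\rhobar,\tau)^*\in \Adm(\eta)$, the idea is to construct a lift explicitly by working in affine coordinates on $M(\eta)$: write down a normal-form Frobenius matrix for $\ovl{\fM}$ inside the Schubert cell indexed by $\tld{w}$ (a combinatorial recipe whose output depends cleanly on $\tau$ because $\tau$ is generic), then lift this matrix entrywise to $\cO[[u]]$ in such a way that the result still defines a point of $M(\eta)$; flatness of $M(\eta)$ over $\Zp$ is exactly what guarantees that such a lift exists on each admissible stratum. Applying Kisin's functor to the resulting characteristic zero Kisin module with descent data then yields the desired crystalline lift of $\rhobar$ of type $(\eta,\tau)$. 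The main obstacle is this sufficiency direction: one must ensure simultaneously that the lift has Hodge type exactly $\eta$ rather than merely $\leq \eta$, that the inertial type is $\tau$ on the nose, and that the reduction of the constructed lift reproduces $\rhobar$ itself rather than a nearby representation sharing the same shape. The $5n$-genericity assumption is essential at each of these steps, both to make the shape a complete invariant and to provide enough rigidity for the explicit matrix computations to pin down the Galois representation exactly.
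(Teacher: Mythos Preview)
Your ``only if'' direction is essentially the paper's argument (Theorem~\ref{thm:admcrit} together with Proposition~\ref{rmk:equalshape}): the local model theory of \cite{CL,PZ,Zhu} forces the shape of the reduced Kisin module to lie in $\Adm(\eta)$.

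The ``if'' direction, however, has a genuine gap. Lifting a mod~$p$ Kisin module $\overline{\fM}$ to a characteristic zero Kisin module $\fM \in Y^{\eta,\tau}(\cO)$ does \emph{not} produce a potentially crystalline $G_K$-representation: the functor $T^*_{dd}$ only yields a $G_{K_\infty}$-representation, and for this to extend to a potentially crystalline $G_K$-representation one must verify the \emph{monodromy condition} (cf.\ \S\ref{sec:pd}). This is a nontrivial closed condition on $\Spec R^{\tau,\overline{\beta}}_{\overline{\fM}}[1/p]$, and flatness of $M(\eta)$ says nothing about it. The paper carries out the direct monodromy analysis only for the very special shapes $t_{w_j(\eta_0)}$ (Proposition~\ref{prop:upperbounddefring}, Corollary~\ref{cor:existencelift}), where the deformation ring is computed to be formally smooth; for a general admissible shape no such computation is available.

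The paper's route to sufficiency is entirely different and, as the introduction states, uses \emph{global input}. One first shows combinatorially (Propositions~\ref{prop:obvintersect} and~\ref{prop:intersect}) that $\tld{w}^*(\rhobar,\tau)\in\Adm(\eta)$ is equivalent to $W_{\mathrm{obv}}(\rhobar)\cap\JH(\overline{\sigma}(\tau))\neq\emptyset$. Then one globalizes $\rhobar$ to an automorphic $\rbar$, invokes the modularity of obvious weights (Theorem~\ref{thm:obvwt}, which relies on the potential diagonalizability of Corollary~\ref{cor:pd} and the change-of-weight results of \cite{BLGGT}), and reads off the desired lift from the resulting automorphic representation. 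So the existence of lifts for arbitrary admissible shapes is ultimately a consequence of automorphy lifting theorems, not of a local matrix construction.
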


We first use the ``only if'' direction of Theorem \ref{thm:liftint} to reduce weight elimination (Theorem \ref{ThmWE}) to a representation theory/combinatorics problem.
For simplicity, assume that $K = \Qp$.  If $F(\lambda)$ is a Serre weight for $\GL_n(\F_p)$ with $p$-restricted highest weight $\lambda$ which is not in $W^{?}(\rhobar)$, one has to exhibit a tame $\GL_n(\cO_K)$-type  $\sigma(\tau)$ such that $F(\lambda)$ is a Jordan--H\"older factor of the reduction $\overline{\sigma}(\tau)$ and $\tld{w}(\rhobar, \tau)^*$ is not $\eta$-admissible.  Initially, this might seem daunting since there are many types which contain $F(\lambda)$. In fact, it suffices to consider only the types which ``obviously'' contain $F(\lambda)$, namely, the Deligne--Lusztig representations $R_s(\tld{w}_h\cdot \lambda+\eta)$ for $s \in W(\GL_n)$ (see \S \ref{sec:main} for undefined notation).   Precisely, we show that if for all $s \in W(\GL_n)$, the shape of $\rhobar$ relative to $R_s(\tld{w}_h \cdot \lambda+\eta)$ is $\eta$-admissible then $F(\lambda) \in W^{?}(\rhobar)$.   The argument uses alcove geometry to relate admissibility to a description of $W^{?}(\rhobar)$ in terms of dominant $p$-restricted alcoves and linkage due to Herzig. In turn, we use this relationship, together with Theorem \ref{thmobvintro} below and global arguments to establish the ``if'' direction of Theorem \ref{thm:liftint}.

We now discuss our second main theorem which represents partial progress towards the other direction of the weight part of Serre's conjecture.  Among the predicted weights, there is a distinguished subset $W_{\mathrm{obv}}(\rhobar) \subset W^{?}(\rhobar)$ called \emph{obvious} Serre weights.  Obvious weights are defined precisely in Definition 7.1.3 of \cite{GHS}, but roughly correspond to the Hodge--Tate weights (with an $\eta$-shift) in which $\rhobar$ has a crystalline lift which is the direct sum of inductions of characters from unramified extensions.   
When $n =2$, there are only obvious weights so the naive generalization of the weight part of Serre's conjecture would be that $W_{\mathrm{obv}}(\rhobar)$ are exactly the modular weights. Despite their name, the modularity of these weights is by no means obvious.  However, they are more easily accessed via automorphy lifting techniques. For example, \cite{gee-geraghty} obtains essentially complete results on modularity of ordinary obvious weights in the ordinary setting. In \cite{BLGG}, they prove modularity of the obvious weights when $n = 3$ and obtain partial results when $n > 3$. Using Theorem \ref{ThmWE} and a generalization of part of \cite{LLLM} on potentially crystalline deformation rings, we extend these results to $\GL_n$:

\begin{thm} \label{thmobvintro}  Let $\overline{r}:\Gal(\overline{F}/F) \rightarrow \GL_n(\overline{\mathbb{F}}_p)$ be an irreducible representation satisfying Taylor-Wiles conditions.  Assume $p$ is unramified in $F$ and that, for all places $v \mid p$, $\rhobar_v$ is semisimple and $(6n-2)$-generic. If $\overline{r}$ is modular of any obvious weight, then $\overline{r}$ is modular of all obvious weights.
\end{thm}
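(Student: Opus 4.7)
The plan is to combine Theorem \ref{ThmWE} with Taylor--Wiles--Kisin patching and a generalization to $\GL_n$ of the potentially crystalline deformation ring analysis of \cite{LLLM}. The broad strategy mirrors that of \cite{BLGG} for $n = 3$: modularity of a Serre weight is encoded in the support of a patched module $M_\infty$ on components of local potentially crystalline deformation rings, and one propagates non-vanishing of $M_\infty$ among the components corresponding to obvious weights.

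I would first set up patching. Using the Taylor--Wiles hypotheses on $\overline{r}$, construct the Kisin--Taylor--Wiles patched module $M_\infty$ over $R_\infty = R^{\mathrm{loc}}_\infty[[x_1,\ldots,x_g]]$, where $R^{\mathrm{loc}}_\infty$ is built from framed potentially crystalline deformation rings $R^{\eta,\tau_v}_{\rhobar_v}$ at the places $v \mid p$ (for a choice of tame type $\tau = (\tau_v)$) together with suitable local factors at other places. Modularity of $\overline{r}$ with weight $V = \otimes_{v \mid p} V_v$ is equivalent to non-vanishing of $M_\infty(\sigma(\tau))$ for some tame $\GL_n(\cO_{F^+,p})$-lattice $\sigma(\tau)$ whose reduction contains $V$. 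By Theorem \ref{ThmWE}, the support of $M_\infty(\sigma(\tau))$ lies in the union of components of $R^{\eta,\tau_v}_{\rhobar_v}$ indexed by $\JH(\overline{\sigma}(\tau_v)) \cap W^?(\rhobar_v)$.

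The second step is to use the key local input, the generalization of \cite{LLLM} to $\GL_n$ that underlies this paper: for $\rhobar_v$ generic semisimple and $\tau_v$ a generic tame principal series type, the ring $R^{\eta,\tau_v}_{\rhobar_v}$ decomposes into irreducible components labelled by $\JH(\overline{\sigma}(\tau_v)) \cap W^?(\rhobar_v)$, with each component indexed by an obvious weight being formally smooth of the expected dimension. For each obvious weight $V_v$ one can exhibit a specific principal series type $\tau_{V_v}$ of the form $R_s(\tld{w}_h \cdot \lambda + \eta)$ containing $V_v$ in $\JH(\overline{\sigma}(\tau_{V_v}))$. Combined with the Cohen--Macaulayness of $M_\infty$ over $R_\infty$ and Breuil--M\'ezard-type reasoning, modularity of an obvious weight $V_0$ then forces full support of $M_\infty$ on the (smooth) component of $R^{\eta,\tau_{V_0}}_{\rhobar_v}$ corresponding to $V_0$.

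Finally, I would propagate from $V_0$ to any target obvious weight $V_1$ by building a chain of tame types $\tau^{(0)},\ldots,\tau^{(m)}$ so that consecutive types share an obvious weight as a common Jordan--H\"older constituent of their reductions, with $V_0$ appearing in $\overline{\sigma}(\tau^{(0)})$ and $V_1$ in $\overline{\sigma}(\tau^{(m)})$. At each step, full support on one obvious component of $R^{\eta,\tau^{(i)}}_{\rhobar_v}$ yields non-vanishing of $M_\infty(\sigma(\tau^{(i+1)}))$, and then, using Theorem \ref{ThmWE} together with the smoothness input, full support on the shared obvious component of $R^{\eta,\tau^{(i+1)}}_{\rhobar_v}$, yielding modularity of the intermediate obvious weight. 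Connectivity of this ``obvious weight graph'' follows from the description of $W_{\mathrm{obv}}(\rhobar_v)$ as (roughly) a torsor under the Weyl group of $\GL_n$. The main obstacle is the local input itself: the generalization of the potentially crystalline deformation ring results of \cite{LLLM} to arbitrary $n$, in particular the formal smoothness of the components indexed by obvious weights, is a substantial local problem requiring a careful classification of generic Kisin modules of tame inertial type.
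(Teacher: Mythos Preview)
Your proposal captures the right circle of ideas but differs from the paper's argument in one essential point, and as written relies on a local result the paper does not prove.

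The local input you invoke --- that for a generic tame $\tau$ the ring $R^{\eta,\tau}_{\rhobar_v}$ decomposes into components labelled by $\JH(\overline{\sigma}(\tau))\cap W^?(\rhobar_v)$, with the obvious-weight components formally smooth --- is a Breuil--M\'ezard type statement much stronger than what is established here. The paper only proves formal smoothness of $R^{\eta,\tau}_{\rhobar_v}$ in the very special case where the shape $\tld{w}(\rhobar_v,\tau)$ is of the form $t_{w(\eta_0)}$ (Theorem~\ref{thm:smoothdef}); there is no general component decomposition.

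What makes the paper's argument work, and renders your chain-of-types propagation unnecessary, is the combinatorial Proposition~\ref{prop:obvtype}: for each obvious weight $F(\lambda)$ there is a specific \emph{obvious type} $\tau$ with $\tld{w}^*(\rhobar_v,\tau)=t_{\pi w^{-1}\eta}$, and for this $\tau$ one has $W^?(\rhobar_v,\tau)=\{F(\lambda)\}$, a singleton. The argument then runs as follows (Theorem~\ref{thm:obvwt}): starting from modularity of one obvious weight $F(\lambda_0)$, choose its obvious type $\tau_0$; one obtains an automorphic lift potentially crystalline of type $(\eta,\tau_0)$, and by Theorem~\ref{thm:smoothdef} and Corollary~\ref{cor:pd} this lift is potentially diagonalizable. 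Now for any other obvious weight $F(\lambda_1)$ with obvious type $\tau_1$, the change-of-weight machinery of \cite{BLGGT} (Theorem~\ref{thm:cow}) produces an automorphic lift of type $(\eta,\tau_1)$. Weight elimination (Corollary~\ref{cor:globalwe}) combined with the singleton property $W^?(\rhobar_v,\tau_1)=\{F(\lambda_1)\}$ forces $F(\lambda_1)$ itself to be modular. No chain is needed: potential diagonalizability lets one jump directly between any two obvious types, and the singleton property pins down the weight at each landing. Your patched-module and chain approach could in principle work but would require the stronger component labelling you assume, which is not available here.
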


Before giving an overview of the paper, we summarize the relationship between the methods and results of this paper and those of \cite{LLLM} which is for $n = 3$. The results about shapes of mod $p$ Kisin modules and the triviality of the Kisin variety under genericity conditions in \S 2-3 of \emph{loc. cit.} generalize directly to the $n$-dimensional setting.  Establishing these generalizations is enough to the prove the only-if direction of Theorem \ref{thm:liftint}.  The only-if direction is the necessary local input to establish weight elimination (Theorem \ref{ThmWE}).   

To prove Theorem \ref{thmobvintro}, we compute certain potentially crystalline deformations which requires generalizing arguments of \S 4-5 of \emph{loc.~cit.} The deformation rings are computed explicitly in \emph{loc.~cit.} for all shapes.  Here, we compute the deformation rings only for shapes of the form translation by a permutation of $(0, 1, \ldots, n-1)$ (when $n =3$, this corresponds to the shapes $\alpha \beta \alpha \gamma$ and $\beta \gamma \alpha \gamma$ appearing in Tables at the end of \emph{loc. cit.}).  For these special shapes, the deformation rings turn out to be formally smooth and this is the necessary local input to prove Theorem \ref{thmobvintro}.  

We now give an overview of the paper.  In \S \ref{sec:background}, we begin with some background on affine Weyl groups, Serre weights, tame types, and inertial local Langlands.  In \S \ref{sec:local}, we prove the main local results in $p$-adic Hodge theory.   Sections \ref{sec:phi}-\ref{sec:gen} are direct generalizations to $n$-dimensional Galois representations of results of \cite{LLLM, LLLM2} on Kisin modules with descent data. In \S \ref{sec:pd}, we show, if $\rhobar$ has special shape with respect to $\tau$, then the potentially crystalline deformation ring of type $(\eta, \tau)$ is formally smooth and deduce the existence of potentially diagonalizable lifts.   

The main theorems appear in \S \ref{sec:main}.  Weight elimination is in \S \ref{sec:global}.  Modularity of the obvious weights is in \S \ref{sec:obv}, and in \S \ref{sec:red}, we complete the proof of the  Theorem \ref{thm:liftint}, the local reduction problem using global input. 

 \subsection{Acknowledgements} We would like to thank Matthew Emerton, Thomas Haines, Florian Herzig, and Stefano Morra for many helpful conversations, and Florian Herzig for detailed comments on an earlier draft of this paper.  We thank the referees for very detailed and helpful feedback which greatly improved the paper.
We would also like to thank the Institut Henri Poincar\'e for their hospitality during part of this project.  
The first author was supported by the National Science Foundation under agreements Nos.~DMS-1128155 and DMS-1703182 and an AMS-Simons travel grant. The second author was partially supported by the National Science Foundation under grant No.~DMS-1801963.

\subsection{Notation and Conventions}  \label{sec:not}

Fix $n \geq 2$. Let $p$ be a prime with $p > n$.  Fix a finite unramified extension $K/\Qp$ of degree $f$. Let $k$ denote the residue field of $K$ of cardinality $q = p^f$. Let $\cO_K := W(k)$ be the ring of integers of $K$. We denote the arithmetic Frobenius automorphism on $\cO_K$ by $\phz$, which acts as raising to $p$-th power on the residue field.  Let $G_K = \Gal(\overline{K}/K)$. Let $I_K$ denote the inertia subgroup and $W_K$ the Weil group.  

Let $E/\Qp$ be finite extension assumed to be sufficiently large such that for any unramified extension $K'/K$ of degree the order of an element of $S_n \times S_n$, $E$ contains a copy of $K'$.  Let $\cO$ be the ring of integers of $E$ with uniformizer $\varpi$ and residue field $\F$.   
We fix an embedding $\sigma_0$ of $K$ into $E$ (equivalently an embedding $k$ into $\F$). Define $\sigma_j = \sigma_0 \circ \phz^{-j}$. 

For $r \geq 1$, we fix a compatible system of $(p^{rf} -1)$st roots $\varpi_r = (-p)^{\frac{1}{p^{rf} -1}} \in \overline{K}$.   The choice of root $\varpi_1$ defines a character $\omega_{\varpi_1}:I_K \ra \cO_K^{\times}$.  Using our choice of embedding $\sigma_0$, we get a fundamental character of niveau $f$ 
\[
\omega_{f}:= \sigma_0 \circ \omega_{\varpi_1}:I_K \ra \cO^{\times}. 
\]
We fix once and for all a sequence $\underline{p} := (p_n)_{n\in\N}$  where $p_n\in \overline{\Q}_p$ verify $p_{n+1}^{p}= p_n$ and $p_0=-p$. We let $K_{\infty} := \underset{n\in\N}{\bigcup}K(p_n)$ and $G_{K_{\infty}} := \Gal(\overline{\Q}_p/K_{\infty})$. 

Let $\un{G} = \Res_{k/\F_p} \GL_n$.  Let $T \subset \GL_n$ be the diagonal torus and $\un{T} = \Res_{k/\F_p} T$, a maximal torus of $\un{G}$. Let $\un{Z} \subset \un{T}$ denote the center of $\un{G}$.  Let $W(\un{G}) = W(\GL_n)^{\Hom(k, \F)}$ denote Weyl group of $\un{G}$.   Similarly, let $X^*(\un{T})$ be the (geometric) characters of $\un{T}$, which is equipped with an action of Frobenius $\pi$. 
We have an action of $\pi$ on $W(\un{G})$ by the formula $\pi(w)(\pi(\nu)) = \pi(w(\nu))$. 
There are isomorphisms $W(\un{G}) \cong W(\GL_n)^f$ and $X^*(\un{T}) \cong X^*(T)^f$ where the $j$-th entry corresponds to the embedding $\sigma_j$. Under this identification, the action of $\pi$ is such that if $\nu = (\nu_j) \in X^*(\un{T})$,  then $\pi(\nu)_j = \nu_{j-1}$. 
Let $\Lambda_R \subset X^*(T)$ (resp. $\un{\Lambda}_R \subset X^*(\un{T})$) denote the root lattice for $\GL_n$ (resp. $\bun{G}$).

Let $W_a$ (resp. $\tld{W}$) denote the affine Weyl group and the extended affine Weyl group for $\GL_n$.  Similarly, we will use $\bun{W}_a \cong W_a^f$ and  $\bun{\tld{W}} \cong \tld{W}^f$ to denote the (extended) affine Weyl group of $\underline{G}$.   Recall that 
\[
W_a = \Lambda_R \rtimes W(\GL_n), \quad \tld{W} = X^*(T) \rtimes W(\GL_n)
\]
and similarly for $\bun{W}_a$ and $\bun{\tld{W}}$.   We use $t_{\nu} \in \tld{W}$ to denote translation by $\nu \in X^*(T)$. 
The action of $\pi$ on $X^*(\bun{T})$ and $W(\bun{G})$ extends naturally to $\tld{\bun{W}}$. 

Let $\un{R}^{+} \subset \un{R}$ (resp. $\un{R}^{+, \vee} \subset \un{R}^{\vee}$) denote the subset of positive roots (resp.~positive coroots) in the set of roots (resp.~coroots) with respect to the upper triangular Borel subgroup in each embedding.  Define dominant (co)characters with respect to this choice of positive roots. 

We fix an isomorphism $X^*(T) \cong \Z^n$ in the standard way, where the standard $i$-th basis element $\eps_i=(0,\ldots, 1,\ldots, 0)$ (with the $1$ in the $i$-th position) of the right-hand side correspond to extracting the $i$-th diagonal entry of a diagonal matrix. Dually we get a standard isomorphism $X_*(T)\cong \Z^n$, and let $\{\eps_i^\vee\}$ denote the dual basis.
Let $\eta_0 = (n-1, n-2, \ldots, 0) \in X^*(T)$ be a fixed lift of the half sum of positive roots for $\GL_n$.  Define $\eta = (\eta_0, \eta_0, \ldots, \eta_0) \in X^*(\un{T})$. In the paper, sometimes we will consider simultaneously the group  $\un{G}=\Res_{k/\F_p} \GL_n$ for multiple $k$. In such a situation, the symbol $\eta$ will be used for the above element for any of the groups that appear, and it will always be clear which group it occurs in. Note that $\eta_j=\eta_0$ for any $j$.

We will always denote the duality pairing between a free $\Z$-module and its dual (e.g. $X^*(T)$ and $X_*(T)$) by $\langle ,\rangle$.

For any ring $S$, we define $M_n(S)$ to be the set of $n\times n$ matrix with entries in $S$. If $\alpha=\eps_i-\eps_j$ is a root of $\GL_n$, we also call the $(i,j)$-th entry of a matrix $X\in M_n(S)$ the $\alpha$-th entry. We will make use of both notations $X_{ij}$ and $X_{\alpha}$ for this entry.

If $P$ is a statement, the symbol $\delta_P\in \{0,1\}$ takes value $1$ if $P$ is true, and $0$ if $P$ is false. 

If $W$ is a de Rham representation of $G_K$ over $E$, then for each $\kappa \in \Hom(K, E)$, we write $\mathrm{HT}_{\kappa}(W)$ for the multiset of Hodge--Tate weights labeled by embedding $\kappa$ normalized such that the $p$-adic cyclotomic character has Hodge--Tate weight $\{1\}$ for every $\kappa$.  For $\mu = (\mu_j) \in X^*(\un{T})$, we say that an $n$-dimensional representation $W$ has Hodge--Tate weights $\mu$ if 
\[
\mathrm{HT}_{\sigma_j}(W) = \{ \mu_{1, j}, \mu_{2, j}, \ldots, \mu_{n, j} \}.
\]

An inertial type is a representation $\tau:I_K \ra \GL_n(E)$ with open kernel and which extends to $W_K$. We say that an $n$-dimensional potentially semistable representation $\rho:G_K \ra \GL_n(E)$ has type $(\mu, \tau)$ if $\rho$ has Hodge--Tate weights $\mu$ and the Weil-Deligne representation $\mathrm{WD}(\rho)$ restricted to $I_K$ is isomorphic to $\tau$. Note that this differs from the conventions of \cite{GHS} via a shift by $\eta$.     

Let $\mathrm{Art}_K: K^{\times} \ra W_K^{\mathrm{ab}}$ denote the Artin map normalized so that uniformizers correspond to geometric Frobenius elements. For $\tau$ an inertial type, we use $\sigma(\tau)$ to denote the finite dimensional smooth irreducible $\overline{\Qp}$-representation of $\GL_n(\cO_K)$ associated to $\tau$ by the ``inertial local Langlands correspondence'' (cf. \S \ref{sec:ill}).  In fact, in all situations, $\sigma(\tau)$ will be defined over $E$.

If $V$ is a finite length representation, then we use $\JH(V)$ to denote the set of Jordan--H\"older factors.



\section{Background}\label{sec:background}

\subsection{Affine Weyl group} \label{sec:awg}

Fix the dominant base alcove the apartment of $(\GL_n, T)$ which defines a Bruhat order on $W_a$ denoted by $\leq$.  If $\Omega$ is the stabilizer of the base alcove, then $\tld{W} = W_a \rtimes \Omega$ and so $\tld{W}$ inherits a Bruhat order in the standard way: For $\tld{w}_1, \tld{w}_2\in W_a$ and $\tld{w}\in \Omega$, $\tld{w}_1\tld{w}\leq \tld{w}_2\tld{w}$ if and only if $\tld{w}_1\leq \tld{w}_2$, and elements in different right $W_a$-cosets are incomparable. We also have the natural generalization $\un{\Omega}$ for $\un{G}$ and a Bruhat order on $\bun{\tld{W}}$. 

We now recall the definition of the admissible set as introduced by Kottwitz and Rapoport:

\begin{defn} \label{defn:adm} Let $\lambda_0 \in X^*(T)$.  Then define 
\[
\Adm(\lambda_0) := \left\{ \tld{w} \in \tld{W} \mid \tld{w} \leq t_{w(\lambda_0)} \text{ for some } w \in W(\GL_n) \right\}.
\]
Similarly, if $\lambda = (\lambda_j) \in X^*(\un{T})$, then define $\Adm(\lambda) = \prod_j \Adm(\lambda_j) \subset \bun{\tld{W}}$. 
\end{defn}

When working on the Galois side, it is natural to work with the partially ordered group $\tld{\bun{W}}^{\vee}$ (resp.~$\tld{W}^{\vee}$) which is identified with $\tld{\bun{W}}$ (resp.~$\tld{W}$) as a group, but whose Bruhat order, also denoted by $\leq$, is defined by the \emph{antidominant} base alcove.
For any character $\mu \in X^*(\un{T})$, define the subset $\Adm^{\vee}(\mu) \subset \tld{\bun{W}}^{\vee}$ as in Definition \ref{defn:adm}.

\begin{defn} \label{affineadjoint}   Define a bijection $\tld{w} \mapsto \tld{w}^*$ between $\tld{\bun{W}}^{\vee}$ and $\tld{\bun{W}}$ as follows: 
\begin{enumerate}
\item For $w = (w_j) \in W(\un{G})$, define $w^* := (w^*_j) \in W(\un{G})$ by $w^*_j = w_{f-1-j}^{-1}$;
\item For $\nu = (\nu_j) \in X^*(\un{T})$, define $\nu^* := (\nu^*_j) \in X^*(\un{T})$  by $\nu^*_j = \nu_{f-1-j}$;
\item For $\widetilde{w} = wt_\nu \in \bun{\widetilde{W}}^{\vee}$, define $\widetilde{w}^* \in \bun{\widetilde{W}}$ by $\widetilde{w}^* := t_{\nu^*} w^*$.
\end{enumerate}
Note that $\widetilde{w} \mapsto \widetilde{w}^*$ is an anti-homomorphism.  
By specializing to the case $f=1$, we obtain a bijective anti-homomorphism between $\tld{W}^{\vee}$ and $\tld{W}$.
\end{defn}

We now record a few basic lemmas for later.  

\begin{lemma}\label{lem:adjineq}
We have $\tld{w}_1 \leq \tld{w}_2$ in $\tld{W}^{\vee}$ if and only if $\tld{w}_1^* \leq \tld{w}_2^*$ in $\tld{W}$.
\end{lemma}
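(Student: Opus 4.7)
My plan is to realize the $*$-operation as the composition of group inversion with an order-preserving group automorphism of $\tld{W}$, which immediately yields the Bruhat-order equivalence.

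I would first define the involutive group automorphism $\sigma\colon \tld{W} \to \tld{W}$ by $\sigma(wt_\nu) := wt_{-\nu}$. Using the semidirect-product multiplication $(w_1 t_{\nu_1})(w_2 t_{\nu_2}) = w_1 w_2 \, t_{w_2^{-1}\nu_1 + \nu_2}$, a one-line calculation verifies that $\sigma$ is indeed a group homomorphism. The crux of the argument is the claim that $\sigma$ is an order-preserving bijection from $(\tld{W}, \leq^{\vee})$ onto $(\tld{W}, \leq)$. This has a clean geometric reason: the negation map $\tau(x) = -x$ on the real apartment $X^*(T)\otimes\R$ is an affine isometry carrying $A_0^{\vee} = -A_0$ onto $A_0$, and a direct computation with $\tld{w} = wt_\nu$ gives
\[
\tau(\tld{w}\, A_0^{\vee}) \;=\; \{\, w(z) - w(\nu) : z \in A_0 \,\} \;=\; \sigma(\tld{w})(A_0).
\]
Because the Bruhat order with respect to either base alcove is an intrinsic combinatorial invariant of the configuration of alcoves and walls relative to that base alcove, any apartment-isometry carrying one base alcove to another preserves Bruhat order. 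Hence $\sigma$ intertwines $\leq^{\vee}$ and $\leq$.

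To conclude, one computes $\sigma(wt_\nu)^{-1} = (wt_{-\nu})^{-1} = t_\nu w^{-1} = (wt_\nu)^*$, so $\tld{w}^* = \sigma(\tld{w})^{-1}$. Since group inversion is a well-known order-preserving anti-involution on any Coxeter group (and extends to the extended affine Weyl group by its coset-wise definition), we chain
\[
\tld{w}_1 \leq^{\vee} \tld{w}_2 \;\iff\; \sigma(\tld{w}_1) \leq \sigma(\tld{w}_2) \;\iff\; \sigma(\tld{w}_1)^{-1} \leq \sigma(\tld{w}_2)^{-1} \;\iff\; \tld{w}_1^* \leq \tld{w}_2^*,
\]
proving the lemma.

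The main obstacle is justifying the Bruhat-order preservation of $\sigma$ with the proper bookkeeping for the extended affine Weyl group structure $\tld{W} = W_a \rtimes \Omega$. Since the Bruhat order on $\tld{W}$ is defined coset-wise from the Coxeter Bruhat order on $W_a$, one must check that $\sigma$ respects the coset decomposition; the stabilizers of $A_0$ and $A_0^{\vee}$ differ by conjugation by $w_0 \in W$ (which sends $A_0$ to $A_0^{\vee}$), and this interacts cleanly with $\sigma$. Once this bookkeeping is done, the core assertion reduces to the statement that an apartment-isometry carrying base alcove to base alcove induces an order-isomorphism on galleries, which is essentially immediate.
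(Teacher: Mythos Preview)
Your proof is correct and takes a genuinely different route from the paper's. The paper argues directly with reduced expressions: it writes $\tld{w}_2 = (\prod_{\alpha\in I} s_\alpha)\tau$ with each $s_\alpha$ a reflection through a wall of the antidominant base alcove and $\tau$ in its stabilizer, then checks by hand that $\tld{w}_2^* = (\prod_{\alpha\in I} {}^{\tau^*}s_\alpha^*)\tau^*$ with $\tau^*\in\Omega$ and each ${}^{\tau^*}s_\alpha^*$ a reflection through a wall of the dominant base alcove; a length-symmetry argument then shows this expression is reduced, and the subword characterization of Bruhat order finishes. Your approach instead factors $*$ as inversion composed with the group automorphism $\sigma(wt_\nu)=wt_{-\nu}$, and establishes order-preservation of $\sigma$ geometrically via the apartment negation $x\mapsto -x$ intertwining the two base-alcove structures. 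The paper's argument is more self-contained and avoids having to verify the $\Omega$ versus $\Omega^\vee$ bookkeeping you flag at the end (it handles the length-zero part concretely); your argument is more structural and makes transparent \emph{why} the result holds, since it reduces everything to the standard facts that Coxeter automorphisms and inversion preserve Bruhat order. Both approaches ultimately hinge on the same observation---that simple reflections for one base alcove are carried to simple reflections for the other---but your factorization isolates this cleanly, whereas the paper verifies it inside the computation. One small point: your displayed computation of $\tau(\tld{w}\,A_0^\vee)$ is slightly garbled (you want $\tau\circ\tld{w}=\sigma(\tld{w})\circ\tau$ as maps, which you do use correctly afterward), but the argument is sound.
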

\begin{proof}
Suppose that $\tld{w}_2$ has a reduced expression $(\prod_{\alpha\in I} s_\alpha)\tau$ where each $s_\alpha$ is an affine reflection along a wall of the antidominant base alcove and $\tau$ stabilizes the antidominant base alcove.
Then $\tld{w}_2^*$ is the product 
\begin{equation}\label{eqn:redexp}
\left(\prod_{\alpha\in I} {^{\tau^*}s_\alpha^*} \right)\tau^*,
\end{equation}
where $^{\tau^*}s_\alpha^* = \tau^*s_\alpha^* (\tau^*)^{-1}$ (the order of factors indexed by $I$ should of course be reversed from the reduced factorization of $\tld{w}_2$).
It is easy to check that $\tau^*$ is in $\Omega$, and that each $^{\tau^*}s_\alpha^*$ is an affine reflection along a wall of the dominant base alcove.
From this, we see that $\ell(\tld{w}_2^*)\leq \ell(\tld{w}_2)$ (note that the lengths are with respect to different sets of generating reflections).
By symmetry, we see that $\ell(\tld{w}_2^*)= \ell(\tld{w}_2)$ so that (\ref{eqn:redexp}) is a reduced expression.

Since $\tld{w}_1 \leq \tld{w}_2$ in $\tld{W}^{\vee}$ if and only if $\tld{w}_1$ has a reduced expression $(\prod_{\alpha\in J} s_\alpha)\tau$ where $J$ is some subsequence of $I$ and similarly for $\tld{w}_1^*$, the result follows.
\end{proof}

\begin{lemma} \label{lem:adjadm} For $\tld{w} \in \tld{\bun{W}}^{\vee}$, we have $\widetilde{w} \in \Adm^{\vee}(\mu)$ if and only if $\widetilde{w}^* \in \Adm(\mu^*)$.  
\end{lemma}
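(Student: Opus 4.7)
My plan is to reduce the claim to the definition of $\Adm^{\vee}(\mu)$ combined with two facts: (i) the bijection $\widetilde{w} \mapsto \widetilde{w}^*$ reverses Bruhat inequalities in the appropriate way (this is exactly Lemma \ref{lem:adjineq}, extended factor-by-factor to $\bun{\tld{W}}$), and (ii) the $*$ operation permutes the set of pure translations by Weyl orbits of $\mu$ onto the set of pure translations by Weyl orbits of $\mu^*$.

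First I would observe that both admissible sets are defined factor-by-factor, so it suffices to work one embedding at a time, i.e.\ to prove the claim in $\tld{W}^{\vee}$ versus $\tld{W}$. Unwinding the definition, $\widetilde{w} \in \Adm^{\vee}(\mu)$ means $\widetilde{w} \leq t_{w(\mu)}$ in $\tld{W}^{\vee}$ for some $w \in W(\GL_n)$; by Lemma \ref{lem:adjineq} this is equivalent to $\widetilde{w}^* \leq (t_{w(\mu)})^*$ in $\tld{W}$ for some $w$. So the whole argument comes down to identifying $(t_{w(\mu)})^*$.

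A direct computation using the definition of $(\cdot)^*$ on $X^*(\un{T})$ and on $W(\un{G})$ gives $(t_{w(\mu)})^* = t_{(w^*)^{-1}(\mu^*)}$: indeed, writing $t_{w(\mu)} = e \cdot t_{w(\mu)}$ one finds $(w(\mu))^*_j = w_{f-1-j}(\mu_{f-1-j}) = (w^*_j)^{-1}(\mu^*_j)$. Since the map $w \mapsto (w^*)^{-1}$ is a bijection of $W(\un{G})$ onto itself, as $w$ ranges over $W(\un{G})$ the elements $(w^*)^{-1}(\mu^*)$ range over the full Weyl orbit of $\mu^*$. Therefore $\widetilde{w}^* \leq t_{(w^*)^{-1}(\mu^*)}$ for some $w$ if and only if $\widetilde{w}^* \leq t_{w'(\mu^*)}$ for some $w' \in W(\un{G})$, i.e.\ $\widetilde{w}^* \in \Adm(\mu^*)$.

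I don't expect any real obstacle here; the statement is essentially a compatibility check between the anti-homomorphism $(\cdot)^*$, the Bruhat orders defined by opposite base alcoves, and the Weyl symmetry built into the admissible set. The only place where one must be careful is bookkeeping the index reversal $j \mapsto f-1-j$ and the inversion $w_j \mapsto w_{f-1-j}^{-1}$ when computing $(t_{w(\mu)})^*$, but this is routine.
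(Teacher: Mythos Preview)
Your proposal is correct and takes essentially the same approach as the paper, which simply records that the lemma follows from Lemma~\ref{lem:adjineq}. You have just made explicit the two steps the paper leaves implicit: that $(t_{w(\mu)})^* = t_{(w^*)^{-1}(\mu^*)}$ is again a pure translation by an element of the Weyl orbit of $\mu^*$, and that $w \mapsto (w^*)^{-1}$ is a bijection of $W(\un{G})$.
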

\begin{proof}
This follows from Lemma \ref{lem:adjineq}.
 \end{proof} 

\begin{lemma} \label{bound} Let $\lambda\in X^*(\un{T})$ be a dominant weight. If $t_{\nu} s \in \Adm(\lambda)$, then
\[
\max \{ | \langle \nu, \alpha^{\vee} \rangle | \mid \alpha^{\vee} \in \un{R}^{\vee} \} \leq \max \{ |  \langle \lambda, \alpha^{\vee} \rangle | \mid \alpha^{\vee} \in \un{R}^{\vee} \}
\]
\end{lemma}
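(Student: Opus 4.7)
The plan is to reduce the statement to the following standard fact from the theory of affine Weyl groups: if $t_\nu s \in \Adm(\lambda)$ for a dominant cocharacter $\lambda$, then the translation part $\nu$ lies in the convex hull $\Conv(W\cdot\lambda)$ of the Weyl orbit of $\lambda$ in $X^*(\un{T})\otimes \mathbb{R}$. This is a classical result of Kottwitz--Rapoport: by definition of $\Adm(\lambda)$, we have $t_\nu s \leq t_{w(\lambda)}$ for some $w\in W(\un{G})$, and one then argues (e.g.\ by induction on the length difference using that each Bruhat covering relation is given by reflection along an affine root) that the translation parts remain in $\Conv(W\cdot\lambda)$. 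Since both $\Adm(-)$ and the maximum in the statement factor through the product decomposition of $\un{G}$ according to the embeddings $\sigma_j$, it suffices to treat each factor separately, i.e.\ we may assume $\un{G} = \GL_n$.

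Once the convex-hull statement is available, the rest is a short convexity argument. Writing $\nu = \sum_{w\in W} c_w\, w(\lambda)$ as a convex combination (with $c_w\geq 0$ and $\sum_w c_w = 1$), and using that $\langle w(\lambda),\alpha^\vee\rangle = \langle \lambda, w^{-1}(\alpha^\vee)\rangle$, we get for any coroot $\alpha^\vee\in \un{R}^\vee$:
\[
|\langle \nu,\alpha^\vee\rangle| \;\leq\; \sum_{w} c_w\, |\langle \lambda, w^{-1}(\alpha^\vee)\rangle| \;\leq\; \max_{\beta^\vee\in \un{R}^\vee} |\langle \lambda,\beta^\vee\rangle|,
\]
where the last inequality uses that $w^{-1}(\alpha^\vee)$ is again a coroot. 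Taking the maximum over $\alpha^\vee$ on the left gives the desired inequality.

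The main obstacle is the admissible-implies-permissible statement $\nu\in \Conv(W\cdot\lambda)$, which is not formally immediate from the definitions but is well-documented in the literature on affine flag varieties (this is, for $\GL_n$, part of the equality of admissible and permissible sets proved by Kottwitz--Rapoport). In the write-up I would either cite this directly or sketch the inductive argument: a Bruhat cover $s_{\tilde\alpha}\tilde{w}\lessdot \tilde{w}$ in $\tld{W}$ with $\tilde\alpha$ an affine root modifies the translation part by a multiple of the underlying finite coroot, and an explicit check shows this modification keeps $\nu$ inside $\Conv(W\cdot\lambda)$ when one starts from the extremal point $w(\lambda)$. No other ingredient is needed; everything else is pure convex geometry.
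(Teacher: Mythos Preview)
Your proof is correct and follows essentially the same approach as the paper: reduce to a single factor, invoke the Kottwitz--Rapoport result that admissibility implies $\nu \in \Conv(W\cdot\lambda)$ (the paper cites \cite[Theorem 3.2]{HN02}), and then finish with a convexity argument. The only cosmetic difference is in the last step: the paper replaces $\nu$ by its dominant Weyl conjugate and compares pairings against the highest coroot, whereas you use a direct convex-combination plus triangle-inequality estimate; both are equally standard.
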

\begin{proof} We reduce immediately to the case $f =1$.  By a result of Kottwitz-Rapoport (cf.~ Theorem 3.2 in \cite{HN02}),  $t_{\nu} s$ is $\lambda$-permissible which says in particular that $\nu$ in the convex hull of Weyl group orbit of $\lambda$.   The claim is that the inequality holds for any $\nu$ in the convex hull.   For this, we can replace $\nu$ with the dominant representative in its Weyl group orbit.   Convexity then says that $\lambda-\nu$ is an $\R$-linear combination of positive roots where all of the coefficients are nonnegative.  If $\alpha^{\vee}_h$ is the highest positive coroot, then for any positive coroot $\alpha$, we have 
\[
\langle \nu, \alpha^{\vee} \rangle \leq \langle \nu, \alpha^{\vee}_h \rangle. 
\]
It suffices then to observe that $\langle \nu, \alpha^{\vee}_h \rangle \leq \langle \lambda, \alpha^{\vee}_h \rangle$ which follows from that fact that $ \alpha^{\vee}_h$ is dominant.  
\end{proof}

Recall that \emph{$p$-alcoves} of $\un{G}$ are defined to be the connected components of \[ (X^*(\un{T}) \otimes \mathbb{R})  \backslash \{ x \mid \alpha^{\vee}(x + \eta) = pm \}_{\alpha^{\vee} \in \un{R}^{\vee}, m \in \Z}.   \]
Define the collection of $p$-restricted dominant weights
\[
X_1(\un{T}) = \{ \lambda \in X^*(\un{T}) \mid 0  \leq \langle \lambda, \alpha^{\vee} \rangle \leq p-1 \text{ for all simple positive coroots } \alpha^{\vee} \}.
\]
A $p$-alcove $C$ is called \emph{$p$-restricted} if $C \cap X^*(\un{T}) \subset X_1(\un{T})$.  We say that $\lambda \in X_1(\un{T})$ is \emph{regular $p$-restricted} if furthermore $\langle \lambda, \alpha^{\vee} \rangle < p-1$ for all simple positive coroots $\alpha^{\vee}$. Recall also that 
\[
X^0(\un{T}) = \{ \lambda \in X^*(\un{T}) \mid \langle \lambda, \alpha^{\vee} \rangle = 0 \text{ for all coroots } \alpha^{\vee} \}.
\]

\begin{defn} \label{defn:dot} Define the dot action of $\bun{\tld{W}}$ on $X^*(\un{T}) \otimes \mathbb{R}$  by 
\[
\tld{w} \cdot x = (w t_{\nu}) \cdot x = w(x + \eta + p \nu) - \eta.
\] 
\end{defn}
In the literature, this is often thought of as an action of $W(\un{G}) \rtimes p X^*(\un{T})$, but it will be convenient to include the $p$-scaling in the definition.  
Recall that the group $\bun{W}_a$ acts simply transitively on the collection of $p$-alcoves. Let $\bun{C}_0$ denote the dominant base $p$-alcove, i.e., the alcove containing $\un{0}.$  

\begin{defn}
\label{dfn:deep}
Let $\lambda\in X^*(\un{T})$ be a weight. We say that $\lambda$ lies $m$-deep in its alcove if there exists integers $n_{\alpha}\in \Z$ such that $pn_{\alpha}+m<\langle\lambda+\eta,\alpha^\vee\rangle< p(n_{\alpha}+1)-m$ for all positive coroots $\alpha^\vee\in \un{R}^{\vee, +}$.
\end{defn}
For example, a dominant weight $\lambda=(\lambda_j) \in X^*(\un{T})$ is $m$-deep in $\bun{C}_0$ if  $m<\langle\lambda_j+\eta_0,\alpha_j^\vee\rangle< p-m$ for all $j=0,\dots,f-1$ and all positive coroots $\alpha_j^\vee \in R^{\vee, +}$.

\subsection{Tame types and Serre weights} \label{sec:ttypes}

We begin with some setup.  An \emph{inertial type} $\tau:I_K \ra \GL_n(E)$ is a representation with open kernel which extends to the Weil group of $K$.  An inertial type is \emph{tame} if it factors through tame inertia. All our tame types will be defined over $\cO$.   

Tame inertial types have a combinatorial description which we will now recall (cf.~ \cite[(6.15)]{herzig-duke} or \cite[Definition 8.2.2]{GHS}). Let $(w, \mu) \in W(\un{G}) \times X^*(\un{T})$.  As in \cite[(4.1)]{herzig-duke} (see also the paragraph preceding \cite[Definition 10.1.12]{GHS}), for any $(\nu, \sigma) \in X^*(\un{T}) \rtimes W(\underline{G})$, define
\begin{equation} \label{sigmaconj}
^{(\nu, \sigma)} (w, \mu) = (\sigma w \pi(\sigma)^{-1}, \sigma(\mu) + p \nu - \sigma w \pi(\sigma)^{-1}\pi(\nu))
\end{equation}
and we write $(w, \mu) \sim (w', \mu')$ if there exists $(\nu, \sigma)$ such that $^{(\nu, \sigma)} (w, \mu) = (w', \mu')$.

Let $r$ be the order of an element of $S_n$.  For any such $r$, we choose an embedding $\sigma_0'$ of the unramified extension $K'/K$ of degree $r$ into $E$ extending $\sigma_0$.
Let $e' = p^{fr}-1$, $e = p^f-1$ and $f'=fr$.
Using our choice of $e'$-th root of $(-p)$ in \S \ref{sec:not}, we get a fundamental character $\omega_{f'}:I_K \ra \cO^{\times}$ such that $\omega_{f'}^{\frac{e'}{e}} = \omega_f$.
The following describes all isomorphism classes of tame inertial types for $K$.

\begin{defn} \label{defn:tau} Define an inertial type $\tau(w, \mu):I_K \ra \GL_n(\cO)$ 
as follows:  If $w = (s_0, \ldots, s_{f-1})$, then set $s_{\tau} = s_0 s_{f-1} s_{f-2} \cdots s_1 \in W(\GL_n)$  and  $\bm{\alpha} \in X^*(\un{T})$ such that $\bm{\alpha}_0 = \mu_0$, $\bm{\alpha}_{j} = s_1^{-1} s_2^{-1} \ldots s_j^{-1}(\mu_j)$ for $1 \leq j \leq f-1$.  Let $r$ denote the order of $s_{\tau}$.  Then, 
\begin{equation} \label{pres1}
\tau(w,\mu) \defeq \bigoplus_{1 \leq i \leq n} \omega_{f'}^{\sum_{0 \leq k \leq r-1} \bf{a}^{(0)}_{s_{\tau}^{k}(i)} p^{fk}} 
\end{equation}
where $\bf{a}^{(0)} := \sum_{j =0}^{f-1}  \bm{\alpha}_{j} p^j\in \Z^n$. Note that $(w, \mu) \sim ((s_{\tau}, 1,\ldots, 1), \bm{\alpha})$ and $\tau(w, \mu) \cong \tau((s_{\tau}, 1,\ldots, 1), \bm{\alpha})$ by construction. 
\end{defn} 

For any $\cO$-valued inertial type $\tau$, we use $\ovl{\tau}:I_K \ra \GL_n(\F)$ to denote the reduction to the residue field.   Note that since $\omega_{f'}$ is the Teichm\"uller lift of its reduction to $\F$, for tame inertial types, $\ovl{\tau}$ determines $\tau$.


We say that a pair $(w,\mu)\in W(\un{G}) \times X^*(\un{T})$ is \emph{good} if $(T_w,\theta_{w,\mu})$ is maximally split (see \cite[\S 9.2]{GHS} for the definitions of $(T_w,\theta_{w,\mu})$ and maximally split).
This definition is consistent with \cite[Definition 6.19]{herzig-duke} by \cite[Proposition 6.20]{herzig-duke}.
As in \cite[\S 9.2]{GHS}, which follows \cite{Jantzen}, we attach a Deligne-Lusztig representation to a \emph{good} pair $(w, \mu) \in  W(\underline{G}) \times X^*(\un{T})$ which we denote by $R_w(\mu)$.  
For any tame representation $\ovl{\tau}:I_K  \ra \GL_n(\F)$ which extends to $G_K$, there is an associated $E$-valued $\GL_n(\cO_K)$-representation $V(\ovl{\tau})$ defined in \cite{GHS} Proposition 9.2.1  (in \emph{loc.~ cit.}~ it is denoted $V_{\phi}(\ovl{\tau})$).  By Proposition 9.2.3 of \cite{GHS} if  $\ovl{\tau} \cong \ovl{\tau}(w, \mu)$, then  
\begin{equation} \label{Vmap} 
V(\ovl{\tau}) = R_w(\mu).
\end{equation} 

\begin{rmk} 
The condition that $(w,\mu)$ is good guarantees that the Deligne-Lusztig representation $R_w(\mu)$ is a genuine representation (and not only virtual, see \cite[Proposition 9.2.1]{GHS}).
The genericity condition defined below will guarantee that $R_w(\mu)$ is in fact (absolutely) irreducible over $E$. 
\end{rmk}

\begin{lemma}\label{lemma:maxsplit}
Suppose that $\mu-\eta\in X^*(\bun{T})$ is in alcove $\bun{C}_0$. 
Then $(w,\mu)$ is good for any $w\in W(\bun{G})$.
\end{lemma}
\begin{proof}
Let $(T_w^*,s)$ be the $F^*$-stable maximal torus of $\bun{G}^*$ and semisimple element $s \in {T_w^*}^{F^*}$ corresponding to $(T_w,\theta_{w,\mu})$ as in \cite[\S 9.2(ii)]{GHS}.
As in the proof of \cite[Lemma 10.1.10]{GHS}, let $s'$ be $(g_{F^*(w^{-1})}^*)^{-1}sF(g_{F^*(w^{-1})}^*)$ (recall that $(g_{F^*(w^{-1})}^*)^{-1}F(g_{F^*(w^{-1})}^*) \in N(\bun{T}^*)$ represents $F^*(w^{-1})$ as in \S 9.2 of \emph{ibid.}).
By the proof of \emph{loc.~cit.}, if the Weyl group of $T_w^*$ in $Z_{\bun{G}^*}(s)$, which is isomorphic to $\Stab_W(s')$, is trivial, then the claim follows.
Suppose that $s_\alpha$ is in $\Stab_W(s')$ (which is generated by reflections).
Let $d$ be the order of $w\pi$ as an automorphism of $X^*(\bun{T})$ (in particular, $f$ divides $d$).
Then by the proof of \emph{loc.~cit.},
\begin{equation}\label{eqn:GHSsum}
\sum_{i=0}^{d-1} p^i\langle \mu,(w\pi)^i \alpha^\vee\rangle
\end{equation}
is divisible by $p^d-1$.
Since $\mu-\eta$ is in $\bun{C}_0$, this divisibility forces $\langle \mu,(w\pi)^i \alpha^\vee\rangle$ to be $p-1$ for all $i$ or $1-p$ for all $i$.
Thus either $(w\pi)^i\alpha^{\vee}$ are highest coroots in $\un{R}^{\vee}$ for all $i$, or they are lowest coroots for all $i$ (note that $\un{R}^{\vee}$ has exactly $f$ highest coroots).
This in turn implies that $(w\pi)^i\alpha^{\vee}=\pi^i\alpha^{\vee}$ for all $0\leq i\leq d-1$, by comparing the unique non-zero component on both sides. Since $d\geq f$, this implies that $w$ fixes $\pi^i\alpha$ for all $i$, and that $s_{\pi^i\alpha}\in \Stab_W(s')$ for all $i$.
We conclude that $\Stab_W(s')$ is $\langle s_{\pi^i \alpha} \rangle_i$ and centralizes $w$.
From this, we see that $Z_{\bun{G}^*}(s)$ is isomorphic to $\Res_{k/\F_p} \GL_2 \times \bun{T}'$ for some torus $\bun{T}'$ and $T_w^*$ is $\bun{T}_2 \times \bun{T}'$ where $\bun{T}_2 \subset \Res_{k/\F_p} \GL_2$ is a maximally split torus.
Then by definition, $(T_w,\theta_{w,\mu})$ is maximally split. 
\end{proof}

\begin{prop} \label{prop:relabeltype} 
Let $(w,\mu)$ and $(w',\mu')$ be in $W(\bun{G})\times X^*(\bun{T})$.
If $(w, \mu) \sim (w', \mu')$, then the tame inertial types $\tau(w, \mu)$ and $\tau(w', \mu')$ are isomorphic.
If $\mu-\eta$ and $\mu'-\eta$ are in alcove $\bun{C}_0$ and the tame inertial types $\tau(w, \mu)$ and $\tau(w', \mu')$ are isomorphic, then $(w, \mu) \sim (w', \mu')$. 
\end{prop}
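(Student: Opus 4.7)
The plan is to prove each direction of Proposition \ref{prop:relabeltype} separately by direct computation with the explicit formula for $\tau(w,\mu)$ from Definition \ref{defn:tau}.

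For the first direction, I would verify invariance of $\tau(w,\mu)$ under $\sim$ by checking the two types of generating moves: the $W(\un{G})$-action (taking $\nu=0$) and the $X^*(\un{T})$-action (taking $\sigma=1$). For $\sigma=(\sigma_j)$ with $\nu=0$, unwinding Definition \ref{defn:tau} shows the new data satisfies $s'_\tau = \sigma_0 s_\tau \sigma_0^{-1}$ and $\bf{a}'_j = \sigma_0(\bf{a}_j)$ for every $j$, so the $i$-th summand of $\tau(w',\mu')$ is exactly the $\sigma_0^{-1}(i)$-th summand of $\tau(w,\mu)$ and the two direct sum decompositions agree as multisets of characters. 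For $\sigma=1$ with $\nu=(\nu_j)$, set $\un{\nu}_j := s_1^{-1}\cdots s_j^{-1}(\nu_j)$ for $j \geq 1$ and $\un{\nu}_0 := \nu_0$; using the identity $s_0 = s_\tau \cdot s_1^{-1}\cdots s_{f-1}^{-1}$, a telescoping sum over $j$ gives the clean formula $\bf{a}'^{(0)} = \bf{a}^{(0)} + p^f \un{\nu}_{f-1} - s_\tau(\un{\nu}_{f-1})$. Substituting into the exponent $\sum_{k=0}^{r-1}\bf{a}^{(0)}_{s_\tau^k(i)} p^{fk}$ in Definition \ref{defn:tau} and telescoping a second time, the correction collapses to $(p^{fr}-1)\un{\nu}_{f-1,\, s_\tau^{-1}(i)} = e' \cdot (\text{integer})$, which is annihilated by $\omega_{f'}$ since $\omega_{f'}$ has order $e'$.

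For the converse direction, I would first normalize both pairs using the observation in Definition \ref{defn:tau} that every $(w,\mu)$ is $\sim$-equivalent to $((s_\tau,1,\ldots,1), \bf{a})$; by the first part this preserves the isomorphism class of the type, so I can assume $(w,\mu)$ and $(w',\mu')$ are already in canonical form. The assumption $\tau(w,\mu) \cong \tau(w',\mu')$ now yields a bijection $\beta$ of $\{1,\ldots,n\}$ with $\sum_k \bf{a}^{(0)}_{s_\tau^k(i)} p^{fk} \equiv \sum_k (\bf{a}')^{(0)}_{(s'_\tau)^k(\beta(i))} p^{fk} \pmod{e'}$. Since each $\bf{a}_j$ is a Weyl-conjugate of $\mu_j$, the hypothesis $\mu-\eta \in \bun{C}_0$ bounds the entries $\bf{a}_{i,j}$ in a range of length at most $p$ (after a central twist, which is itself an instance of the $\sim$-action via $\nu \in X^0(\un{T})$), so base-$p^f$ digit extraction from the exponents recovers each $\bf{a}^{(0)}_{s_\tau^k(i)}$ unambiguously up to the cyclic $s_\tau$-rotation within its orbit. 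Matching these digits on the two sides then produces a Weyl element $\sigma_0 \in W(\GL_n)$ conjugating $s_\tau$ to $s'_\tau$ and carrying $\bf{a}^{(0)}$ to $(\bf{a}')^{(0)}$, and the first part combined with a suitable choice of $\nu$ assembles this into the required witness $(\nu,\sigma) \in X^*(\un{T})\rtimes W(\un{G})$ showing $(w,\mu) \sim (w',\mu')$.

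The main obstacle I anticipate is the digit-extraction step of the converse: one has to verify that the genericity bound $\mu-\eta \in \bun{C}_0$ really is sufficient to remove the $\mod e'$ ambiguity (in particular that no ``carry'' occurs between consecutive blocks of $f$ digits), and that the residual combinatorial ambiguity of the canonical form, namely the diagonal embedding of the centralizer $C_{W(\GL_n)}(s_\tau)$, is exactly absorbed by the $\sim$-action. Everything else is a mechanical verification leveraging $\omega_{f'}^{e'}=1$ and the cyclic structure of $s_\tau$.
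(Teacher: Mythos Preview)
Your treatment of the first direction is correct and matches the paper's approach (the paper simply says ``direct computation'' without spelling it out; your telescoping verification is exactly what is meant).

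For the converse, your route is genuinely different from the paper's. The paper does not attempt digit extraction at all; instead it passes through Deligne--Lusztig representations. The hypothesis $\mu-\eta,\mu'-\eta\in\bun{C}_0$ ensures (via \cite[Proposition 6.20]{herzig-duke}) that both pairs are \emph{good}, so the associated tori are maximally split and the Deligne--Lusztig virtual characters $R_w(\mu)$, $R_{w'}(\mu')$ are genuine representations. By \cite[Proposition 9.2.1]{GHS} the map $\ovl{\tau}\mapsto V(\ovl{\tau})$ sends $\ovl{\tau}(w,\mu)$ to $R_w(\mu)$, so $\tau(w,\mu)\cong\tau(w',\mu')$ forces $R_w(\mu)\cong R_{w'}(\mu')$. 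Then \cite[Lemma 4.2]{herzig-duke} (the classification of isomorphic Deligne--Lusztig representations by geometric conjugacy) gives $(w,\mu)\sim(w',\mu')$ directly. This is a two-line proof once the references are in hand.

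Your elementary approach should work, but the obstacle you flag is real and not merely bookkeeping. The condition $\mu-\eta\in\bun{C}_0$ bounds only the pairwise \emph{differences} of the entries of $\mu_j$, not the entries themselves, so arranging all entries of $\bf{a}_j$ into $[0,p)$ by a central $\sim$-twist means solving the system $pc_j-c_{j-1}=d_j$ over $\Z$, which need not have an integral solution for arbitrary targets $d_j$. One fix is to extract base-$p$ digits of the \emph{ratios} $\chi_i/\chi_k$ rather than of the $\chi_i$ individually (the exponent differences genuinely lie in $(-p,p)$ at each slot), recovering first the differences $\bf{a}_{i,j}-\bf{a}_{k,j}$ and then reconstructing $\bf{a}$ up to a central ambiguity, which is exactly what $\sim$ absorbs. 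You would also need to argue that the orders $r$, $r'$ of $s_\tau$, $s'_\tau$ agree (equivalently that the $p^f$-Frobenius orbit structure on the $\chi_i$ determines the cycle type), which is true but should be said. The paper's citation-based argument sidesteps all of this.
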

\begin{proof}
The first part follows from a direct computation.
For the second part, $(T_w,\theta_{w,\mu})$ and $(T_{w'},\theta_{w',\mu'})$ are maximally split by Lemma \ref{lemma:maxsplit}. 
The second part now follows from \cite[Proposition 9.2.1]{GHS}. 
\end{proof}


\begin{defn}
\label{defi:generic}
Let $\tau$ be a tame inertial type.
\begin{enumerate}
\item Define $\tau$ to be \emph{$m$-generic} if there is an isomorphism $\tau \cong \tau(s, \lambda + \eta)$ for some $s \in W(\un{G})$ and $\lambda \in X^*(\un{T})$ which is $m$-deep in alcove $\un{C}_0$. 

\item Define $\rhobar:G_K \ra \GL_n(\F)$ to be $m$-generic if $\rhobar^{\mathrm{ss}}|_{I_K} \cong \ovl{\tau}(s, \lambda + \eta)$ for $\lambda \in X^*(\un{T})$ which is $m$-deep in alcove $\un{C}_0$. 

\item \label{defi:ngeneric} We say that $\tau$ (resp. $\rhobar$) is \emph{generic} if it is $n$-generic (resp.~ $3n$-generic). 
\item A \emph{lowest alcove presentation} of $\tau$ is a pair $(s, \mu) \in W(\un{G}) \times X^*(\un{T})$ where $\mu \in \bun{C}_0$ such that $\tau \cong \tau(s, \mu + \eta)$ (which by definition exists exactly when $\tau$ is 0-generic).
\end{enumerate}
\end{defn}
\begin{rmk} The results in \S 3.2 hold for $\tau$ which are $n$-generic and $\rhobar$ which are $2n$-generic. For \S 3.4, we will use $2n-1$-generic to control monodromy condition. 
In most of \S4, $\rhobar$ will be $6n-2$-generic because of representation theoretic input and a reliance on \cite{Enns} to eliminate weights near the alcove walls. For example, Proposition \ref{prop:JH} requires that the type to be $2n$-generic which combined with \cite{Enns} forces $\rhobar$ to be $6n-2$-generic.  A more careful analysis would likely improve this bound.
\end{rmk}

\begin{rmk}
Since the bounds for genericity do not depend on $p$, as $p$ gets large, most semisimple representations will be generic. 
\end{rmk}

Concretely, $\tau$ is $m$-generic, if there exists an isomorphism $\tau \cong \tau(s, \lambda + \eta)$ with $\lambda_j + \eta_j=(a_{1,j}, a_{2,j}, \ldots, a_{n, j})$ 
\[
m < |a_{i, j} - a_{k, j}| < p - m, \quad \text{for all } 1 \leq i\neq k \leq n. 
\]

\begin{rmk} \label{rmk:comparison} The notion of generic here is slightly stronger than that of \cite{LLLM} when $n =3$ and \cite{Enns} in general.  Comparing with Definition 2.1 in \cite{LLLM} and Definition 2 in \cite{Enns}, we see that if $\tau$ is $m$-generic as in Definition \ref{defi:generic} then it is $m$-generic in the sense of \cite{LLLM, Enns}.  If it is $m$-generic in the sense of \cite{LLLM, Enns}, then it is $m-1$ generic in the sense of Definition \ref{defi:generic}. The difference being that the first inequality above is strict. Both \cite{LLLM, Enns} state genericity as a condition on a presentation as in (\ref{pres1}), that is, as a condition on the $\bm{\alpha}_j$'s.  To compare the two definitions, note that if $(s, \mu)$ is a lowest alcove presentation then $\bm{\alpha}_{j}$ is in the Weyl group orbit of $\mu_j + \eta_j$.   
\end{rmk}

\begin{defn} \label{defn:regular} We say that a tame inertial type $\tau$ is \emph{regular} if the characters appearing in $\tau$ as in (\ref{pres1}) are pairwise distinct.  
\end{defn}
Note that 1-generic implies regular but regular is a weaker condition. 

\begin{defn} \label{defn:SW} A \emph{Serre weight} is an irreducible representation of $\GL_n(\F_{p^f})$.  If $\lambda \in X_1(\un{T})$ is a $p$-restricted dominant weight, then $F(\lambda)$ denotes the associated Serre weight which is the socle of be the (dual) Weyl module, $W(\lambda)$, with highest weight $\lambda$. A Serre weight $V$ is \emph{regular} if $V \cong F(\lambda)$ for a regular $p$-restricted weight $\lambda$ (see discussion after Lemma \ref{bound}). 
\end{defn} 

We recall that the map $\lambda \mapsto F(\lambda)$ induces a bijection between $X_1(\bun{T})/(p-\pi)X^0(\bun{T})$ and the set of isomorphism classes of Serre weights (see \cite[Lemma 9.2.4]{GHS}).

Let $w_0$ denote the longest elements of $W(\un{G})$.  Recall the self-bijection on regular $p$-restricted weights defined by $\lambda \mapsto w_0 \cdot (\lambda - p \eta)$ which induces a map $\cR$ on regular Serre weights (pg. 54 of \cite{GHS}).  If we let $\tld{w}_h := w_0 t_{-\eta}$,  then 
\[
\cR(F(\lambda)) = F(w_0 \cdot (\lambda - p \eta)) = F(\tld{w}_h \cdot \lambda).
\]
Note that $\tld{w}_h \cdot \bun{C}_0$ is the highest $p$-restricted $p$-alcove.  

We are now ready to state the Serre weight recipe following \cite{herzig-duke, GHS}.
\begin{defn} \label{defn:SWC} (cf.~ \cite{GHS}, Definition 9.2.5)  Let $\rhobar$ be a generic semisimple $n$-dimensional representation of $\Gal(\overline{K}/K)$.   Then, 
\[
W^{?}(\rhobar) := \{ \cR (F(\lambda)) : F(\lambda) \text{ is a Jordan--H\"older constituent of } \ovl{V}(\rhobar|_{I_K}) \}
\]
where $\ovl{V}(\rhobar|_{I_K})$ denotes the semisimplification of a reduction modulo $\varpi$ of $V(\rhobar|_{I_K})$.
Define $W_{\mathrm{obv}}(\rhobar) \subset W^{?}(\rhobar)$ as in \cite{GHS}, Definition 7.1.3.  
\end{defn}
We give another characterization of the obvious weights:
\begin{prop} \label{prop:obvwt}
Let $\rhobar$ be generic and semisimple.   Then for $\lambda \in X_1(\un{T})$,  $F(\lambda) \in W_{\mathrm{obv}}(\rhobar)$ if and only $\rhobar|_{I_K} \cong \ovl{\tau}(w, \lambda + \eta)$ for some $w\in W(\un{G})$.   
\end{prop}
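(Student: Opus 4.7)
The plan is to translate \cite[Definition 7.1.3]{GHS} into the language of the tame inertial types $\tau(w,\mu)$ of Definition \ref{defn:tau}. By that definition, $F(\lambda) \in W_{\mathrm{obv}}(\rhobar)$ exactly when $\rhobar$ admits a crystalline lift of a prescribed ``obvious'' form: a direct sum of inductions of tame characters from unramified extensions of $K$, whose Hodge--Tate weights along each embedding $\sigma_j$ form a $W(\GL_n)$-permutation of the coordinates of $\lambda_j+\eta_0$. The choice of these permutations across the $f$ embeddings is encoded by an element $w\in W(\un{G})$, and the remaining freedom in the construction consists of unramified twists of the character summands.

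The forward implication is a direct computation of the inertial type of such an obvious lift $\rho$. Applying the standard formula for the restriction to $I_K$ of an induction of a tame character with prescribed Hodge--Tate weights, and using the normalization of fundamental characters $\omega_{f'}$ fixed in \S\ref{sec:not}, one identifies $\rho|_{I_K}$ with $\tau(w,\lambda+\eta)$ as in Definition \ref{defn:tau}. Reducing modulo $\varpi$ yields $\rhobar|_{I_K}\cong \ovl{\tau}(w,\lambda+\eta)$.

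Conversely, suppose $\rhobar|_{I_K}\cong \ovl{\tau}(w,\lambda+\eta)$. Writing out the right-hand side as a sum of powers of $\ovl{\omega}_{f'}$ via Definition \ref{defn:tau}, one Teichm\"uller-lifts these characters, defines them on the appropriate $G_{K'}$, and induces them back to obtain a crystalline representation $\rho$ of the required obvious form, whose Hodge--Tate weights are prescribed by $(w,\lambda+\eta)$. By construction $\rho|_{I_K}\cong \tau(w,\lambda+\eta)$ lifts $\rhobar|_{I_K}$; since $\rhobar$ is generic, its characters on $I_K$ are pairwise distinct, so the induction summands of $\rho$ are uniquely matched with the Jordan--H\"older pieces of $\rhobar$. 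One then adjusts the unramified twist on each summand of $\rho$ to arrange $\ovl{\rho}\cong \rhobar$ as $G_K$-representations, exhibiting an obvious lift and hence placing $F(\lambda)$ in $W_{\mathrm{obv}}(\rhobar)$.

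The main obstacle is bookkeeping: Definition \ref{defn:tau} packages a tame type via the twisted orbit of $s_\tau = s_0 s_{f-1}\cdots s_1$ acting on the tuple $\bf{a}^{(0)}\in\Z^n$, whereas obvious lifts in \cite{GHS} are naturally described through their Hodge--Tate weights along each embedding. Tracking the $\eta$-shift between our conventions and those of \cite{GHS} (cf.\ the remark on the convention for type $(\mu,\tau)$ in \S\ref{sec:not}), and the conjugation action \eqref{sigmaconj}, with help from Proposition \ref{prop:relabeltype}, is what makes this dictionary precise.
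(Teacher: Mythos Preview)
Your proposal is essentially correct: you are sketching the content of \cite[Proposition 9.3.7]{GHS}, which is exactly what the paper invokes---the paper's own proof is the single sentence ``See Proposition 9.3.7 in \cite{GHS}.'' So there is no divergence of approach; you have simply unpacked the citation.

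A minor point of care in your converse direction: when you ``adjust the unramified twist on each summand,'' you are implicitly using that the decomposition of $\rhobar$ into inductions of characters from unramified extensions is determined (up to reordering) by $\rhobar|_{I_K}$, so that the shape of the obvious lift $\rho$ matches that of $\rhobar$. This is true here because genericity forces the inertial characters to be pairwise distinct, hence the Frobenius orbits on them---which dictate the degrees of the inducing fields---are uniquely determined. You allude to this, but it is worth stating explicitly, since without it one would only obtain $\ovl{\rho}|_{I_K}\cong \rhobar|_{I_K}$ rather than $\ovl{\rho}\cong\rhobar$.
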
 
\begin{proof} See Proposition 9.3.7 in \cite{GHS}. 
\end{proof}

\begin{cor} \label{cor:obvweight} Let $\rhobar$ be $n$-generic and semisimple.   Then $|W_{\mathrm{obv}}(\rhobar)| = (n!)^f$.  
\end{cor}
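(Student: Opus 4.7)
The plan is to exhibit an explicit bijection between $W(\un{G})$ and $W_{\mathrm{obv}}(\rhobar)$; since $|W(\un{G})| = (n!)^f$, this yields the count. By Proposition~\ref{prop:obvwt}, an element of $W_{\mathrm{obv}}(\rhobar)$ is represented by an $F(\lambda)$, with $\lambda \in X_1(\un{T})$ modulo $(p-\pi)X^0(\un{T})$, such that $\rhobar|_{I_K}\cong \ovl{\tau}(w,\lambda+\eta)$ for some $w\in W(\un{G})$. Since $\rhobar$ is $n$-generic, I first fix a lowest alcove presentation $\rhobar|_{I_K}\cong \ovl{\tau}(s,\mu+\eta)$ with $\mu$ lying $n$-deep in $\bun{C}_0$.

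For each $\sigma\in W(\un{G})$ I would apply the equivalence~(\ref{sigmaconj}) with $(\nu,\sigma)$, obtaining
\[
(s,\mu+\eta) \sim \bigl(w_\sigma,\; \sigma(\mu+\eta) + (p - w_\sigma\pi)\nu\bigr), \qquad w_\sigma := \sigma s \pi(\sigma)^{-1},
\]
for every $\nu\in X^*(\un{T})$, and by Proposition~\ref{prop:relabeltype} all of these pairs realize $\rhobar|_{I_K}$. The next step is to pick $\nu_\sigma\in X^*(\un{T})$ so that $\lambda_\sigma := \sigma(\mu+\eta) + (p - w_\sigma\pi)\nu_\sigma - \eta$ lies in $X_1(\un{T})$; this is possible because $\sigma(\mu+\eta)$ lies deep inside some $p$-alcove in the $\sigma$-translate of the dominant Weyl chamber, and the tiling of $X^*(\un{T})\otimes\mathbb{R}$ by translates of this alcove under $(p-w_\sigma\pi)X^*(\un{T})$ meets the $p$-restricted box $X_1(\un{T})+\eta$ in a unique alcove modulo $(p-\pi)X^0(\un{T})$. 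I would then define $\Phi(\sigma):=F(\lambda_\sigma)$, an element of $W_{\mathrm{obv}}(\rhobar)$ by Proposition~\ref{prop:obvwt}.

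Surjectivity of $\Phi$ is direct: if $F(\lambda)\in W_{\mathrm{obv}}(\rhobar)$, Proposition~\ref{prop:obvwt} gives $w\in W(\un{G})$ with $(w,\lambda+\eta)\sim(s,\mu+\eta)$, and unwinding~(\ref{sigmaconj}) produces $\sigma$ and $\nu$ showing that $\Phi(\sigma)=F(\lambda)$. For injectivity, if $\lambda_{\sigma_1}\equiv\lambda_{\sigma_2}\pmod{(p-\pi)X^0(\un{T})}$, I would argue that, since $\mu$ is $n$-deep in $\bun{C}_0$, the $n!$ points $\sigma(\mu+\eta)$ (for $\sigma\in W(\un{G})$) lie $n$-deep in the distinct alcoves $\sigma\bun{C}_0+\eta$; the shifts needed to land in $X_1(\un{T})$ preserve this separation modulo the center, forcing $\sigma_1=\sigma_2$.

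The main obstacle will be the alcove-geometric verification that $\nu_\sigma$ exists and is unique modulo $(p-\pi)X^0(\un{T})$-shifts, since the lattice $(p - w_\sigma\pi)X^*(\un{T})$ depends on $\sigma$ through $w_\sigma$ and interacts nontrivially with the Frobenius $\pi$. The essential input is the $n$-deep hypothesis on $\mu$, which isolates each $\sigma(\mu+\eta)$ in the interior of its $p$-alcove and prevents collisions between different $\sigma$'s; once this is in hand, the remaining ingredients are bookkeeping with (\ref{sigmaconj}) and the standard description of the $p$-restricted region as a fundamental domain for translations modulo Weyl-twisted shifts.
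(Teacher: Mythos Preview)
Your overall strategy---building a bijection $W(\un{G})\to W_{\mathrm{obv}}(\rhobar)$ via the conjugation action~(\ref{sigmaconj}) starting from a fixed $n$-deep presentation $(s,\mu)$---is exactly the paper's approach. The formula $\lambda_\sigma=\sigma(\mu+\eta)+(p-w_\sigma\pi)\nu_\sigma-\eta$ agrees with the paper's $\mu'$ in~(\ref{eqn:change}).

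The genuine gap is in how you propose to choose $\nu_\sigma$. Your justification (``tiling of $X^*(\un{T})\otimes\R$ by translates of this alcove under $(p-w_\sigma\pi)X^*(\un{T})$'') is not correct: the image of $\nu\mapsto p\nu-w_\sigma\pi(\nu)$ is a proper sublattice of $X^*(\un{T})$, and translates of one alcove by it do not tile anything, nor do they interact well with the $p$-alcove hyperplanes. The paper's key maneuver is to \emph{decouple} the two pieces of~(\ref{eqn:change}): rewrite $\lambda_\sigma=(t_{\nu}\sigma)\cdot\mu-w_\sigma\pi(\nu)$, then choose $\nu$ purely by the condition that $(t_\nu\sigma)\cdot\bun{C}_0$ be $p$-restricted (this exists uniquely up to $X^0(\un{T})$ because the $p$-restricted alcoves are a fundamental domain for the $\bun{W}_a$-action). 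The perturbation $-w_\sigma\pi(\nu)$ then satisfies $|\langle w_\sigma\pi(\nu),\alpha^\vee\rangle|\le n-1$ (since $\nu$ moves $\bun{C}_0$ only to a $p$-restricted alcove), so $n$-depth of $\mu$ keeps $\lambda_\sigma$ inside the chosen alcove. This is the mechanism you were missing; your ``main obstacle'' paragraph correctly locates the difficulty but the proposed resolution would not work as stated.

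For surjectivity the paper runs the same bound backwards: given any $(\nu,\sigma)$ with $\lambda_\sigma$ $p$-restricted, one shows $\max_{\alpha^\vee}|\langle\nu,\alpha^\vee\rangle|<n$, forcing $(t_\nu\sigma)\cdot\bun{C}_0$ to be $p$-restricted and hence $\nu$ to be one of the chosen translates. For injectivity, your ``distinct alcoves preserved by shifts'' is too vague; the paper restricts to the center $\bun{Z}$ to deduce $\nu-\nu'\in\un{\Lambda}_R$ (after adjusting by $X^0(\un{T})$), and then uses $(t_\nu\sigma)\cdot\bun{C}_0=(t_{\nu'}\sigma')\cdot\bun{C}_0$ together with $\tld{\bun{W}}=\bun{W}_a\rtimes\un{\Omega}$ to conclude $\sigma=\sigma'$.
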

\begin{proof} Since $\rhobar$ is $n$-generic, $\rhobar|_{I_K} \cong \ovl{\tau}(s, \mu + \eta)$ for some $\mu$ which is $n$-deep in $\bun{C}_0$.
We define a map 
\begin{align*}
W(\bun{G}) &\ra W_{\mathrm{obv}}(\rhobar)\\
\sigma &\mapsto F(\mu')
\end{align*}
where $(\sigma s \pi(\sigma)^{-1},\mu' + \eta) = ^{(\nu, \sigma)} (s, \mu + \eta)$ and $\nu\in X^*(\bun{T})$ is the unique element up to $X^0(\bun{T})$ such that $(t_{\nu} \sigma)\cdot \bun{C}_0$ is $p$-restricted. 
Explicitly,
\begin{equation} \label{eqn:change}
\mu' = \sigma(\mu + \eta) + p \nu - \sigma s \pi(\sigma)^{-1} \pi(\nu) - \eta = (t_{\nu} \sigma) \cdot \mu - \sigma s \pi(\sigma)^{-1} \pi(\nu).
\end{equation}
The inequality $|\langle \nu,\alpha^\vee\rangle|\leq |\langle \eta, \alpha^\vee\rangle| < n$ from Lemma \ref{bound} coupled with the fact that $\mu$ is $n$-deep in $\bun{C}_0$ implies that $\mu'$ is in the alcove $(t_{\nu} \sigma)\cdot \bun{C}_0$.  By Proposition \ref{prop:obvwt}, $F(\mu')$ is in $W_{\mathrm{obv}}(\rhobar)$, so that our map lands in the claimed set.

We now prove surjectivity.   Consider any pair $(\nu, \sigma)$ and let $\mu'$ be such that $(\sigma s \pi(\sigma)^{-1},\mu' + \eta) = ^{(\nu, \sigma)} (s, \mu + \eta)$.   Assume that $\mu'$ is $p$-restricted. Any obvious weight is of this form by Propositions \ref{prop:relabeltype} and \ref{prop:obvwt}.   We claim then that $(t_{\nu} \sigma) \cdot \bun{C}_0$ is $p$-restricted and so $(\nu, \sigma)$ is one of the pairs above.  Since $\mu'$ is $p$-restricted, for any coroot $\alpha^{\vee},$
\[
0 \leq |\langle \mu' + \eta, \alpha^{\vee} \rangle| = | \langle \sigma(\mu + \eta) + p \nu - \sigma s \pi(\sigma)^{-1} \pi(\nu), \alpha^{\vee} \rangle | \leq p(n-1).
\]
Since $\mu$ is $n$-deep in alcove $\bun{C}_0$, we have $n < | \langle \sigma(\mu + \eta), \alpha^{\vee} \rangle | < p- n$.   We conclude that 
\[
| \langle p \nu - \sigma s \pi(\sigma)^{-1} \pi(\nu), \alpha^{\vee} \rangle | < n(p-1).
\] 
Set $M = \max_{\alpha^{\vee}} \{ |\langle \nu, \alpha^{\vee}\rangle| \}$.   We deduce that $(p-1)M < n(p-1)$ and so $M < n$.   
Since $M < n$ and $(t_{\nu} \sigma) \cdot \mu$ is $n$-deep in its alcove, we deduce that $\mu'$ lies in the alcove $(t_{\nu} \sigma) \cdot \bun{C}_0$.   Since $\mu'$ is $p$-restricted so is $(t_{\nu} \sigma) \cdot \bun{C}_0$.  

For injectivity,
suppose that $^{(\nu, \sigma)} (s, \mu + \eta) \equiv ^{(\nu', \sigma')} (s, \mu + \eta) \mod (p-\pi)X^0(\bun{T})$.
Then restricting the part in $X^*(\bun{T})$ to $\bun{Z}$, we see that $(p-\pi)\nu|_{\bun{Z}} \equiv (p-\pi)\nu'|_{\bun{Z}} \mod (p-\pi)nX^*(\bun{Z})$.
Since $p-\pi$ acts injectively on $X^*(\bun{Z})$, we deduce that $\nu|_{\bun{Z}} \equiv \nu'|_{\bun{Z}} \mod nX^*(\bun{Z})$, so after modifying $\nu'$ by an element in $X^0(\bun{T})$ we can now assume $\nu-\nu'\in \un{\Lambda}_R$.

As observed at the beginning of the proof, the fact that $\mu$ is $n$-deep in $\bun{C}_0$ implies that the part in $X^*(\bun{T})$ of $ ^{(\nu, \sigma)} (s, \mu + \eta)$ lies in alcove $t_\nu\sigma\cdot \bun{C}_0$. Thus the above equality implies an equality of alcoves $(t_\nu\sigma)\cdot \bun{C}_0=(t_{\nu'}\sigma')\cdot \bun{C}_0$.
Combining this with the decomposition $\tld{\bun{W}} = \bun{W}_a \rtimes \un{\Omega}$ shows that $\nu=\nu'$ and $\sigma=\sigma'$, thus giving what we want. 
\end{proof}

\begin{defn} \label{defn:obvweight} We say that $F(\lambda) \in W_{\mathrm{obv}}(\rhobar)$ is the obvious weight \emph{associated} to $w \in W(\un{G})$ if $w$ maps to $F(\lambda)$ in the bijection from the proof of Corollary \ref{cor:obvweight} (note that this depends on the choice of $(s,\mu)$ such that $\rhobar|_{I_K} \cong \ovl{\tau}(s, \mu + \eta)$ and $\mu$ is $n$-deep in $C_0$).
\end{defn}

\begin{prop}\label{prop:alcoveC0}  Assume $\tau$ is $d$-generic with $d \geq 1$. Then for any lowest alcove presentation $(s', \mu')$ of $\tau$, $\mu'$ lies $d-1$-deep in  alcove $\bun{C}_0$.   Furthermore,  if $(s, \mu)$ is a fixed lowest alcove presentation of $\tau$, the collection of lowest alcove presentations are given by 
\[
\{ (s', \mu') \in W(\un{G}) \times X^*(\bun{T}) \mid (s', \mu' + \eta) = ^{(\nu, \sigma)} (s, \mu + \eta),  t_{\nu} \sigma \in \un{\Omega} \}.   
\]
\end{prop}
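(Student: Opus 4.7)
The plan is to deduce both claims from Proposition \ref{prop:relabeltype} via a careful analysis of the allowed values of $\langle\nu, \alpha^\vee\rangle$. Since $\tau$ is $d$-generic, fix a lowest alcove presentation $(s_0, \mu_0)$ with $\mu_0$ being $d$-deep in $\bun{C}_0$. Given any lowest alcove presentation $(s', \mu')$ of $\tau$, the second part of Proposition \ref{prop:relabeltype} (applicable because $\mu_0, \mu' \in \bun{C}_0$) produces $(\nu, \sigma) \in X^*(\un{T}) \rtimes W(\un{G})$ with $(s', \mu' + \eta) = {}^{(\nu, \sigma)}(s_0, \mu_0 + \eta)$; explicitly,
\[
\mu' + \eta = \sigma(\mu_0 + \eta) + p\nu - s'\pi(\nu), \qquad s' = \sigma s_0 \pi(\sigma)^{-1}.
\]

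The central step is to show $t_\nu \sigma \in \un{\Omega}$. Pairing with a positive coroot $\alpha^\vee$ gives
\[
p\langle\nu, \alpha^\vee\rangle = \langle\mu' + \eta, \alpha^\vee\rangle - \langle\mu_0 + \eta, \sigma^{-1}\alpha^\vee\rangle + \langle s'\pi(\nu), \alpha^\vee\rangle.
\]
Set $M := \max_{\beta^\vee}|\langle\nu, \beta^\vee\rangle|$; then $|\langle s'\pi(\nu), \alpha^\vee\rangle| \leq M$ since $s'\pi$ permutes coroots. Using $\langle\mu' + \eta, \alpha^\vee\rangle \in (0, p)$, $|\langle\mu_0 + \eta, \sigma^{-1}\alpha^\vee\rangle| \in (d, p-d)$, and the integrality of all pairings, a case analysis on the sign of $\sigma^{-1}\alpha^\vee$ shows that $|\langle\nu, \alpha^\vee\rangle| \geq 2$ would force the integer $\langle\mu_0 + \eta, \sigma^{-1}\alpha^\vee\rangle - \langle s'\pi(\nu), \alpha^\vee\rangle$ into an open interval containing no integer, for any $d \geq 1$. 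Hence $M \leq 1$, and re-running the case analysis then forces $\langle\nu, \alpha^\vee\rangle = \delta_{\sigma^{-1}\alpha^\vee < 0}$ for every positive coroot $\alpha^\vee$, with the $X^0(\un{T})$-component of $\nu$ unconstrained. This is exactly the condition for $t_\nu\sigma \in \un{\Omega}$ (noting that $t_\lambda \in \un{\Omega}$ for every $\lambda \in X^0(\un{T})$, since $\bun{C}_0$ is translation-invariant in the central direction), reproducing the analysis from the surjectivity argument in the proof of Corollary \ref{cor:obvweight}.

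Conversely, given any $(\nu, \sigma)$ with $t_\nu\sigma \in \un{\Omega}$, substituting $\langle\nu, \alpha^\vee\rangle = \delta_{\sigma^{-1}\alpha^\vee < 0}$ and $|\langle s'\pi(\nu), \alpha^\vee\rangle| \leq 1$ into the displayed formula yields $\langle\mu' + \eta, \alpha^\vee\rangle \in (d-1, p-(d-1)) \subseteq (0, p)$ for every positive coroot, which simultaneously shows that $\mu' \in \bun{C}_0$ (the parameterization) and that $\mu'$ is $(d-1)$-deep. The main technical obstacle is the integer-point argument establishing $M \leq 1$ when $d = 1$: magnitude estimates alone only yield $M \leq 2$, so the finer integrality constraint is essential.
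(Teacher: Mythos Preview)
Your approach is correct and follows the paper's line: invoke Proposition~\ref{prop:relabeltype} to produce $(\nu,\sigma)$, bound $M=\max_{\alpha^\vee}|\langle\nu,\alpha^\vee\rangle|$, and conclude $t_\nu\sigma\in\un{\Omega}$. Two remarks on execution.

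First, your concern about $d=1$ is unwarranted: the straight magnitude bound already yields $M<2$. With $\langle\mu'+\eta,\alpha^\vee\rangle\in[1,p-1]$ and $|\langle\mu_0+\eta,\sigma^{-1}\alpha^\vee\rangle|\in(d,p-d)$ (both integer-valued), one obtains $(p-1)M\le 2p-d-2$, hence $M\le 1$ for every $d\ge 1$. No separate ``integer-point'' case analysis is needed; this is exactly the computation the paper alludes to when it says ``the same argument in Corollary~\ref{cor:obvweight} shows \ldots\ $M<2$''.

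Second, the paper reaches $t_\nu\sigma\in\un{\Omega}$ and the depth estimate more conceptually. Once $M\le 1$, the weight $\mu'=(t_\nu\sigma)\cdot\mu_0-s'\pi(\nu)$ is $(d-1)$-deep in the alcove $(t_\nu\sigma)\cdot\bun{C}_0$ (since $(t_\nu\sigma)\cdot\mu_0$ is $d$-deep there and the shift has pairings bounded by $1$). As $\mu'$ also lies in the open alcove $\bun{C}_0$ by hypothesis and $d-1\ge 0$, the two alcoves coincide, i.e.\ $t_\nu\sigma\in\un{\Omega}$, and the depth claim follows at once. This bypasses your explicit coordinate description $\langle\nu,\alpha^\vee\rangle=\delta_{\sigma^{-1}\alpha^\vee<0}$ and the second case analysis. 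Your characterization is of course correct and makes the converse inclusion (that every such $(\nu,\sigma)$ yields a lowest alcove presentation) more transparent, whereas in the paper this direction is left implicit in the same depth computation.
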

\begin{proof}  
The proof is similar to that of Corollary \ref{cor:obvweight}.  Fix a presentation $(s, \mu)$ with $\mu$ $d$-deep in $\bun{C}_0$.   For any other presentation $(s', \mu')$,  $(s', \mu' + \eta) = ^{(\nu, \sigma)} (s, \mu + \eta)$ by Proposition \ref{prop:relabeltype} so 
\[
\mu' = (t_{\nu} \sigma) \cdot \mu - \sigma s \pi(\sigma)^{-1} \pi(\nu)
\]
as in \eqref{eqn:change}.  Since $\mu' \in \bun{C}_0$, for all $\alpha^{\vee}$, 
\[
0 < | \langle \sigma(\mu + \eta) + p \nu - \sigma s \pi(\sigma)^{-1} \pi(\nu), \alpha^{\vee} \rangle | \leq p-1.
\]
The same argument in Corollary \ref{cor:obvweight} shows that if $M = \max_{\alpha^{\vee}} \{ |\langle \nu, \alpha^{\vee}\rangle| \}$, then $M < 2$.   

Thus, if $\mu$ is $d$-deep in $\bun{C}_0$ then $\mu' = (t_{\nu} \sigma) \cdot \mu - \sigma s \pi(\sigma)^{-1} \pi(\nu)$ is $d-1$-deep in $(t_{\nu} \sigma) \cdot \bun{C}_0$.   Since $\mu'$ is in $\bun{C}_0$, we see that $t_{\nu} \sigma \in \un{\Omega}$ and $\mu'$ is $d-1$-deep in $\bun{C}_0$. We have thus shown that all lowest alcove presentations occur in the set described in the statement of the Proposition.

Finally, we observe that any pair $(s',\mu')$ in the set described in the Proposition indeed gives a lowest alcove presentation. This is because if  $t_{\nu} \sigma \in \un{\Omega}$, then $|\langle \nu,\alpha^\vee \rangle|\leq 1$. Using that $\mu$ is at least $d$-deep in $\bun{C}_0$ and $d\geq 1$, we conclude that $\mu'$ is also in $\bun{C}_0$.

\end{proof}

\begin{prop} \label{prop:lambdagen} Let $\lambda$ be a weight which is $d$-deep in a $p$-restricted alcove $\bun{C}$ with $d \geq n$.   Then for any $s \in W(\un{G})$ and any lowest alcove presentation $(s',\mu')$ of $\tau(s, \lambda + \eta)$, $\mu'$ is $d-n+1$ deep in $\bun{C}_0$. Moreover, $\tau(s, \lambda + \eta)$ is at least $d-n+1$-generic.
\end{prop}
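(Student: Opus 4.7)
The plan is to exhibit one explicit lowest alcove presentation of $\tau := \tau(s,\lambda+\eta)$ with translation part $(d-n+1)$-deep in $\bun{C}_0$, and then invoke Proposition \ref{prop:alcoveC0} to upgrade this to every lowest alcove presentation.

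First I would reduce to the base alcove. Since $\bun{C}$ is $p$-restricted, write $\bun{C} = \tld{w}_1 \cdot \bun{C}_0$ with $\tld{w}_1 = t_{\nu_1}w_1 \in \bun{W}_a$. A coordinatewise analysis of the $p$-restrictedness condition on $\bun{C}$ shows that in each embedding $j$ the consecutive differences $(\nu_1)_{j,i}-(\nu_1)_{j,i+1}$ all lie in $\{0,1\}$, so the pairwise entry differences of $(\nu_1)_j$ are bounded in absolute value by $n-1$. Decomposing $\lambda = \tld{w}_1 \cdot \lambda_0$ with $\lambda_0 \in \bun{C}_0$, the weight $\lambda_0$ inherits $d$-deepness because the dot action preserves depth.

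Next I would apply the equivalence \eqref{sigmaconj} with $(\nu,\sigma) = (-w_1^{-1}(\nu_1), w_1^{-1})$. Using $\pi(w_1)\pi(w_1^{-1}) = \Id$, a direct calculation gives
\[
{}^{(\nu,\sigma)}(s,\lambda+\eta) \;=\; \bigl(w_1^{-1} s\, \pi(w_1),\ \lambda_0 + \eta + w_1^{-1} s\, \pi(\nu_1)\bigr).
\]
Set $\mu_0 := \lambda_0 + w_1^{-1} s\, \pi(\nu_1)$. The shift $w_1^{-1} s\, \pi(\nu_1)$ is just a rearrangement of the entries of $\pi(\nu_1)$, so $|\langle w_1^{-1} s\, \pi(\nu_1), \alpha^\vee\rangle| \leq n-1$ for every positive coroot $\alpha^\vee$; combining with the $d$-deepness of $\lambda_0$ yields
\[
d-n+1 \;<\; \langle \mu_0 + \eta, \alpha^\vee\rangle \;<\; p - d + n - 1
\]
for each positive $\alpha^\vee$. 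Since $d \geq n$, this interval sits inside $(0,p)$, so $\mu_0 \in \bun{C}_0$ and is $(d-n+1)$-deep. This already gives a lowest alcove presentation of the required depth and the $(d-n+1)$-genericity of $\tau$.

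Finally, by Proposition \ref{prop:alcoveC0} every other lowest alcove presentation $(s',\mu')$ is related to $(w_1^{-1}s\,\pi(w_1), \mu_0)$ by a pair $(\nu',\sigma')$ with $t_{\nu'}\sigma' \in \un{\Omega}$, so $\mu' = \sigma'\cdot\mu_0 - \sigma' s^{*}\pi(\sigma')^{-1}\pi(\nu')$ for $s^{*} = w_1^{-1}s\,\pi(w_1)$. The dot action preserves depth, hence $\sigma'\cdot\mu_0$ is again $(d-n+1)$-deep, and the hardest part of the argument is to control the residual shift $\sigma' s^{*}\pi(\sigma')^{-1}\pi(\nu')$. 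A crude bound risks bleeding off an extra unit of depth; the point is that the explicit description of $\un{\Omega}$ for $\GL_n$ together with the requirement $\mu' \in \bun{C}_0$ pins down $\nu'$ narrowly enough that, combined with the twist already built into $\mu_0$ by $\pi(\nu_1)$, the two contributions are compatible and the full $(d-n+1)$-deepness is retained.
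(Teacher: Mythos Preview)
Your first step is fine: the explicit conjugation by $(\nu,\sigma)=(-w_1^{-1}\nu_1,\,w_1^{-1})$ really does produce a lowest alcove presentation with translation part $(d-n+1)$-deep, and this already gives the ``in particular'' clause. The problem is entirely in the second step.

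Invoking Proposition~\ref{prop:alcoveC0} as stated only guarantees that every other lowest alcove presentation is $(d-n)$-deep, and your final paragraph does not close this gap. You assert that the $\un{\Omega}$-shift $\sigma's^{*}\pi(\sigma')^{-1}\pi(\nu')$ and the twist $w_1^{-1}s\,\pi(\nu_1)$ already present in $\mu_0$ are ``compatible'', but you give no mechanism for this; a priori these two contributions have pairings bounded by $1$ and $n-1$ respectively, and there is no reason they should not add to $n$. The only way to see that the combined shift is bounded by $n-1$ is to recognise that the \emph{composite} element $(t_{\nu'}\sigma')\,\tld{w}_1^{-1}$ sends $\bun{C}$ to $\bun{C}_0$, and then use the fact that any element taking $\bun{C}_0$ to a $p$-restricted alcove has translation part with pairings bounded by $n-1$ (cf.\ Remark~\ref{rmk:bounderror}). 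Once you are forced to make this observation about the composite, you have essentially abandoned the two-step strategy.

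This is exactly what the paper does in one move. It relates an \emph{arbitrary} lowest alcove presentation $(s',\mu')$ directly to $(s,\lambda+\eta)$ via some $(\nu,\sigma)$, and runs the estimate from Corollary~\ref{cor:obvweight} in that situation: since $\mu'\in\bun{C}_0$ one has $|\langle\mu'+\eta,\alpha^\vee\rangle|<p$, while $p$-restrictedness and $d$-deepness of $\lambda$ give $|\langle\sigma(\lambda+\eta),\alpha^\vee\rangle|<p(n-1)-d$; subtracting yields $(p-1)M<pn-d\le n(p-1)$ (using $d\ge n$), so $M\le n-1$ directly. Then $\mu'=(t_\nu\sigma)\cdot\lambda-\sigma s\pi(\sigma)^{-1}\pi(\nu)$ with the first term $d$-deep and the shift at most $n-1$, giving $(d-n+1)$-deepness and simultaneously $(t_\nu\sigma)\cdot\bun{C}=\bun{C}_0$. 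No intermediate presentation is needed, and no unit of depth is lost.
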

\begin{proof} The same argument as in Corollary \ref{cor:obvweight} shows that $(s',\mu' + \eta) = ^{(\nu, \sigma)} (s, \lambda + \eta)$ with $(t_{\nu} \sigma) \cdot \bun{C} = \bun{C}_0$. The result now follows because $|\langle \nu, \alpha^{\vee}\rangle| \leq n-1$ for all coroots $\alpha^{\vee}$.  
\end{proof} 


\subsection{Inertial local Langlands} \label{sec:ill}

In this section, we establish some simple instances of the inertial local Langlands following \S 2.4 of \cite{EGH}. We let $B_n\subset \GL_n$ be the Borel subgroup consisting of upper triangular matrices. Fix an isomorphism $i:\overline{\mathbb{Q}}_p \cong \mathbb{C}$.  As above let $K \subset \overline{\Q}_p$ be the unramified extension of degree $f$.  Let $\rec_{K, \mathbb{C}}$ denote the local Langlands correspondence for $\GL_n(K)$ of \cite{harris-taylor}.   Using $i$, define a local Langlands correspondence $\rec_K$ over $\overline{\Q}_p$ such  $i \circ \rec_K = \rec_{K, \mathbb{C}} \circ i$. We first recall the existence statement:
\begin{thm} \label{thm:ill}  Let $\tau$ be an inertial type, then there is a finite-dimensional smooth irreducible $\overline{\Q}_p$-representation $\sigma(\tau)$ of $\GL_n(\cO_K)$ such that if $\pi$ is any irreducible smooth $\overline{\Q}_p$-representation of $\GL_n(K)$ then $\pi|_{\GL_n(\cO_K)}$ contains a unique copy of $\sigma(\tau)$ as a subrepresentation if and only if $\rec_K(\pi)|_{I_K} \cong \tau$ and $N = 0$ on $\rec_K(\pi)$. 

If $\pi$ is an irreducible smooth $\ovl{\Q}_p$-representation of $\GL_n(K)$ such that 
\[\Hom_{\GL_n(\cO_K)}(\Ind_{B_n(k)}^{\GL_n(k)} \boxtimes_{i=1}^n \chi_i,\pi) \neq 0,\]
then $\rec_K(\pi)|_{I_K} \cong \oplus_{i=1}^n \chi_i \circ \mathrm{Art}^{-1}_K|_{I_K}$. 
\end{thm}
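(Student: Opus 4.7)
The plan is to invoke the theory of types of Bushnell--Kutzko, specifically the Schneider--Zink construction for $\GL_n$, following the treatment in \cite[\S 2.4]{EGH}, which handles exactly this setting.

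For the first assertion I would take $\sigma(\tau)$ to be the smooth irreducible $\overline{\Q}_p$-representation of $\GL_n(\cO_K)$ attached by Schneider--Zink to the Bernstein component $\mathfrak{s}$ consisting of those irreducible smooth $\pi$ satisfying $\rec_K(\pi)|_{I_K}\cong\tau$ and $N=0$ on $\rec_K(\pi)$ (the inertial class on the Galois side determines a Bernstein component on the automorphic side via local Langlands). The content of the construction is that $\sigma(\tau)$ is a type for $\mathfrak{s}$ in the sense of Bushnell--Kutzko, which immediately yields the equivalence $\Hom_{\GL_n(\cO_K)}(\sigma(\tau),\pi)\neq 0 \Leftrightarrow \pi\in\mathfrak{s}$. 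Uniqueness of the embedded copy is automatic: $\pi|_{\GL_n(\cO_K)}$ is a semisimple smooth representation of a compact group and $\sigma(\tau)$ is irreducible, so $\Hom_{\GL_n(\cO_K)}(\sigma(\tau),\pi)$ records the $\sigma(\tau)$-isotypic multiplicity in $\pi$, which equals one by the multiplicity-one property of the type.

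For the second assertion let $J\subset \GL_n(\cO_K)$ be the Iwahori subgroup, the preimage of $B_n(k)$ under reduction modulo the maximal ideal. Inflation from $\GL_n(k)$ to $\GL_n(\cO_K)$ together with Frobenius reciprocity identifies the given Hom-space with the subspace of $J$-fixed vectors in $\pi$ on which the torus $T(\cO_K)$ acts via the inflation of $\chi_1\boxtimes\cdots\boxtimes\chi_n$. The existence of such a vector forces $\pi$ to be a Jordan--H\"older constituent of a normalized parabolic induction $\mathrm{Ind}_B^{\GL_n(K)}(\tld{\chi}_1\boxtimes\cdots\boxtimes\tld{\chi}_n)$ for some characters $\tld{\chi}_i:K^\times\to\overline{\Q}_p^\times$ whose restriction to $\cO_K^\times$ factors through $k^\times$ and equals $\chi_i$ there (this is the Bushnell--Kutzko / Roche theory of the Iwahori--Hecke algebra for the pair $(J,\chi_1\boxtimes\cdots\boxtimes\chi_n)$). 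The local Langlands correspondence applied to any such subquotient yields a Weil--Deligne representation whose semisimplification equals $\oplus_i\tld{\chi}_i\circ\mathrm{Art}_K^{-1}$; restricting to $I_K$ collapses the ambiguity in the extensions $\tld{\chi}_i$ and produces the stated isomorphism.

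The only real obstacle is bookkeeping: one must verify that the normalization of $\rec_K$ (through $\rec_{K,\mathbb{C}}$ and the fixed isomorphism $i$), the twist by $\delta_B^{1/2}$ implicit in normalized induction, and the sign conventions defining $\mathrm{Art}_K$ are all consistent with the form of the conclusion. In the unramified-$K$ setting and for principal series this is done explicitly in \cite[\S 2.4]{EGH}, so no new computation is needed.
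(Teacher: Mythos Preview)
Your proposal is correct and follows essentially the same approach as the paper. For the first part the paper simply cites \cite[Theorem 3.7]{ceggps} (which packages the Schneider--Zink/type-theoretic input you describe), and for the second part the paper, like you, reduces to showing that $\pi$ occurs in a suitable principal series $\mathrm{n\text{-}Ind}_{B_n(K)}^{\GL_n(K)}\delta_{B_n}^{-1/2}\boxtimes_i\tld{\chi}_i$ and then invokes compatibility of $\rec_K$ with parabolic induction, exactly as in \cite[Proposition 2.4.1(ii)]{EGH}; the only cosmetic difference is that the paper asserts $\pi$ is a \emph{subrepresentation} (which the EGH argument actually gives) rather than merely a Jordan--H\"older constituent, and notes explicitly that $N$ need not vanish here.
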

\begin{proof}
The first part is \cite[Theorem 3.7]{ceggps}.
We now prove the second part.
One proves as in the proof of \cite[Proposition 2.4.1(ii)]{EGH} that $\pi$ is a subrepresentation of $\mathrm{n-}\Ind_{B_n(K)}^{\GL_n(K)} \delta_{B_n}^{-1/2}\boxtimes_{i=1}^n \tld{\chi}_i$, for some $\tld{\chi}_i$ extending $\chi_i$ as in \emph{loc.~cit.}  
Then the result follows as in \emph{loc.~cit.}~ (though $N(\pi)$ is not necessarily zero in our context).
\end{proof}

What we will need is an explicit $\sigma(\tau)$ in the case when $\tau$ is a tame inertial type.   Let $K'/K$ denote an unramified extension of degree $r$ with residue fields $k'/k$.   A character $\theta:k'^{\times} \ra \overline{\Q}_p^{\times}$ is \emph{primitive} if all its $\Gal(k'/k)$-conjugates are distinct.   Following \S 4, \cite{herzig-duke}, let $\kappa(\theta) = (-1)^{r-1} R^{\theta}_{T_w}$ denote the cuspidal representation of $\GL_r(k)$ parametrized by $\theta$.   

\begin{prop} \label{illcusp} Let $\theta:k'^{\times} \ra \overline{\Q}_p^{\times}$ be a primitive character. Let $\tau = \oplus_{i=0}^{r-1} (\theta \circ \mathrm{Art}_{K'}^{-1})^{(i)}|_{I_K}$ where $\chi^{(i)}$ denote the $i$th Frobenius twist.  Then $\sigma(\tau)$ can be taken to be $\kappa(\theta)$ interpreted as a $\GL_r(\cO_K)$-representation via the reduction map $\GL_r(\cO_K) \ra \GL_r(k)$.   
\end{prop}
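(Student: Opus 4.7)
The plan is to verify that the inflation of $\kappa(\theta)$ to $\GL_r(\cO_K)$ satisfies the characterizing property of $\sigma(\tau)$ from Theorem \ref{thm:ill}. Concretely, I will show that for any irreducible smooth $\overline{\Q}_p$-representation $\pi$ of $\GL_r(K)$, the space $\Hom_{\GL_r(\cO_K)}(\kappa(\theta),\pi)$ is one-dimensional exactly when $\rec_K(\pi)|_{I_K}\cong\tau$ and $N(\rec_K(\pi))=0$, and zero otherwise.

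I would first construct a candidate depth-zero supercuspidal: extend the central character of $\kappa(\theta)$ across a uniformizer of $K$ to obtain a representation $\widetilde{\kappa(\theta)}$ of $K^\times\GL_r(\cO_K)$, and set $\pi_\theta := c\text{-}\Ind_{K^\times\GL_r(\cO_K)}^{\GL_r(K)}\widetilde{\kappa(\theta)}$. Primitivity of $\theta$ makes $\kappa(\theta)$ an irreducible cuspidal Deligne--Lusztig representation of $\GL_r(k)$, so by Kutzko's construction of depth-zero supercuspidals, $\pi_\theta$ is irreducible and supercuspidal. Mackey's formula applied to $\pi_\theta|_{\GL_r(\cO_K)}$ then shows that only the trivial double coset in $K^\times\GL_r(\cO_K)\backslash\GL_r(K)/\GL_r(\cO_K)$ contributes a copy of $\kappa(\theta)$: intertwiners from nontrivial cosets factor through Jacquet modules at proper parabolics of $\GL_r(k)$ and vanish by cuspidality. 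This yields $\dim\Hom_{\GL_r(\cO_K)}(\kappa(\theta),\pi_\theta)=1$. Via the explicit depth-zero local Langlands correspondence (DeBacker--Reeder), $\rec_K(\pi_\theta)\cong\Ind_{W_{K'}}^{W_K}(\widetilde{\theta\circ\mathrm{Art}_{K'}^{-1}})$ for some choice of Weil extension of $\theta\circ\mathrm{Art}_{K'}^{-1}|_{I_{K'}}$; its restriction to $I_K$ is visibly $\tau$, and $N$ vanishes since $\pi_\theta$ is supercuspidal. Unramified twists of $\pi_\theta$ share the same two properties.

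For the converse direction, I would take any irreducible smooth $\pi$ with $\Hom_{\GL_r(\cO_K)}(\kappa(\theta),\pi)\neq 0$. Cuspidality of $\kappa(\theta)$ rules out $\pi$ being a subquotient of a proper parabolic induction, since Jacquet modules of $\pi$ at proper parabolics of $\GL_r(K)$ cannot contain $\kappa(\theta)$ upon further restriction to the finite Levi. Hence $\pi$ is supercuspidal and $N(\rec_K(\pi))=0$. Frobenius reciprocity from the compact induction defining $\pi_\theta$, combined with the classification of depth-zero supercuspidals, then identifies $\pi$ as an unramified twist of $\pi_\theta$, so $\rec_K(\pi)|_{I_K}\cong\tau$, completing the verification.

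The main obstacle will be the final identification of such a $\pi$ as an unramified twist of $\pi_\theta$: this is essentially the assertion that $\kappa(\theta)$ is an $\mathfrak{s}$-type in the sense of Bushnell--Kutzko for the Bernstein component containing $\pi_\theta$. In this depth-zero cuspidal setting it is classical work of Moy--Prasad and Morris, but for the present purpose one can alternatively extend the treatment of \cite[\S 2.4]{EGH} from the $n=3$ case, whose argument via explicit intertwining and centralizer computations for cuspidal types carries over to general $r$ without essential change.
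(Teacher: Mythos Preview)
Your proposal is correct. The paper's own proof is a one-line citation to \cite[Proposition 2.4.1(i)]{EGH}, and your sketch is essentially a reconstruction of how that result is established: compact induction of the cuspidal $\kappa(\theta)$ from $K^\times\GL_r(\cO_K)$ to build the depth-zero supercuspidal $\pi_\theta$, Mackey theory for the multiplicity-one statement, and the explicit depth-zero local Langlands correspondence to identify the inertial parameter. So there is no genuine difference in approach---you are unpacking exactly the black box the paper invokes.

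One small remark on the converse direction: your sentence about ``Jacquet modules of $\pi$ at proper parabolics of $\GL_r(K)$'' is phrased loosely. The clean version is that if $\pi$ were not supercuspidal it would embed in some $\Ind_P^{\GL_r(K)}\sigma$; restricting to $\GL_r(\cO_K)$ via the Iwasawa decomposition and applying Frobenius reciprocity reduces the question to $\Hom_{P(k)}(\kappa(\theta),-)$, which vanishes because the $N(k)$-coinvariants of $\kappa(\theta)$ are zero by cuspidality. You correctly flag this step as the depth-zero theory of types (Moy--Prasad, Morris, or the level-zero case of Bushnell--Kutzko), which is indeed the right input here.
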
 
\begin{proof} See \cite[Proposition 2.4.1(i)]{EGH}.  
\end{proof}

\begin{defn} \label{pind} Let $n = \sum_{j=1}^{\ell} r_j$ be a partition of $n$.   For each $j$, let $\theta_j$ be a primitive character for the extension of degree $r_j$ of $k$.  Define $\mathrm{PInd}(\kappa(\theta_1), \ldots, \kappa(\theta_{\ell}))$ to be the parabolic induction to $\GL_n(k)$ of $\otimes_j \kappa(\theta_j)$ as a representation of the rational points of a parabolic $P \supset B$ with Levi subgroup $\prod_j \GL_{r_j}$.  
\end{defn}

\begin{prop} \label{illtame} Let $\tau = \oplus_{j=1}^{\ell} \tau_j$ where $\tau_j$ is a cuspidal inertial type associated to primitive characters $\theta_j$ of degree $r_j$ as in Proposition \ref{illcusp}. Assume that the cuspidal types $\kappa(\theta_j)$ are pairwise distinct.  Then $\sigma(\tau)$ can be taken to be $\mathrm{PInd}(\kappa(\theta_1), \ldots, \kappa(\theta_{\ell})).$
\end{prop}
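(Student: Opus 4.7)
The plan is to verify that $\mathrm{PInd}(\kappa(\theta_1), \ldots, \kappa(\theta_\ell))$ satisfies the characterization of $\sigma(\tau)$ provided by Theorem \ref{thm:ill}, generalizing the cuspidal case of Proposition \ref{illcusp} (i.e.\ \cite[Proposition 2.4.1(i)]{EGH}). First I would classify the irreducible smooth $\overline{\Q}_p$-representations $\pi$ of $\GL_n(K)$ with $\rec_K(\pi)|_{I_K}\cong \tau$ and $N=0$ on $\rec_K(\pi)$: by the Bernstein--Zelevinsky classification and compatibility of $\rec_K$ with parabolic induction, they are exactly the (necessarily irreducible) normalized parabolic inductions $\pi = \mathrm{n-}\Ind_{P(K)}^{\GL_n(K)}(\pi_1\otimes\cdots\otimes\pi_\ell)$, where $P\supset B$ is the standard parabolic with Levi $\prod_j\GL_{r_j}$ and each $\pi_j$ is a supercuspidal representation of $\GL_{r_j}(K)$ satisfying $\rec_K(\pi_j)|_{I_K}\cong\tau_j$. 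Irreducibility of the induction uses that the inertial types $\tau_j$ are pairwise non-isomorphic, a consequence of the hypothesis that the $\kappa(\theta_j)$ are pairwise distinct.

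Next I would exploit the Iwasawa decomposition $\GL_n(K)=P(K)\GL_n(\cO_K)$ to identify $\pi|_{\GL_n(\cO_K)}$ with $\Ind_{P(\cO_K)}^{\GL_n(\cO_K)}\bigl((\pi_1\otimes\cdots\otimes\pi_\ell)|_{\prod_j\GL_{r_j}(\cO_K)}\bigr)$. By Proposition \ref{illcusp}, each $\pi_j|_{\GL_{r_j}(\cO_K)}$ contains $\kappa(\theta_j)$ (inflated via reduction $\GL_{r_j}(\cO_K)\twoheadrightarrow\GL_{r_j}(k)$) as a subrepresentation, and uniquely so. Since each $\kappa(\theta_j)$ is inflated from $\GL_{r_j}(k)$, the induction in stages and compatibility between parahoric induction and parabolic induction on the finite group (via the reduction map $P(\cO_K)\twoheadrightarrow P(k)$) produces an embedding of $\mathrm{PInd}(\kappa(\theta_1),\ldots,\kappa(\theta_\ell))$, inflated through $\GL_n(\cO_K)\twoheadrightarrow\GL_n(k)$, into $\pi|_{\GL_n(\cO_K)}$.

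To finish I would verify the uniqueness clause: the $\mathrm{PInd}(\kappa(\theta_1),\ldots,\kappa(\theta_\ell))$-isotypic subspace of $\pi|_{\GL_n(\cO_K)}$ is one-dimensional. By Frobenius reciprocity this reduces to computing $\Hom_{P(\cO_K)}(\kappa(\theta_1)\otimes\cdots\otimes\kappa(\theta_\ell),\pi|_{P(\cO_K)})$. Decomposing $\pi|_{P(\cO_K)}$ by Mackey theory along $P(K)\backslash \GL_n(K)/P(\cO_K)$, the identity double coset contributes exactly one copy by the multiplicity-one statement of Proposition \ref{illcusp} applied to each factor; all other double cosets contribute representations whose cuspidal support on the Levi differs from $\otimes_j\kappa(\theta_j)$, hence contribute nothing once one invokes that $\mathrm{PInd}(\kappa(\theta_1),\ldots,\kappa(\theta_\ell))$ is itself irreducible as a $\GL_n(k)$-representation (a standard consequence, for pairwise distinct cuspidals, of Harish-Chandra philosophy for finite groups of Lie type).

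The main obstacle is the last step, namely ruling out the non-trivial double coset contributions to the $\bigotimes_j\kappa(\theta_j)$-isotypic component. The hypothesis that the $\kappa(\theta_j)$ are pairwise distinct is precisely what guarantees that the resulting finite-group parabolic induction is irreducible with multiplicity-one cuspidal support, and so is exactly the input needed to promote the cuspidal case of \cite[Prop.\ 2.4.1]{EGH} to the general tame case. Given Theorem \ref{thm:ill}'s uniqueness of $\sigma(\tau)$, this completes the identification.
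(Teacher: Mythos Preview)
The paper's proof is a one-line citation to the level-zero case of Bushnell--Kutzko type theory (see also \S 3.6 of \cite{ceggps}), so your direct verification of the characterizing property in Theorem~\ref{thm:ill} is a genuinely different route. Steps~1--3 (the classification of the relevant $\pi$ via Bernstein--Zelevinsky and the existence of the embedding) are essentially correct.

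The uniqueness argument in Step~4 has a genuine gap. Even granting your Frobenius reduction to $\Hom_{P(\cO_K)}(\otimes_j\kappa(\theta_j),\pi|_{P(\cO_K)})$ (which, if one-dimensional, would suffice --- though note that the inflation of $\mathrm{PInd}$ to $\GL_n(\cO_K)$ equals $\Ind_J^{\GL_n(\cO_K)}(\lambda)$ for $J$ the preimage of $P(k)$, \emph{not} $\Ind_{P(\cO_K)}^{\GL_n(\cO_K)}(\otimes_j\kappa(\theta_j))$, the latter being infinite-dimensional), the Mackey index set $P(K)\backslash\GL_n(K)/P(\cO_K)\cong P(\cO_K)\backslash\GL_n(\cO_K)/P(\cO_K)$ you invoke is \emph{infinite}: already for $\GL_2$ with $P=B$, the double cosets are distinguished by the valuation of the lower-left entry. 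Thus ``all other double cosets contribute nothing'' must dispose of infinitely many terms, and for most of them there is no Levi of $P(k)$ in sight on which to compare cuspidal supports. If instead you run Mackey for $\pi|_J$, the index set becomes the finite set $W_M\backslash W(\GL_n)/W_M$, and the vanishing for nontrivial $w$ does follow from cuspidality of $\otimes_j\kappa(\theta_j)$ on $M(k)$ via an Iwahori-type factorization of $J\cap w^{-1}P(K)w$. But carrying this out is exactly the level-zero type computation the paper defers to the literature; it is not a formal consequence of the irreducibility of $\mathrm{PInd}$. You correctly flag this step as the main obstacle, but the resolution you sketch does not close it.
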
 
\begin{proof}
This follows from \S 6 of \cite{PZ} where $\sigma(\tau)$ is constructed as $\sigma_{\cP}(\lambda)$ for maximal $\cP$ (see also \S 3.6 of \cite{ceggps}).  In the case of principal series, see also Proposition 2.4.1(ii) in \cite{EGH}.  
\end{proof}

\begin{cor} \label{explicitill}  Let $\tau = \tau(w, \mu)$ be a regular tame inertial type $($Definition $\ref{defn:regular}).$   Then $\sigma(\tau)$ can be taken to be $R_w(\mu)$.    
\end{cor}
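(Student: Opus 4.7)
The plan is to reduce the statement to the parabolic-induction case already handled by Proposition~\ref{illtame}, via the cycle decomposition of the permutation $s_\tau$ attached to $(w,\mu)$ in Definition~\ref{defn:tau}. First, I would invoke invariance of both sides under the $\sim$-equivalence on $W(\un{G})\times X^*(\un{T})$: for $\tau(w,\mu)$ this is part of Proposition~\ref{prop:relabeltype}, and for $R_w(\mu)$ it is a standard fact that $\sim$-equivalent pairs yield geometrically conjugate (torus, character) data in the Deligne--Lusztig construction (cf.~\cite[\S 4.1]{herzig-duke}). This lets me replace $(w,\mu)$ by the standard representative $((s_\tau, 1,\ldots, 1), \mathbf{a})$, in which all non-triviality is concentrated in the first embedding.

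Second, I would decompose $s_\tau \in W(\GL_n)$ into disjoint cycles of lengths $r_1,\ldots, r_\ell$ with $\sum r_j = n$; after a further $\sim$-equivalence by a Weyl group element one may assume these cycles act on consecutive blocks of coordinates, exhibiting $s_\tau$ as lying in the standard Levi $L = \prod_j \GL_{r_j}$. The pair then splits as $\prod_j (s_j, \mathbf{a}_j)$ with $s_j$ a single $r_j$-cycle, and correspondingly the inertial type factors as
\[
\tau \cong \bigoplus_{j=1}^{\ell} \tau_j,
\]
where each $\tau_j$ has the shape of Proposition~\ref{illcusp} for some character $\theta_j$ of the residue field of the degree-$r_j$ unramified extension of $K$. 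The regularity hypothesis (Definition~\ref{defn:regular}) is exactly what is needed to ensure that (i) each $\theta_j$ is primitive, so that the Galois orbit giving rise to the $j$-th cycle consists of $r_j$ distinct characters, and (ii) the cuspidal representations $\kappa(\theta_j)$ are pairwise non-isomorphic across $j$.

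Third, Proposition~\ref{illtame} then yields $\sigma(\tau) = \mathrm{PInd}(\kappa(\theta_1), \ldots, \kappa(\theta_\ell))$. On the other hand, for the block pair $\prod_j (s_j, \mathbf{a}_j)$, the representation $R_w(\mu)$ is by construction the parabolic induction from $L$ of $\bigotimes_j R_{s_j}(\mathbf{a}_j)$, and each factor equals $\kappa(\theta_j)$ since $s_j$ is an $r_j$-cycle and $\mathbf{a}_j$ encodes the primitive character $\theta_j$ (this is the cuspidal case packaged into the definition of $R_w(\mu)$, cf.~\cite[\S 4]{herzig-duke}). Matching the two parabolic inductions completes the identification.

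The main obstacle is not a deep one: it is the bookkeeping translating Definition~\ref{defn:tau} into the cycle-and-Levi decomposition, i.e.~verifying that the primitive character extracted from the $j$-th cycle of $s_\tau$ via the exponents in \eqref{pres1} is the same $\theta_j$ parametrizing the cuspidal summand $\tau_j$ (and hence the factor of $R_w(\mu)$ on the $j$-th block). Once this is pinned down, the corollary follows formally from Propositions~\ref{illcusp} and~\ref{illtame}.
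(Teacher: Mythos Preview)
Your proposal is correct and follows essentially the same strategy as the paper: reduce to the standard presentation $((s_\tau,1,\ldots,1),\mathbf{a})$, observe that regularity yields the distinctness hypothesis of Proposition~\ref{illtame}, and then identify the resulting parabolic induction with $R_w(\mu)$. The paper compresses your final identification step into a single citation of \cite[Lemma 4.7]{herzig-duke}, which is exactly the statement that the Deligne--Lusztig representation for a block-form pair is the parabolic induction of the cuspidal pieces; your ``by construction'' is a slight overstatement, but the content and the reference are the same.
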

\begin{proof}  As in the Definition \ref{defn:tau}, we immediately reduce to the case that $w = (s_{\tau}, \id,\ldots, \id)$.   The condition of being regular corresponds to the pairwise distinctness condition in Proposition \ref{illtame}.   Finally, we use Lemma 4.7 of \cite{herzig-duke} to relate the parabolic induction in Proposition \ref{illtame} to the Deligne-Lusztig representation $R_w(\mu)$.
\end{proof}

From now on, for any regular tame inertial type, we let $\sigma(\tau)$ be as in Corollary \ref{explicitill}.  

\begin{rmk}  Note that for a regular tame inertial type $\tau$,  by Corollary  \ref{explicitill} and \eqref{Vmap}
\[
V(\ovl{\tau}) \cong \sigma(\tau). 
\] 
\end{rmk}

\section{Local results}\label{sec:local}

In this section, we prove the main results on reductions of potentially crystalline representations which will be used for weight elimination in \S \ref{sec:we}.  

\subsection{\'Etale $\varphi$-modules}\label{sec:phi}
In this section, we consider \'etale $\varphi$-modules associated to affine Weyl group elements and determine their corresponding semisimple Galois representations.  The key result is Proposition \ref{Galoistotype} which relates the Galois representation to the tame types defined in \S \ref{sec:ttypes}.

Let $\cO_{\cE}$ denote the $p$-adic completion of $\fS[\frac{1}{v}]$, where $\fS := \cO_K[\![v]\!]$ is endowed with a Frobenius morphism $\phz$ extending the Frobenius on $\cO_K$ such that $\phz(v) = v^p$. Let $R$ be a local, complete Noetherian $\cO$-algebra with finite residue field. By base change, the ring $\cO_{\cE}\widehat{\otimes}_{\Zp}R$ is naturally endowed with a Frobenius endomorphism $\phz$ and we write $\Phi\text{-}\Mod^{\text{\'et}}(R)$ for the category of \'etale $(\phz,\cO_{\cE}\widehat{\otimes}_{\Zp}R)$-modules. Its objects are finite type projective modules $\cM$ over $\cO_{\cE}\widehat{\otimes}_{\Zp}R$, endowed with a Frobenius semilinear endomorphism $\phi_{\cM}:\cM\ra\cM$ inducing an isomorphism on the pull-back: $\Id\otimes_{\phz}\phi_{\cM}:\phz^*(\cM)\stackrel{\sim}{\longrightarrow}\cM$. 

Since $K_{\infty}/K$ is totally wildly ramified, the subgroup $G_{K_{\infty}}$ of $G_K$ projects surjectively to the tame quotient of $G_K$. Hence the restriction map 
\[
\Rep_{\F}^t(G_K) \rightarrow \Rep_{\F}(G_{K_{\infty}})
\]   
is fully faithful where $\Rep^t$ denotes subcategory of tame representation. We use $\Rep^t_{\F}(G_{K_{\infty}})$ to denote the essential image of this map and will often implicitly identify these representations of $G_{K_{\infty}}$ with their canonical extensions to $G_K$. Note that this essential image contains exactly representations of $G_{K_\infty}$ which are trivial on $G_{K_\infty}\cap G_{K^t}$, where $K^t$ is the maximal tamely ramified extension of $K$.

For any complete local Noetherian $\cO$-algebra $R$ with finite residue field, by theory of norm fields, there is anti-equivalence of categories 
\[
\bV^*:\Phi\text{-}\Mod^{\text{\'et}}(R) \ra \Rep_R(G_{K_{\infty}})
\]
(cf.~ Lemma 1.2.7 \cite{MFFGS} for version with coefficients). If $K'$ is finite unramified extension of $K$, let $K'_{\infty} = K_{\infty} \otimes_K K'$, and we can similarly consider the category of \'etale $\varphi$-modules over $K'$ denoted $\Phi\text{-}\Mod^{\text{\'et}}_{K'}(R)$ together with the anti-equivalence $\bV^*_{K'}:\Phi\text{-}\Mod^{\text{\'et}}_{K'}(R) \ra \Rep_R(G_{K'_{\infty}})$.

For any $(\cM, \phi_{\cM}) \in \Phi\text{-}\Mod^{\text{\'et}}(R)$, we decompose $\cM = \oplus_{j} \cM^{(j)}$ over embeddings $\sigma_j:W(k) \ra \cO$ with the induced maps $\phi^{(j)}_{\cM}:\cM^{(j)} \ra \cM^{(j+1)}$.   We are now ready to define ``semisimple'' \'etale $\varphi$-modules. We fix an embedding $\widetilde{W}^{\vee} =  X^*(T) \rtimes W(\GL_n) \iarrow N_{\GL_n}(T)(\F(\!(v)\!))$ given by $\mu \mapsto v^{\mu}$ and identifying $W(\GL_n)$ with the subgroup of permutation matrices.
Here, for $\mu = (a_1,\ldots,a_n)$ we define $v^\mu$ to be the diagonal matrix with entries $v^{a_i}$ (we interpret $X^*(T)$ as the group of cocharacters of the dual torus and $v^\mu$ is the associated cocharacter evaluated at $v$).

\begin{defn} \label{defn:Wphi}  For any $\tld{w} = (\tld{w}_j) \in \bun{\widetilde{W}}^{\vee}$ and $D = (D_j) \in \un{T}(\F)$, define $\cM(\tld{w}, D) \in \Phi\text{-}\Mod^{\text{\'et}}(\F)$ to be the free module over $\cO_{\cE} \otimes_{\Zp} \F$ of rank $n$ such that the Frobenius $\phi^{(j)}_{\cM}$ is given by the matrix $D_j \tld{w}_j \in  N_{\GL_n}(T)(\F(\!(v)\!))$ (with respect to the standard basis). Set $\cM(\tld{w}) := \cM(\tld{w}, \mathrm{Id})$.  
\end{defn}
\begin{prop} \label{Galoistotype} Let $\cM(\tld{w}, D)$ be as in Definition \ref{defn:Wphi} with $\tld{w} = st_{\mu} \in \bun{\widetilde{W}}^{\vee}$.  Then,  $\bV^*(\cM(\tld{w}, D))$ lies in $\Rep^t_{\F}(G_{K_{\infty}})$ and  
\[
\bV^*(\cM(\tld{w}, D))|_{I_K} \cong \ovl{\tau}(s^*, \mu^*).
\]
In particular, the restriction to inertia doesn't depend on $D$. 
\end{prop}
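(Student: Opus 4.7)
The plan is to compute $\bV^*(\cM(\tld{w}, D))|_{I_K}$ directly from the phi-module. The key simplification is that $\phi^f$ lands back in the single component $\cM^{(0)}$ as a $\phz^f$-semilinear endomorphism, so after iterating enough times and performing a mild unramified base change, the phi-module diagonalizes into rank-$1$ pieces whose inertial characters can be read off explicitly. The matrices $D_j$, being unit scalars in $\F$, contribute only unramified twists and hence disappear upon restriction to inertia, yielding the ``in particular'' statement at the end.

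First I would compute $\phi^f = \phi^{(f-1)}\circ\cdots\circ\phi^{(0)}$ on $\cM^{(0)}$ by explicit matrix multiplication: its matrix is
\[
B \;=\; A_{f-1}\cdot \phz(A_{f-2})\cdot \phz^2(A_{f-3})\cdots \phz^{f-1}(A_0),
\]
where $A_j = D_j s_j v^{\mu_j}$. Since $\phz$ is $\F$-linear on $\cO_{\cE}\otimes_{\Zp}\F$ (hence fixes $\F$-valued entries) and sends $v$ to $v^p$, one has $\phz^k(A_j) = D_j s_j v^{p^k\mu_j}$. Moving all $v^{\mu}$ factors to the right via the commutation $v^\mu s = s v^{s^{-1}\mu}$ and collecting the remaining diagonal factors gives
\[
B \;=\; \widetilde D \cdot s_\tau^\phi \cdot v^\nu,
\]
where $\widetilde D \in \un T(\F)$ is a diagonal unit depending on $D$, $s_\tau^\phi := s_{f-1}s_{f-2}\cdots s_0$, and $\nu_i = \sum_{j=0}^{f-1} p^{f-1-j}(\mu_j)_{s_{j-1}\cdots s_0(i)}$. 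Letting $r$ be the order of $s_\tau^\phi$, iterating this formalism shows that $\phi^{fr}$ on $\cM^{(0)}$ is diagonal with entries in $\F^\times\cdot v^{\tilde\nu_i}$, where $\tilde\nu_i = \sum_{l=0}^{r-1} p^{f(r-1-l)}\nu_{(s_\tau^\phi)^l(i)}$. After base-changing to the unramified extension $K'/K$ of degree $r$, the phi-module decomposes into rank-$1$ pieces indexed by the $s_\tau^\phi$-orbits on $\{1,\ldots,n\}$; each piece corresponds via the standard rank-$1$ anti-equivalence to a tame character of $I_K = I_{K'}$ which is a power of $\omega_{fr}$ with exponent (up to sign convention) $\tilde\nu_i$, independent of $\widetilde D$. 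Inducing back to $G_{K_\infty}$ produces a tame $n$-dimensional representation, so $\bV^*(\cM(\tld w, D))\in\Rep^t_\F(G_{K_\infty})$ and the inertia restriction is independent of $D$.

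The main obstacle is the combinatorial identification of this sum of tame characters with $\ovl\tau(s^*,\mu^*)$ as in Definition \ref{defn:tau}. Unfolding the latter with $s^*_j = s_{f-1-j}^{-1}$ and $\mu^*_j = \mu_{f-1-j}$, one computes $s_{\tau^*} = s_{f-1}^{-1}(s_\tau^\phi)^{-1}s_{f-1}$ (a conjugate of $(s_\tau^\phi)^{-1}$ of the same order $r$), and the associated datum satisfies $\bf{a}^{*,(0)}_\ell = \sum_k p^{f-1-k}(\mu_k)_{s_k^{-1}\cdots s_{f-2}^{-1}(\ell)}$. Setting $\sigma := s_{f-2}s_{f-3}\cdots s_0$, a direct term-by-term comparison gives the key identity $\bf{a}^{*,(0)}_{\sigma(i)} = \nu_i$. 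Applying the equivalence \eqref{sigmaconj} (via Proposition \ref{prop:relabeltype}) to put $(s^*,\mu^*)$ into the normal form $((s_{\tau^*},\id,\ldots,\id),\bf{a}^*)$ reduces the type to a single-embedding cyclic sum whose characters have exponents $e^*_\ell = \sum_k p^{fk}\bf{a}^{*,(0)}_{s_{\tau^*}^k(\ell)}$, and the re-indexing $\ell \leftrightarrow s_{f-1}^{-1}(i)$---which relies on the crucial factorization $s_\tau^\phi = s_{f-1}\sigma$ to intertwine the action of $s_{\tau^*}$ with that of $s_\tau^\phi$---matches these with the $\tilde\nu_i$ computed above, completing the proof.
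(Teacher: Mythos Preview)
Your proof is correct and follows the same basic strategy as the paper: iterate Frobenius to $\phi^{fr}$, diagonalize after an unramified base change of degree $r$, and match the resulting tame characters with those appearing in $\ovl{\tau}(s^*,\mu^*)$.

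The one noteworthy difference in execution is where the $\sim$-equivalence of Proposition~\ref{prop:relabeltype} is invoked. You apply it only on the \emph{type} side, rewriting $(s^*,\mu^*)$ in normal form and then carrying out a full combinatorial matching of the exponents $\tilde\nu_i$ with $\bf{a}^{*,(0)}$. The paper instead first proves that $(s_1^*,\mu_1^*)\sim(s_2^*,\mu_2^*)$ forces $\cM(s_1t_{\mu_1},D_1)\cong\cM(s_2t_{\mu_2},D_2)$ for suitable $D_2$ (this is a short $\phz$-conjugation by $t_{\pi^{-1}(\nu)}\sigma$), and uses this to reduce \emph{both} sides simultaneously to the case $\tld{w}_j=1$ for $j\neq f-1$. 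After that reduction there is only a single pair $(s_0^{-1},\mu_0)$ in play, the exponent of $v$ in $\phi^{fr}(e_i)$ is simply $\sum_{m} a_{s_0^{m+1}(i)}p^{fm}$, and the comparison with \eqref{pres1} is immediate. Your direct computation buys you nothing extra and costs the bookkeeping of your final paragraph; the paper's reduction step is worth internalizing as it eliminates that bookkeeping entirely.
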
 
\begin{proof}
Assume that $(s_1^*, \mu^*_1) \sim (s_2^{*}, \mu^*_2)$ via conjugation by $(\nu^*, \sigma^*)$. Then for any $D_1 \in \un{T}(\F)$, there exists a $D_2 \in \un{T}(\F)$ such that $\phz$-conjugation by $t_{\pi^{-1}(\nu)} \sigma$ induces an isomorphism $\cM(s_1 t_{\mu_1}, D_1) \cong \cM(s_2 t_{\mu_2}, D_2)$ by \eqref{sigmaconj} . For any $\tld{w}$, by an appropriate conjugation, we can assume $\tld{w}_j = 1$ for $j \neq f-1$.   Let $\tld{w}_{f-1} = s_0^{-1} t_{\mu_0}$ with $\mu_0 = (a_1, \ldots, a_n) \in \Z^n$, and let $r$ denote order of $s_0$. Consider the base change 
\[
\cM' := \cM(\tld{w}, D) \otimes_{W(k)} W(k')
\]
where $k'/k$ is finite extension of degree $r$. Let $\phi_{\cM'}$ denote the Frobenius on $\cM'$.  A straightforward computation as in \cite[Lemma 3.2.3]{LLLM2} shows that there exists a basis $(e_i)$ for $(\cM')^{(0)}$ (the piece corresponding to fixed embedding $\sigma_0':W(k') \ra \cO$) such that  
\[
\phi_{\cM'}^{fr}(e_i) =  d_i v^{\sum_{m=0}^{r-1} a_{s_0^{m+1}(i)} p^{fm}} e_i
\]
for some scalars $d_i$ determined by $D$. 
Following Proposition 2.1.7 in \cite{CDM} and using our choice of embedding $\sigma_0':W(k') \ra \cO$, one can determine the $\bV^*_{K'}(\cM')$ from the $\phi_{\cM'}^{fr}$.  If $K'_{\infty} = K_{\infty} \otimes_{W(k)} W(k')$, then there are unramified characters $\xi_i$ such that
\[
\bV^*_{K'}(\cM') \cong \left( \bigoplus_{i=1}^n \xi_i \, \omega_{rf}^{\sum_{m=0}^{r-1} p^{fm} a_{s_0^{m+1}(i)}} \right)|_{G_{K'_{\infty}}}.
\]
Hence $\bV^*(\cM(\tld{w}, D))$ is tame and by comparison with \eqref{pres1}, 
\[
\bV^*(\cM(\tld{w}, D))|_{I_K} \cong \ovl{\tau}((s_0, 1, \ldots, 1), (\mu_0, 0, \ldots, 0)).
\]
\end{proof} 

\subsection{Semisimple Kisin modules}

In this subsection, we generalize \cite{LLLM} from $\GL_3$ to $\GL_n$ and study reductions of Kisin modules with descent. For the convenience of the reader, we first state the main theorem which is used for weight elimination in \S 4. The theorem will be a consequence of Theorem \ref{fixedpoints} about reductions of Kisin modules with descent data.  The proof appears at the end of the subsection.

\begin{thm} \label{thm:admcrit} Let $\tau \cong \tau(s,\mu+ \eta)$ be 1-generic with lowest alcove presentation $(s, \mu)$  and let $\lambda \in X^*(\un{T})$. Let $\rhobar$ be the reduction of a potentially crystalline representation of type $(\lambda, \tau)$.  Assume either $(1)$ $\tau$ is a principal series type and $ \rhobar^{\mathrm{ss}}$ is a direct sum of characters or $(2)$ $\lambda = \eta$ and $\tau$ is generic.   Then there exists $(w, \nu) \in W(\un{G}) \times X^*(\un{T})$ such that
\[
 \rhobar^{\mathrm{ss}}|_{I_K} \cong \ovl{\tau}(w, \nu + \eta) \text{ and }  s^{-1} t_{\nu - \mu} w \in \Adm(\lambda).
\] 
\end{thm}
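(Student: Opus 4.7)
The plan is to pass from the potentially crystalline representation to a Kisin module with descent data, extract the shape of its mod-$\varpi$ reduction, and translate the Hodge-type bound into the admissibility condition. More precisely, given a Galois stable $\cO$-lattice in a potentially crystalline lift of $\rhobar$ of type $(\lambda,\tau)$, Kisin's theory produces a Kisin module $\fM$ of height $\leq \lambda$ over $\fS_{\cO_L}$, where $L/K$ is the tame extension cutting out $\tau$, equipped with descent data of type $\tau$. Reducing mod $\varpi$ and inverting $v$ recovers $\rhobar|_{G_{K_\infty}}$ via the anti-equivalence $\bV^*$ of \S\ref{sec:phi}, and in particular determines $\rhobar^{\mathrm{ss}}|_{I_K}$.

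The main geometric input is that the height $\leq \lambda$ condition places $\ovl{\fM}$ in the special fiber of the local model $M(\lambda)$, which by \cite{CL} embeds into the union of affine Schubert cells indexed by $\Adm^{\vee}(\lambda)$ (this is where Zhu's theorem on the coherence conjecture enters). To exploit this, I first put $\ovl{\fM}$ in a normal form whose Frobenius matrix reads $D \cdot \tld{w}$ for $D$ diagonal and $\tld{w} \in \bun{\tld{W}}^{\vee}$; the element $\tld{w}$ is then precisely the shape $\tld{w}(\rhobar,\tau)$, which lies in $\Adm^{\vee}(\lambda)$ by the local model description. In case~(1), the decomposition of $\rhobar$ into characters together with the principal-series decomposition of $\tau$ yields a direct sum decomposition of $\ovl{\fM}$ into rank-one descent-data eigenspaces, from which $\tld{w}$ is read off by $v$-adic valuations. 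In case~(2), this is the content of Theorem~\ref{fixedpoints} (the $\GL_n$ generalization of \cite{LLLM,LLLM2}), where the genericity of $\tau$ together with the monodromy condition supplied by $\lambda=\eta$ kill off-diagonal terms on the semisimplification.

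Finally, Proposition~\ref{Galoistotype} applied to the étale $\varphi$-module associated to $\ovl{\fM}$ with the shape $\tld{w} = s' t_{\mu'}$ identifies $\rhobar^{\mathrm{ss}}|_{I_K} \cong \ovl{\tau}((s')^{*},(\mu')^{*})$. Setting $w = (s')^{*}$ and $\nu = (\mu')^{*} - \eta$, one unwinds how the presentation $(s,\mu)$ of $\tau$ enters the choice of descent-data eigenbasis to obtain the identity $\tld{w}^{*} = s^{-1} t_{\nu-\mu} w$ in $\bun{\tld{W}}$. Since $\Adm(\lambda^{*}) = \Adm(\lambda)$ by the product definition of the admissible set, Lemma~\ref{lem:adjadm} converts $\tld{w} \in \Adm^{\vee}(\lambda)$ into $\tld{w}^{*} \in \Adm(\lambda)$, which is the required conclusion.

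The main obstacle is achieving the normal form for $\ovl{\fM}$ in case~(2): one must produce an explicit basis in which the Frobenius becomes monomial-and-diagonal of the claimed shape, requiring a delicate embedding-by-embedding manipulation that generalizes the $\GL_3$ calculation of \cite{LLLM}. It is at this step that the genericity hypothesis on $\tau$, the Hodge bound $\lambda=\eta$, and the monodromy constraint are all used jointly; case~(1) serves as a simpler semisimple model where the diagonalization is essentially immediate once the descent data is diagonalized.
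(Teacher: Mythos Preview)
Your overall architecture is right and matches the paper's: produce a Kisin module with descent data, show its mod-$\varpi$ reduction has shape in $\Adm^{\vee}(\lambda)$, and translate via the $*$-involution. But there are two genuine problems and one minor correction.

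\textbf{Gap in case (1).} You assert that the decomposition of $\rhobar$ into characters induces a direct-sum decomposition of $\ovl{\fM}$ into rank-one pieces. This is false in general: only the \'etale $\varphi$-module $\ovl{\fM}[1/u']$ decomposes (since $\bV^*$ is an equivalence), while the particular lattice $\ovl{\fM}$ inside it coming from Kisin's construction need not be compatible with that splitting. The paper's fix (Theorem~\ref{fixedpoints}) is to observe that the torus $T=\Gm^n$ acts on the Kisin variety $Y^{\lambda,\tau}_{\cM_{dd}}$ via the character decomposition, and any algebraic torus action on a projective variety has a fixed point; that fixed point is a \emph{different} Kisin module $\ovl{\fM}'$ which does decompose and still lies in $Y^{\lambda,\tau}$, hence has admissible shape. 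You cannot skip this step.

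\textbf{No monodromy here.} In case (2) you invoke ``the monodromy condition supplied by $\lambda=\eta$'' to kill off-diagonal terms. The monodromy operator plays no role in this theorem; it only appears later in \S\ref{sec:pd} for deformation rings. What case (2) actually uses is: genericity of $\tau$ forces the Kisin variety to be a single point (Proposition~\ref{Kisinvariety}), so after unramified base change to split $\rhobar$, the unique Kisin module must coincide with the torus-fixed one from case (1); descending shows $\ovl{\fM}$ itself is semisimple. The hypothesis $\lambda=\eta$ enters only through the height bound needed for Propositions~\ref{Kisinvariety} and \ref{gauge basis}, not through any monodromy.

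\textbf{Translation formula.} Your identification of $(w,\nu)$ directly from the shape $\tld{w}=s't_{\mu'}$ is off: Proposition~\ref{Galoistotype} is applied not to $\tld{w}$ but to $\tld{w}\, s^* t_{\mu^*+\eta^*}$, because undoing the descent data (Corollary~\ref{cor:phiandKisin}) contributes the factor $s^* t_{\mu^*+\eta^*}$ to the \'etale $\varphi$-module's Frobenius. Writing $\tld{w}\, s^* t_{\mu^*+\eta^*} = w^* t_{\nu^*+\eta^*}$ gives $\tld{w}^* = s^{-1} t_{\nu-\mu} w$ directly, and then Lemma~\ref{lem:adjadm} finishes.
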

\begin{rmk}
The element $s^{-1} t_{\nu - \mu} w$ in case (2) of Theorem \ref{thm:admcrit} is later defined to be $\tld{w}^*(\rhobar,\tau)$ in Definition \ref{defn:relshape}.
\end{rmk}
We continue to use the notation of \S \ref{sec:ttypes}.
Let $\tau:I_K \ra \GL_n(E)$ be a tame inertial type.
We will assume throughout that $\tau$ is 1-generic and fix a lowest alcove presentation $(s, \mu)$  where $\mu \in \bun{C}_0$ (i.e., $\tau \cong \tau(s, \mu + \eta)$). 

If $s = (s_0, \ldots, s_{f-1})$ and $\mu = (\mu_j)_{0 \leq j \leq f-1} \in X^*(\un{T})$, we take $s_\tau \defeq s_0 s_{f-1} s_{f-2} \cdots s_1 \in W(\GL_n)$ and  $\bm{\alpha}_{(s, \mu)} \in X^*(\un{T})$ such that $\bm{\alpha}_{(s,\mu), j} = s_1^{-1} s_2^{-1} \ldots s_j^{-1}(\mu_j + \eta_j)$ for $1 \leq j \leq f-1$ and $\bm{\alpha}_{(s, \mu), 0} = \mu_0 + \eta_0$.  Let $r$ denote the order of $s_{\tau}$ and $f' = rf$.  As in Definition \ref{defn:tau}, $\tau(s, \mu+ \eta) \cong \tau((s_{\tau}, 1, \ldots, 1), \bm{\alpha}_{(s, \mu)})$ and concretely, 
\begin{equation} \label{pres}
\tau \cong \oplus_{i=1}^{n} \chi_i \text{ with } \chi_i :=  \omega_{f'}^{\sum_{0 \leq k \leq r-1} \bf{a}^{(0)}_{(s, \mu),s_{\tau}^{k}(i)} p^{fk}}
\end{equation}
where $\bf{a}^{(0)}_{(s, \mu)} = \sum_{j=0}^{f-1} \bm{\alpha}_{(s, \mu), j}  p^j  \in \Z^n$ (compare with \eqref{pres1}). By fixing a choice of lowest alcove presentation, we also fix the order of the characters $\chi_i$ as above.  
 
\begin{rmk} \label{rmk:compare}  In \cite{LLLM}, the notion of lowest alcove presentation does not appear. Everything is written for presentations of the form $\tau((s_{\tau}, 1, \ldots, 1), \bm{\alpha}_{(s, \mu)})$ (see, for example, the beginning of \S 2.1, \S 6.1 of \cite{LLLM}). In the notation of the \emph{loc. cits.},  $\bm{\alpha}_{(s, \mu), j} = (a_{1, j}, a_{2,j}, a_{3,j})$.  If \[s_{\mathrm{or}} := (s_1^{-1} s_2^{-1} \ldots s_{f-1}^{-1}, s_1^{-1} s_2^{-1} \ldots s_{f-2}^{-1},   \ldots, s_1^{-1},  1) \in W(\un{G}),\] then $s^*_{\mathrm{or}}(\bm{\alpha}_{(s, \mu)}) = \mu + \eta$, and conjugation by $(\un{0}, s^*_{\mathrm{or}})$ changes one presentation to the other.  The element $s_{\mathrm{or}}$ is called the \emph{orientation} of $\bm{\alpha}_{(s, \mu)}$ (Definition 2.6 and equation (2.2) in \cite{LLLM}).  
\end{rmk}

\begin{rmk} Comparing \eqref{pres} with equation at beginning of \S 2.1 \cite{LLLM}, the exponents differ by a minus sign.  This is because of a dual with appears in Definition \ref{defn:Kisin} which makes everything consistent.   See Remark \ref{tauvee} for more details.  
\end{rmk}


We continue to write $K$ (resp. $K'$) for the unramified extension of $\Q_p$ of degree $f$ (resp. $f'\defeq fr$).
If $r =1$, we say that $\tau$ is a \emph{principal series type}. Otherwise, we write $\tau'$ for the base change of $\tau$ to $K'/K$ (which is just $\tau$ considered as a principal series type for $G_{K'}$). We record the relevant data for $\tau'$.  Define $\bm{\alpha}'_{(s,\mu)} \in  X^*(T)^{\Hom(k', \F)} \cong X^*(T)^{f'}$ (using the fixed choice of embedding $\sigma_0'$) by 
\[
\bm{\alpha}'_{(s,\mu), j + kf} :=  s_{\tau}^{-k} (\bm{\alpha}_{(s, \mu), j}) \text{ for } 0 \leq j \leq f-1, 0 \leq k \leq r-1.
\]   If $\tau_{K'}(w', \mu')$ is the analogous construction of tame types over $K'$ for $(w', \mu') \in (W(\GL_n) \times X^*(T))^{\Hom(k', \F)}$, then $\tau' \cong \tau_{K'}(1, \bm{\alpha}'_{(s,\mu)})$ by direct comparison using \eqref{pres}.  The \emph{orientation} $s'_{\mathrm{or}} \in W(\GL_n)^{f'}$ of $\bm{\alpha}'_{(s,\mu)}$ in the sense of Definition 2.6 in \cite{LLLM} is given by 
\begin{equation} \label{primeorient}
s'_{\mathrm{or}, j + kf} := s_{\tau}^{k+1} s_{\mathrm{or}, j}\text{ for } 0 \leq j \leq f-1, 0 \leq k \leq r-1 
\end{equation} 
 (compare with Proposition 6.1 \cite{LLLM}).  

Note that $\bm{\alpha}'_{(s, \mu), j+ kf} $ is in the $W(\GL_n)$ orbit of $\mu_j + \eta_j$. If $(s, \mu)$ is a lowest alcove presentation with $\mu$ $m$-deep in alcove $\bun{C}_0$, then for all coroots $\alpha^{\vee}$ of $\GL_n$ and $0 \leq j' \leq f'-1$,
\begin{equation} \label{primegen}
m < |\langle \bm{\alpha}_{(s, \mu), j'}, \alpha^{\vee} \rangle| < p- m \text{ and } (s'_{\mathrm{or}, j'})^{-1}(\bm{\alpha}'_{(s, \mu), f' - 1 - j'}) = \mu_{f-1 - j} + \eta_{f-1-j} \text{ is dominant}
\end{equation}
where $j \equiv j'$ mod $f$. 





Define $L' := K'(\varpi_r) = K'((-p)^{\frac{1}{p^{rf}-1}})$, and let $\Delta' := \Gal(L'/K') \subset \Delta := \Gal(L'/K)$. Note that $\tau$ defines a  $\cO$-valued representation of $\Delta'$. For any complete local Noetherian $\cO$-algebra with residue field $\F'$ finite over $\F$, let $\fS_{L', R} := (W(k') \otimes_{\Zp} R)[\![u']\!]$. We endow $\fS_{L', R}$ with an action of $\Delta$ as follows: for any $\tau$ in $\Delta'$, $\tau(u') = \frac{\tau(\varpi_r)}{\varpi_r} u'$ and $\tau$ acts trivially on the coefficients; if $\sigma \in\Gal(L'/\Qp)$ is the lift of Frobenius on $W(k')$ which fixes $\varpi_r$, then $\sigma^f$ generates $\Gal(K'/K)$ acting in natural way on $W(k')$ and trivially on both $u'$ and $R$. Set $v = (u')^{p^{rf}-1}$, and note that 
\[
(\fS_{L', R})^{\Delta = 1} = (W(k) \otimes_{\Zp} R)[\![v]\!].
\]
As usual, $\varphi:\fS_{L', R} \ra \fS_{L', R}$ acts as $\sigma$ on $W(k')$, trivially on $R$, and sends $u'$ to $(u')^{p}$.  

For any positive integer $h$, let $Y^{[0, h], \tau}(R)$ be the category of Kisin modules over $L'$ with tame descent of type $\tau$ and height in $[0,h]$ as defined in  \S 3 of \cite{CL} if $\tau$ is principal series types. For other types, we refer to \S 6 of \cite{LLLM} for further background.   

\begin{defn} \label{defn:Kisin} An element $(\fM, \phi_{\fM}, \{ \widehat{g} \}) \in Y^{[0, h], \tau}(R)$ is a Kisin module $(\fM, \phi_{\fM})$ over $\fS_{L', R}$ (Definition 2.3 \cite{LLLM}) with height less than $h$ together with a semilinear action of $\Delta$ which commutes with $\phi_{\fM}$ such that for each $0 \leq j \leq f' - 1$ 
\[
\fM^{(j)} \mod u' \cong \tau^{\vee} \otimes_{\cO} R 
\]  
as $\Delta'$-representations. In particular, the semilinear action induces an isomorphism $(\sigma^f)^*(\fM) \cong \fM$ (see \cite[\S 6.1]{LLLM}) as elements of $Y^{[0,h], \tau'}(R)$.
\end{defn}   
\begin{rmk} As explained in \cite[\S 6.1]{LLLM}, the data of an extension of the action of $\Delta'$ to an action of $\Delta$ is equivalent to the choice of an isomorphism $(\sigma^f)^*(\fM) \cong \fM$ satisfying an appropriate cocycle condition. We will use both point of view interchangeably.
\end{rmk}
\begin{rmk} \label{tauvee} The appearance of $\tau^{\vee}$ in the definition is due to the fact that we are using the contravariant functors to Galois representations to be consistent with \cite{LLLM} as opposed to the covariants versions which appear in \cite{CL, EGH}.  In \cite{LLLM}, we didn't use the notation $\tau^{\vee}$. Instead, we included it in our description of descent data by having a minus sign in the equation before Definition 2.1 of \emph{loc. cit.}   The notion of Kisin module with tame descent data of type $\tau$ here is consistent with what appears in \emph{loc. cit.} 
\end{rmk}

Recall that we have fixed a lowest alcove presentation $(s, \mu)$ with $\mu \in \bun{C}_0$.  Definitions \ref{defn:shape} and \ref{defn:eigenbasis} as well as the matrix of partial Frobenius $A^{(j)}$ below depend on the choice of presentation (see Remark \ref{rmk:preschoice}). 

Recall the following definition:
\begin{defn} \label{defn:eigenbasis} For any complete local Noetherian $\cO$-algebra $R$, an eigenbasis $\beta$ for $\fM \in Y^{[0, h], \tau}(R)$ is a (ordered) basis $\beta^{(j')} = (f_1^{(j')}, f_2^{(j')}, \ldots, f_n^{(j')})$ of $\fM^{(j')}$ for each $0 \leq j' < f'$ such that $\Delta'$ acts on $f_i^{(j')}$ via the character $\chi_i^{-1}$ from \eqref{pres} and such that $(\sigma^f)^*(\beta) = \beta$ (see \cite[Definition 2.8]{LLLM} and \cite[Definition 3.1.6]{LLLM2} for details). 
\end{defn}

Note that since the order of $\Delta'$ is prime to $p$ and $\cO$ is assumed to be sufficiently large eigenbases exist for any $\fM \in Y^{[0, h], \tau}(R)$ when $R$ is a complete local Noetherian $\cO$-algebra as above.

Given $\fM \in Y^{[0, h], \tau}(R)$ together with an eigenbasis $\beta$, the matrix of the partial Frobenius $\phi^{(j')}_{\fM, s'_{\mathrm{or}, j'+1}(n)}$  with respect to $\beta$ is defined as in Definition 2.11 of \cite{LLLM}.  Namely, let $\phi^{(j')}_{\fM, s'_{\mathrm{or}, j'+1}(n)}:\phz^*(\fM)^{(j')}_{\chi_{s'_{\mathrm{or}, j'+1}(n)}} \ra  \fM^{(j'+1)}_{\chi_{s'_{\mathrm{or}, j'+1}(n)}}$ be the Frobenius map on the $\chi_{s'_{\mathrm{or}, j'+1}(n)}$ isotypic pieces of $\fM^{(j'+1)}$ and $\phz^*(\fM)^{(j')}$ respectively.  For any $0 \leq j' \leq f'-1$, set 
\begin{equation} \label{def:atot}
\bf{a}_{(s, \mu)}^{\prime \, (j')} = \sum_{i=0}^{f'-1} \bm{\alpha}'_{(s, \mu),- j' + i} p^i
\end{equation}
where $-j' + i$ is taken modulo $f'$.   

If $\beta^{(j' + 1)} = (f_1^{(j' + 1)}, \ldots, f_n^{(j' + 1)})$, then as in Lemma 2.9 in \cite{LLLM}, $$\left\{ (u')^{\bf{a}_{(s, \mu), i}^{\prime \, (j' +1)} - \bf{a}_{(s, \mu), s'_{\mathrm{or}, j'+1}(n) }^{\prime \, (j'+1)} } f_i^{(j' +1)} \right\}_{i=1}^n$$ is a basis of $\fM^{(j'+1)}_{\chi_{s'_{\mathrm{or}, j'+1}(n)}}$.  Similarly, if $\beta^{(j')} = (f_1^{(j')}, \ldots, f_n^{(j')})$, then $$\left\{ (u')^{\bf{a}_{(s, \mu), i}^{\prime \, (j' +1)} - \bf{a}_{(s, \mu), s'_{\mathrm{or}, j'+1}(n) }^{\prime \, (j'+1)}} \otimes f_i^{(j')} \right\}_{i=1}^n$$ is a basis for $\phz^*(\fM)^{(j')}_{\chi_{s'_{\mathrm{or}, j'+1}(n)}}$.  We order these bases such that the $u'$-multiple of $f_{s'_{\mathrm{or}, j'+1}(i)}$ is the $i$th basis vector. Note that the orientation $s'_{\mathrm{or}}$ is chosen such that for all $i < k$,
\begin{equation} \label{eq:domorient}
p^{f'} - 1 > \bf{a}_{(s, \mu), s'_{\mathrm{or}, j'+1}(i)}^{\prime \, (j'+1)} - \bf{a}_{(s, \mu), s'_{\mathrm{or}, j'+1}(k)}^{\prime \, (j'+1)}  > 0
\end{equation}
 so that all the exponents which appear are positive. The inequalities are strict because $\tau$ is regular (since $\tau$ is $1$-generic).   

The matrix of the $j'$-th partial Frobenius with respect to these bases ordered as above will be denoted by $A^{(j')} = \Mat_{\beta}\big(\phi^{(j')}_{\fM, s'_{\mathrm{or}, j'+1}(n)} \big)$. We stress that the notion of eigenbasis and the definition of $A^{(j')}$ depends on the chosen presentation of $\tau$, as well as our choice of ordering of the characters in $\tau$. By our requirement that $\beta$ is $\sigma^f$ invariant, $A^{(j')}$ only depends on $j'$ mod $f$. We also observe that the height condition implies $v^{h}(A^{(j')})^{-1}\in M_n(R[\![v]\!])$.  

For any $\cO$-algebra $R$, define
\begin{itemize}
\item $\cI(R) := \{ M \in \GL_n(R[\![v]\!]) \mid M \mod v \text{ is upper triangular} \}$;
\item $\cI_1(R) := \{ M \in \GL_n(R[\![v]\!]) \mid M \mod v \text{ is upper triangular unipotent} \}$;
\item For any $m \geq 1$, $\cD_m(R) := \{ M \in \GL_n(R[\![v]\!]) \mid M \mod v^m \text{ is diagonal} \}$. 
\end{itemize}

For any $M \in \Mat_n(R(\!(u')\!))$ and $g \in \GL_n(R(\!(u')\!) )$, define 
\begin{equation} \label{def:adj}
\Ad(g)(M) := g M g^{-1}. 
\end{equation}

We can now record the effect of changing the eigenbasis $\beta$ on the matrices $A^{(j)}$, which is the generalization of Propositions 2.15 and 2.16 in \cite{LLLM}:
\begin{prop} \label{prop:changeofbasis}
Let $R$ be a complete local Noetherian $\cO$-algebra.  Let $\fM \in Y^{[0,h], \tau}(R)$ together with two eigenbases $\beta^{(j)}:=  \Big( f^{(j)}_{1},  f^{(j)}_{2},\ldots ,f^{(j)}_{n} \Big) $ and $\beta'^{(j)}:= \Big ( f'^{(j)}_{1}, f'^{(j)}_{2},\ldots, f'^{(j)}_{ n} \Big)$ related by
$$
\Big (f'^{(j)}_{1}, f'^{(j)}_{2},\ldots , f'^{(j)}_{n} \Big) D^{(j)} = \Big (f^{(j)}_{1}, f^{(j)}_{2},\ldots, f^{(j)}_{n} \Big)
$$
with $D^{(j)} \in \GL_n(R[\![u']\!])$. 
Let us write $A^{(j)} \defeq \Mat_{\beta}\big(\phi^{(j)}_{\fM, s'_{\mathrm{or}, j+1}(n)}\big)$ and  $A'^{(j)} \defeq \Mat_{\beta'}\big(\phi^{(j)}_{\fM, s'_{\mathrm{or}, j+1}(n)}\big)$ as above.

Set $I^{(j)} \defeq \Ad \big((s'_{\mathrm{or}, j})^{-1} (u')^{-\mathbf{a}_{(s, \mu)}^{\prime \, (j)}} \big) (D^{(j)}) \in \Iw(R)$,which only depends on $j \mod f$. 
 
Then for all $0 \leq j \leq f'-1$, 
\[
A'^{(j)}= I^{(j+1)}A^{(j)} \big(s^*_j \big(I^{(j), \phz}\big) (s^*_j)^{-1}\big) 
\]
where $$I^{(j), \phz}\defeq  v^{\mu_j^*+\eta_j^*} (\phz(I^{(j)})^{-1}) v^{-\mu_j^*-\eta_j^*}.$$ 

Furthermore, if $\mu$ is $m$-deep in alcove $\bun{C}_0$, then $I^{(j), \phz} \in \cD_{m+1}(R)$.
\end{prop}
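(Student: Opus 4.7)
The plan is to generalize the proofs of \cite[Propositions 2.15 and 2.16]{LLLM} from $\GL_3$ to $\GL_n$, with the extra indexing arising from the use of an arbitrary lowest alcove presentation $(s,\mu)$ in place of a presentation of the form $((s_\tau,1,\ldots,1),\bf{a}_{(s,\mu)})$. The main input is the eigenbasis condition: since $\Delta'$ acts on both $f^{(j)}_i$ and $f'^{(j)}_i$ via the same character $\chi_i$, equivariance forces the $(k,i)$-entry $D^{(j)}_{k,i}$ to lie in the $\chi_i \chi_k^{-1}$-isotypic piece of $R[\![u']\!]$. By \eqref{pres} this means $D^{(j)}_{k,i}$ is supported on powers of $u'$ in a single residue class modulo $p^{rf}-1$. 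Conjugation by $(u')^{-\bf{a}^{\prime (j)}_{(s,\mu)}}$ translates this class to the nonnegative multiples of $p^{rf}-1$, i.e., to powers of $v$, so the conjugated matrix lies in $\GL_n(R[\![v]\!])$. The further conjugation by $(s'_{\mathrm{or},j})^{-1}$ rearranges rows and columns so that the smallest-exponent entries end up on or above the diagonal, the key point being that $(s'_{\mathrm{or},j})^{-1}(\bf{a}^{\prime (j)}_{(s,\mu)})$ is dominant by \eqref{primegen}; this gives $I^{(j)} \in \cI(R)$. The $(\sigma^f)^*$-invariance of $\beta$ and $\beta'$ combined with the Frobenius relations \eqref{primeorient}--\eqref{def:atot} then forces the $s_\tau$-twists introduced by shifting $j$ by $f$ to be absorbed by the defining conjugation of $I^{(j)}$, yielding $I^{(j+f)} = I^{(j)}$.

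To derive the transformation formula, one writes $A^{(j)}$ as the matrix of the partial Frobenius on the $\chi_{s'_{\mathrm{or},j+1}(n)}$-isotypic pieces of $\fM^{(j+1)}$ and $\phz^*\fM^{(j)}$; these acquire $R[\![v]\!]$-bases from $\beta^{(j+1)}$ and $\beta^{(j)}$ by rescaling via $(u')^{\bf{a}^{\prime(j+1)}}$ and $(u')^{p\bf{a}^{\prime(j)}}$ respectively, composed with the permutation $s'_{\mathrm{or},j+1}$ (cf.~\cite[Lemma 2.9]{LLLM}). Writing $\beta' = \beta D$ and restricting to isotypic pieces produces a formula of the form $A'^{(j)} = \tld{D}^{(j+1)} A^{(j)} \phz(\tld{D}^{(j)})^{-1}$, where $\tld{D}^{(j)}$ denotes $D^{(j)}$ conjugated by the corresponding scaling and permutation. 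The left factor is $I^{(j+1)}$ by definition. The right factor requires unwinding the discrepancy between scaling by $(u')^{\bf{a}^{\prime (j+1)}}$ and $(u')^{p\bf{a}^{\prime(j)}}$: a direct computation shows that $p\bf{a}^{\prime(j)} - \bf{a}^{\prime(j+1)}$ equals $(p^{rf}-1)$ times a translate of $-(\mu_j+\eta_j)$ (under the $^*$-relabelling intrinsic to the Kisin-module side), and this contributes the twist by $v^{\mu^*_j+\eta^*_j}$. The mismatch of orientations $s_{\mathrm{or},j} = s_{\mathrm{or},j+1} s^*_j$ at consecutive indices produces the outer conjugation by $s^*_j$, identifying the right factor as $s^*_j(I^{(j),\phz})(s^*_j)^{-1}$.

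The depth estimate is then immediate: since $I^{(j)} \in \cI(R)$, the matrix $\phz(I^{(j)})^{-1}$ is upper triangular modulo $v^p$, and the $(i,k)$-entry of $I^{(j),\phz} = v^{\mu_j^*+\eta_j^*} \phz(I^{(j)})^{-1} v^{-\mu_j^*-\eta_j^*}$ equals that of $\phz(I^{(j)})^{-1}$ multiplied by $v^{(\mu_j+\eta_j)_i-(\mu_j+\eta_j)_k}$. When $\mu$ is $m$-deep in $\bun{C}_0$, for $i<k$ the exponent exceeds $m$; for $i>k$ the exponent is strictly greater than $-(p-m)$, but the original entry is divisible by $v^p$, so the product is divisible by $v^{m+1}$. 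Hence $I^{(j),\phz}\in \cD_{m+1}(R)$. The main obstacle is the middle step: correctly identifying the twist $v^{\mu_j^*+\eta_j^*}$ and the $s^*_j$-conjugation on the right, which requires careful bookkeeping between the Galois-theoretic and Kisin-module conventions but is a direct generalization of the $\GL_3$ calculation in \cite{LLLM}.
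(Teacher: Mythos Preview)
Your proposal is correct and follows essentially the same approach as the paper: both defer to \cite[Propositions 2.15 and 2.16]{LLLM} and identify the two key bookkeeping facts $(s'_{\mathrm{or},j+1})^{-1}s'_{\mathrm{or},j}=s^*_j$ and $(s'_{\mathrm{or},j})^{-1}(\bf{a}'_{(s,\mu),f'-1-j})=\mu^*_j+\eta^*_j$ as what makes the $\GL_3$ argument go through verbatim for $\GL_n$ with an arbitrary lowest alcove presentation. A couple of minor slips to fix: your orientation identity should carry primes (it is $s'_{\mathrm{or},j}=s'_{\mathrm{or},j+1}s^*_j$), and in the depth estimate the exponent is $(\mu^*_j+\eta^*_j)_i-(\mu^*_j+\eta^*_j)_k$ rather than $(\mu_j+\eta_j)_i-(\mu_j+\eta_j)_k$, though of course the $m$-depth bound is the same either way.
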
 
\begin{proof} The formula for change of basis only depends on $\fM$ as Kisin module over $L'$ for the principal series type $\tau'$. The fact that $I^{(j)}$ only depends on $j \mod f$ follows from the fact that $\beta$ is fixed by $\sigma^f$.  The rest of the proof is the same as in Proposition 2.15 of \cite{LLLM} where we note that $s_{j}$ which appear in \emph{loc. cit.} are called $s'_{\mathrm{or}, j}$ here.  We use that $(s'_{\mathrm{or}, j+1})^{-1} s'_{\mathrm{or}, j} = s^*_j$ by \eqref{primeorient} and Remark \ref{rmk:compare}. 
Also, we use that for $0 \leq j \leq f-1$, $(s'_{\mathrm{or}, j})^{-1}(\bm{\alpha}'_{(s, \mu), f-1-j}) = \mu_{f-1-j} + \eta_{f-1-j}$ (cf. \eqref{primegen}). 
That $I^{(j)} \in \cI(R)$ follows from Equation \ref{eq:domorient} which follows from the fact that the characters appearing in $\tau$ are distinct (see Proposition 4.6 in \cite{CL}).
The fact that $I^{(j), \phz} \in \cD_{m+1}(R)$ is straightforward (compare with Proposition 2.16 in \cite{LLLM}).    

\end{proof}


\begin{rmk}\label{rmk:changeofbasistuple} In the situation of Proposition \ref{prop:changeofbasis}, we call the tuple $(I^{(j)})\in \cI(R)^{f'}$ the change of basis tuple from $\beta$ to $\beta'$. It satisfies $I^{(j)}=I^{(k)}$ if $j\equiv k$ mod $f$. Conversely, any tuple in $\cI(R)^{f'}$ with this property is the change of basis tuple from $\beta$ to another eigenbasis $\beta'$ (this uses our running assumption that $\tau$ is $1$-generic). In other words, given $\beta$, the data of an eigenbasis $\beta'$ is the same as the data of the tuple $(I^{(j)})$.
\end{rmk}
 Recall (from \cite[Definition 5.5]{CL}) the notion of shape:
\begin{defn}
\label{defn:shape} If $\tau$ is a principal series type, the \emph{shape} of a Kisin module $\fM \in Y^{[0,h], \tau}(\F')$ is the element $\tld{w} = (\widetilde{w}_0, \widetilde{w}_1, \ldots, \widetilde{w}_{f-1}) \in \bun{\widetilde{W}}^{\vee}=(\widetilde{W}^{\vee})^{\mathrm{Hom}(k,\F)}$ such that for any eigenbasis $\beta$ and any $0 \leq j \leq f-1$, the matrix $A^{(j)} = \Mat_{\beta}\big(\phi_{\fM, s'_{\mathrm{or}, j+1}(n)}^{(j)} \big) $ lies in $\Iw(\F') \widetilde{w}_j \Iw(\F')$. (Recall the fixed inclusion $\widetilde{W}^{\vee} \iarrow N_{\GL_n}(T)(\F(\!(v)\!))$ before Definition \ref{defn:Wphi}.) 

For a non-principal series type $\tau$, we define the shape via base change as in \cite[Definition 6.10]{LLLM}. By definition, an element $\fM \in Y^{[0,h], \tau}(\F')$ consists of an element $\fM' \in Y^{[0,h], \tau'}(\F')$ together with an isomorphism $(\sigma^f)^*(\fM') \cong \fM'$ satisfying the cocycle condition as in \cite[Definition 6.3]{LLLM}. By the principal series type case, we have the shape of $\fM'$, which is an element  $\widetilde{w}' = (\widetilde{w}'_0, \widetilde{w}'_1, \ldots, \widetilde{w}'_{f'-1}) \in (\widetilde{W}^{\vee})^{\mathrm{Hom}(k',\F)}$. By the isomorphism $(\sigma^f)^*(\fM') \cong \fM'$ and our requirement that eigenbases are compatible with it, the components of $\widetilde{w}'$ corresponding to two embeddings $k'\into \F$ are equal if they restrict to the same embedding $k\into \F$. In our numbering, this gives $\widetilde{w}'_j=\widetilde{w}'_{j+f}$. We then \emph{define} the shape $\widetilde{w}$ of $\fM$ as the element $\widetilde{w}= (\widetilde{w}'_0, \widetilde{w}'_1, \ldots, \widetilde{w}'_{f-1}) \in \bun{\widetilde{W}}^{\vee}=(\widetilde{W}^{\vee})^{\mathrm{Hom}(k,\F)}$.
\end{defn}

\begin{rmk} \label{rmk:preschoice} Proposition \ref{prop:changeofbasis} shows that the shape of a Kisin module is well-defined. The shape of a Kisin module depends mildly on the choice of lowest alcove presentation of $\tau$ (and the associated ordered characters $\chi_i$ in \eqref{pres}).   For a different choice of presentation, the shape changes by an outer automorphism of $\un{W}_a$ coming from the action of fundamental group (cf. \cite{LLLM}, Corollary 2.24).  
Everything we do depends on a choice of a lowest alcove presentation, and we will always fix it at the outset before talking about objects such as $A^{(j)} = \Mat_{\beta}\big(\phi_{\fM, s'_{\mathrm{or}, j+1}(n)}^{(j)} \big)$, etc.
\end{rmk}

\begin{rmk} The shape (Definition \ref{defn:shaperhobar}) is a kind of relative position between the two tame representations $\rhobar^{ss}$ and $\ovl{\tau}$. The shape is closely related to the geometry of the potentially crystalline deformation ring of type $(\eta, \tau)$ as studied in \cite{LLLM}.   
\end{rmk}

Recall the functor from \S 6.1 of \cite{LLLM}:
\[
T^*_{dd}:Y^{[0, h], \tau}(R) \ra \Rep_{R} (G_{K_{\infty}}).
\]
Let $\lambda \in X^*(\un{T})$ effective, i.e., $\lambda_j = (a_{i,j})$ with $a_{i,j} \geq 0$. 
We will need finer control of the shape of the Kisin module in the case when $\rhobar$ is semisimple.  For any $\lambda$ effective, we have a closed substack $Y^{\lambda, \tau} \subset Y^{[0,h], \tau}$ constructed in \cite[Proposition 5.2]{CL} (see also \cite[Section 3.1]{LLLM}). Then for any finite extension $\F'/\F$, $Y^{\lambda, \tau}(\F')$ is the full subgroupoid of $Y^{[0,h], \tau}(\F')$ (for any sufficiently large $h$) consisting of Kisin modules whose shapes lies in $\Adm^{\vee}(\lambda)$ by \cite[Proposition 5.4]{CL}.

\begin{defn} \label{Kissemisimple}  Let $\overline{\fM} \in Y^{[0,h], \tau}(\F')$ where $\F'/\F$ is a finite extension.  We say that $\overline{\fM}$ is \emph{semisimple} of shape $\tld{w} = (\widetilde{w}_j) \in \tld{\bun{W}}^{\vee}$ if there exists an eigenbasis $\beta$ of $\overline{\fM}$ such that 
$$
A^{(j)} =  \Mat_{\beta}\big(\phi_{\ovl{\fM}, s'_{\mathrm{or}, j+1}(n)}^{(j)} \big)\in T(\F'[\![v]\!]) \widetilde{w}_j
$$ 
for $0 \leq j \leq f'-1$.
 \end{defn} 
\begin{rmk}\label{monomial 0} Since the set of monomial matrices (i.e. matrices that have at most one non-zero entry in each row and column) in $\cI(\F')\tld{w}_j\cI(\F')$ is exactly $T(\F'[\![v]\!])\tld{w}_j$, the above condition is equivalent to $A^{(j)}$ being a monomial matrix. 
\end{rmk}
\begin{prop} \label{simpleform} If $\overline{\fM}$ is \emph{semisimple} of shape $\tld{w} = (\widetilde{w}_j) \in \bun{\tld{W}}^{\vee}$, then there exists an eigenbasis $\beta$ such that 
\begin{equation*}
A^{(j)} \in T(\F') \widetilde{w}_j \text{ for } 0 \leq j \leq f-1. 
\end{equation*}
  \end{prop}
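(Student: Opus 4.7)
The plan is to start from the eigenbasis $\beta$ provided by Definition \ref{Kissemisimple} and construct the desired eigenbasis by a diagonal change of basis that iteratively kills the higher-order $v$-adic corrections of each $D^{(j)}$. By Remark \ref{monomial 0}, the hypothesis furnishes $A^{(j)} = D^{(j)} \tld{w}_j$ with $D^{(j)} \in T(\F'[\![v]\!])$, so I can write $D^{(j)} = D^{(j)}_0 \Delta^{(j)}$ with $D^{(j)}_0 \in T(\F')$ and $\Delta^{(j)} \equiv 1 \bmod v$. The strategy is to find a change of basis tuple $(I^{(j)}) \in \cI(\F')^{f'}$ with each $I^{(j)} \in T(\F'[\![v]\!])$ and $I^{(j)} = I^{(j+f)}$, so that by Remark \ref{rmk:changeofbasistuple} it defines a new eigenbasis $\beta'$ with $\Delta'^{(j)} = 1$.

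The point of taking $I^{(j)}$ diagonal is that the formula of Proposition \ref{prop:changeofbasis} collapses: conjugation by $v^{\mu^*_j+\eta^*_j}$ fixes diagonal matrices, so $I^{(j),\phz} = \phz(I^{(j)})^{-1}$ is itself diagonal. Thus the transformed partial Frobenius $A'^{(j)} = I^{(j+1)} A^{(j)} (s^*_j I^{(j),\phz}(s^*_j)^{-1})$ is again monomial, of the form $D'^{(j)} \tld{w}_j$, and writing $I^{(j)} = \mathrm{diag}(y^{(j)}_1,\ldots,y^{(j)}_n)$ the equation $D'^{(j)} = D^{(j)}_0$ becomes the system of multiplicative equations
\begin{equation*}
y^{(j+1)}_i \cdot d^{(j)}_i \cdot \phz(y^{(j)}_{\rho_j(i)})^{-1} = D^{(j)}_{0,i}, \qquad 0\leq j < f', \ 1 \leq i \leq n,
\end{equation*}
where $\rho_j$ is the permutation of indices induced by conjugation by $\tld{w}_j s^*_j$ on the torus.

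I would then solve this system by successive approximation in powers of $v$. Looking for $y^{(j)}_i \in 1 + v\F'[\![v]\!]$ and comparing $v^N$-coefficients, the Frobenius contribution $\phz(y^{(j)}_{\rho_j(i)})^{-1}$ only involves coefficients of $v$-degree at most $\lceil N/p \rceil$ in $y^{(j)}_{\rho_j(i)}$, so at each order $N$ the equation is triangular and has a unique solution over $\F'$ (in particular no extension of scalars is needed). Propagating the recursion forward $f'$ steps gives a closed-form relation $y^{(0)}_i = \phz^{f'}(y^{(0)}_{\rho(i)}) \cdot C_i$ for $\rho$ the composition of the $\rho_j$, which admits a unique solution in $1 + v\F'[\![v]\!]$ by the same triangular argument since $\phz^{f'}$ raises $v$-degrees by $p^{f'} > 1$.

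The main technical point I expect to require care is the compatibility $I^{(j)} = I^{(j+f)}$, which must be present so that $(I^{(j)})$ genuinely defines an eigenbasis. This should fall out of $(\sigma^f)^*$-invariance of the initial data: because $\beta$ is $(\sigma^f)^*$-stable, the tuples $(D^{(j)})$ and $(\rho_j)$ are $f$-periodic (up to the action of $\phz^f$, which is compatible with the Frobenius appearing in the equations above), and by the uniqueness of the solution at each $v$-adic order one inherits the same periodicity for $(y^{(j)}_i)$. Once this is verified, the resulting $\beta'$ is the desired eigenbasis.
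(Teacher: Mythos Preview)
Your proposal is correct and rests on the same underlying idea as the paper's proof: make a diagonal change of basis and exploit that $\phz$ is a $v$-adic contraction. The execution, however, is organized differently. Rather than writing down and solving the fixed-point system for the $y^{(j)}_i$, the paper simply iterates a change of basis. Starting from $A^{(j)} = D_j\tld{w}_j$ with $D_j \in T(\F'[\![v]\!])$ and $\ovl{D}_j := D_j \bmod v$, it takes $I^{(j+1)} := \ovl{D}_j D_j^{-1}$ and applies Proposition~\ref{prop:changeofbasis}; since $I^{(j)} \equiv 1 \pmod v$ one has $I^{(j),\phz} \equiv 1 \pmod{v^p}$, so the new matrices are $D_{1,j}\tld{w}_j$ with $D_{1,j} \equiv \ovl{D}_j \pmod{v^p}$. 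Repeating gives a sequence $\beta_m$ with $D_{m,j} \equiv \ovl{D}_j \pmod{v^{p^m}}$, which converges to the desired eigenbasis. The advantage of this packaging is that the compatibility $I^{(j)} = I^{(j+f)}$ is immediate at every step (it is visibly inherited from $D_j = D_{j+f}$), so no separate uniqueness argument is needed. Your direct solution is equally valid; the uniqueness-implies-periodicity argument you sketch goes through, since the shift by $f$ on $\Z/f'\Z$ preserves the system and the solution in $(1 + v\F'[\![v]\!])^n$ is unique.
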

\begin{proof} By definition, there exists an eigenbasis $\beta$ such that $A^{(j)} = D_j \tld{w}_j$ for $D_j \in T(\F'[\![v]\!])$.  Let $\ovl{D}_j = D_j \mod v$.   For $0 \leq j \leq f'-1$, set $I^{(j+1)} = \ovl{D}_{j} D_{j}^{-1} \in \cI(\F')$ with $j$ considered mod $f$.    Then $(I^{(j)}) \in \cI(\F')^{f'}$ defines an $f'$-tuple as in Remark \ref{rmk:changeofbasistuple}. 
By Proposition \ref{prop:changeofbasis}, there is an eigenbasis $\beta_1$ for $\ovl{\fM}$ such that the matrix of partial Frobenius with respect to $\beta_1$ is 
\[
A^{(j)}_1 = I^{(j+1)} A^{(j)} s_j^*(I^{(j), \phz})^{-1} (s_j^*)^{-1} = \ovl{D}_j \tld{w}_j  s_j^*(I^{(j), \phz})^{-1} (s_j^*)^{-1}.
\]
Since $I^{(j)} \equiv 1 \mod v$ and is an element of $T(\F'[\![v]\!])$, $s_j^*(I^{(j), \phz})^{-1} (s_j^*)^{-1} \equiv 1 \mod v^p$ and is an element of $T(\F'[\![v]\!])$.   We conclude that 
$
A^{(j)}_1 = D_{1, j} \tld{w}_j
$
where $D_{1,j} \equiv \ovl{D}_j \mod v^p$.  Repeating this process, we can inductively construct a sequence of eigenbases $\beta_m$ such that the matrix of partial Frobenius with respect to $\beta_m$ has the form $D_{m, j} \tld{w}_j$ where $D_{m,j} \equiv  \ovl{D}_j\mod v^{p^m}$.  The sequence $\beta_m$ converges to an eigenbasis with the desired property.   
\end{proof}
	
\begin{cor} \label{cor:phiandKisin}  Let $(s, \mu)$ be a lowest alcove presentation for $\tau$.  If $\overline{\fM} \in Y^{[0,h], \tau}(\F')$ is semisimple of shape $\tld{w}  \in \bun{\tld{W}}^{\vee}$, then $T_{dd}^*(\ovl{\fM})$ is semisimple and (after extending coefficients)
\[
T_{dd}^*(\overline{\fM})|_{I_{K}} \cong \bV^*(\cM(\widetilde{w} s^* t_{\mu^* + \eta^*}))|_{I_{K}} \cong \ovl{\tau}(w, \nu + \eta)
\] 
where $\widetilde{w} s^* t_{\mu^*} = w^* t_{\nu^*}$. 
\end{cor}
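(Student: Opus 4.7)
The plan is to compute the \'etale $\varphi$-module attached to $\ovl{\fM}$ in a good basis, identify it with $\cM(\tld{w}s^* t_{\mu^*+\eta^*}, D)$ for some $D\in \un{T}(\F)$, and then invoke Proposition \ref{Galoistotype} to deduce both isomorphisms and the semisimplicity simultaneously.

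First, I apply Proposition \ref{simpleform} to replace the given eigenbasis $\beta$ of $\ovl{\fM}$ by one for which the partial-Frobenius matrices are strictly monomial: $A^{(j)} = D_j \tld{w}_j$ with $D_j\in T(\F')$. By the $\sigma^f$-invariance built into the definition of an eigenbasis, this extends from $0\leq j\leq f-1$ to all $j$, and reduces the corollary to an explicit matrix calculation on the \'etale $\varphi$-module $\cM := \ovl{\fM}[1/u']^{\Delta=1}$ obtained by inverting $u'$ and descending along $\Delta=\Gal(L'/K)$.

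The heart of the argument is to translate the matrices $A^{(j)}=D_j\tld{w}_j$ into the matrix of the Frobenius on $\cM$ in the basis induced by $\beta$ after rescaling by appropriate powers of $u'$; this rescaling is possible precisely because $\Delta'$ acts on each $f_i^{(j)}$ through the character $\chi_i$ of \eqref{pres}, so that after rescaling, the basis becomes $\Delta$-invariant up to Teichm\"uller factors that are absorbed into a diagonal $D$. Three ingredients must be combined in this translation: the shift by $(u')^{-\bf{a}'_{(s,\mu)}}$ built into the definition of the partial Frobenius, the reordering of isotypic pieces according to the orientation $s'_{\mathrm{or}}$, and the descent twist by the $\chi_i$ themselves. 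Using \eqref{primeorient}, Remark \ref{rmk:compare}, and the identity $(s'_{\mathrm{or}, j+1})^{-1} s'_{\mathrm{or}, j} = s_j^*$ that already appeared in the proof of Proposition \ref{prop:changeofbasis}, one checks that these contributions combine into right-multiplication of $\tld{w}$ by exactly $s^* t_{\mu^*+\eta^*}$. This yields $\cM\cong \cM(\tld{w}s^* t_{\mu^*+\eta^*}, D)$ for some $D\in \un{T}(\F)$, possibly after enlarging $\F$.

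The first isomorphism in the statement then holds by construction. The second, together with the explicit form $\ovl{\tau}(w,\nu+\eta)$, comes from Proposition \ref{Galoistotype} applied to $\tld{w}s^* t_{\mu^*+\eta^*}=w^* t_{\nu^*+\eta^*}$, which in the notation of that proposition has permutation part $w^*$ and translation part $\nu^*+\eta^*$, giving $\ovl{\tau}((w^*)^*,(\nu^*+\eta^*)^*)=\ovl{\tau}(w,\nu+\eta)$. Semisimplicity of $T_{dd}^*(\ovl{\fM})$ is then automatic: it is a tame $G_K$-representation whose restriction to $I_K$ is a direct sum of distinct characters, and such a representation is determined by its inertial restriction together with an unramified twist of a Frobenius element, hence is semisimple. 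I expect the main obstacle to be the matrix bookkeeping in the translation step: pinning down the translation part as $t_{\mu^*+\eta^*}$ rather than some other element in the same $\cI(\F)$-double coset forces one to track very carefully the interaction between the adjoint $\tld{w}\mapsto \tld{w}^*$, the orientation $s'_{\mathrm{or}}$, and the shift $\bf{a}'_{(s,\mu)}$.
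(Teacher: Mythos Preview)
Your proposal is correct and follows essentially the same approach as the paper: apply Proposition \ref{simpleform}, rescale the eigenbasis by powers of $u'$ to pass to $\ovl{\fM}[1/u']^{\Delta'=1}$, reorder using the orientation $s'_{\mathrm{or}}$, use the identity $(s'_{\mathrm{or},j+1})^{-1}s'_{\mathrm{or},j}=s_j^*$ together with \eqref{primegen} to identify the Frobenius matrix as $A^{(j)}s_j^* v^{\mu_j^*+\eta_j^*}$, descend along $\sigma^f$, and conclude via Proposition \ref{Galoistotype}. One small correction: your semisimplicity justification (``direct sum of \emph{distinct} characters on $I_K$'') is not quite right, since the inertial characters of $\ovl{\tau}(w,\nu+\eta)$ need not be distinct; the cleaner argument is that $T_{dd}^*(\ovl{\fM})$ is tame (Proposition \ref{Galoistotype}), hence factors through a finite quotient of order prime to $p$, and Maschke's theorem gives semisimplicity.
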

\begin{proof}

The second isomorphism is from Proposition \ref{Galoistotype}. 
This first isomorphism  follows from a direct computation of the \'etale $\varphi$-module $\overline{\cM} = (\overline{\fM}[1/u'])^{\Delta = 1}$ as in \cite[Proposition 3.2.1]{LLLM2}. We briefly go through the main points.  

Let $\beta = (\beta^{(j)})$ be an eigenbasis for $\ovl{\fM}$ as in Proposition \ref{simpleform}.   We start by considering a basis $\tld{\beta}'$ for $\ovl{\cM}' :=(\overline{\fM}[1/u'])^{\Delta' = 1}$ as follows:  for $0 \leq j' \leq f'-1$, if $\beta^{(j')} =(f_1^{(j')}, \ldots, f_n^{(j')})$, define $\tld{\beta}^{\prime, (j')} =  ( (u')^{\bf{a}^{\prime \, (j')}_{(s, \mu). 1}}f_1^{(j')}, \ldots,  (u')^{\bf{a}^{\prime \, (j')}_{(s, \mu), n}} f_n^{(j')})$ which is a basis for $\ovl{\cM}^{\prime \, (j')}$.  The matrix for $\phi_{\ovl{\cM}'}^{(j')}:\ovl{\cM}^{\prime \, (j')} \ra \ovl{\cM}^{\prime \, (j'+1)}$ with respect to $\tld{\beta}'$ is given by 
\[
s'_{\mathrm{or}, j'+1} A^{(j')} (s'_{\mathrm{or}, j'+1})^{-1} (u')^{p \bf{a}^{\prime \, (j')}_{(s,\mu)} - \bf{a}^{\prime \, (j'+1)}_{(s,\mu)}}.
\]
Since $p \bf{a}^{\prime \, (j')}_{(s,\mu)} - \bf{a}^{\prime \, (j'+1)}_{(s, \mu)} = (p^{f'} -1) \bm{\alpha}'_{(s, \mu), f'-1-j'}$, this is same as 
\[
s'_{\mathrm{or}, j'+1} A^{(j')} (s'_{\mathrm{or}, j'+1})^{-1}  v^{\bm{\alpha}'_{(s, \mu), f'-1-j'}}.  
\]
Define $\tld{\beta}$ by $\tld{\beta}^{(j')} = \tld{\beta}^{\prime \, (j')} s'_{\mathrm{or}, j'}$. Let $j' = j + if$ for $0 \leq j \leq f-1$.   Then the matrix for $\phi_{\ovl{\cM}'}^{(j')}$ with respect to $\tld{\beta}$ is given by 
\[
 A^{(j')} (s'_{\mathrm{or}, j'+1})^{-1}  s'_{\mathrm{or},j'} v^{(s'_{\mathrm{or},j'})^{-1} (\bm{\alpha}'_{(s, \mu), f'-1-j'})} =  A^{(j')} s^*_j v^{\mu^*_j + \eta^*_j}
\] 
using \eqref{primeorient} and \eqref{primegen}.   Since $(\sigma^f)^*(\tld{\beta}^{(j')}) = \tld{\beta}^{(j' -f)}$, this descends to a basis of $\ovl{\cM} := (\overline{\fM}[1/u'])^{\Delta = 1} = (\ovl{\cM}')^{\sigma^f = 1}$ such that Frobenius $\phi^{(j)}_{\ovl{\cM}}:\ovl{\cM}^{(j)} \ra \ovl{\cM}^{(j+1)}$ is given by 
\[
A^{(j)} s^*_j v^{\mu^*_j + \eta^*_j} = D_j \widetilde{w}_j s^*_j v^{\mu^*_j + \eta^*_j}
\] 
for $D = (D_j) \in T(\F')^f$ using Proposition \ref{simpleform}.   Thus, $\ovl{\cM} \cong \cM(\widetilde{w} s^* t_{\mu^* + \eta^*}, D)$ (Definition \ref{defn:Wphi}). 
\end{proof}

Before proving the main theorems of the section, we show that in generic situations, the Kisin module which gives rise to $\rhobar$ is necessarily unique (if one exists), i.e., the Kisin variety is trivial. 
	
\begin{prop} \label{Kisinvariety}  Assume that $\tau$ is generic.
\begin{enumerate}
\item
Let $\overline{\fM}, \overline{\fM}' \in  Y^{\eta, \tau}(\F')$ for some finite extension $\F'/\F$.  If $T^*_{dd}(\overline{\fM}) \cong T^*_{dd}(\overline{\fM'})$, then $\overline{\fM} \cong \overline{\fM'}$.  
\item Let $\ovl{\fM}  \in  Y^{\eta, \tau}(\F')$ and $\rhobar := T^*_{dd}(\ovl{\fM})$. Define a groupoid
$$
\mathfrak{t}_{\overline{\fM}} = \Big\{ (\fM, \delta_0) \mid \fM \in Y^{\eta, \tau}(\F'[\eps]/\eps^2),\ \delta_0:\fM/\eps\fM \stackrel{\sim}{\longrightarrow}\overline{\fM} \Big\}.
$$

Then the functor $T_{dd}^*$ induces a fully faithful functor
$$
T_{\mathrm{tan}}^*: \mathfrak{t}_{\overline{\fM}} \ra \Rep_{\F'[\eps]/\eps^2}(G_{K_{\infty}})_{\rhobar} = \Big\{ (\rho, \gamma_0) \mid \rho \in \Rep_{\F'[\eps]/\eps^2}(G_{K_{\infty}}),\ \gamma_0:\rho \! \mod \eps \stackrel{\sim}{\longrightarrow}\rhobar \Big\}.
$$   
\end{enumerate}
\end{prop}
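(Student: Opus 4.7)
The plan is to view both parts as rigidity statements about lattices inside a common \'etale $\varphi$-module with descent data, adapting the $n=3$ arguments of \cite{LLLM}. By reducing to the base change $\tau'$ (and using that eigenbases are required to be $(\sigma^f)^*$-equivariant), I may work with principal series types throughout.

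For part (1), I would start by fixing an isomorphism $T^*_{dd}(\overline{\fM}) \cong T^*_{dd}(\overline{\fM}')$ and interpreting both $\overline{\fM}$ and $\overline{\fM}'$ as $\fS_{L',\F'}$-lattices inside a common object $\overline{\cM}$ of $\Phi\text{-}\Mod^{\text{\'et}}_{L'}(\F')$. The shape of a Kisin module depends only on the semisimplification of $T^*_{dd}$ through $\overline{\tau}$ and the relative position of the lattice (compare Corollary \ref{cor:phiandKisin}), so the shapes of $\overline{\fM}$ and $\overline{\fM}'$ coincide and both lie in $\Adm^{\vee}(\eta)$. Choosing eigenbases $\beta, \beta'$ respectively, the linear-algebra data of the two lattices inside $\overline{\cM}$ differ by a change-of-basis tuple $I=(I^{(j)}) \in \cI(\F'((v)))^{f'}$ as in Remark \ref{rmk:changeofbasistuple}. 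The height $\leq \eta$ condition forces $v^{n-1}(A^{(j)})^{-1}$ and $v^{n-1}(A'^{(j)})^{-1}$ to be integral; combined with Lemma \ref{bound} this bounds the valuations occurring in $I^{(j)}$. Using genericity of $\tau$ (which by Proposition \ref{prop:alcoveC0} implies the lowest alcove presentation is $(n{-}1)$-deep), the correction term $I^{(j),\varphi}$ in Proposition \ref{prop:changeofbasis} lies in $\cD_{m+1}(\F')$ for large $m$, and a successive approximation argument (as in Proposition \ref{simpleform}) shows that $I$ can be modified by self-changes-of-basis of $\overline{\fM}$ and $\overline{\fM}'$ so as to become the identity. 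This produces an isomorphism $\overline{\fM} \cong \overline{\fM}'$.

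For part (2), I would run exactly the same argument over the base $\F'[\eps]/\eps^2$. Fully faithfulness amounts to two checks: (a) an automorphism of $\fM \in \mathfrak{t}_{\overline{\fM}}$ which acts trivially on $T^*_{dd}(\fM)$ and induces the identity on $\overline{\fM}$ is necessarily the identity; and (b) any Galois-compatible isomorphism $T^*_{dd}(\fM) \cong T^*_{dd}(\fM')$ compatible with $\delta_0,\delta_0'$ comes from a unique morphism of Kisin modules lifting the identity mod $\eps$. Both reduce, after fixing eigenbases lifting those provided by part (1), to analysing a change-of-basis tuple $I \in \cI(\F'[\eps]/\eps^2)^{f'}$ whose mod-$\eps$ reduction is trivial. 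Its $\eps$-part lives in $\Mat_n(\F'((v)))^{f'}$, and the height-$\eta$ plus genericity constraints give a linearised version of the rigidity from part (1) forcing the $\eps$-part to arise from a unique morphism of Kisin modules.

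The main obstacle is the rigidity step in part (1): one must show that the change-of-basis tuple $I$ connecting the two lattices can be normalized to the identity modulo genuine automorphisms. For $\GL_3$ this was done in \cite{LLLM} by explicit case analysis of the admissible set, but for general $\GL_n$ one needs a uniform argument, combining the bound from Lemma \ref{bound} on translation parts of admissible elements with the depth provided by $n$-genericity to ensure convergence of the successive approximation. Once this rigidity is in place, both parts follow along parallel lines.
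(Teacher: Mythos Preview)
Your overall strategy is on the right track---view both $\overline{\fM}$ and $\overline{\fM}'$ as lattices in a common \'etale $\varphi$-module with descent data and argue that the transition matrix must be integral, following \cite[Theorem 3.2]{LLLM} for part (1) and \cite[Proposition 3.4]{LLLM} for part (2). But several steps in your writeup are either unnecessary or based on misconceptions.

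First, the claim that the shapes of $\overline{\fM}$ and $\overline{\fM}'$ coincide because ``the shape depends only on the semisimplification of $T^*_{dd}$'' is circular: that the shape is determined by $T^*_{dd}(\overline{\fM})$ is exactly what this proposition establishes (it is used to \emph{define} $\tld{w}(\rhobar,\tau)$ in Definition~\ref{defn:shaperhobar}). Fortunately, you do not need this step at all; the argument works without any reference to shapes. Second, Lemma~\ref{bound} is about translation parts of admissible elements and plays no role here; the only inputs are the height bound $v^{n-1}(A^{(j)})^{-1}\in M_n(\F'[\![v]\!])$ and the depth of $\mu$.

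Most importantly, your ``main obstacle'' rests on a misreading of \cite{LLLM}. Theorem~3.2 there does \emph{not} proceed by case analysis of the admissible set; it is a uniform argument. One writes the change-of-basis relation from Proposition~\ref{prop:changeofbasis} with $D^{(j)}\in\GL_n(\F'(\!(u')\!))$ a priori, and then shows by an iterative pole-order estimate---using only that the height is $\leq n-1$ and that $\mu$ is $n$-deep---that any negative $u'$-valuation in $D^{(j)}$ would propagate to a contradiction. The passage from $\GL_3$ to $\GL_n$ is literally ``replace $2$ by $n-1$'' in those estimates, as the paper's proof states. There is no need to ``normalize $I$ to the identity via successive approximation modulo automorphisms''; once $D^{(j)}\in\GL_n(\F'[\![u']\!])$ you immediately have $\overline{\fM}\cong\overline{\fM}'$. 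Your plan for part (2) is fine in outline and matches the paper's approach.
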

\begin{proof} We fix a lowest alcove presentation $(s,\mu)$ of $\tau$ such that $\mu$ is $n$-deep in alcove $\bun{C}_0$ to perform all calculations.

For part (1), since the $G_{K_{\infty}}$-representations are isomorphic, we have $\overline{\fM}[1/u'] \cong \overline{\fM}'[1/u']$ as \'etale $\varphi$-modules with descent datum. We pick two eigenbases $\beta$, $\beta'$ of $\overline{\fM}$, $\overline{\fM}'$, and let $(D^{(j)})\in (\GL_n(\F'((u'))))^{f'}$ be the $f'$-tuple which expresses the basis $\beta'$ in terms of $\beta$ as in Proposition \ref{prop:changeofbasis}. Note that $D^{(j)}=D^{(j+f)}$ since our eigenbases are compatible with the action of $\sigma^f$. The same computation in the proof of \cite[Theorem 3.2]{LLLM} with $2$ replaced everywhere by $n-1$ (cf. Remark \ref{rmk:comparison} for the comparison of the genericity in the present paper with that of \cite{LLLM}) now shows that $D^{(j)}\in \GL_n(\F' [\![u']\!])$ for all $j$.

Part (2) is similar to part (1).  The argument in the proof of \cite[Proposition 3.4]{LLLM} adapts to our situation.

\end{proof}

\begin{defn} \label{defn:shaperhobar} Assume that $\tau$ is generic with a chosen lowest alcove presentation $(s,\mu)$. If there exists $\ovl{\fM} \in Y^{\eta, \tau}(\F)
$ such that $T^*_{dd}(\overline{\fM}) \cong \rhobar|_{G_{K_{\infty}}}$, then define $\tld{w}(\rhobar, \tau) \in \Adm^{\vee}(\eta)$ to be the shape of $\overline{\fM}$.  This is well-defined by Proposition \ref{Kisinvariety}. 
\end{defn}

A key input for weight elimination is the following: 

\begin{thm} \label{thm:potcris} Assume either $(1)$ $\tau$ is a regular principal series type or $(2)$ $\lambda = \eta$ and $\tau$ is generic.  If $\rhobar$ has a potentially crystalline lift with type $(\lambda, \tau)$ where $\lambda$ is effective, there is a Kisin module $\overline{\fM} \in  Y^{\lambda, \tau}(\F) $ such that $T^*_{dd}(\overline{\fM}) \cong \rhobar|_{G_{K_{\infty}}}$. 
\end{thm}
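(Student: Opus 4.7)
The plan is to produce $\overline{\fM}$ as the mod $\varpi$ reduction of an integral Kisin module attached to a Galois-stable lattice in the given potentially crystalline lift, and then to upgrade the height bound so as to land in the finer closed substack $Y^{\lambda,\tau}$. Let $\rho:G_K\to\GL_n(E')$ be a potentially crystalline lift of $\rhobar$ of type $(\lambda,\tau)$, possibly defined over a finite extension $E'/E$ with ring of integers $\cO'$ and residue field $\F'$, and choose a $G_K$-stable $\cO'$-lattice $\rho^\circ\subset\rho$. By Kisin's theory of integral $p$-adic Hodge theory \cite{KisinFcrys}, extended with tame descent as in \cite[Section 3]{CL} and \cite[Section 6]{LLLM}, there is a functorially associated Kisin module $\fM(\rho^\circ)$ over $\fS_{L',\cO'}$ with descent datum of type $\tau$ and height in $[0,h]$ for $h=\max_{\alpha^\vee}\langle\lambda,\alpha^\vee\rangle$, and this assignment is compatible with reduction mod $\varpi$. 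Setting $\overline{\fM}':=\fM(\rho^\circ)\otimes_{\cO'}\F'$ thus produces an object of $Y^{[0,h],\tau}(\F')$ satisfying $T^*_{dd}(\overline{\fM}')\cong\rhobar|_{G_{K_\infty}}\otimes_\F\F'$.

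The core of the argument is to show that $\fM(\rho^\circ)$ is actually an $\cO'$-point of the closed substack $Y^{\lambda,\tau}\subset Y^{[0,h],\tau}$; its reduction then lies in $Y^{\lambda,\tau}(\F')$, and descent to $\F$ follows from the triviality of the Kisin variety in generic situations (Proposition \ref{Kisinvariety}, applied to the $\Gal(\F'/\F)$-conjugate objects). The rigid generic fiber $\fM(\rho^\circ)[1/p]$ has Hodge type exactly $\lambda$ because $\rho$ has Hodge--Tate weights $\lambda$, so it corresponds to a point of the generic fiber of $Y^{\lambda,\tau}$. In case (1), $\tau$ is principal series, and Kisin's classification of $\cO$-flat finite height $\varphi$-modules combined with the local model description of $Y^{\lambda,\tau}$ via the Pappas--Zhu model $M(\lambda)$ from \cite{CL} forces $\cO'$-flat lifts of $\F'$-points of $Y^{[0,h],\tau}$ whose generic fiber has Hodge type $\lambda$ to lie in $Y^{\lambda,\tau}$. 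For case (2), I would reduce to case (1) by base changing along the unramified extension $K'/K$ of degree $r$ over which $\tau$ becomes the principal series type $\tau'$: an object of $Y^{[0,h],\tau}(\cO')$ is equivalent to an object of $Y^{[0,h],\tau'}(\cO')$ together with a $\sigma^f$-descent isomorphism, and under this equivalence $Y^{\eta,\tau}$ corresponds to $Y^{\eta,\tau'}$, so case (1) applied to $\rho|_{G_{K'}}$ and $\tau'$ yields the desired conclusion.

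The main obstacle is precisely this integral refinement in the second step. The a priori bound from Kisin's theory only gives finite height $[0,h]$, and promoting this to the admissible shape condition cutting out $Y^{\lambda,\tau}$ requires the explicit local model description of \cite{CL}: one must identify the condition ``Hodge type $\lambda$'' on the generic fiber with a scheme-theoretic condition that propagates to $\cO'$-flat points. The base change trick bypasses the most delicate part in the cuspidal setting, but its validity rests on the compatibility between $Y^{\lambda,\tau}$ and $Y^{\lambda,\tau'}$ under restriction from $K$ to $K'$ established in \cite[Section 6]{LLLM}.
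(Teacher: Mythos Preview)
Your approach is essentially correct and parallels the paper's: produce an integral Kisin module from a lattice in the lift via Kisin's theory with tame descent, then invoke the local-model results of \cite{CL} (which rely on \cite{PZ}) to land in $Y^{\lambda,\tau}$. For case~(1) this is exactly what the paper does, citing \cite[Proposition~5.4 and Corollary~5.18]{CL}.

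For case~(2) your route differs from the paper's in an interesting way. The paper first applies case~(1) over $K'$ to obtain $\ovl{\fM}'\in Y^{\eta,\tau'}(\F)$ \emph{without} the $\sigma^f$-descent, and must then construct that descent mod~$p$: since $\rhobar$ is a $G_{K_\infty}$-representation, the \'etale $\varphi$-module $\ovl{\fM}'[1/u']$ carries a $\sigma^f$-descent satisfying the cocycle condition, and the paper uses the triviality of the Kisin variety (Proposition~\ref{Kisinvariety}, which requires $\tau$ generic) to show this descent preserves the lattice $\ovl{\fM}'$. You instead build $\fM(\rho^\circ)$ directly in characteristic~$0$ with the full $\Delta$-descent, which comes for free from $\rho$ being a $G_K$-representation; the shape condition is then checked after forgetting to $\tau'$. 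Your route is more direct and, notably, does not actually use the genericity of $\tau$ here, whereas the paper's argument genuinely needs it for Proposition~\ref{Kisinvariety}.

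One small gap: you invoke Proposition~\ref{Kisinvariety} to descend the coefficient field from $\F'$ to $\F$, but that proposition is only stated for $\tau$ generic and $\lambda=\eta$, so it does not apply in case~(1). This is easily avoided by enlarging $E$ (and hence $\F$) at the outset so that the lift $\rho$ is already defined over $E$, which is consistent with the paper's standing convention that $E$ is sufficiently large; the paper's own proof implicitly does this (it writes $\ovl{\fM}'\in Y^{\eta,\tau'}(\F)$ without passing to an extension).
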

\begin{proof}
If $\tau$ is a principal series type, this is direct consequence of \cite[Proposition 5.4 and Corollary 5.18]{CL} (also see Remark 5.6 in \emph{loc. cit.} which compares the stratification of the moduli of Kisin module with Definition \ref{defn:shape}).  Proposition 5.4 in \emph{loc. cit.} crucially uses results about local models from \cite{PZ}.

For case (2), by the first case, there exists a Kisin module $\ovl{\fM}' \in  Y^{\eta, \tau'}(\F)$ with $T^*_{dd, K'}(\overline{\fM}') = \rhobar|_{G_{K'_{\infty}}}$ where $T^*_{dd, K'}$ is the analogous functor over $K'$.  Since $\rhobar$ extends to $G_{K_{\infty}}$, we have an isomorphism $\iota:(\sigma^f)^*(\ovl{\fM}'[1/u']) \cong \ovl{\fM}'[1/u']$ (satisfying an appropriate cocycle condition, cf. \cite[\S 6.1]{LLLM} ).  Since $\tau$ is generic, by the proof of Proposition \ref{Kisinvariety}, $\iota((\sigma^f)^*(\ovl{\fM}')) = \ovl{\fM}'$ inside $\ovl{\fM}'[1/u']$. Thus, $\ovl{\fM}'$ defines an element of $Y^{\eta, \tau}(\F)$ and hence has shape in $\Adm^{\vee}(\eta)$.     
\end{proof}

\begin{rmk} Our definition of $\Adm^{\vee}(\lambda)$ (Definition \ref{defn:adm}) is in terms of the Bruhat order on the (extended) affine Weyl group for $\GL_n$ using our choice of antidominant base alcove for the standard apartment.  The Bruhat order which appears in \cite[\S 5]{CL} comes from the closure relation on the affine flag variety with respect to the standard Iwahori subgroup, the subgroup $\cI(\F)$ of matrices which are upper triangular mod $v$. The subgroup $\cI(\F)$ is the stabilizer of the antidominant base alcove, hence the Bruhat order (and hence the admissible set as well) in \emph{loc. cit.} is the same as ours.  
\end{rmk}


The following Proposition gives us control on $Y^{\eta, \tau}(\F')$:
\begin{prop} \label{gauge basis} Assume $\tau$ is generic. Let $\overline{\fM}\in   Y^{[0,n-1], \tau}(\F')$, $\beta$ an eigenbasis of $\overline{\fM}$ and $A^{(j)} = \Mat_{\beta}\big(\phi_{\overline{\fM}, s'_{\mathrm{or}, j+1}(n)}^{(j)} \big)$. Then the assignment $\beta' \mapsto ( \Mat_{\beta'}\big(\phi_{\overline{\fM},s'_{\mathrm{or}, j+1}(n)}^{(j)} \big))_{0\leq j<f'}$ defines a bijection between the set of eigenbases $\beta'$ such that $\beta'=\beta$ mod $u'$ and the set tuples of the form $(X_jA^{(j)})_{0\leq j<f'}$ such that:
\begin{itemize}
\item $X_j\in \cI_1(\F')$ for all $j$.
\item $X_j=X_k$ if $j \equiv k$ mod $f$.
\end{itemize}
\end{prop}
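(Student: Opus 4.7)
The plan is to leverage Remark \ref{rmk:changeofbasistuple}, which already identifies the set of all eigenbases of $\overline{\fM}$ with the set of $f$-periodic tuples $(I^{(j)})_{0\leq j<f'}\in \cI(\F')^{f'}$. With that in hand, I would carry out three steps: (i) identify the sub-locus of $(I^{(j)})$'s corresponding to the additional condition $\beta'\equiv\beta\pmod{u'}$; (ii) use Proposition \ref{prop:changeofbasis} to put the resulting partial Frobenius matrices in the claimed form $X_j A^{(j)}$ with $X_j \in \cI_1(\F')$; (iii) verify that the resulting map between $f$-periodic tuples is a bijection.

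For step (i), writing $D^{(j)}\in \GL_n(\F'[\![u']\!])$ for the raw change of basis matrix from $\beta$ to $\beta'$, the condition $\beta'\equiv \beta \pmod{u'}$ is equivalent to $D^{(j)}\equiv 1\pmod{u'}$. Unwinding the relation $I^{(j)}=\Ad\bigl((s'_{\mathrm{or},j})^{-1}(u')^{-\bf{a}'^{(j)}_{(s,\mu)}}\bigr)(D^{(j)})$ together with the $\Delta'$-equivariance of $D^{(j)}$ (which pins down each entry $D^{(j)}_{ab}$ to a specific residue class of $u'$-exponents modulo $e'$), a direct calculation translates $D^{(j)}\equiv 1\pmod{u'}$ into $I^{(j)}\in \cI_1(\F')$. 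For step (ii), Proposition \ref{prop:changeofbasis} gives
\[
A'^{(j)} \,=\, I^{(j+1)}\,A^{(j)}\,s_j^*(I^{(j),\phz})(s_j^*)^{-1} \,=\, \bigl(I^{(j+1)}\cdot A^{(j)}\,s_j^*(I^{(j),\phz})(s_j^*)^{-1}(A^{(j)})^{-1}\bigr)A^{(j)}.
\]
Since $\tau$ is generic, $\mu$ lies deep enough in $\bun{C}_0$ that $I^{(j),\phz}\in\cD_{n+1}(\F')$; combined with $I^{(j)}\in\cI_1(\F')$ (which upgrades the diagonal of $I^{(j),\phz}$ to be congruent to $1$ modulo $v^p$) and the height bound $v^{n-1}(A^{(j)})^{-1}\in M_n(\F'[\![v]\!])$, a short computation shows that the parenthesized factor lies in $\cI_1(\F')$, yielding $X_j\in\cI_1(\F')$. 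The $f$-periodicity of $(X_j)$ is inherited from that of $(I^{(j)})$, $(A^{(j)})$, and $(s_j^*)$.

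The main obstacle will be step (iii). Solving the defining relation for $I^{(j+1)}$ yields a recursion $I^{(j+1)}=X_j\cdot A^{(j)}\,s_j^*(I^{(j),\phz})^{-1}(s_j^*)^{-1}(A^{(j)})^{-1}$, so that the tuple $(I^{(j)})_{j=1}^{f-1}$ is determined by $I^{(0)}$ and the data $(X_j)$, and the $f$-periodicity requirement $I^{(f)}=I^{(0)}$ reduces to a single fixed-point equation $I^{(0)}=\Phi(I^{(0)})$ on $\cI_1(\F')$. The hard part will be to show that $\Phi$ is a strict $v$-adic contraction: its dependence on the input factors through $\phz$ at each step, which raises $v$-adic valuations by a factor of $p$, whereas the intervening conjugations by $v^{\mu^*+\eta^*}$ and by $A^{(j)}$ only decrease $v$-valuations by amounts bounded by the depth of $\mu$ and by $n-1$ respectively. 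Since $p>n$, these losses should not defeat the $p$-fold gain from $\phz$, so $\Phi$ contracts on the complete metric space $\cI_1(\F')$ and has a unique fixed point, simultaneously establishing injectivity and surjectivity.
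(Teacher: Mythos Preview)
Your approach is essentially the same as the paper's. The paper also reduces everything to Remark~\ref{rmk:changeofbasistuple} and the change-of-basis formula, establishes the equivalence $\beta'\equiv\beta\pmod{u'}\Leftrightarrow I^{(j)}\in\cI_1(\F')$, checks that $X_j\in\cI_1(\F')$ using $I^{(j),\phz}\equiv 1\pmod{v^{n+1}}$ together with the height bound $v^{n-1}(A^{(j)})^{-1}\in M_n(\F'[\![v]\!])$, and then proves bijectivity by iteration. The only organizational difference is that the paper treats surjectivity (via successive approximation, citing \cite[Lemma 2.20]{LLLM}) and injectivity (via a direct valuation-growth argument) separately, whereas you package both into a single contraction/fixed-point statement; the underlying estimates are identical.

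One small correction to your bookkeeping: the loss in $v$-valuation from $\Ad(v^{\mu^*+\eta^*})$ is \emph{not} bounded by the depth of $\mu$, but rather by $\max_{\alpha}|\langle\mu^*+\eta^*,\alpha^\vee\rangle|$, which for $n$-deep $\mu$ can be as large as $p-n-1$. Together with the loss of $n-1$ from $\Ad(A^{(j)})$, the total loss per step can be close to $p-2$, not $2n$. The contraction still works because the gain from $\phz$ is multiplicative: valuation $\delta\mapsto p\delta$, so the net change is $\geq p\delta-(p-2)=p(\delta-1)+2>\delta$ for $\delta\geq 1$. For $\delta=0$ (two arbitrary elements of $\cI_1$ need not agree modulo $v$) you need the additional a~priori bound $I^{(j),\phz}\equiv 1\pmod{v^{n+1}}$, which forces the output difference to have valuation $\geq 2$ regardless; this is the same ``base case'' the paper uses to start its injectivity induction at $\delta=2$. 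With that in hand your contraction argument is complete.
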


\begin{proof}
Throughout this proof, we adopt the same notation as in Proposition \ref{prop:changeofbasis}: 
We let $(D^{(j)})$ be the tuple of matrices expressing $\beta$ in terms of $\beta'$, from which we get the matrices $I^{(j)}$, $I^{(j), \phz}$. Observe that the condition $\beta' \equiv \beta$ mod $u'$ is equivalent to $D^{(j)} \equiv 1$ mod $u'$ and also equivalent to $I^{(j)}\in\cI_1(\F')$. If these equivalent conditions hold then $I^{(j), \phz} \equiv 1$ mod $v^{n+1}$, since $\tau$ is assumed to be generic. We also observe that if $j \equiv k$ mod $f$, then the $j$-th and $k$-th component of any tuple of matrices that we work with are equal.

Since $\overline{\fM}\in   Y^{[0,n-1], \tau}(\F')$ we have $v^{n-1}(A^{(j)})^{-1}\in M_n(\F'[\![v]\!])$. This gives us a bound on the denominators of $(A^{(j)})^{-1}$.

First we check that our assignment is actually defined, in the sense that the collection $A'^{(j)}= \Mat_{\beta'}\big(\phi_{\overline{\fM}, s'_{\mathrm{or}, j+1}(n)}^{(j)} \big)$ associated to $\beta'$ is really of the  prescribed form.

By Proposition \ref{prop:changeofbasis} we have
\[
X_j=A'^{(j)}(A^{(j)})^{-1}= I^{(j+1)}A^{(j)} \big(s^*_j\big(I^{(j), \phz}\big) (s^*_j)^{-1}\big)(A^{(j)})^{-1}
\]
It is clear that $X_j=X_k$ if $j \equiv k$ mod $f$.
As observed above, $I^{(j), \phz} \equiv 1$ mod $v^{n+1}$ hence we can write $ s^*_j\big(I^{(j), \phz}\big) (s^*_j)^{-1}=1+v^{n+1}Y_j$ with $Y_j\in M_n(\F'[\![v]\!])$.  Since  $v^{n-1}(A^{(j)})^{-1}\in M_n(\F'[\![v]\!])$, we get
\[X_j=A'^{(j)}(A^{(j)})^{-1}= I^{(j+1)}(1+v^{n+1}A^{(j)} Y_j(A^{(j)})^{-1})\in \cI_1(\F')\]
as desired.

Next, we show that our assignment is surjective. Thus we are given $(X_j)$ with $X_j\in \cI_1(\F')$, and we need to solve the system of equations 
\[X_j= I^{(j+1)}A^{(j)}  \big(s^*_j\big(I^{(j), \phz}\big) (s^*_j)^{-1}\big)(A^{(j)})^{-1}\]
with $I^{(j)}\in\cI_1(\F')$. To do this, we carry out the same limiting procedure as in the proof of Lemma 2.20 of \cite{LLLM}, using $I^{(j), \phz} \equiv 1$ mod $v^{n+1}$ and $v^{n-1}(A^{(j)})^{-1}\in M_n(\F'[\![v]\!])$ to establish convergence to a solution.

Finally, we show that our assignment is injective. This means that we have to show that if there are two collections $(I^{(j)}), (I'^{(j)})\in \cI_1(\F')^{f'}$ corresponding to eigenbases $\beta', \beta''$ such that
\[ I^{(j+1)}A^{(j)} \big(s^*_j\big(I^{(j), \phz}\big) (s^*_j)^{-1}\big)(A^{(j)})^{-1}= I'^{(j+1)}A^{(j)} \big(s^*_j\big(I'^{(j), \phz}\big) (s^*_j)^{-1}\big)(A^{(j)})^{-1}\]
then $I^{(j)}=I'^{(j)}$ for all $j$. By replacing $\beta$ with $\beta''$ and $A^{(j)}$ with $A''^{(j)}$, we reduce to the case when $I'^{(j)}=1$. Thus it suffices to show that if 
\[ I^{(j+1)}A^{(j)} \big(s^*_j\big(I^{(j), \phz}\big) (s^*_j)^{-1}\big)(A^{(j)})^{-1}=1\]
then $I^{(j)}=1$ for all $j$.

Indeed, by the observations at the beginning of the proof, $A^{(j)} \big(s^*_j\big(I^{(j), \phz}\big) (s^*_j)^{-1}\big)(A^{(j)})^{-1} \equiv 1$ mod $v^2$ for all $j$, thus $I^{(j)} \equiv 1$ mod $v^2$ for all $j$. Suppose we already have $I^{(j)} \equiv 1$ mod $v^\delta$ for some $\delta\geq 2$ and for all $j$, then $I^{(j),\phz} \equiv 1$ mod $v^{p\delta-n-1}$, and hence $A^{(j)}  \big(s^*_j\big(I^{(j), \phz}\big) (s^*_j)^{-1}\big)(A^{(j)})^{-1} \equiv 1$ mod $v^{p\delta-2n}$ for all $j$, hence also $I^{(j)} \equiv 1$ mod $v^{p\delta-2n}$ for all $j$. Since $p\delta-2n>\delta$ (the existence of a generic $\tau$ implies $p>2n+1$), this shows that $I^{(j)} \equiv 1$ mod arbitrary high powers of $v$ for all $j$. This shows $I^{(j)}=1$ for all $j$.

\end{proof}

We now discuss the notion of gauge basis, which provide certain normal forms for Kisin modules. For a root $\alpha=\epsilon_i-\epsilon_j$ of $\GL_n$, recall that the $(i,j)$-th entry of an $n\times n$ matrix $A$ is also called the $\alpha$-th entry, and denoted by $A_{\alpha}$.  For any statement $\star$, define $\delta_{\star}$ to be 1 if $\star$ is true and 0 if $\star$ is false. 

For any $\tld{w}=wt_{\nu} \in \tld{W}^{\vee}$ and any ring $R$, we define the subset $U_{\widetilde{w}}(R)\subset \GL_n(R(\!(v)\!))$ to be the set of $X\in \GL_n(R(\!(v)\!))$ satisfying the following conditions:
\begin{itemize}
\item The diagonal entries of $X$ are in $R^{\times}$.
\item For any root $\alpha$ of $\GL_n$, the $\alpha$-th entry $X_\alpha$ is of the form $\sum_i a_iv^i\in R(\!(v)\!)$, where $a_i=0$ unless $\delta_{\alpha<0}\leq i <-\langle \nu, \alpha^{\vee} \rangle +\delta_{w(\alpha)<0}$. In particular, $X_{\alpha}=0$ unless $\delta_{\alpha<0} <-\langle \nu, \alpha^{\vee} \rangle +\delta_{w(\alpha)<0}$. 
\end{itemize}
By a standard computation with affine root groups, the natural map 
\[
\widetilde{w} U_{\tld{w}}(\F') \ra \cI_1(\F') \backslash  \cI(\F') \tld{w} \cI(\F')
\]
is a bijection, for any extension $\F'$ of $\F$.

The following definition generalizes  Definition 2.22 of \cite{LLLM}:
\begin{defn} \label{gauge basis defn} Let $\overline{\fM}\in   Y^{\eta, \tau}(\F')$ with shape $(\widetilde{w}_j)$. A \emph{gauge basis} $\beta$ for $\overline{\fM}$ is an eigenbasis for $\overline{\fM}$ such that $A^{(j)} = \Mat_{\beta}\big(\phi_{\fM, s'_{\mathrm{or}, j+1}(n)}^{(j)} \big)$ belongs to $\tld{w}_j U_{\tld{w}_j}(\F')$ for all $0\leq j <f$.
\end{defn}

\begin{exam} When $n =3$, the list of $\eta$-admissible elements $\Adm^{\vee}(\eta)$ is given in Table 1 of \cite{LLLM}.   For each $\tld{w} \in \Adm^{\vee}(\eta)$ up to outer automorphism, Table 4 of \emph{loc. cit.} lists the set $\tld{w}_j U_{\tld{w}_j}(\F)$ which are the possible matrices of partial Frobenii for pairs $(\overline{\fM}, \overline{\beta})$ with shape $\tld{w}$ where $\overline{\beta}$ is a gauge basis. 
\end{exam}

For $\tau$ generic, Proposition \ref{gauge basis} shows that gauge bases exist and are unique up to scaling by the subgroup of $T(\F')^{f'}$ consisting of tuples whose $j$-th and $k$-th entries are the same for $j \equiv k$ mod $f$.
\begin{rmk}\label{monomial} Assume $\tau$ is generic. If $\overline{\fM}\in Y^{\eta,\tau}(\F')$ is semisimple of shape $\tld{w}=(\tld{w}_j)$, Proposition \ref{simpleform} shows that there is an eigenbasis $\beta$ with the property that $A^{(j)}= \Mat_{\beta}\big(\phi_{\fM, s'_{\mathrm{or}, j+1}(n)}^{(j)} \big)$ belongs to $T(\F')\tld{w}_j=\tld{w}_jT(\F')\subset \tld{w}_jU_{\tld{w}_j}(\F')$. Such an eigenbasis is therefore a gauge basis, and we deduce that the matrices of partial Frobenii with respect to any gauge basis have this form. In particular they are all monomial matrices. Conversely, if there is a gauge basis for which the matrices of partial Frobenii is monomial, then $\overline{\fM}\in Y^{\eta,\tau}(\F')$ is semisimple, by Remark \ref{monomial 0}.

\end{rmk}

\begin{thm} \label{fixedpoints} Let $\rhobar:G_K \ra \GL_n(\F)$ be a semisimple representation and let $\tau$ be a 1-generic type. Assume that either $(1)$ $\rhobar$ is a direct sum of characters, or $(2)$ $\lambda = \eta$ and $\tau$ is generic. Then if there exists a Kisin module $\overline{\fM} \in  Y^{\lambda, \tau}(\F)$ such that $T^*_{dd}(\overline{\fM}) \cong \rhobar|_{G_{K_{\infty}}}$, then there exists a finite extension $\F'/\F$ and a semisimple Kisin module $\overline{\fM}' \in Y^{\lambda, \tau}(\F')$  with shape in $\Adm^{\vee}(\lambda)$ (cf. Definition $\ref{Kissemisimple}$) such that (after extending scalars) $T^*_{dd}(\overline{\fM}') \cong \rhobar|_{G_{K_{\infty}}}$. Furthermore, we can take $\F'=\F$ in case (2).
\end{thm}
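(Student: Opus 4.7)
The plan is to extract (in case (2)) or construct directly (in case (1)) an eigenbasis of the Kisin module whose matrices of partial Frobenius are monomial; by Remark \ref{monomial} this is equivalent to producing a gauge basis with monomial matrices.

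For case (2) ($\lambda=\eta$, $\tau$ generic), Proposition \ref{Kisinvariety}(1) asserts that $\overline{\fM}$ is determined up to isomorphism by $\rhobar|_{G_{K_\infty}}$. From this uniqueness, I would deduce that any $G_{K_\infty}$-equivariant endomorphism of $\rhobar$ lifts to an endomorphism of $\overline{\fM}$ as a Kisin module with descent data (via Proposition \ref{Kisinvariety}(2) applied to such an endomorphism viewed as a tangent vector, or via a diagonal-block argument on $\overline{\fM}\oplus\overline{\fM}$). The semisimplicity of $\rhobar$ then provides, possibly after extending $\F$ to some $\F''$, a maximal torus $T$ in the centralizer of $\rhobar|_{G_{K_\infty}}$, whose lifted action on $\overline{\fM}\otimes_\F \F''$ is by $\fS_{L',\F''}$-linear automorphisms commuting with both Frobenius and $\Delta$, decomposing $\overline{\fM}\otimes_\F \F''$ into rank-one eigenspaces stable under this entire structure. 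Choosing a $\Delta'$-eigenvector (compatible with $\sigma^f$-invariance) in each eigenspace yields an eigenbasis whose partial Frobenius matrices are monomial. To achieve $\F'=\F$, I would exploit that the isotypic decomposition of $\rhobar|_{G_{K_\infty}}$ is $\F$-rational even when the individual summands are only $\F''$-rational—since Frobenius transitively permutes the rank-one summands inside each isotypic block—and regroup the rank-one components within each orbit to realize the monomial form over $\F$.

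For case (1), the hypothesis that $\rhobar=\bigoplus_{i=1}^n\chi_i$ is a direct sum of $G_K$-characters forces $\tau$ to be a principal series type, since each $\chi_i|_{I_K}$ must be $\mathrm{Frob}$-invariant, hence of niveau one. I would construct $\overline{\fM}'$ directly as a direct sum $\bigoplus_i \overline{\fN}_i$ of rank-one Kisin modules, where each $\overline{\fN}_i$ has principal series descent matching some character of $\tau$ paired with $\chi_i|_{I_K}$ and satisfies $T^*_{dd}(\overline{\fN}_i)\cong \chi_i|_{G_{K_\infty}}$. Such $\overline{\fN}_i$ are parameterized by a choice of shape in $X^*(T_1)^f\cong\Z^f$ together with a scalar, with different shapes corresponding to unramified twists of the underlying data that preserve $\chi_i|_{I_K}$. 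The resulting $\overline{\fM}'$ is tautologically semisimple and satisfies $T^*_{dd}(\overline{\fM}')\cong\rhobar|_{G_{K_\infty}}$; the remaining step is to verify that the total shape $\tld{w}'$ of $\overline{\fM}'$ lies in $\Adm^\vee(\lambda)$. Here I would use the existence of $\overline{\fM}\in Y^{\lambda,\tau}(\F)$ together with Corollary \ref{cor:phiandKisin}, which determines the shape of any semisimple Kisin module from $\rhobar^{ss}|_{I_K}$ and the chosen presentation $(s,\mu)$ up to precisely the shape-twist ambiguity of the rank-one pieces, to select the $\overline{\fN}_i$ so that $\tld{w}'$ matches the admissible shape of $\overline{\fM}$.

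The hardest part is the final shape-matching in case (1): choosing the shapes of the rank-one constituents $\overline{\fN}_i$ so that the direct sum has total shape in $\Adm^\vee(\lambda)$. This rests on the explicit formula $\tld{w} s^* t_{\mu^* + \eta^*} = w^* t_{\nu^*}$ from Corollary \ref{cor:phiandKisin} and the combinatorial observation that the admissible shape of $\overline{\fM}$ itself exhibits one valid choice for these shape parameters. In case (2), the subtle point is instead the descent step that ensures $\F'=\F$, which requires verifying that the Galois-equivariance of the torus action on $\overline{\fM}$ is compatible with its $\F$-structure.
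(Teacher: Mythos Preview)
Your approach diverges significantly from the paper's in case (1) and has a genuine gap there; your case (2) is in the right spirit but the paper's route is cleaner.

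\textbf{Case (1).} The paper does not construct $\overline{\fM}'$ by hand. Instead it observes that since $\rhobar$ is a sum of characters, the \'etale $\varphi$-module $\cM_{dd}=\overline{\fM}[1/u']$ splits as $\bigoplus_i \cM_i$ with each $\cM_i$ of rank one, so the torus $T=\Gm^n$ acts on $\cM_{dd}$ by scaling the summands. This induces an algebraic $T$-action on the projective Kisin variety $Y^{\lambda,\tau}_{\cM_{dd}}$ of lattices in $\cM_{dd}$ lying in $Y^{\lambda,\tau}$. Such an action has a fixed point over some finite $\F'$; a fixed point decomposes as $\bigoplus_i \overline{\fM}'_i$ with each summand of rank one, hence is semisimple, and it lies in $Y^{\lambda,\tau}$ \emph{by construction}, so its shape is automatically in $\Adm^\vee(\lambda)$. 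No shape-matching combinatorics is needed at all.

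Your proposed shape-matching step is where the gap lies. The given $\overline{\fM}$ need not be semisimple, so Corollary~\ref{cor:phiandKisin} says nothing about its shape; and there is no evident reason why the shape $\tld{w}$ of $\overline{\fM}$ should be realizable by some direct sum of rank-one Kisin modules with the correct $T^*_{dd}$. Your sentence ``the admissible shape of $\overline{\fM}$ itself exhibits one valid choice for these shape parameters'' is exactly the unproved assertion. Also, your claim that $\tau$ must be a principal series type is incorrect: the niveau of the characters $\chi_i|_{I_K}$ constrains $\rhobar|_{I_K}$, not the descent type $\tau$ of the Kisin module (cf.\ Remark~\ref{orderwarning}: on a rank-one summand $\overline{\fM}'_i$, the $\Delta'$-character on $(\overline{\fM}'_i)^{(j)}\bmod u'$ may vary with $j$).

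\textbf{Case (2).} The paper reduces to case (1). One passes to an unramified extension $\breve{K}/K'$ over which $\rhobar$ becomes a sum of characters, base changes $\overline{\fM}$ to $\breve{\overline{\fM}}$, and notes that $\breve{\tau}$ is still generic (Lemma~\ref{lemma:bc}), so by Proposition~\ref{Kisinvariety} $\breve{\overline{\fM}}$ is the \emph{unique} element of $Y^{\eta,\breve{\tau}}(\F)$ with the right $T^*_{dd}$. Case (1) then forces $\breve{\overline{\fM}}$ itself to be semisimple. To get $\F'=\F$, one takes a gauge basis $\beta$ of $\overline{\fM}$ over $\F$; its base change $\breve{\beta}$ is a gauge basis of $\breve{\overline{\fM}}$, and by Remark~\ref{monomial} the partial Frobenius matrices with respect to $\breve{\beta}$ are monomial. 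But these are the same matrices as for $\beta$, so $\overline{\fM}$ is semisimple over $\F$.

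Your idea of lifting $\Aut(\rhobar|_{G_{K_\infty}})$ to $\Aut(\overline{\fM})$ via Proposition~\ref{Kisinvariety}(1) is correct and is morally the same uniqueness input. However, your descent to $\F$ via ``regrouping rank-one components within each Frobenius orbit'' is not a complete argument: having a monomial eigenbasis over $\F''$ does not obviously produce one over $\F$. The paper sidesteps this entirely by exploiting that gauge bases exist over $\F$ and that monomiality of the gauge-basis matrices can be checked after base change.
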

\begin{proof}
Let $\cM_{dd} = \overline{\fM}[1/u']$. It is an \'etale $\varphi$-module over $L'$ with descent datum to $K$. 

We first treat the case where $\rhobar$ is a direct sum of characters. 
Since $\bV^*_{dd}$ is an equivalence of categories and $\rhobar$ is a direct sum of characters, $\cM_{dd} = \bigoplus_{i=1}^n \cM_i$ where each $\cM_i$ has rank $1$, is stable under $\phi_{\cM_{dd}}$ and the descent datum.  

Let $Y^{\lambda, \tau}_{\cM_{dd}}$ be the Kisin variety parametrizing lattices in $\cM_{dd}$ which lie in $Y^{\lambda, \tau}$, i.e., which have shape in $\Adm^{\vee}(\lambda)$. When $\tau$ is principal series type, $Y^{\lambda, \tau}_{\cM_{dd}}$ is defined as in  Definition 3.1 in \cite{LLLM} and is shown there to be a projective scheme over $\F$.   In general, we define $Y^{\lambda, \tau}_{\cM_{dd}}$ to be the closed subscheme of fixed points on $Y^{\lambda, \tau'}_{\cM_{dd}}$ for the natural action of $\sigma^f$.    Note that by assumption $Y^{\lambda, \tau}_{\cM_{dd}}$ is non-empty.

The torus $T = \Gm^n$ acts on $\cM_{dd}$ by scaling individually in each factor of the above decomposition.  As a consequence, we get an algebraic action of $T$ on the projective variety $Y^{\lambda, \tau}_{\cM_{dd}}$. 
Any such action has a fixed point (possibly after passing to a finite extension $\F'$ of $\F$).  Let $\overline{\fM}' \subset Y^{\lambda, \tau}_{\cM_{dd}}(\F')$ be a $T$-fixed point.   Let $\psi_i:T \ra \Gm$ denote projection onto the $i$-th coordinate, and set $\overline{\fM}'_i := (\overline{\fM}')^{\psi_i}$. Then
\begin{equation} \label{a1}
\overline{\fM}' = \bigoplus_{i=1}^n \overline{\fM}'_i
\end{equation}
Since the $T$-action commutes with $\phi_{\cM_{dd}}$ and $\Delta$, each $\overline{\fM}'_i$ is stable under both, hence $\overline{\fM}'_i$ is in fact a rank one Kisin module with descent datum.  Any choice of eigenbasis which respects this decomposition shows that $\overline{\fM}'$ is semisimple. Because $\overline{\fM}'$ is in $Y^{\lambda,\tau}(\F')$, it is semisimple with an admissible shape $\tld{w} \in \Adm^{\vee}(\lambda)$.

Now suppose that $\tau$ is generic, but $\rhobar$ is not necessarily a direct sum of characters. In this case, $\overline{\fM}$ is the unique element of $Y^{\eta,\tau}(\F)$ such that $T^*_{dd}(\overline{\fM}) \cong \rhobar|_{G_{K_{\infty}}}$ by Proposition \ref{Kisinvariety}. We pick an unramified extension $\breve{K}/K'$ such that $\rhobar|_{G_{\breve{K}}}$ is a direct sum of characters.  Let $\breve{\tau}$ be the base change of $\tau$ to $\breve{K}$, and let $\breve{\ovl{\fM}}=\ovl{\fM}\otimes _{W(k')}W(\breve{k})$ be the base change of the Kisin module $\ovl{\fM}$ to $\breve{K}L'$. Since $\breve{\tau}$ is still generic by Lemma \ref{lemma:bc}, $\breve{\ovl{\fM}}$ is the unique lattice in $\breve{\ovl{\fM}}[1/u']$ which belongs to $Y^{\eta,\breve{\tau}}(\F)$. By the above argument, the set of such Kisin lattices must have a semisimple element (possibly after extending $\F$), thus $\breve{\ovl{\fM}}$ is semisimple. Fix a gauge basis $\beta$ of $\overline{\fM}$, and let $\breve{\beta}$ be the induced eigenbasis of $\breve{\ovl{\fM}}$. It is easy to check that it is a gauge basis of $\breve{\ovl{\fM}}$. By Remark \ref{monomial}, the matrices of partial Frobenii of $\breve{\ovl{\fM}}$ with respect to $\breve{\beta}$ is monomial, hence the same is true for the matrices of partial Frobenii of $\ovl{\fM}$ with respect to $\beta$. This shows that $\ovl{\fM}\in Y^{\eta,\tau}(\F)$ is semisimple.

\end{proof}
\begin{rmk} \label{orderwarning}
In the proof of Theorem \ref{fixedpoints}, while $\Delta'$ acts on $(\overline{\fM}'_1)^{(j)} \mod u'$ through one of the characters of $\tau$, it need not be the same character for each $j$.  This is why even though $\phi_{\overline{\fM}}$ is ``diagonal,'' the individual $A^{(j)}$ need not be diagonal, but only monomial.
\end{rmk}

\begin{proof}[Proof of Theorem \ref{thm:admcrit}]  If $\rhobar$ is the reduction of a potentially crystalline representation of type $(\lambda, \tau)$, by a standard argument (cf., for example, \cite[Lemma 5]{Enns}), $\rhobar^{\mathrm{ss}}$ is also the reduction (after possible extending scalars) of a potentially crystalline representation of type $(\lambda, \tau)$.  So, without loss of generality, we assume $\rhobar$ is semisimple. After twisting, we may assume $\lambda$ is effective. Then, by Theorem \ref{thm:potcris} combined with Theorem \ref{fixedpoints}, after possibly replacing $\F$ by a finite extension, there exists a semisimple Kisin module $\overline{\fM} \in  Y^{\lambda, \tau}(\F)$ of shape $\tld{w} \in \Adm^{\vee}(\lambda)$ such that $\rhobar|_{G_{K_{\infty}}} \cong T^*_{dd}(\overline{\fM})$.   

If we write
\[
\widetilde{w} s^* t_{\mu^* + \eta^*} = w^* t_{\nu^* + \eta^*}
\] 
for $(w, \nu) \in W(\un{G}) \times X^*(\un{T})$. 
Then, by Corollary \ref{cor:phiandKisin}, 
\[
\rhobar^{\mathrm{ss}}|_{I_K} \cong \ovl{\tau}(w, \nu + \eta). 
\]
Furthermore, by Lemma \ref{lem:adjadm},   
\[
\tld{w}^* = s^{-1}  t_{\nu - \mu} w  \in \Adm(\lambda).
\]
\end{proof}

\subsection{Genericity conditions} \label{sec:gen}

\begin{lemma} \label{lemma:bc} Let $K'/K$ be an unramified extension.  If $\tau$ is an $m$-generic tame inertial type, then $\tau|_{I_{K'}}$ is also $m$-generic.   Similarly, if $\rhobar:G_K \ra \GL_n(\F)$ is an $m$-generic representation, then $\rhobar|_{G_{K'}}$ is $m$-generic. 

\end{lemma}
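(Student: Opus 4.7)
The statement for $\rhobar$ reduces immediately to the statement for tame inertial types: since $K'/K$ is unramified we have $I_{K'}=I_K$, and hence $(\rhobar|_{G_{K'}})^{\mathrm{ss}}|_{I_{K'}}$ and $\rhobar^{\mathrm{ss}}|_{I_K}$ coincide as representations of this common inertia group. Thus $m$-genericity of $\rhobar$ (respectively of $\rhobar|_{G_{K'}}$) amounts to $m$-genericity of this common tame inertial representation viewed as a type for $K$ (respectively for $K'$).

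For the statement on tame inertial types, my plan is to exhibit an explicit compatible lowest alcove presentation of $\tau|_{I_{K'}}$. Write $\tau\cong\tau(s,\lambda+\eta)$ with $\lambda$ $m$-deep in $\un{C}_0$, set $d=[K':K]$, and let $\un{G}':=\Res_{k'/\F_p}\GL_n$. Under the identifications $X^*(\un{T}')\cong X^*(T)^{fd}$ and $W(\un{G}')\cong W(\GL_n)^{fd}$ indexed by $\Hom(k',\F)$, I define $s'\in W(\un{G}')$ and $\lambda'\in X^*(\un{T}')$ by the periodic extension
\[
s'_{j+kf} := s_j, \qquad \lambda'_{j+kf} := \lambda_j \qquad (0\leq j<f,\ 0\leq k<d).
\]
By construction each component of $\lambda'$ equals some component of $\lambda$, and hence is $m$-deep in the dominant alcove of $\GL_n$; so $\lambda'$ is $m$-deep in $\un{C}_0$ for $\un{G}'$. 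It therefore suffices to show $\tau|_{I_{K'}}\cong \tau_{K'}(s',\lambda'+\eta)$.

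This last claim I would verify by a direct character computation from Definition \ref{defn:tau}. First, one checks by grouping the product into $d$ consecutive blocks of $f$ terms (each of which collapses to $s_\tau$) that the associated element $s'_\tau = s'_0 s'_{fd-1}\cdots s'_1$ equals $s_\tau^d$; hence its order is $r':=r/\gcd(r,d)$. Both sides then decompose as direct sums of characters via \eqref{pres1}, with exponents of the fundamental characters $\omega_{fr}$ and $\omega_{fdr'}$ respectively. Using the standard relation $\omega_{fr}|_{I_{K'}}=\omega_{fdr'}^{(p^{fdr'}-1)/(p^{fr}-1)}$ and regrouping the two $p$-adic expansions (using $s_\tau^{dk+j}=s_\tau^{(dk+j)\bmod r}$ and that the map $(j,k)\mapsto dk+j$ gives a bijection with $\{0,\dots,fdr'-1\}$ after partitioning appropriately) reduces the identification of characters to a direct index-matching exercise.

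The main obstacle is purely the bookkeeping in that last character computation; the conceptual content lies in identifying the periodic extension $(s',\lambda')$ as the right candidate presentation, and in observing that this extension manifestly preserves the $m$-deepness of $\lambda$.
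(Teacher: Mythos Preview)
Your proof is correct and follows the same approach as the paper: both use the periodic extension $(s',\lambda')$ (the paper's notation is $\iota(s,\lambda)$), verify $\tau(s,\mu)|_{I_{K'}} \cong \tau_{K'}(\iota(s,\mu))$, and observe that $m$-deepness is preserved componentwise. The paper also offers an alternative to your direct character computation via Proposition~\ref{Galoistotype}: restriction to $G_{K'}$ corresponds to $-\otimes_k k'$ on \'etale $\varphi$-modules, so $\cM(w^* t_{\mu^*}) \otimes_k k'$ is visibly the periodic extension $\cM(w^* t_{\mu^*},\ldots,w^* t_{\mu^*})$, which bypasses the index bookkeeping you flag as the main obstacle.
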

\begin{proof}
 Let $r$ be the degree of $K'/K$.  Let $\tau_{K'}$ denote the analogous construction as in Definition \ref{defn:tau} with $K$ replaced by $K'$ (using the choice of compatible system of $p^{r'f} -1$-system of roots of $-p$ from \S \ref{sec:not} for all $r'$ and an embedding $\sigma'_0:K' \ra E$ extending $\sigma_0$).   Then, for $(w', \mu') \in W(\un{G})^r \times X^*(\un{T})^r$,  there is a tame inertial type $\tau_{K'}(w', \mu'):I_{K'} \ra \GL_n(\cO)$.  For $(w, \mu) \in  W(\un{G})\times X^*(\un{T})$, let $\iota(w, \mu) = ((w,w, \ldots, w), (\mu, \mu, \ldots, \mu)) \in  W(\un{G})^r \times X^*(\un{T})^r$.  Then,
\begin{equation} \label{c1}
\tau(w, \mu)|_{I_{K'}} \cong \tau_{K'}(\iota(w, \mu)).
\end{equation}
This can be checked by direct computation.  Alternatively, one can appeal to Proposition \ref{Galoistotype} which says in particular that $\bV^*(\cM(w^* t_{\mu^*}, D))|_{I_K} \cong \ovl{\tau}(w, \mu)$ for any $D \in \un{T}(\F)$.  If $k'$ is the residue field of $k$, then restricting to $G_{K'}$ corresponds to tensoring with $k'$ on \'etale $\varphi$-modules side. We see immediately that 
\[
\cM(w^* t_{\mu^*}, D) \otimes_{k} k' = \cM((w^* t_{\mu^*}, w^* t_{\mu^*}, \ldots, w^* t_{\mu^*}), D') \text{ for } D' = (D, \ldots, D) \in \un{T}(\F)^r
\]
and so $\ovl{\tau}(w, \mu)|_{I_K} \cong \ovl{\tau}_{K'}(\iota(w, \mu))$.  Applying Teichm\"uller lifts yields \eqref{c1}.  

Let $\un{G}' = \Res_{k'/k} \un{G}$.  Let $\bun{C}_0'$ the analogous lowest alcove  with $k$ replaced by $k'$ viewed as a subset of $X^*(\un{T})^r \otimes \mathbb{R}$.   
Let $\tau \cong \tau(s, \lambda + \eta)$ with $\lambda$ $m$-deep in $\bun{C}_0$.  Then clearly $\iota(s, \lambda)$ is a lowest alcove presentation with $\iota(\lambda)$ $m$-deep in $\bun{C}_0'$.  The argument for $\rhobar$ is the same. 

\end{proof}

\begin{prop} \label{telim2}  Let $\tau$ be a 1-generic tame inertial type.  Assume $\rhobar$ is semisimple and $m$-generic where $m > n$. Then if $\rhobar$ is the reduction of a potentially crystalline representation of type $(\eta, \tau)$ then $\tau$ is  $m -n$-generic.  In particular, if $\rhobar$ is $2n$-generic, then $\tau$ is generic. 
\end{prop}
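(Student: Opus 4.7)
The plan is to reduce to a case where Theorem \ref{thm:admcrit}(1) applies, then compare two presentations of $\rhobar^{\mathrm{ss}}|_{I_{K'}}$ — the one produced by the Kisin module and the one witnessing $m$-genericity.

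First I choose an unramified extension $K'/K$ large enough that $\rhobar|_{G_{K'}}$ is a direct sum of characters and $\tau|_{I_{K'}}$ is a principal series type; such $K'$ exists since the tame-inertia actions in sight factor through a finite quotient. By Lemma \ref{lemma:bc} the representation $\rhobar|_{G_{K'}}$ remains $m$-generic and $\tau|_{I_{K'}}$ remains $0$-generic. Let $(s,\mu)$ be a lowest alcove presentation of $\tau$; then $(s',\mu'):=\iota(s,\mu)=((s,\ldots,s),(\mu,\ldots,\mu))$ is a lowest alcove presentation of $\tau|_{I_{K'}}$ with $\mu'$ in the base alcove $\bun{C}_0'$ of $\un{G}'=\Res_{k'/\F_p}\GL_n$. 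Restricting the potentially crystalline lift and applying Theorem \ref{thm:admcrit}(1), I obtain $(w'',\nu'')$ such that $\rhobar^{\mathrm{ss}}|_{I_{K'}}\cong \ovl{\tau_{K'}}(w'',\nu''+\eta)$ and $(s')^{-1}t_{\nu''-\mu'}w''\in \Adm(\eta)$. Rewriting this admissible element as $t_{(s')^{-1}(\nu''-\mu')}(s')^{-1}w''$ and invoking Lemma \ref{bound} (with $\lambda=\eta$, so the bound is $n-1$), I get
\[
|\langle \nu''-\mu',\alpha^\vee\rangle|\le n-1
\]
for every coroot $\alpha^\vee$ of $\un{G}'$.

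The remaining work is a comparison. The $m$-genericity of $\rhobar|_{G_{K'}}$ gives a lowest alcove presentation $(\tld s,\tld\lambda+\eta)$ of $\rhobar^{\mathrm{ss}}|_{I_{K'}}$ with $\tld\lambda$ that is $m$-deep in $\bun{C}_0'$ (and, by Proposition \ref{prop:alcoveC0}, every lowest alcove presentation is at least $(m-1)$-deep). Since $(w'',\nu''+\eta)$ and $(\tld s,\tld\lambda+\eta)$ present the same tame inertial type, they are $\sim$-equivalent via some $(\xi,\sigma)$. Mimicking the bookkeeping in the proof of Corollary \ref{cor:obvweight}, I will bound $M:=\max_{\alpha^\vee}|\langle\xi,\alpha^\vee\rangle|$ using the facts that $\nu''$ lies within $n-1$ of $\mu'\in\bun{C}_0'$ on every coroot and that $\tld\lambda+\eta$ satisfies $m<\langle\cdot,\alpha^\vee\rangle<p-m$ on simple coroots. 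This bound will force $\mu'$ to be at least $(m-n)$-deep in $\bun{C}_0'$, and hence $\mu$ to be $(m-n)$-deep in $\bun{C}_0$, proving that $\tau$ is $(m-n)$-generic. The final assertion follows by setting $m=2n$, since being generic means being $n$-generic (Definition \ref{defi:generic}(3)).

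The hardest step is the final comparison. Unlike in Proposition \ref{prop:alcoveC0}, the pair $(w'',\nu''+\eta)$ coming from the Kisin module is not a priori a lowest alcove presentation because $\nu''$ need not lie in $\bun{C}_0'$. Controlling the size of the element $(\xi,\sigma)$ of the $\sim$-action that moves it into a lowest alcove presentation, and tracking how much it shifts the weight, requires the explicit Weyl-group and translation estimates in the style of the proof of Corollary \ref{cor:obvweight}.
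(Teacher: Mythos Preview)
Your outline is correct and follows the same approach as the paper's proof: reduce via Lemma \ref{lemma:bc} to the case where $\rhobar$ is a direct sum of characters and $\tau$ is principal series, apply Theorem \ref{thm:admcrit} and Lemma \ref{bound} to get $|\langle \nu''-\mu',\alpha^\vee\rangle|\le n-1$, and then compare with the $m$-deep presentation of $\rhobar|_{I_K}$ via an inequality argument of the same shape as in Corollary \ref{cor:obvweight}. The only differences are organizational (the paper performs the base change at the end rather than the beginning) and that the paper carries out explicitly the estimate you leave as a sketch --- showing in fact $M\le 1$ for the translation part of the $\sim$-action, and then combining the three bounds to force $m-n<\langle \mu+\eta,\alpha^\vee\rangle<p-m+n$ for all positive coroots.
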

\begin{proof}
Let $(s, \mu)$ be a lowest alcove presentation for $\tau$. First, assume that that $\rhobar$ is a direct sum of characters and $\tau$ is a principal series.  By Theorem \ref{thm:admcrit}, we have
\[
\rhobar|_{I_K} \cong \overline{\tau}(w, \mu' + \eta)
\]
with $s^{-1} t_{\mu' - \mu} w \in \Adm(\eta)$. Define $\nu=\mu'-\mu$. By Lemma \ref{bound}, for any $\alpha^\vee\in \un{R}^{\vee}$ we have
\begin{equation}\label{ineq 1}
 |\langle \nu, \alpha^{\vee} \rangle|  \leq n-1.
\end{equation}
   Since $\rhobar$ is $m$-generic, there exists $(\nu', \sigma) \in X^*(\un{T}) \times W(\un{G})$ such that 
\begin{equation*}
\sigma(\mu' + \eta) + p \nu' - \sigma w \pi(\sigma)^{-1} \pi(\nu') - \eta
\end{equation*}
is $m$-deep in alcove $\bun{C}_0$.  
Hence, for any $\alpha^{\vee}\in \un{R}^{\vee}$,
\begin{equation}
\label{ineq 2}
m < |\langle \sigma(\mu'+\eta)+p\nu'-\sigma w\pi(\sigma^{-1})\pi(\nu'),\alpha^{\vee} \rangle| < p-m.
\end{equation}
Since $\mu$ is in $\bun{C}_0$, 
\[ |\langle \mu'+\eta,\sigma^{-1}\alpha^{\vee}\rangle|=|\langle \mu+\eta+ \nu,\sigma^{-1}\alpha^{\vee} \rangle |<  p+n-1.\]
It follows that
\[|\langle p\nu'-\sigma w\pi(\sigma^{-1})\pi(\nu'),\alpha^{\vee} \rangle|<2p+ n - m -1 \]
for all $\alpha^{\vee} \in \un{R}^{\vee}$. 
Let $M=\mathrm{max} _{\alpha^{\vee}\in \un{R}^{\vee}}\{ |\langle \nu',\alpha^{\vee} \rangle| \}$. By choosing $\alpha^{\vee}$ so that $ |\langle \nu',\alpha^{\vee} \rangle|=M$ in the above inequality, we get $(p-1)M<2p+n-m-1$, hence $M<2$. Thus we have shown $| \langle \nu',\alpha^{\vee} \rangle|\leq 1$ for all $\alpha^{\vee}\in \un{R}^{\vee}$. Hence for all $\alpha^{\vee}\in \un{R}^{\vee}$
\begin{equation}\label{ineq 3}
|\langle \sigma w\pi(\sigma^{-1})\pi(\nu'),\alpha^{\vee} \rangle|\leq 1.
\end{equation}

Since 
\[\sigma(\mu'+\eta)+p\nu'-\sigma w\pi(\sigma^{-1})\pi(\nu') =  \sigma(\mu+\eta) + \sigma(\nu) +p \nu' -  \sigma w\pi(\sigma^{-1})\pi (\nu') , \]
inequalities \eqref{ineq 1} (with $\alpha^{\vee}$ replaced by $\sigma^{-1}\alpha^{\vee}$), \eqref{ineq 2},\eqref{ineq 3} and the equality $\mu'=\mu+\nu$ together imply 
\[
m - n < |\langle  \sigma(\mu+\eta)+ p \nu' ,\alpha^{\vee} \rangle|= |\langle \mu+\eta,\sigma^{-1}\alpha^{\vee} \rangle+p\langle \nu',\alpha^{\vee} \rangle| < p - m + n
\]
for all coroots $\alpha^{\vee}$. 
It follows that for any coroot $\alpha^{\vee}$, the integer $\langle \mu+\eta, \alpha^{\vee} \rangle$ is not congruent modulo $p$ to any integer between $n-m$ and $m-n$. 
But since $\mu \in \bun{C}_0$, for any \emph{positive} coroot $\alpha^{\vee}$ we also have that $0<\langle \mu+\eta, \alpha^{\vee} \rangle<p$, and thus we in fact have
\[m-n<\langle \mu+\eta, \alpha^{\vee} \rangle<p-m+n\] 
This shows that $\tau$ is $(m-n)$-generic.

Finally, if $\rhobar$ is not the direct sum of characters or $\tau$ is not a principal series, let $K'/K$ be unramified extension over which both become true. By Lemma \ref{lemma:bc}, $\rhobar|_{G_{K'}}$ is $m$-generic.  We will use notation from the proof of Lemma \ref{lemma:bc}.  By \eqref{c1},  $\iota(s, \mu)$ is a lowest alcove presentation for $\tau|_{I_{K'}}$.   The same argument as above with $K$ replaced by $K'$ shows that $\iota(\mu)$ is $m-n$-deep in $\bun{C}_0'$.   Hence $\mu$ is $m-n$-deep in $\bun{C}_0$. 
\end{proof}

\subsection{Potential diagonalizability}\label{sec:pd}


Let $\rhobar$ be a tame representation. The goal of this section is to show that for certain sufficiently generic tame types $\tau$, all potentially crystalline lifts of $\rhobar$ of type $(\eta,\tau)$ are potentially diagonalizable in the sense of \cite{BLGGT}.

The main theorem is the following:

\begin{thm} \label{thm:smoothdef}  Let $\tau\cong \tau(s,\mu)$ be a tame type with a lowest alcove presentation $(s,\mu-\eta)$ such that $\mu-\eta$ is $(2n-1)$-deep in $\bun{C}_0$. Let $\rhobar:G_K\to \GL_n(\F)$ be a semisimple representation. Assume that there exists $\ovl{\fM}\in Y^{\eta,\tau}(\F)$ of shape $(t_{w_j(\eta_0)})_j$, where $w_j\in W(\GL_n)$ and such that $T^*_{dd}(\ovl{\fM})\cong \rhobar|_{G_{K_\infty}}$.
Then the framed potentially crystalline deformation ring $R_{\rhobar}^{\eta, \tau}$ is non-zero and formally smooth. 
\end{thm}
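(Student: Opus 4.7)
The plan is to establish formal smoothness through an auxiliary deformation problem for Kisin modules equipped with a gauge basis, in the spirit of the $\GL_3$ arguments in \cite{LLLM}. Since $\mu-\eta$ is $(2n-1)$-deep in $\bun{C}_0$, $\tau$ is sufficiently generic by Proposition \ref{telim2}, and Proposition \ref{Kisinvariety}(2) shows that the tangent functor $T_{\mathrm{tan}}^*$ is fully faithful on infinitesimal deformations of $\ovl{\fM}$ inside $Y^{\eta,\tau}$. Thus, up to framing variables for $\rhobar$ and a (tangentially smooth) monodromy condition extending the $G_{K_\infty}$-action back to a potentially crystalline $G_K$-action of Hodge--Tate type $\eta$, it will suffice to prove formal smoothness of the Kisin module deformation groupoid $\Def_{\ovl{\fM}}^{\eta,\tau}$.

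Fix a gauge basis $\beta_0$ of $\ovl{\fM}$, which exists by Proposition \ref{gauge basis}, and let $\Def_{\ovl{\fM}}^{\eta,\tau,\beta_0}$ denote the deformation groupoid rigidified by a gauge basis lifting $\beta_0$. Since gauge bases are unique up to scaling by an explicit torus, the forgetful map $\Def_{\ovl{\fM}}^{\eta,\tau,\beta_0}\to \Def_{\ovl{\fM}}^{\eta,\tau}$ is a torsor under this torus, so it suffices to check formal smoothness of the rigidified problem. An object of $\Def_{\ovl{\fM}}^{\eta,\tau,\beta_0}(R)$ is specified by its tuple of partial Frobenius matrices, which by Definition \ref{gauge basis defn} has the form $X_j\widetilde{w}_j$ with $X_j\in U_{\widetilde{w}_j}(R)$. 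The crucial point is that each component of the shape is $\widetilde{w}_j=t_{w_j(\eta_0)}$, a pure translation by a Weyl conjugate of $\eta_0=(n-1,\dots,0)$; the defining inequalities of $U_{\widetilde{w}_j}$ then cut out an explicit affine space of polynomial entries with $v$-degree bounded by $\langle w_j(\eta_0),\alpha^\vee\rangle$ at each positive root. Therefore $\Def_{\ovl{\fM}}^{\eta,\tau,\beta_0}$ is pro-represented by a power series ring over $\cO$, and $\Def_{\ovl{\fM}}^{\eta,\tau}$ is formally smooth.

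The main obstacle is the final passage to $R_{\rhobar}^{\eta,\tau}$: one must show that the Breuil--Kisin monodromy condition defining potentially crystalline descent (from $G_{K_\infty}$- to $G_K$-representations) imposes no extra restriction beyond a formally smooth cut, and that the framing contributes only additional formal variables. For shapes of the form $t_{w_j(\eta_0)}$, the monodromy operator $N_\infty$ on $\fM\otimes S$ can be solved uniquely as a convergent series in the gauge basis parameters, with convergence ensured precisely by the $(2n-1)$-deep hypothesis on $\mu-\eta$; this step generalizes the $\GL_3$ computations in \cite[\S 5]{LLLM} and is expected to be the most delicate part of the argument. Granting this, one obtains a formally smooth surjection from a power series ring over $\cO$ onto $R_{\rhobar}^{\eta,\tau}$ which induces an isomorphism on tangent spaces, hence the desired formal smoothness. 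Non-vanishing is automatic: the hypothesis provides an $\F$-point of $\Def_{\ovl{\fM}}^{\eta,\tau}$ which, by formal smoothness, lifts to $\cO$ and produces a potentially crystalline lift of $\rhobar$ of type $(\eta,\tau)$.
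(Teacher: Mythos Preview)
Your overall architecture is right---pass to a gauge-basis rigidified Kisin module deformation problem and compare with $R^{\eta,\tau}_{\rhobar}$ via diagram (\ref{defdiagram})---but the treatment of the monodromy condition contains a genuine gap that is the heart of the argument.

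You write that the monodromy condition ``imposes no extra restriction beyond a formally smooth cut'' and that ``the monodromy operator $N_\infty$ \ldots\ can be solved uniquely as a convergent series.'' These conflate two different things. The operator $N_\infty$ always exists as a convergent series, meromorphic along $\lambda$; the monodromy \emph{condition} is that $N_\infty$ has no poles, and this genuinely cuts down the dimension of $R^{\tau,\ovl{\beta}}_{\ovl{\fM}}$. The paper does not prove directly that this cut is formally smooth. Instead, the argument is a dimension count: Proposition~\ref{prop:upperbounddefring} analyzes the monodromy equations entry by entry (using the $(2n-1)$-deep hypothesis to control error terms modulo $p$) and shows that $R^{\tau,\ovl{\beta},\nabla}_{\ovl{\fM}}/\varpi$ is topologically generated by at most $n(n+1)f/2$ elements. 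This gives a surjection $\cO[\![x_1,\dots,x_{d-1}]\!]\onto R^{\tau,\ovl{\beta},\Box}_{\ovl{\fM},\rhobar}$ with $d=n(n+1)f/2+n^2$. The matching lower bound $\dim R^{\tau,\ovl{\beta},\Box}_{\ovl{\fM},\rhobar}=d$ comes from Kisin's dimension formula for $R^{\eta,\tau}_{\rhobar}$ together with Corollary~\ref{cor:existencelift} (to ensure the ring is nonzero) and Proposition~\ref{prop:factors} (to identify $R^{\tau,\ovl{\beta},\Box,\nabla}_{\ovl{\fM}}$ with $R^{\tau,\ovl{\beta},\Box}_{\ovl{\fM},\rhobar}$). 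A surjection of local rings of equal Krull dimension with regular source is an isomorphism, and formal smoothness follows. Your proposal does not supply either half of this count.

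A secondary issue: your claim that $\Def^{\eta,\tau,\beta_0}_{\ovl{\fM}}$ is pro-represented by a power series ring via the description of $U_{\tld{w}_j}$ is too quick. Definition~\ref{gauge basis defn} is only stated for $\F'$-points; the correct degree bounds over an Artinian base are those of Proposition~\ref{prop:univlift}, and even then the height $\leq\eta$ conditions (determinant and minors divisible by powers of $v+p$) impose further equations. That $R^{\tau,\ovl{\beta}}_{\ovl{\fM}}$ is nonetheless a power series ring is the content of Proposition~\ref{prop:univfhlift}, whose proof exploits the specific translation shape $t_{w_j(\eta_0)}$ in an essential way. Finally, your nonvanishing argument is circular: lifting an $\F$-point of the Kisin deformation problem to $\cO$ does not by itself produce a point of $\Spec R^{\eta,\tau}_{\rhobar}[1/p]$ unless one verifies the monodromy condition on the lift, which is precisely Corollary~\ref{cor:existencelift}.
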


\begin{rmk} The proof of Theorem \ref{thm:smoothdef} uses the techniques introduced in \cite{LLLM} for $n =3$.   When $n =3$, the shapes $t_{w_j(\eta_0)}$ as in the Theorem correspond to the shapes $\alpha \beta \alpha \gamma$ and $\beta \gamma \alpha \gamma$ (and their cyclic permutations) in the Tables at the end of \cite{LLLM}. In Table 6 of \emph{loc. cit.}, the reader can see that the deformation ring for these shapes is formally smooth.  
\end{rmk}

For the rest of the section, we will be in the setting of the theorem. 
By Theorem \ref{fixedpoints} and the uniqueness of $\ovl{\fM}$ (cf.~ Theorem \ref{Kisinvariety}), $\ovl{\fM}$ is semisimple. We fix a gauge basis $\ovl{\beta}$ of $\ovl{\fM}$. By Remark \ref{monomial}, for all $j$, the matrices of the $j$-th partial Frobenius with respect to $\ovl{\beta}$ has the form
\begin{equation}\label{eqn:mod p frob}
\ovl{A}^{(j)}=\ovl{D}_j v^{w_j(\eta_0)}
\end{equation}
with $\ovl{D}_j \in T(\F).$  

We will need the following result, which gives a normal form for deformations of the pair $(\ovl{\fM},\ovl{\beta})$

\begin{prop} \label{prop:univlift} Let $R$ be complete local Noetherian $\cO$-algebra with maximal ideal $\mathfrak{m}$, residue field $\F$ and let $\tau\cong \tau(s,\mu)$ be a type with $\mu -\eta$ $n$-deep in $\bun{C}_0$.  Let $\fM \in Y^{[0,n-1], \tau}(R)$ such that $\fM \otimes \F \cong \overline{\fM}$.  Then there exists an eigenbasis $\beta$ lifting $\overline{\beta}$ such that the matrices of partial Frobenii $(A^{(j)})_{0 \leq j \leq f'-1}$ with respect to $\beta$ satisfy the following degree bounds:
\begin{enumerate}
\item[] $A^{(j)}_{ik} \in v^{\delta_{i>k}}R[v]$  and has degree $<{\delta_{i\geq k}}+ \langle w_j(\eta_0),\eps_k^\vee \rangle$.
\end{enumerate}
$($Note that automatically $A^{(j)}_{ik} \in v^{\delta_{i>k}}R[\![v]\!])$. 

Furthermore, such $\beta$ is uniquely determined up to scaling by the group $\{ (t_j)\in \ker({T(R)}\ra T(\F))^{f'} \mid t_j=t_{k} \textrm{ for }j\equiv k \textrm{ mod } f\}$.
\end{prop}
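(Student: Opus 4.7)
The plan is to adapt the successive approximation argument from \cite[Proposition 2.22]{LLLM} to general $\GL_n$. The existence of some eigenbasis $\beta_0$ lifting $\ovl{\beta}$ follows from smoothness of the functor classifying eigenbases: since $\tau$ is $0$-generic (hence regular), the descent eigenspaces in $\fM^{(j')}$ modulo $u'$ are free rank-one $R$-modules, so any eigenbasis of $\ovl{\fM}$ lifts uniquely along the eigenspace decomposition. Writing $A_0^{(j)}$ for the resulting matrices of partial Frobenii, we have $A_0^{(j)} \equiv \ovl{D}_j v^{w_j(\eta_0)} \bmod \fm$ by \eqref{eqn:mod p frob}.

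The heart of the argument is to construct a convergent product of change-of-basis tuples $(I_m^{(j)})_{m \geq 0}$ in $\cI(R)^{f'}$, each subject to $I_m^{(j)} = I_m^{(k)}$ whenever $j \equiv k \bmod f$ (as in Remark \ref{rmk:changeofbasistuple}), bringing $A_0^{(j)}$ into the desired form. By Proposition \ref{prop:changeofbasis}, each such modification transforms
\[
A^{(j)}\ \longmapsto\ I^{(j+1)}\,A^{(j)}\,s_j^{*}\bigl(I^{(j),\phz}\bigr)(s_j^{*})^{-1},
\]
and the $n$-deepness of $\mu-\eta$ in $\bun{C}_0$ forces $I^{(j),\phz}\in \cD_{n+1}(R)$, so the right-hand twist is diagonal modulo $v^{n+1}$. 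At stage $m$, I would choose $I_m^{(j+1)}\in \cI(R)$ (whose nontrivial off-diagonal entries are concentrated in a single $v$-degree and in $\fm^{e_m}$) so that left multiplication kills the coefficients of $A_m^{(j)}$ currently violating the target degree bounds at the lowest obstructing level. Because left multiplication by the affine root subgroups of $\cI$ acts on each column independently (modulo the known leading term $\ovl{D}_j v^{w_j(\eta_0)}$), and because the right-hand $\phz$-twist only feeds back errors at strictly higher $v$-degree (thanks to $\phz(v)=v^p$ combined with the $\cD_{n+1}$-control), the sequence of corrections converges in the $(v,\fm)$-adic topology; its limit is the desired eigenbasis $\beta$.

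For uniqueness, suppose $\beta$ and $\beta'$ are two eigenbases lifting $\ovl{\beta}$ both satisfying the degree bounds, related by $(I^{(j)})$ as in Proposition \ref{prop:changeofbasis}. From $\beta'\equiv \beta\bmod\fm$ we get $I^{(j)}\in\ker(\cI(R)\to \cI(\F))$. Comparing the change-of-basis formula coefficient-by-coefficient against the degree constraints on both $A^{(j)}$ and $A'^{(j)}$, one argues by induction on the $v$-degree that all off-diagonal entries of each $I^{(j)}$ must vanish, whence $I^{(j)}\in T(R)$; the constraint $I^{(j)}=I^{(k)}$ for $j\equiv k\bmod f$ is then precisely the $\sigma^f$-compatibility required of an eigenbasis, which matches the stated torus subgroup.

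The main obstacle I anticipate is the precise bookkeeping of degrees in the iterative step, in particular verifying that the correction by $I_m^{(j+1)}$ needed to kill illegal entries at one level does not introduce fresh violations at the same level, which would break the induction. The interplay between the Weyl element $w_j$ (which controls the column-wise degree targets $n-w_j^{-1}(k)$) and the partial order on the entries of the Iwahori $\cI$ requires a careful choice of the order in which entries are eliminated, substantially more subtle than the $\GL_3$ case treated in \cite{LLLM}.
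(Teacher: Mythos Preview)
Your outline is in the right spirit and is morally the same argument as the paper's, but the obstacle you flag at the end is real and is not handled by what you wrote. The naive successive approximation ``kill the lowest obstructing coefficient, repeat'' does \emph{not} obviously converge: when you choose $I_m^{(j+1)}$ to cancel an illegal entry in column $k$ at some $v$-level, left multiplication by $I_m^{(j+1)}$ perturbs the other columns of $A^{(j)}$ at the \emph{same} $v$-level, and the right-hand $\phz$-twist, while congruent to $1$ modulo $v^{n+1}$, produces terms $A^{(j)}\cdot v^{n+1}Y$ whose $v$-degree can drop back into the forbidden range after you compare columns (since the height bound only gives $v^{n-1}(A^{(j)})^{-1}$ integral). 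So a pure $v$-adic induction stalls.

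The paper resolves this by abandoning the $v$-adic filtration alone and working instead with a mixed $(\fm,v)$-adic metric: for $P=\sum r_iv^i$ set $d(P)=\min_i\bigl((n+3)\,v_R(r_i)+i\bigr)$, extend to matrices and $f'$-tuples by taking minima, and work in the complete subspace $V\subset M_n(R[\![v]\!])^{f'}$ of tuples that vanish modulo $\fm$, are upper-triangular nilpotent modulo $v$, and depend only on $j\bmod f$. One then rewrites the problem ``find $\beta'\equiv\beta\bmod u'$ satisfying the degree bounds'' as a fixed-point equation $(X_j)=H((X_j))$ on $V$, where $H$ is built from the change-of-basis formula and the truncation operator $\ftr_j$ that projects onto the part violating the bounds. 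The key computation is that $d(H(a)-H(b))\geq d(a-b)+1$: the feedback terms you were worried about do reappear at the same $v$-level, but always one step deeper in $\fm$, and the weighting $(n+3)$ is chosen precisely so that one extra power of $\fm$ beats the possible loss of up to $n-1$ in $v$-degree coming from the height bound. The Contraction Mapping Theorem then gives existence \emph{and} uniqueness of the fixed point simultaneously; the residual torus ambiguity comes from the observation that the set of $\beta\bmod u'$ lifting $\ovl{\beta}$ is a torsor for exactly the stated group.

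Your uniqueness sketch has the same defect: an induction on $v$-degree alone will not close, because the change-of-basis identity mixes $v$-levels through $A^{(j)}$. Once you have the contraction, uniqueness is automatic and you should not argue it separately.
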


\begin{exam}\label{Example:fh}  Let $n = 3$, $f =1$, and $\tau$ be a generic principal series type. Let $\overline{\fM}$ be a Kisin module with shape $t_{(1,2,0)} = \beta \gamma \alpha \gamma$ in the notation of \cite{LLLM} and choose a gauge basis $\overline{\beta}$.  Proposition \ref{prop:univlift} says that any lift $\fM$ with height in $[0,n-1]$ has an eigenbasis $\beta$ lifting $\overline{\beta}$ such that the matrix $A^{(0)}$ has polynomial entries with degrees 
\[
\begin{pmatrix}  1 & \leq 1 & <0 \\
\leq 1 & 2 & <0 \\
 \leq 1 & \leq 2 & 0 \\
\end{pmatrix}
\]  
where the entries below diagonal are divisible by $v$ (compare with Table 5 in \emph{loc. cit.} where degree bounds are given for all admissible shapes).
\end{exam}
\begin{rmk} Our method of proof for Proposition \ref{prop:univlift} can be adapted easily (with more burdensome notations) to treat \emph{semisimple} Kisin modules $\overline{\fM}$ of more general shapes. On the other hand, the generalization to the situation where $\overline{\fM}$ is not semisimple requires more work. We only treat the case covered in Proposition \ref{prop:univlift} in this paper as this is all that we need, and leave these generalizations to future work.
\end{rmk}

\begin{proof}[Proof of Proposition \ref{prop:univlift}] The proof is a straightforward generalization of the arguments in \cite[\S 4]{LLLM}.
As in \emph{loc.~cit.}, we introduce a semi-valuation on $R$ given by $v_R(r) = \max \{ k \in \N \mid k \geq 0, r \in \mathfrak{m}^k\}$ and $v_R(0)=\infty$.
For $P = \sum_i r_i v^i \in R[\![v]\!]$, define $d(P) = \min_i ((n+3) v_R(r_i) + i)$. For a matrix $X$ with entries in $R[\![v]\!]$, define $d(X)$ to be the minimum of $d(X_{ij})$ where $X_{ij}$ are the entries of $X$, and for a tuple of matrices $(X_j)_j\in M_n(R[\![v]\!])^{f'}$ we define $d((X_j)_j)=\min_j d(X_j)$. Note that in all cases $d$ takes values in $\Z_{\geq 0} \cup \{\infty\}$. We have:
\begin{itemize}
\item $d(a+b)\geq \min \{d(a),d(b)\}$ for $a$, $b$ both in either $R[\![v]\!]$, $M_n(R[\![v]\!])$ or $M_n(R[\![v]\!])^{f'}$.
\item $d(ab)\geq d(a)+d(b)$ for $a$, $b$ both in either $R[\![v]\!]$ or $M_n(R[\![v]\!])$.
\end{itemize}
On any of the spaces $R[\![v]\!]$, $M_n(R[\![v]\!])$ or $M_n(R[\![v]\!])^{f'}$, the function $||a||=2^{-d(a)}$ defines a norm, which is furthermore submultiplicative in the first two cases. Thus each of these spaces are endowed with a metric topology, which is easily checked to be complete.

For each $0\leq j\leq f'-1$, we define the truncation operator $\ftr_j: M_n(R[\![v]\!])\to  M_n(R[\![v]\!])$ as follows: For $X\in M_n(R[\![v]\!])$,
\begin{itemize}
\item If $i<k$ then $\ftr_j(X)_{ik}$ is the sum of the terms in $X_{ik}\in R[\![v]\!]$ of degree $\geq   \langle w_j(\eta_0),\eps_k^\vee \rangle$;
\item If $i\geq k$ then $\ftr_j(X)_{ik}$ is the sum of the terms in $X_{ik}\in R[\![v]\!]$ of degree $>  \langle w_j(\eta_0),\eps_k^\vee \rangle$. 
\end{itemize}
In other words $\ftr_j(X)$ kills off precisely the part of $X$ that satisfies the degree bounds on $A^{(j)}$ in the conclusion. 
It is clearly an idempotent additive map.
We observe that our degree bounds are chosen precisely so that the image $\ftr_j$ is the subspace of $X$ such that $X v^{-w_j(\eta_0)}\in  M_n(R[\![v]\!])$ is integral and is furthermore upper triangular nilpotent mod $v$.

Note also since $d(vP)=1+d(P)$ for $P\in R[\![v]\!]$, we have 
\begin{equation} \label{ineq truncation}
d(\ftr_j(X) v^{-w_j(\eta_0)})\geq d(\ftr_j(X))-n+1\geq d(X)-n+1.
\end{equation}
We also define $\ftr:M_n(R[\![v]\!])^{f'}\to M_n(R[\![v]\!])^{f'}$ by $\ftr((X_j)_j)=(\ftr_j(X_j))_j$.

We will show that for any given eigenbasis $\beta$ of $\fM$ lifting $\ovl{\beta}$, there is a unique $\beta'$ lifting $\ovl{\beta}$ such that $\beta'\equiv\beta$ mod $u'$, and $\beta'$ satisfies the conclusion of the Proposition. This proves the Proposition, since the set of all possible $\beta$ mod $u'$ forms a torsor for the group $\{ (t_j)\in \ker({T(R)}\ra T(\F))^{f'} \mid t_j=t_{k} \textrm{ for }j\equiv k \textrm{ mod } f\}$.

We now fix an eigenbasis $\beta$ lifting $\ovl{\beta}$. Our strategy will be to interpret the problem of finding $\beta'$ as finding a fixed point for certain mapping on a complete subspace of $M_n(R[\![v]\!])^{f'}$. We then show that this mapping is contracting on this subspace, and the Proposition follows by the Contraction Mapping Theorem.

By Remark \ref{rmk:changeofbasistuple}, prescribing any other eigenbasis $\beta'$ of $\fM$ is the same as prescribing a change of basis tuple $(I^{(j)})_j\in \cI(R)^{f'}$ such that $I^{(j)}$ depends only on $j$ mod $f$. The condition that $\beta'$ also lifts $\ovl{\beta}$ is equivalent to $I^{(j)} \equiv 1$ mod $\mathfrak{m}$, and the condition that $\beta' \equiv \beta$ mod $u'$ is equivalent to $I^{(j)}\in \cI_1(R)$. Thus the tuple $(X_j)_j=(I^{(j)}-1)_j$ satisfies
\begin{itemize}
\item $X_j$ depends only on $j$ mod $f$;
\item $X_j \equiv 0$ mod $\mathfrak{m}$;
\item $X_j$ is upper triangular nilpotent mod $v$.
\end{itemize}
This leads us to define the subspace $V\subset M_n(R[\![v]\!])^{f'}$ consisting of tuples satisfying all these conditions. Clearly $V$ is stable under component-wise addition and is easily seen to be a closed subspace, hence is also complete.

Let $(A^{(j)})_j$ be the tuple of matrices of partial Frobenii with respect to $\beta$. Since $\beta$ lifts $\ovl{\beta}$, $\ovl{A}^{(j)}$ has the form given in (\ref{eqn:mod p frob}), thus we can decompose
\[  A^{(j)}=D_jv^{w_j(\eta_0)}+M_j \]
with $D_j\in T(R)$ and $M_j\in M_n(\mathfrak{m}[\![v]\!])$. We can and will assume that this decomposition has been chosen so that $D_j$ and $M_j$ only depend on $j$ mod $f$.

By Proposition \ref{prop:changeofbasis}, our problem of finding $\beta'$ now reduces to finding $(X_j)_j\in V$ such that for all $j$
\begin{equation}\label{main eqn}
\ftr_j( (1+X_{j+1}))A^{(j)} \Ad(s_j^* v^{\mu^*_j})(\phz((1+X_j)^{-1})) =0
\end{equation}
by Proposition \ref{prop:changeofbasis} (Recall that $\Ad(g)(M) := gMg^{-1}$.) 
To lighten the notation, we put $Y_j=Y_j(X_j)= \Ad(s_j^* v^{\mu^*_j})(\phz((1+X_j)^{-1}))$, and think of it as a function in $X_j$. We now rewrite the left-hand side of the above equation as
\begin{align*}
 \qquad &\ftr_j ((1+X_{j+1})(D_j v^{w_j(\eta_0)}+M_j))+\ftr_j ((1+X_{j+1})A^{(j)}(Y_j-1))\\
=\quad&\ftr_j(D_j v^{w_j(\eta_0)})+\ftr_j(X_{j+1}D_j v^{w_j(\eta_0)})+\ftr_j((1+X_{j+1})M_j)+\ftr_j ((1+X_{j+1})A^{(j)}(Y_j-1))\\
=\quad&X_{j+1}D_j v^{w_j(\eta_0)}+\ftr_j((1+X_{j+1})M_j)+\ftr_j ((1+X_{j+1})A^{(j)}(Y_j-1)),\\
\end{align*}
where the last equality is due to the fact that $X_{j+1}\in M_n(R[\![v]\!])$ is upper triangular nilpotent mod $v$.

Thus equation (\ref{main eqn}) is equivalent to $(X_j)_j$ being a fixed point of the map $H: V \to  M_n(R[\![v]\!])^{f'}$ given by
\[H((X_j)_j)= \big( -(\ftr_{j-1}((1+X_{j})M_{j-1})+\ftr_{j-1} ((1+X_{j})A^{(j-1)}(Y_{j-1}(X_{j-1})-1)))v^{-w_{j-1}(\eta_0)}(D_{j-1})^{-1}   \big)_j\]
Note that the assumption $\mu - \eta$ is $n$-deep in $\bun{C}_0$ implies that $(Y_j-1)_j\in V$, so that each expression that gets truncated is indeed in the domain of definition of the appropriate truncation operator. 
  
Clearly $H((X_j)_j)$ satisfies the first property defining $V$. Now $H((X_j)_j)$ satisfies the second property defining $V$, since truncation operators preserve the property of being 0 mod $\mathfrak{m}$, and $M_j=0$ mod $\mathfrak{m}$ since $\overline{A}^{(j)}$ satisfied the correct degree bounds.
Finally, the description of the image of $\ftr_j$ implies that $H((X_j)_j)$ satisfies the third property defining $V$. Thus $H$ maps $V$ to $V$.

The proof of the Proposition is complete once we have the following:
\begin{lemma} We have
\[d(H(a)-H(b))\geq d(a-b)+1\]
for $a,b\in V$. 
\end{lemma}
\begin{proof}
Put $a=(X_j)_j$ and $b=(X_j+\Delta_j)_j$. Put $\delta=d(a-b)=\min_j d(\Delta_j)$.

On the $(j+1)$-th component, we have
\begin{align*}
 &(H(a)_{j+1}-H(b)_{j+1})D_jv^{w_j(\eta_0)}\\
=\quad &\ftr_j(\Delta_{j+1}M_j)+\ftr_j ((1+X_{j+1}+\Delta_{j+1})A^{(j)}(Y_j(X_j+\Delta_j)-1)) -\ftr_j ((1+X_{j+1})A^{(j)}(Y_j(X_j)-1))\\
=\quad &\ftr_j(\Delta_{j+1}M_j)+\ftr_j (\Delta_{j+1}A^{(j)}(Y_j(X_j+\Delta_j)-1))+\ftr_j ((1+X_{j+1})A^{(j)}(Y_j(X_j+\Delta_j)-Y_j(X_j)))
\end{align*}
For the first term, since $M_j\in  M_n(\mathfrak{m}[\![v]\!])$, $d(M_j)\geq n+3$. Thus
\[d(\ftr_j(\Delta_{j+1}M_j))\geq d(\Delta_{j+1})+d(M_j)\geq \delta+ n+3.\]
For the second term, as observed before $Y_j(X_j+\Delta_j)-1\in  M_n(\mathfrak{m}[\![v]\!])$, and we similarly have
\[d(\ftr_j (\Delta_{j+1}A^{(j)}(Y_j(X_j+\Delta_j)-1)))\geq \delta+ n+3.\]
For the third term, we observe
\begin{align*}
&d(\ftr_j ((1+X_{j+1})A^{(j)}(Y_j(X_j+\Delta_j)-Y_j(X_j))))\geq d(Y_j(X_j+\Delta_j)-Y_j(X_j))\\
=\quad&d(-\Ad(s_j^*v^{\mu^*_j})(\phz((1+X_j+\Delta_j)^{-1})\phz(\Delta_j)\phz((1+X_j)^{-1}))))\\
\geq \quad &d(\Ad(s_j^* v^{\mu^*_j})(\phz(\Delta_j))=d(\Ad(v^{\mu^*_j})(\phz(\Delta_j)),
\end{align*}
where the second inequality is due to the fact $\Ad(s_j^*v^{\mu^*_j})(\phz(X))\in M_n(R[\![v]\!])$ for $X\in \cI_1(R)$ (this uses that $\mu-\eta$ is $0$-deep in alcove $\un{C}_0$). For the diagonal entries, we have
\[d((\Ad(v^{\mu^*_j})(\phz(\Delta_j)))_{ii})=d(\phz(\Delta_j)_{ii})\geq p-1+d((\Delta_j)_{ii})\geq p-1+\delta\geq \delta+n\]
by the observation that $d(\phz(P))=p+d(\phz(P/v))\geq p+d(P/v)=p-1+d(P)$ for $P\in vR[\![v]\!]$.
For the $\alpha$-th entry where $\alpha$ is a root of $\GL_n$, we have
\[d((\Ad(v^{\mu^*_j})(\phz(\Delta_j)))_{\alpha})=d(\phz(\Delta_j)_{\alpha})+\langle \mu^*_j, \alpha^{\vee} \rangle\]
By the above observation, the fact that $(\Delta_j)_{\alpha} \in  vR[\![v]\!]$ for $\alpha<0$ and the fact that $\mu-\eta$ is $n$-deep in alcove $\un{C}_0$, we conclude that for all roots $\alpha$
\[d((\Ad(v^{\mu^*_j})(\phz(\Delta_j)))_{\alpha})\geq \delta+n.\]
Thus 
\[d(\ftr_j ((1+X_{j+1})A^{(j)}(Y_j(X_j+\Delta_j)-Y_j(X_j))))\geq \delta+n.\]
Putting everything together, we obtain
\[d(H(a)_{j+1}-H(b)_{j+1})\geq d((H(a)_{j+1}-H(b)_{j+1})D_j v^{w_j(\eta_0)})-n+1\geq \delta+n-n+1=\delta+1\]
as desired.

\end{proof}

 Thus we deduce that the map $H:V \to V$ is a contraction mapping. In particular, $H$ has a unique fixed point in $V$, which is what we wanted.


\end{proof}

We call a basis as above a \emph{gauge basis} (lifting $\ovl{\beta}$) of the deformation $\fM$ of $\ovl{\fM}$. This is consistent with \cite[Definition 4.15]{LLLM}. Since we have fixed the data ($\ovl{\fM}$,$\ovl{\beta}$), we will suppress the dependence on $\ovl{\beta}$. 

For each $0\leq j<f'$, the deformation problem that assigns to each Artinian $\cO$-algebra $A$ with residue field $\F$ the set of matrices $A_j \in M_n(A[\![v]\!])$ lifting $\ovl{A}^{(j)}$ and satisfies the degree bounds in Proposition \ref{prop:univlift} is clearly representable by a complete local Noetherian $\cO$-algebra $R_j$, which is a formal power series ring over $\cO$. It carries the universal matrix $A_j^{\univ}$.

 For any Artinian $\cO$-algebra $A$ with residue field $\F$, let $D^{\tau, \overline{\beta}}_{\overline{\fM}}(A)$ be the category of pairs $(\fM_A, \beta_A)$ deforming $(\overline{\fM}, \overline{\beta})$ where $\fM_A \in Y^{\eta, \tau}(A)$ and $\beta_A$ is a gauge basis of $\fM_A$.  We would like to give an ``explicit'' presentation for $D^{\tau, \overline{\beta}}_{\overline{\fM}}$ as in \cite[Theorem 4.17]{LLLM}. Define $R_{{w_j(\eta_0)}}^{\expl}$ to be the quotient of the formal power series ring $R_j$ above by the height $\leq \eta$ relations:
\begin{itemize} 
\item $\det A_j^{\univ}=x_j^*(v+p)^{n(n-1)/2}$ where $x_j^*\in (R_j[\![v]\!])^\times$
\item $(v+p)^{k(k-1)/2}$ divides each $k\times k$ minor of $A_j^{univ}$. Note that the condition that $(v+p)^l$ divides $P\in R_j[\![v]\!]$ can be expressed as 
\[P|_{v=-p}=0,\qquad \frac{d}{dv}P|_{v=-p}=0,\qquad \ldots, \qquad (\frac{d}{dv})^{l-1}P|_{v=-p}=0,\]
which are algebraic conditions in the coefficients of $P$.
\end{itemize}

\begin{prop} The functor $D^{\tau, \overline{\beta}}_{\overline{\fM}}$ is representable by the complete local Noetherian $\cO$-algebra 
\begin{equation}
R^{\tau, \overline{\beta}}_{\overline{\fM}} = \widehat{\otimes}_{0\leq j<f} (R_{{w_j(\eta_0)}}^{\expl})^{p\text{-flat, red}}.
\end{equation}
\end{prop}
\begin{proof}
By Proposition \ref{prop:univlift}, there exists a closed immersion $D^{\tau, \overline{\beta}}_{\overline{\fM}} \iarrow \Spf \, \widehat{\otimes}_{0\leq j<f} R_j$ (note that $Y^{\eta, \tau}$ is closed in $Y^{[0, n-1], \tau}$) so $D^{\tau, \overline{\beta}}_{\overline{\fM}}$ is representable by a quotient $R^{\tau, \overline{\beta}}_{\overline{\fM}}$ of $\widehat{\otimes}_{0\leq j<f} R_j$. 

By \cite[Theorem 5.3]{CL}, $Y^{\eta, \tau}$ is equisingular to the local model $M(\eta)$. Since the addition of a gauge basis is formally smooth, $R^{\tau, \overline{\beta}}_{\overline{\fM}}$ is $p$-flat and reduced.   It suffices then to compare $\overline{\Q}_p$-points of $R^{\tau, \overline{\beta}}_{\overline{\fM}}$ and $\widehat{\otimes}_{0\leq j<f} (R_{{w_j(\eta_0)}}^{\expl})^{p\text{-flat, red}}$.  

Let $F/E$ be a finite extension with ring of integers $\cO_F$. Let $x: \widehat{\otimes}_{0\leq j<f}  R_j \ra \cO_F$ with associated Kisin module $\fM_x$ and the matrix of partial Frobenii given by $A_{j, x}$.   By Theorem 5.13 in \cite{CL}, $\fM_x$ lies in $Y^{\eta, \tau}(\cO_F)$ and hence $D^{\tau, \overline{\beta}}_{\overline{\fM}}(\cO_F)$ if and only if $\fM_x[1/p]$ has $p$-adic Hodge type $\leq \eta$. In our notation, $\fM_x[1/p]$ has $p$-adic Hodge type $\leq \eta$ if for each $j$, the filtration on $\fM^{(j)}_{x, \chi_{s'_{\mathrm{or}}(n)}}[1/p]$ is of type $\mu_j$ with $\mu_j \leq \eta_0$.   The filtration is induced by the partial Frobenius with matrix $A_{j,x}$ and so this is equivalent to the condition that the elementary divisors of $A_{j, x}$ as a matrix over $F[\![v+p]\!]$ are $(v+p)^{\mu_j}$ for each $0 \leq j < f$. Thus $\fM_x$ lies in $D^{\tau, \overline{\beta}}_{\overline{\fM}}(\cO_F)$ if and only if the elementary divisors of $A_{j, x}$ as a matrix over $F[\![v+p]\!]$ are bounded by $(v+p)^{\eta_0}$ for each $0 \leq j < f$. But this condition is exactly the divisibility condition on the minors and the determinant condition on $A_{j, x}$ imposed by the relations defining $R_{{w_j(\eta_0)}}^{\expl}$.
\end{proof}

Let ($\fM^{\mathrm{univ}},\beta^{\mathrm{univ}})$ be the universal pair living over $R^{\tau, \overline{\beta}}_{\overline{\fM}}$.
\begin{prop} \label{prop:univfhlift} \begin{enumerate}
\item
Over $R^{\tau, \overline{\beta}}_{\overline{\fM}}$, the universal matrices of partial Frobenii of $\fM^{\mathrm{univ}}$ with respect to $\beta^{\mathrm{univ}}$ have the form
\[A^{(j), \univ}=D_j^{\univ}(v+p)^{w_j(\eta_0)}U^{(j),\univ}\]
for $0\leq j<f$, where
\begin{itemize}
\item $D_j^{\univ}\in T(R^{\tau, \overline{\beta}}_{\overline{\fM}})$ lifts $\ovl{D}_j$.
\item $w_j^{-1} U^{(j),\univ} w_j$ is lower triangular unipotent, and for any root $\alpha=\eps_i-\eps_k$ of $\GL_n$, its $\alpha$-th entry is a polynomial with topologically nilpotent coefficients of the form $v^{\delta_{w_j(\alpha)<0}} f^{(j)}_{\alpha}(v)$ where $\deg f^{(j)}_{\alpha}(v)<-\langle \alpha^\vee,\eta_0\rangle= i-k$.
\end{itemize}   
\item $R^{\tau, \overline{\beta}}_{\overline{\fM}}$ is the formal power series ring over $\cO$ generated by the coefficients $X_{\alpha}^{(j),l}$ of $f^{(j)}_{\alpha}$ $($where $0\leq j<f$, $\alpha<0$ a negative root of $\GL_n$, $0\leq l<-\langle \alpha^\vee,\eta_0\rangle$) and the variables $c_{ii}^{(j)}=(D_j^{\univ})_{ii}-[(\ovl{D}_j)_{ii}]$ (where $1\leq i\leq n$, $0\leq j<f$ and $[\cdot]$ denotes the Teichmuller lift$)$. 
\end{enumerate}
\end{prop}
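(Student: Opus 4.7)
The proof is a direct generalization to $\GL_n$ of Theorem 4.17 of \cite{LLLM}, substantially simplified by the specific form $(w_j(\eta_0))_j$ of the shape. Thanks to the tensor factorization of $R^{\tau, \overline{\beta}}_{\overline{\fM}}$, it suffices to fix an embedding index $j$ and show that $(R^{\expl}_{w_j(\eta_0)})^{p\text{-flat, red}}$ is isomorphic to the formal power series ring $S_j \defeq \cO[\![c_{ii}^{(j)}, X_\alpha^{(j),l}]\!]$ in the stated variables, in such a way that the universal matrix $A^{(j), \univ}$ acquires the factorization of part (1). We abbreviate $\mu \defeq w_j(\eta_0)$ and $w \defeq w_j$, and proceed by constructing mutually inverse maps.

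In the forward direction, we define over $S_j$ the diagonal matrix $D_j \in T(S_j)$ lifting $\ovl{D}_j$ via the $c_{ii}^{(j)}$, together with the matrix $U^{(j)} \in \GL_n(S_j[v])$ so that $w^{-1} U^{(j)} w$ is lower triangular unipotent with off-diagonal entries of the form specified in part (1) (the $X_\alpha^{(j),l}$ being the coefficients of the $f^{(j)}_\alpha$). A direct computation shows that $A^{(j)} \defeq D_j (v+p)^{\mu} U^{(j)}$ satisfies the degree bounds of Proposition \ref{prop:univlift}, reduces modulo $\fm_{S_j}$ to $\ovl{A}^{(j)}$ (because $p \in \fm_{S_j}$ and $U^{(j)}$ reduces to the identity), and satisfies the height relations: $\det A^{(j)} = \det(D_j)(v+p)^{n(n-1)/2}$, and the $(v+p)^{k(k-1)/2}$-divisibility of the $k\times k$ minors of $A^{(j)}$ follows from multiplicativity of minors under matrix products, combined with the explicit $(v+p)$-valuations contributed by the diagonal factor $(v+p)^{\mu}$. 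By the universal properties of $R_j$ and $R^{\expl}_{w_j(\eta_0)}$, this defines a map $R^{\expl}_{w_j(\eta_0)} \to S_j$ which factors through the $p$-flat reduced quotient since $S_j$ is itself a power series ring over $\cO$.

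In the reverse direction, the heart of the argument is to show that $A^{(j), \univ}$ itself admits a unique factorization $A^{(j), \univ} = D_j^{\univ}(v+p)^{\mu} U^{(j), \univ}$ of the form in part (1). By the entry-wise identity $A^{(j), \univ}_{ik} = (D_j^{\univ})_{ii}(v+p)^{n - w^{-1}(i)} U^{(j), \univ}_{ik}$ together with $U^{(j), \univ}_{ii} = 1$, this amounts to showing that each diagonal entry $A^{(j), \univ}_{ii}$ is forced to be a unit multiple of $(v+p)^{n - w^{-1}(i)}$ and that the off-diagonal $A^{(j), \univ}_{ik}$ are divisible (as polynomials in $v$) by the appropriate $(v+p)$-power, with quotients matching the prescribed form of $U^{(j), \univ}$. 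Both claims are consequences of the combined system of degree bounds from Proposition \ref{prop:univlift} and the height conditions on $k \times k$ minors for all $k$: the interplay cuts out precisely the desired factorization, as in the analogous $\GL_3$ computation in \cite[\S 4]{LLLM}. Sending the entries of $D_j^{\univ}$ and the coefficients of the $f^{(j)}_\alpha$ so obtained to the corresponding generators of $S_j$ yields the inverse map, which composes with the previous one to the identity by the uniqueness of the factorization.

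The principal obstacle is establishing this unique factorization, i.e., showing that the height relations on all $k \times k$ minors of $A^{(j), \univ}$, together with the degree bounds of Proposition \ref{prop:univlift}, supply exactly the $(v+p)$-divisibility needed to factor $A^{(j), \univ}$ in the prescribed form; this is an intricate but essentially combinatorial minor-juggling computation that generalizes the detailed $\GL_3$ analysis of \cite[\S 4]{LLLM}, with the simplifying feature that our shape $w_j(\eta_0)$ lies in the Weyl-group orbit of $\eta_0$.
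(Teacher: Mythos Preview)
Your outline matches the paper's approach: construct the power series ring $S_j$ (what the paper calls $\tld{R}_j$) with the evident map from $R^{\expl}_{w_j(\eta_0)}$, and then show that over the $p$-flat reduced quotient the universal matrix admits the claimed factorization, yielding the inverse. The one substantive omission is the actual argument for the factorization, which you defer as ``minor-juggling.'' The paper carries this out explicitly: set $B = w_j^{-1} A^{(j),\univ} w_j$ and argue by \emph{downward induction on the row and column index}, using at each step a single well-chosen $k \times k$ minor (the one formed by the last $k-1$ rows or columns together with the $(n-k+1)$-th row or column) combined with the degree bounds to force vanishing above the diagonal and the required $(v+p)$-divisibility on and below it. A technical point you do not mention, and which is essential to make this go through, is that the step ``$(v+p)^l \mid vP$ implies $(v+p)^l \mid P$'' requires $p$ to be a nonzerodivisor in the base ring; this is precisely why the argument must be run over the $p$-flat quotient $R^{\tau,\ovl{\beta}}_{\ovl{\fM}}$ rather than over $R^{\expl}_{w_j(\eta_0)}$ itself.
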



\begin{exam} In the situation of Example \ref{Example:fh}, that is in for $n = 3$, $f =1$, $\tau$ a generic principal series type and $\overline{\fM}$ has shape $t_{(1,2,0)} = \beta \gamma \alpha \gamma$, the Proposition asserts that the universal deformation of $(\overline{\fM},\overline{\beta})$ living over $R^{\tau, \overline{\beta}}_{\overline{\fM}}$ has matrix of Frobenius of the form
\[\begin{pmatrix}  (v+p)c^*_{11} & (v+p)c_{12} & 0 \\
0 &  (v+p)^2c^*_{22} & 0 \\
 vc_{31} & v(c_{32}+(v+p)c'_{32}) & c^*_{33} \\
\end{pmatrix}
\]  
where the starred coefficients are units and the non-starred coefficients are topologically nilpotent. This is exactly what is given in Table 5 of \cite{LLLM}.
The ring $R^{\tau, \overline{\beta}}_{\overline{\fM}}$ is a power series ring over $c_{12}, c_{31},c_{32},c'_{32}, c^*_{11}-[\overline{c}^*_{11}], c^*_{22}-[\overline{c}^*_{22}], c^*_{33}-[\overline{c}^*_{33}]$.
\end{exam}

\begin{proof} 
\begin{enumerate}
\item
We work with fixed $j$, and set $B=w_j^{-1}A^{(j),\univ}w_j$. The fact that $A^{(j),\univ}$ obeys the degree bounds in Proposition \ref{prop:univlift} implies:
\begin{itemize}
\item $B_{ii}\in R^{\tau, \overline{\beta}}_{\overline{\fM}}[v]$ and has degree $\leq n-i$.
\item For any root $\alpha=\eps_i-\eps_k$ of $\GL_n$, $v^{-\delta_{w_j(\alpha)<0}}B_{ik}$ is in $R^{\tau, \overline{\beta}}_{\overline{\fM}}[v]$ and has degree $<n-k$.
\end{itemize} 
We claim that these conditions together with the height $\leq$ relations force $B$ to be lower triangular, $B_{ii}$ to be $u_i(v+p)^{n-i}$ with $u_i \in (R^{\tau, \overline{\beta}}_{\overline{\fM}})^\times$, and $B_{ik}$ to be divisible by $v^{\delta_{w_j(\eps_i-\eps_k)<0}}(v+p)^{n-i}$. This finishes the proof, since we can then uniquely factorize 
\[B=D_j(v+p)^{\eta_0}U\]
with $D_j\in T(R^{\tau, \overline{\beta}}_{\overline{\fM}})$ and $U$ lower triangular unipotent (whose entries obey degree bounds deduced from the bounds for $B$), and conjugating by $w_j$ yields the desired factorization of $A^{(j),\univ}$. Note that the non-diagonal entries of $U$ are necessarily topologically nilpotent since $B$ is diagonal modulo the maximal ideal.

We now prove the claim by downward induction on the index of the rows and columns. We start by showing the claim for entries in the $n$th column and $n$-th row of $B$. The degree bounds imply $B_{in}=0$ for $i<n$, while $B_{nn}\in  R^{\tau, \overline{\beta}}_{\overline{\fM}}$. Furthermore $B_{nn}$ is a unit since it lifts a unit in the residue field. The claim is empty for all other entries of the $n$-th row of $B$. Suppose we claim holds for all entries in the last $k-1$ rows and columns. Then the induction hypothesis and the condition that each $k\times k$ minor of $A^{(j),\univ}$ (and hence also each $k\times k$ minor of of $B$) is divisible by $(v+p)^{k(k-1)/2}$ implies:
\begin{itemize}
\item Looking at the minor formed by the last $k$ columns, the last $(k-1)$ rows and the $i$-th row of $B$ for $i\leq n-k+1$, we get  
\[(v+p)^{k(k-1)/2} \mid B_{i(n-k+1)} \prod_{l=0}^{k-2} u_{n-l}(v+p)^{l},\]
thus $(v+p)^{k-1} \mid B_{i(n-k+1)}$. For $i=n-k+1$, since $B_{(n-k+1)(n-k+1)}$ has degree $\leq k-1$, we must have $B_{(n-k+1)(n-k+1)}=u_{(n-k+1)}(v+p)^{k-1}$, and $u_{(n-k+1)}$ is a unit since it lifts a unit in the residue field. On the other hand, for $i<n-k+1$, the degree bounds imply that $B_{i(n-k+1)}$ is of the form $v^{\delta}$ times a polynomial of degree $<k-1$, for $\delta\in \{0,1\}$. However if $p$ is regular in a ring $R$, the condition that $vP$ is divisible by $(v+p)^l$ for $P\in R[v]$ is equivalent to $P $ divisible by $(v+p)^l$ (this can be seen by using the interpretation of this condition in terms of vanishing up to $(l-1)$-th order derivatives of $P$ evaluated at $v=-p$). Since $p$ is regular in $R^{\tau, \overline{\beta}}_{\overline{\fM}}$, we conclude that $B_{i(n-k+1)}=0$.
\item Looking at the minor formed by the last $k$ rows, the last $k-1$ columns, and the $i$-th column of $B$ for $i<n-k+1$, we get  
\[(v+p)^{k(k-1)/2} \mid B_{(n-k+1)i} \prod_{l=0}^{k-2} u_{n-l}(v+p)^{l},\]
thus $(v+p)^{k-1} \mid B_{(n-k+1)i}$. We get the claim about divisibility by $v^{\delta_{w_j(\eps_{n-k+1}-\eps_i)<0}}(v+p)^{k-1}$of $B_{(n-k+1)i}$ by the same reasoning as above.
\end{itemize}
\item
We observe that for a polynomial $P\in R[v]$ with given degree, the condition that $P$ is divisible by $(v+p)^l$ is equivalent to solving the first $l$ coefficients of $P$ in terms of the remaining ones.

Thus, for $j$, there is a quotient $\tld{R}_j$ of the ring $R_j$ over which the universal matrix with degree bounds $A_j^{\univ}$ has the form in the first part of the Proposition (as this is equivalent to asking that each entry is either 0 or is divisible by certain powers of $v+p$ and $v$), and $\tld{R}_j$ is exactly the power series ring in the variables described in the second part of the Proposition. Furthermore, as the specialization of the universal matrix $A_j^{\univ}$ to $\tld{R}_j$ clearly satisfies the determinant and the height conditions defining $R_{{w_j(\eta_0)}}^{\expl}$, we conclude that there is a factorization $R_j\onto R_{{w_j(\eta_0)}}^{\expl}\onto \tld{R}_j$.
Part $(1)$ then shows that there is a factorization $\widehat{\otimes}_{0\leq j<f}  R_{{w_j(\eta_0)}}^{\expl}\onto\widehat{\otimes}_{0\leq j<f} \tld{R}_{j} \onto R^{\tau, \overline{\beta}}_{\overline{\fM}}$. But since $\widehat{\otimes}_{0\leq j<f} \tld{R}_{j}$ is a power series ring, hence reduced and $p$-flat, and $R^{\tau, \overline{\beta}}_{\overline{\fM}}$ is the maximal reduced and $p$-flat quotient of $\widehat{\otimes}_{0\leq j<f}  R_{{w_j(\eta_0)}}^{\expl}$, the last quotient map is an isomorphism.
\end{enumerate}

\end{proof}

We now recall the monodromy condition on the universal Kisin module $\fM^{\univ}$ over $R^{\tau, \overline{\beta}}_{\overline{\fM}}$, as in \cite[\S 5.1]{LLLM}, \cite{KisinFcrys}. We refer to \emph{loc.~cit.}~ for undefined symbols. On $\fM^{\univ}\otimes \cO^{\mathrm{rig}}_{R^{\tau, \overline{\beta}}_{\overline{\fM}}}$, there is a canonical derivation over the differential operator $-\lambda u'\frac{d}{d(u')}$, the \emph{monodromy operator}, which is meromorphic along $\lambda$ (in fact it has poles of order $\leq n-2$ due to the finite height conditions we imposed). The \emph{monodromy condition} is the condition that this operator has no poles. On the closed points of $\Spec R^{\tau, \overline{\beta}}_{\overline{\fM}}[\frac{1}{p}]$, this condition precisely cuts out the (Zariski closed) locus where the induced Kisin module comes from a potentially crystalline representation (which is necessarily of inertial type $\tau$ and Hodge-Tate weight $\leq \eta$).

We recall some more deformation problems attached to the current situation, similar to \cite[Definition 5.10]{LLLM} (when $\tau$ is principal series) and \cite[\S 6.2]{LLLM} (for general $\tau$). All data below are understood to be compatible with the given data $\rhobar$, $\ovl{\fM}$, $\ovl{\beta}$, etc.
\begin{enumerate}
\item $R^{\eta,\tau}_{\rhobar}$ is the framed potentially crystalline deformation ring of type $(\eta,\tau)$ as in \cite{KisinPSS}. By \cite[Theorem 3.3.4]{KisinPSS}, if this ring is not zero it has Krull dimension $\dim R^{\eta,\tau}_{\rhobar}=\frac{n(n-1)f}{2}+n^2+1$.
We denote the deformation problem it represents $D_{\rhobar}^{\tau, \Box}$.

\item Let $R_{\overline{\fM}, \rhobar}^{\tau, \Box}$ denote the complete local Noetherian $\cO$-algebra which represents the deformation problem
$$
D_{\overline{\fM}, \rhobar}^{\tau, \Box}(A) := \left \{ (\fM_A, \rho_A, \delta_A) \mid \fM_A \in  Y^{\mu, \tau}(A), \rho_A \in D_{\rhobar}^{\tau, \Box}(A),  \delta_A:T_{dd}^*(\fM_A) \cong (\rho_A)|_{G_{K_{\infty}}} \right \}.
$$
Thanks to Proposition \ref{Kisinvariety} (and our running hypothesis that $\tau$ is generic), this deformation problem is representable, and in fact is representable by $R^{\eta,\tau}_{\rhobar}$ as explained in \cite[Section 5.2]{LLLM}.
\item Let $R^{\tau, \overline{\beta}, \Box}_{\overline{\fM}, \rhobar}$ denote the complete local Noetherian $\cO$-algebra which represents the deformation problem
$$
D^{\tau, \overline{\beta}, \Box}_{\overline{\fM}, \rhobar}(A) = \left\{ (\fM_A, \rho_A, \delta_A,\beta_A) \mid (\fM_A, \rho_A,\delta_A) \in D_{\overline{\fM}, \rhobar}^{\tau,\Box}(A), \beta_A \text{ a gauge basis for } \fM_A \right \}.
$$

\item Let $R^{\tau, \overline{\beta}, \Box}_{\overline{\fM}}$ denote the complete local Noetherian $\cO$-algebra which represents the deformation problem of triples $(\fM_A, \beta_A, \un{e}_A)$ where $(\fM_A, \beta_A) \in D^{\tau, \overline{\beta}}_{\overline{\fM}}(A)$ and $\un{e}_A$ is a basis of $T_{dd}^*(\fM_A)$ lifting the basis on $\rhobar|_{G_{K_{\infty}}}$ so that $(T_{dd}^*(\fM_A), \un{e}_A)$ is a framed deformation of $\rhobar|_{G_{K_{\infty}}}$.
\item Let $R^{\tau, \overline{\beta}, \nabla}_{\overline{\fM}}$ denote the $\cO$-flat and reduced quotient of $R^{\tau, \overline{\beta}}_{\overline{\fM}}$ such that $\Spec R^{\tau, \overline{\beta}, \nabla}_{\overline{\fM}}[1/p]$ is the locus where the monodromy condition holds on $\Spec R^{\tau, \overline{\beta}}_{\overline{\fM}}[1/p]$.  
 We define  $R^{\tau, \overline{\beta},\Box, \nabla}_{\overline{\fM}}$ from $R^{\tau, \overline{\beta},\Box}_{\overline{\fM}}$ in a similar way.
\end{enumerate}

We recall \cite[Diagram (5.9)]{LLLM}, which summarizes the relationship between the above deformation problems.  The square is Cartesian and f.s.~ stands for formally smooth. 
\begin{equation} \label{defdiagram}
\xymatrix{
& & \Spf R^{\tau, \overline{\beta}, \square, \nabla}_{\overline{\fM}} \ar[r]^{f.s.} \ar@{^{(}->}[d]  & \Spf R^{\tau, \overline{\beta}, \nabla}_{\overline{\fM}} \ar@{^{(}->}[d] \\
 &\Spf R^{\tau, \overline{\beta}, \Box}_{\overline{\fM}, \rhobar} \ar[d]^{f. s.}  \ar@{^{(}.>}[ru]^{\xi} \ar@{^{(}->}[r]  & \Spf R_{\overline{\fM}}^{\tau, \overline{\beta}, \Box} \ar[r]^{f.s.} & \Spf R_{\overline{\fM}}^{\tau, \overline{\beta}}  \\
 \Spf R^{\eta, \tau}_{\rhobar} & \Spf R_{\overline{\fM},\rhobar}^{\tau, \Box}  \ar[l]_{\sim}  
}
\end{equation}

The maps which are formally smooth correspond to forgetting either a framing on the Galois representation or a gauge basis on the Kisin modules (the fact that adding gauge basis is a formally smooth operation is due to Proposition \ref{prop:univlift}). The fact that the horizontal arrow below the dotted arrow is a closed immersion is due to our assumption that $\tau$ is (at least) generic, which implies $\ad(\rhobar)$ is cyclotomic free (e.g.~ by looking at the inertial weights, which are easily read off by applying Corollary \ref{cor:phiandKisin}), and hence the argument of \cite[Proposition 5.11]{LLLM} applies. This shows that if the dotted arrow exists, it must be a closed immersion. 
We show below that the dotted arrow exists, and is furthermore an isomorphism. The fact that it is an isomorphism rather than just a closed immersion is because in the present situation, the elementary divisors of the matrices of partial Frobenii of $\overline{\fM}$ with respect to $\overline{\beta}$ are \emph{exactly} $(v+p)^{\eta_0}$, and thus no lift $\fM$ of $\overline{\fM}$ can satisfy a height $\leq\lambda$ relation for $\lambda<\eta$.

\begin{prop}\label{prop:factors} The natural map  $ R^{\tau, \overline{\beta}}_{\overline{\fM}}\to  R_{\overline{\fM},\rhobar}^{\tau, \overline{\beta}, \Box}$ factors through the quotient $R^{\tau, \overline{\beta}, \nabla}_{\overline{\fM}}$. The induced map $\xi$ is an isomorphism.
\end{prop}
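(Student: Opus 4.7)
The plan is to adapt the argument of \cite[Proposition 5.11]{LLLM} to the $\GL_n$ setting. For the factorization claim, I would verify that any $A$-valued point $(\fM_A,\rho_A,\delta_A,\beta_A)$ of $D^{\tau,\overline{\beta},\Box}_{\overline{\fM},\rhobar}$ yields a Kisin module $\fM_A$ whose canonical monodromy operator on $\fM_A\otimes\cO^{\mathrm{rig}}_A$ has no poles along $\lambda$; this holds because $\rho_A$ is potentially crystalline by construction, and Kisin's integral $p$-adic Hodge theory \cite{KisinFcrys} identifies potentially crystalline lifts with precisely those Kisin modules in $Y^{\eta,\tau}$ satisfying the monodromy condition. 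Since $R^{\tau,\overline{\beta},\Box}_{\overline{\fM},\rhobar}$ is reduced and $\cO$-flat---being formally smooth over $R^{\eta,\tau}_{\rhobar}$, which is reduced and $\cO$-flat by \cite{KisinPSS}---and $R^{\tau,\overline{\beta},\nabla}_{\overline{\fM}}$ is by definition the maximal reduced, $\cO$-flat quotient of $R^{\tau,\overline{\beta}}_{\overline{\fM}}$ on which monodromy holds, the factorization through $R^{\tau,\overline{\beta},\nabla}_{\overline{\fM}}$ is immediate.

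The closed immersion property of $\xi$ has already been recorded in the text surrounding diagram \eqref{defdiagram}: genericity of $\tau$ combined with Corollary \ref{cor:phiandKisin} provides an explicit description of $\rhobar^{\mathrm{ss}}|_{I_K}$, from which one reads off that no Jordan--H\"older factor of $\ad(\rhobar)$ is a twist of the cyclotomic character, so the argument of \cite[Proposition 5.11]{LLLM} applies. To upgrade $\xi$ from a closed immersion to an isomorphism, I would construct a section on $A$-points. Given a triple $(\fM_A,\beta_A,\un{e}_A)\in D^{\tau,\overline{\beta},\Box,\nabla}_{\overline{\fM}}(A)$, Kisin's equivalence produces a canonical $G_K$-representation $\rho_A$ equipped with a distinguished isomorphism $\delta_A\colon T_{dd}^*(\fM_A)\stackrel{\sim}{\to}\rho_A|_{G_{K_\infty}}$, through which the basis $\un{e}_A$ transports to a framing of $\rho_A$. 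The determinant relation $\det A^{(j),\univ}=x_j^\ast(v+p)^{n(n-1)/2}$ built into $R^{\expl}_{w_j(\eta_0)}$, together with the height bounds, pins down the Hodge--Tate weights of $\rho_A$ to be exactly $\eta$ and not merely bounded by $\eta$, while the descent datum built into $Y^{\eta,\tau}$ forces the inertial type to be $\tau$.

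The main obstacle is verifying that these two constructions are mutually inverse as morphisms of formal schemes, which amounts to the assertion that the $G_K$-action on $\rho_A$ is uniquely determined by the data $(\fM_A,\beta_A,\un{e}_A)$ together with the monodromy condition. This uniqueness is the content of Kisin's fully faithful functor from lattices in potentially crystalline representations of the relevant type to Kisin modules satisfying monodromy, which guarantees that the $\rho_A$ constructed above is the unique lift of $T_{dd}^*(\fM_A)$ to a $G_K$-representation compatible with the Kisin module structure. Combining this with the closed immersion already established yields the desired isomorphism.
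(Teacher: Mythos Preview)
Your factorization argument is essentially the paper's, though the phrasing is imprecise: ``$\rho_A$ is potentially crystalline by construction'' has no meaning for Artinian $A$. The correct statement, which you do eventually reach, is that since $R^{\tau,\overline{\beta},\Box}_{\overline{\fM},\rhobar}$ is reduced and $\cO$-flat, it suffices to verify the monodromy condition on closed points of the generic fiber, where Kisin's theory applies.

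For the isomorphism, your proposed section on Artinian $A$-points has a genuine gap. The ring $R^{\tau,\overline{\beta},\Box,\nabla}_{\overline{\fM}}$ has no moduli interpretation on Artinian rings---it is defined as a Zariski closure in characteristic zero---so there is no deformation functor ``$D^{\tau,\overline{\beta},\Box,\nabla}_{\overline{\fM}}(A)$'' to evaluate. More fundamentally, Kisin's theory produces a $G_K$-representation from a Kisin module satisfying the monodromy condition only over $p$-adic fields; there is no mechanism to promote $T^*_{dd}(\fM_A)$ from a $G_{K_\infty}$-representation to a $G_K$-representation for general Artinian $A$, so the sentence ``Kisin's equivalence produces a canonical $G_K$-representation $\rho_A$'' cannot be made precise in that generality. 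The full faithfulness you invoke at the end is likewise a characteristic-zero statement and does not furnish the section you want.

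The paper sidesteps this by staying on the generic fiber throughout: since $\xi$ is already a closed immersion of reduced $\cO$-flat schemes, it is an isomorphism as soon as every closed point of $\Spec R^{\tau,\overline{\beta},\Box,\nabla}_{\overline{\fM}}[1/p]$ lies in the image. Such a point gives a potentially crystalline representation of type $(\leq\eta,\tau)$, and the only way it could fail to lie in $\Spec R_{\overline{\fM},\rhobar}^{\tau,\overline{\beta},\Box}[1/p]$ is if some embedding had Hodge--Tate weights strictly smaller than $\eta_0$; the explicit form of the universal Frobenius in Proposition~\ref{prop:univfhlift} rules this out. You correctly identified this last mechanism via the determinant relation, so your argument is easily repaired by running it on $\overline{\Q}_p$-points rather than Artinian ones.
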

\begin{proof} The proof is completely analogous to the proof \cite[Theorem 5.12]{LLLM}. As both target rings are reduced and $p$-flat, we only need to check the factorization exists on closed points of the generic fibers. However, this is just the statement that a Kisin module coming from a potentially crystalline Galois representation satisfies the monodromy condition.

To see that the map $\xi$ is an isomorphism, we note that the only closed points in $\Spec R^{\tau, \overline{\beta}, \Box,\nabla}_{\overline{\fM}}[\frac{1}{p}]$ that do not come from 
$\Spec R_{\overline{\fM},\rhobar}^{\tau, \overline{\beta}, \Box}[\frac{1}{p}]$ are those for which the $j$-th component of the underlying Kisin module has elementary divisors strictly dominated by $(v+p)^{\eta_0}$, for some $0\leq j<f$ (this corresponds to the condition that the Hodge-Tate weight of the $j$-th embedding of the corresponding Galois representation is $<\eta_0$). However, this possibility is ruled out by the form of the universal Kisin module given in Proposition \ref{prop:univfhlift}
\end{proof}

\begin{cor} \label{cor:existencelift} For $\rhobar, \tau$ as in Theorem \ref{thm:smoothdef}, there exists a closed point $x\in \Spec R^{\eta,\tau}_{\rhobar}[\frac{1}{p}]$ such that the corresponding Galois representation $\rho_x$ becomes a direct sum of characters after restriction to a finite index subgroup. In particular  $R^{\eta,\tau}_{\rhobar}\neq 0$.
\end{cor}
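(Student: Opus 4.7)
The plan is to exhibit an explicit closed point of $\Spec R^{\eta,\tau}_{\rhobar}[\tfrac{1}{p}]$ directly from the power series description of $R^{\tau,\overline{\beta}}_{\overline{\fM}}$ provided by Proposition \ref{prop:univfhlift}(2). Consider the closed formal subscheme $Z \hookrightarrow \Spf R^{\tau,\overline{\beta}}_{\overline{\fM}}$ cut out by the ideal generated by all the variables $X^{(j),l}_{\alpha}$, so that $Z \cong \Spf \cO[\![c^{(j)}_{ii}]\!]$ is formally smooth over $\cO$. Over $Z$, the universal matrices of partial Frobenii specialize to the diagonal matrices $A^{(j)}|_Z = D_j^{\univ}(v+p)^{w_j(\eta_0)}$, and the generic fiber $Z[\tfrac{1}{p}]$ certainly has many closed points.

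The next step is to verify that $Z$ factors through the quotient $\Spf R^{\tau,\overline{\beta},\nabla}_{\overline{\fM}}$ cut out by the monodromy condition. Pulling the universal Kisin module back along the natural inclusion $\fS_{R} \hookrightarrow \fS_{L',R}$ and forgetting the $\Delta/\Delta'$-part of the descent datum, the diagonal form of $A^{(j)}|_Z$ implies that $\fM^{\univ}|_Z$ decomposes as an orthogonal direct sum of $n$ rank-one Kisin modules, each spanned by a single element of the gauge basis (which is a $\chi_i$-eigenvector for $\Delta'$). The monodromy derivation is additive with respect to Frobenius-stable direct summands, so the condition of having no pole at $\lambda$ reduces to the rank-one case, where it is automatic: any rank-one Kisin module of finite height over $\fS_{L',R}$ arises from a crystalline character of $G_{L'}$.

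Given any closed point $y$ of $Z[\tfrac{1}{p}]$, the inclusion $Z \hookrightarrow \Spf R^{\tau,\overline{\beta},\nabla}_{\overline{\fM}}$ combined with Proposition \ref{prop:factors} makes $y$ into a closed point of $\Spec R^{\tau,\overline{\beta},\Box}_{\overline{\fM},\rhobar}[\tfrac{1}{p}]$; diagram (\ref{defdiagram}) then descends $y$ to a closed point $x$ of $\Spec R^{\eta,\tau}_{\rhobar}[\tfrac{1}{p}]$, proving in particular that $R^{\eta,\tau}_{\rhobar}\ne 0$. By construction, the Kisin module underlying $\rho_x$ is, after base change to $\fS_{L',R}$ and forgetting the $\Delta/\Delta'$ descent, a direct sum of rank-one Kisin modules, each corresponding to a crystalline character of $G_{L'}$; hence $\rho_x|_{G_{L'}}$ is a direct sum of crystalline characters, and $L'/K$ is a finite extension.

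The only non-formal ingredient is the reduction of the monodromy condition to the rank-one case over the non-trivial base $Z$. If one prefers to avoid analyzing the derivation globally, an equivalent route is to construct $n$ potentially crystalline characters of $G_K$ with the expected Hodge--Tate and inertial invariants via Kisin's classification in rank one, take their direct sum $\rho$, and use the uniqueness of Proposition \ref{Kisinvariety} to recognize that the Kisin module associated to $\rho$ lies on $Z$; this produces the desired closed point $x$ without any global analysis of monodromy.
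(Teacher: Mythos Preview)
Your main argument is correct and takes essentially the same approach as the paper: specialize to the diagonal locus in $\Spf R^{\tau,\overline{\beta}}_{\overline{\fM}}$, verify the monodromy condition via a rank-one decomposition, and lift through diagram~(\ref{defdiagram}). The paper passes to a further unramified extension $\breve{K}/K'$ before asserting the rank-one splitting, whereas you observe (correctly, via the computation in the proof of Corollary~\ref{cor:phiandKisin}) that when the $A^{(j')}$ are diagonal the submodules $\bigoplus_{j'} R[\![u']\!]\,f_i^{(j')}$ are already $\phi$- and $\Delta'$-stable; this is a slight streamlining. Note also that since $R^{\tau,\overline{\beta},\nabla}_{\overline{\fM}}$ is defined as an $\cO$-flat reduced quotient via a pointwise condition on the generic fiber, and $Z$ is a power series ring over $\cO$, your ``non-formal ingredient'' only needs to be checked at closed points of $Z[1/p]$, where the rank-one case is standard.

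One genuine error in the alternative you sketch at the end: a direct sum of $n$ characters of $G_K$ can only lift $\rhobar$ when $\rhobar$ itself splits into characters over $K$, which is not assumed (it is merely semisimple, hence possibly a sum of inductions from unramified extensions). The paper's version of this alternative produces $\rho_x$ as an \emph{induction} of a sum of characters from an unramified extension of $K$, which handles the general case. (Also, the phrase ``pulling the universal Kisin module back along $\fS_R \hookrightarrow \fS_{L',R}$'' is garbled; you simply mean retaining only the $\Delta'$-part of the descent datum.)
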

\begin{proof}
Recall that we have fixed a gauge basis $\ovl{\beta}$ of $\ovl{\fM}$ such that for all $j$, the matrices of the $j$-th partial Frobenius with respect to $\ovl{\beta}$ has the form
\begin{equation}
\ovl{A}^{(j)}=\ovl{D}_j v^{w_j(\eta_0)}
\end{equation}
with $\ovl{D}_j \in T(\F).$  
Using Proposition \ref{prop:univfhlift}, we can produce an $\cO$ point of $R^{\tau,\ovl{\beta}}_{\ovl{\fM}}$ such that the matrices of partial Frobenii are monomial matrices of the form
\[A^{(j)}=D_j(v+p)^{w_j(\eta_0)}.\]
by choosing a diagonal matrices $D_j\in T(\cO)$ lifting $\ovl{D}_j$.
A Kisin module of this form becomes isomorphic to a direct sum of rank $1$ Kisin modules after passing to a finite unramified extension $\breve{K}$ of $K'$. Since the monodromy condition can be checked after base change, and always holds for rank $1$ Kisin modules (by an easy computation), we deduce that the above Kisin module satisfies the monodromy condition. Lifting this point along the formally smooth maps in the diagram \ref{defdiagram} yields a closed point $x\in \Spec R^{\eta,\tau}_{\rhobar}[\frac{1}{p}]$. As the underlying Kisin module of $x$ decomposes into direct sum of rank $1$ Kisin modules over $\breve{K}$, $\rho_x$ becomes a direct sum of characters over $\breve{K}$.

Alternatively, we can also directly produce $\rho_x$ as a direct sum of inductions of (potentially crystalline) characters for unramified extensions of $K$ and then check that it comes from a Kisin module with the above form, hence has type $(\eta,\tau)$.
\end{proof}
\begin{prop}\label{prop:upperbounddefring} $R^{\tau, \overline{\beta}, \nabla}_{\overline{\fM}}/\varpi$ is a quotient of a power series ring over $\F$ in $\frac{n(n+1)f}{2}$ variables.
\end{prop}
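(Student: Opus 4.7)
The plan is to identify $R^{\tau, \overline{\beta}}_{\overline{\fM}}/\varpi$ via Proposition \ref{prop:univfhlift}(2) as a formal power series ring over $\F$ in the variables $\{c^{(j)}_{ii}\}_{1\leq i\leq n,\,0\leq j<f}$ and $\{X^{(j),l}_{\alpha}\}_{\alpha<0,\,0\leq l<-\langle\alpha^\vee,\eta_0\rangle,\,0\leq j<f}$, and then to show that the monodromy condition forces each ``higher'' variable $X^{(j),l}_{\alpha}$ with $l\geq 1$ to be, modulo $\varpi$ and $\fm^2$, a linear combination of the remaining variables. A direct combinatorial count shows that the ``low'' variables, i.e.\ the $c^{(j)}_{ii}$ together with the constants $X^{(j),0}_{\alpha}$ indexed by negative roots $\alpha$ of $\GL_n$, number $nf + \binom{n}{2}f = \frac{n(n+1)f}{2}$, while the higher variables number $\frac{n(n-1)(n-2)f}{6}$; and these two numbers add up to the total $\frac{nf(n^2+5)}{6}$ obtained from the power series description.

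To carry this out I would compute the universal monodromy operator $N^{(j)}$ on the $j$-th component of $\fM^{\univ}\otimes \cO^{\mathrm{rig}}_{R^{\tau, \overline{\beta}}_{\overline{\fM}}}$ using the explicit form $A^{(j),\univ}=D^{\univ}_j(v+p)^{w_j(\eta_0)}U^{(j),\univ}$ of Proposition \ref{prop:univfhlift}(1). The operator $N^{(j)}$ is determined recursively by its commutation relation with $\phi^{(j)}$ (stemming from the fact that $-\lambda u'\frac{d}{du'}$ is a derivation compatible with Frobenius), and can be written as a Laurent series in $\lambda$ with pole of order at most $n-2$ due to the finite height condition. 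The monodromy ideal $I\subset R^{\tau, \overline{\beta}}_{\overline{\fM}}$ is then generated by the matrix entries of the coefficients of $\lambda^{-k}$ in $N^{(j)}$ for $1\leq k\leq n-2$, $0\leq j<f$.

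The heart of the argument is the following claim: after reducing modulo $\varpi$ and linearizing modulo $\fm^2$, each higher variable $X^{(j),l}_{\alpha}$ with $l\geq 1$ appears with a nonzero coefficient in some generator of $I$, in such a way that one can solve for these variables inductively (say, in decreasing order of $l$) in terms of the low variables. This should follow from the explicit inductive formula for $N^{(j)}$ together with the action of $-\lambda u'\frac{d}{du'}$ on $(v+p)^{w_j(\eta_0)}$: the scalar coefficients that arise are integers coming from $\langle w_j(\eta_0),\eps_i^\vee\rangle$ shifted by the diagonal entries of $D^{\univ}_j$, and they are nonzero modulo $p$ thanks to the assumption that $\mu-\eta$ is $(2n-1)$-deep in $\bun{C}_0$ (this genericity also controls the monodromy recursion itself, ensuring it converges formally to a well-defined meromorphic operator).

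Granting this claim, the image of $I$ in the cotangent space $\fm/(\fm^2+(\varpi))$ of $R^{\tau, \overline{\beta}}_{\overline{\fM}}/\varpi$ contains $\frac{n(n-1)(n-2)f}{6}$ linearly independent elements, so the embedding dimension of $R^{\tau, \overline{\beta}, \nabla}_{\overline{\fM}}/\varpi$ is at most
\[
\frac{nf(n^2+5)}{6}-\frac{n(n-1)(n-2)f}{6}=\frac{n(n+1)f}{2},
\]
which gives the proposition. The main obstacle is the explicit linearization step: one must unwind the inductive definition of $N^{(j)}$ enough to extract the precise coefficient of each $X^{(j),l}_\alpha$ in the leading term of the appropriate pole coefficient, and verify these coefficients assemble into a matrix of full row rank over $\F$. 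This is a somewhat lengthy but direct generalization of the rank three computation performed in \cite[\S 5]{LLLM}, with the $(2n-1)$-deep hypothesis replacing the ad hoc genericity bounds used there.
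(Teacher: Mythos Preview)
Your overall strategy matches the paper's: identify $R^{\tau,\overline{\beta}}_{\overline{\fM}}/\varpi$ as a power series ring via Proposition \ref{prop:univfhlift}(2), then show the monodromy condition produces enough relations to eliminate all but $\frac{n(n+1)f}{2}$ generators. The counting is correct. However, two concrete points in your plan are wrong, and the first would cause the ``full row rank'' verification to fail as stated.

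First, the variables that survive are the \emph{top} degree coefficients $X^{(j),-\langle\eta_0,\alpha^\vee\rangle-1}_\alpha$, not the constants $X^{(j),0}_\alpha$. The reason: after the paper's careful estimates showing that the higher Frobenius-iterate error terms $M^{(j)}$ are divisible by $p$ (this is where the $(2n-1)$-deep hypothesis is used), the monodromy condition reduces modulo $\varpi$ to the vanishing of $(\tfrac{d}{dv})^t$ at $v=0$ (not $v=-p$) of a polynomial $\sum_l (\text{coeff}_l)\,X^{(j-1),l}_\alpha v^l$, for $0\le t\le -\langle\eta_0,\alpha^\vee\rangle-2$. The $t$-th such condition picks out exactly the coefficient of $v^t$, so it eliminates $X^{(j-1),t}_\alpha$ for $t$ up to $-\langle\eta_0,\alpha^\vee\rangle-2$; the top coefficient never appears and so cannot be eliminated. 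Your claim that each $X^{(j),l}_\alpha$ with $l\ge 1$ appears with nonzero coefficient is therefore false for $l=-\langle\eta_0,\alpha^\vee\rangle-1$, while you are missing $l=0$. The induction is on the root $\alpha$ (from higher to lower), not on $l$: the nonlinear cross-terms that appear involve only strictly higher roots, so they drop out modulo $\fm^2$ and, more precisely, are already solved at earlier steps of the root induction.

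Second, the nonzero scalar coefficients do not come from $\langle w_j(\eta_0),\eps_i^\vee\rangle$ or from $D^{\univ}_j$ (which encodes $\rhobar$), but from the \emph{type} data: the relevant scalar is $-l-\delta_{w_{j-1}^{-1}(\alpha)<0}+\langle (s'_{\mathrm{or},j})^{-1}(\mathbf{a}^{\prime(j)}_{(s,\mu)}),\alpha^\vee\rangle$, whose nonvanishing modulo $p$ is precisely the genericity of $\tau$. The derivative term $e'v\tfrac{d}{dv}$ contributes the $-l$ (since $e'\equiv -1\bmod p$), and the commutator with the diagonal matrix built from the descent-datum exponents contributes the pairing with $\alpha^\vee$; the matrix $D_{j-1}$ is diagonal and commutes out, playing no role in these coefficients.
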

\begin{proof}
We work over $R=R^{\tau, \overline{\beta}}_{\overline{\fM}}$ with universal Kisin module $\fM=\fM^{\univ}$ and universal gauge basis $\beta$. This determines the matrices $A^{(j)}$ as before. It will suffice to analyze the monodromy condition on $\fM$ viewed as a Kisin module over $K'$ with descent data corresponding to the base changed type $\tau'$ (which is a principal series type), and we will do so by closely following the computations in \cite[\S 5.1]{LLLM} (especially \cite[Theorem 5.6]{LLLM}). As in \emph{loc.~cit.}, we have the matrices $C^{(j)}$ which are determined by the $A^{(j)}$ and our chosen presentation of $\tau$, and a ring $\cO^{\mathrm{rig}}_R$. We note that the variable $u$ in \emph{loc.~cit.}~ corresponds to our variable $u'$, the variable $v$ there is the same as our $v=(u')^{e'}$, and $E(u)$ there is $(u')^{e'} + p = v+p$. 
Exactly as in \cite[Lemma 5.2]{LLLM} we have a formula for the $j$-th component $N_\infty^{(j)}$ of the monodromy operator:
\begin{align*}
N_{\infty}^{(j)}=N_1^{(j)} + \sum^{\infty}_{i = 1} \left(\prod_{k=0}^{i-1}\phz^{k}(C^{(j-k-1)})\right) \phz^i(N_{1}^{(j-i)})\left(\prod_{k=i-1}^{0}\phz^{k} (C^{(j-k-1), *}) \right)
\end{align*}
where $C^{(j), *} :=  (v+p) (C^{(j)})^{-1}$,
\[N_{1}^{(j)} = \lambda u'\frac{d}{du'}(C^{(j-1)}) (C^{(j-1)})^{-1},\]
and the convergence happens inside $\lambda^{2-n}\Mat_n(\cO^{\mathrm{rig}}_R)$.

As in \cite[Theorem 5.6]{LLLM} we can write
\[p^{n-1}\lambda^{n-2}N_{\infty}^{(j)}= (p\lambda)^{n-1} u'\frac{d}{du'}(C^{(j-1)}) (C^{(j-1)})^{-1}+\sum_{i=1}^{\infty} X_i^{(j)}\]
where
\[X_i^{(j)} := \frac{\phz^{i+1}(\lambda)^{n-1}}{p^{i(n-2)}} \left(\prod_{k=0}^{i-1}\phz^{k}(C^{(j-k-1)})\right) \phz^i\left(u' \frac{d}{du'} C^{(j-i -1)} \right) \left(\prod_{k=i}^{0}\phz^{k} \left((v+p)^{n-2} C^{(j-k-1), *} \right) \right).\]
If $z \in \mathbb{Z}^n$, we use the shorthand $\mathrm{Diag}(z)$ to denote diagonal matrix with entries $z_1, z_2, \ldots, z_n$.   Also for $M, N \in M_n(R[\![v]\!]), [M, N] := MN - NM$.  By ``removing the descent data'' as in \emph{loc.~cit.} (see \eqref{def:adj} for notation), we obtain
\[
p^{n-1} \Ad \big((s'_{\mathrm{or}, j})^{-1} (u')^{-\mathbf{a}_{(s, \mu)}^{\prime \, (j)}} \big) (\lambda^{n-2} N_{\infty}^{(j)})=
-\phz(\lambda)^{n-1}P_N(A^{(j-1)})+\sum_{i= 1}^{\infty}\phz^{i+1}(\lambda)^{n-1}Z_i^{(j)},
\]
where (cf.~ \cite[Lemma 5.4]{LLLM}) 
\[P_{N}(A^{(j-1)}) \defeq   \left(- e'v \frac{d}{dv} A^{(j-1)} - [\mathrm{Diag}((s'_{\mathrm{or}, j})^{-1}(\mathbf{a}_{(s, \mu)}^{\prime \, (j)})), A^{(j-1)}] \right)  (v+p)^{n-1}(A^{(j-1)})^{-1}\]
and
\[Z^{(j)}_i = \Ad \big((s'_{\mathrm{or}, j})^{-1} (u')^{-\mathbf{a}_{(s, \mu)}^{\prime \, (j)}} \big) \left(\frac{1}{\phz^{i+1}(\lambda)^{n-1}} X_i^{(j)} \right).
\]
Now exactly as in the last part of the proof of \cite[Theorem 5.6]{LLLM}, using that $\tau'$ is $(2n-1)$-generic, we get $Z^{(j)}_i\in \frac{v^{(2n-2)p^{i-1}}}{p^{i(n-2)}}M_n(R[\![v]\!])$ for $i>1$ and $Z^{(j)}_1\in \frac{v^{2n-1}}{p^{n-2}}M_n(R[\![v]\!])$. This fact together with a simple computation with derivatives shows that $M^{(j)}:=\frac{1}{\phz(\lambda)^{n-1}}\sum_{i= 1}^{\infty}\phz^{i+1}(\lambda)^{n-1}Z_i^{(j)}$ satisfies $(\frac{d}{dv})^{t} M^{(j)}|_{v=-p}\in p^{2n-1-(n-2)-t}M_n(R)$ and $(\frac{d}{dv})^{t} v^{-1}M^{(j)}|_{v=-p}\in p^{2n-1-(n-2)-t-1}M_n(R)$ for $0\leq t\leq n-3$.

As in the proof of \cite[Proposition 5.3]{LLLM}, the monodromy condition is equivalent to $\lambda^{n-2}N^{(j)}_\infty$ vanishing to order $n-2$ at $u'=(-p)^{\frac{1}{e'}}$. Thus the upshot of the above discussion is that the monodromy condition is equivalent to $-P_N(A^{(j-1)})+M^{(j)}$ vanishing to order $n-2$ at $v=-p$ in $M_n(\cO^{\mathrm{rig}}_{R})$. Note that this condition is preserved under multiplication by any power of $v$ and can be expressed as the vanishing of all derivatives $(\frac{d}{dv})^t$ at $v=-p$ for $0\leq t\leq n-3$ .

We now recall Proposition \ref{prop:univfhlift} which gives the decomposition
\[A^{(j)}=D_j(v+p)^{w_j(\eta_0)}U^{(j)}\]
and $R=R^{\tau, \overline{\beta}}_{\overline{\fM}}$ is the formal power series ring over $\cO$ in the variables $X_{\alpha}^{(j),l}$ (where $0\leq j<f$, $\alpha<0$ is a negative root of $\GL_n$ and $0\leq l<-\langle \eta_0,\alpha^{\vee} \rangle)$ and the variables $c_{ii}^{(j)}$ (where $1\leq i\leq n$, $0\leq j<f$), which (up to a translation in the case of $c_{ii}^{(j)}$) give the coefficients of the entries of $U^{(j)}$ and $D_j$ respectively.

Substituting the above expression, we get 
\begin{align*}
&P_N(A^{(j-1)})= -(v+p)^{n-1}(e'v\frac{d}{dv}(v+p)^{w_{j-1}(\eta_0)})(v+p)^{-w_{j-1}(\eta_0)}\\
&-(v+p)^{n-1}\Ad(D_{j-1}(v+p)^{w_{j-1}(\eta_0)})\Big(\Big(e'v\frac{d}{dv}U^{(j-1)}+\left[\mathrm{Diag}((s'_{\mathrm{or}, j})^{-1}(\mathbf{a}_{(s, \mu)}^{\prime \, (j)})), U^{(j-1)}\right]\Big)(U^{(j-1)})^{-1}\Big).
\end{align*}

To understand the second term, recall that also from Proposition \ref{prop:univfhlift}, for each negative root $\alpha=\eps_i-\eps_k$ of $\GL_n$, the $\alpha$-th entry of $w_{j-1}^{-1}U^{(j-1)}w_{j-1}$ is given by 
\[v^{\delta_{w_j(\alpha)<0}} f^{(j-1)}_{\alpha}(v)=v^{\delta_{w_{j-1}(\alpha)<0}}\sum_{l=0}^{-\langle \eta_0,\alpha^{\vee} \rangle-1}X^{(j-1),l}_{\alpha}v^l.\]
Thus, the $\alpha$-th entry of $\Ad((D_{j-1}w_{j-1})^{-1})(P_N(A^{(j-1)}))$ is of the form
\begin{align*}
-&(v+p)^{n-1+\langle \eta_0,\alpha^{\vee} \rangle}\big( e'v\frac{d}{dv}+\langle (s'_{\mathrm{or}, j})^{-1}(\mathbf{a}_{(s, \mu)}^{\prime \, (j)}),w_{j-1}\alpha^{\vee}\rangle )(v^{\delta_{w_{j-1}(\alpha)<0}}f^{(j-1)}_{\alpha})+\ldots\big)\\
=-&(v+p)^{n-1+\langle \eta_0,\alpha^{\vee} \rangle}\big( \sum_{l=0}^{-\langle \eta_0,\alpha^{\vee} \rangle-1}(e'(l+\delta_{w_{j-1}(\alpha)<0})
+\langle (s'_{\mathrm{or}, j})^{-1}(\mathbf{a}_{(s, \mu)}^{\prime \, (j)}),w_{j-1}\alpha^{\vee}\rangle)  X^{(j-1),l}_{\alpha}v^{l+\delta_{w_{j-1}(\alpha)<0}}+\ldots\big)
\end{align*}  
where the ellipsis in the first expression is an $R$-linear combination of terms of the form 
\[e'\frac{d}{dv} \left(v^{\delta_{w_{j-1}(\alpha_0)<0}}f^{(j-1)}_{\alpha_0} \right)\prod_{i\neq 0} v^{\delta_{w_{j-1}(\alpha_i)<0}}f^{(j-1)}_{\alpha_i}\] 
and
\[\prod_i v^{\delta_{w_{j-1}(\alpha_i)<0}}f^{(j-1)}_{\alpha_i},\]
where $\alpha_i$ are negative roots of $\GL_n$ with $\alpha=\sum_i \alpha_i$ and the sum has at least two terms. 
In particular, the remaining terms are polynomials divisible by $v^{\delta_{w_{j-1}(\alpha)<0}}$ and whose coefficients only involve $X^{(j-1)}_{\alpha'}$ for roots $\alpha'$ strictly larger than $\alpha$.

Thus, the monodromy condition on the $\alpha$-th entry of $v^{-\delta_{w_{j-1}(\alpha)<0}}\Ad(w_{j-1}^{-1}(D_{j-1})^{-1})(-P_N(A^{(j-1)})+M^{(j)})$ has the form
\begin{align*}
 (&\frac{d}{dv})^t\big((v+p)^{n-1+\langle \eta_0,\alpha^{\vee} \rangle}\big( \sum_{l=0}^{-\langle \eta_0,\alpha^{\vee} \rangle-1}(e'(l+\delta_{w_{j-1}(\alpha)<0})
+\langle (s'_{\mathrm{or}, j})^{-1}(\mathbf{a}_{(s, \mu)}^{\prime \, (j)}),w_{j-1}\alpha^{\vee}\rangle)  X^{(j-1),l}_{\alpha}v^{l}\big)\big)|_{v=-p}\\
=&O((X_{\alpha'}^{(j-1),l})_{0>\alpha'>\alpha,l})+O(p^{2n-1-(n-2)-t-1})
\end{align*}
for $0\leq t\leq n-3$, and where the right-hand side only involves variables indexed by strictly larger roots and a term in $R$ that is divisible by $p^{2n-1-(n-2)-t-1}$, and hence is divisible by $p$.

Thus, the above system of equation holds in the quotient $R^{\tau, \overline{\beta}, \nabla}_{\overline{\fM}}/\varpi$ of $R$, where it implies (noting $e'=-1$ in $\F$)
\begin{align*}  
&\left(\frac{d}{dv} \right)^t\left( \sum_{l=0}^{-\langle \eta_0,\alpha^{\vee} \rangle-1}(-l-\delta_{w_{j-1}(\alpha)<0}+\langle (s'_{\mathrm{or}, j})^{-1}(\mathbf{a}_{(s, \mu)}^{\prime \, (j)}),w_{j-1} \alpha^{\vee}\rangle)  X^{(j-1),l}_{\alpha}v^{l}\big)\right) \mid_{v=0}\\
=\qquad&O((X_{\alpha'}^{(j-1),l})_{0>\alpha'>\alpha,l})
\end{align*}
for $0\leq t \leq (n-3)-(n-1+\langle \eta_0,\alpha^{\vee} \rangle)=-\langle \eta_0,\alpha^{\vee} \rangle -2$. Since $\tau$ is generic and $p>n$, all the coefficients $-l-\delta_{w_{j-1}^{-1}(\alpha)<0}+\langle (s'_{\mathrm{or}, j})^{-1}(\mathbf{a}_{(s, \mu)}^{\prime \, (j)}),w_{j-1}\alpha^{\vee}\rangle$ as well as the constants introduced by taking derivatives are non-zero in $\F$,
and hence this system of equations solves $X^{(j-1),l}_{\alpha}$ for $l<-\langle \eta_0,\alpha^{\vee} \rangle-1$ in terms of 
variables indexed by strictly larger roots. It follows that $R^{\tau, \overline{\beta}, \nabla}_{\overline{\fM}}/\varpi$ is topologically generated by the top degree coefficients of $f^{(j)}_{\alpha}$ and the $c^{(j)}_{ii}-[\overline{c}^{(j)}_{ii}]$ for $0\leq j<f$, $1\leq i\leq n$, and negative roots $\alpha$ of $\GL_n$. Hence it is topologically generated by $n(n-1)f/2+nf=n(n+1)f/2$ elements.
\end{proof}



\begin{proof}[Proof of Theorem \ref{thm:smoothdef}]
We already know $R^{\eta,\tau}_{\rhobar}\neq 0$ by Corollary \ref{cor:existencelift}. We look at diagram \ref{defdiagram}. By Proposition \ref{prop:univlift}, $\Spf R^{\tau, \overline{\beta}, \Box}_{\overline{\fM}, \rhobar}\to \Spf R^{\eta, \tau}_{\rhobar}$ is a torsor for $(\widehat{\mathbb{G}}_m)^{nf}$, hence  $d\defeq \dim R^{\tau, \overline{\beta}, \Box}_{\overline{\fM}, \rhobar}= \dim R^{\eta, \tau}_{\rhobar}+nf=n(n-1)f/2+n^2+nf+1=n(n+1)f/2+n^2+1$.
On the other hand, $\Spf R^{\tau, \overline{\beta},\Box, \nabla}_{\overline{\fM}} \to \Spf R^{\tau, \overline{\beta}, \nabla}_{\overline{\fM}}$ is formally smooth of relative dimension $n^2$, hence Proposition \ref{prop:upperbounddefring} shows that there is a surjection $\cO[\![x_1,\ldots,x_{d-1}]\!]\onto  R^{\tau, \overline{\beta},\Box, \nabla}_{\overline{\fM}}$. From Proposition \ref{prop:factors}, we obtain a surjection $\cO[\![x_1,\ldots,x_{d-1}]\!]\onto R^{\tau, \overline{\beta}, \Box}_{\overline{\fM}, \rhobar}$. Since the quotient ring has dimension $d=\dim \cO[\![x_1,\ldots,x_{d-1}]\!]$, the kernel of this surjection must be trivial, hence the surjection is an isomorphism. It follows that $R^{\eta, \tau}_{\rhobar}$ is formally smooth over $\cO$.
\end{proof}

\begin{cor}\label{cor:pd}
With $\rhobar, \tau$ as in Theorem \ref{thm:smoothdef}, any potentially crystalline lift of $\rhobar$ of type $(\eta,\tau)$ is potentially diagonalizable.  
\end{cor}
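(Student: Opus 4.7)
The plan is to deduce the corollary directly from Theorem \ref{thm:smoothdef} combined with Corollary \ref{cor:existencelift}, using the definition of potential diagonalizability from \cite{BLGGT}. Recall that a potentially crystalline representation $\rho \colon G_K \to \GL_n(\overline{\Q}_p)$ of Hodge type $\eta$ is potentially diagonalizable if, after restriction to some finite extension $L/K$ over which it becomes crystalline, the corresponding point of $\Spec R_{\rhobar|_{G_L}}^{\mathrm{cris},\eta}[1/p]$ lies on the same irreducible component as a direct sum of crystalline characters. Our task is thus to certify such a component for every lift.

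First, I would fix a finite extension $L/K$ which simultaneously trivializes the inertial type $\tau$ and contains the extension $\breve{K}$ provided by Corollary \ref{cor:existencelift}. Restriction of Galois representations from $G_K$ to $G_L$ defines a morphism of framed deformation functors from type $(\eta,\tau)$ lifts of $\rhobar$ to crystalline lifts of $\rhobar|_{G_L}$ with Hodge--Tate weights $\eta$ (framings transfer compatibly). This gives a ring homomorphism
\[
\iota \colon R_{\rhobar|_{G_L}}^{\mathrm{cris},\eta} \longrightarrow R_{\rhobar}^{\eta,\tau}.
\]

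Second, by Theorem \ref{thm:smoothdef}, the ring $R_{\rhobar}^{\eta,\tau}$ is formally smooth over $\cO$ and, in particular, a domain, so $\Spec R_{\rhobar}^{\eta,\tau}[1/p]$ is irreducible. The image of the induced map on generic fibers therefore lies in a single irreducible component $Z \subset \Spec R_{\rhobar|_{G_L}}^{\mathrm{cris},\eta}[1/p]$. By Corollary \ref{cor:existencelift}, there is a closed point $x$ of $\Spec R_{\rhobar}^{\eta,\tau}[1/p]$ whose associated representation $\rho_x$ restricts to a direct sum of crystalline characters on $G_L$; its image in $Z$ therefore exhibits $Z$ as a component containing a diagonal point.

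Finally, any closed point of $\Spec R_{\rhobar}^{\eta,\tau}[1/p]$ corresponds to a lift $\rho$ of $\rhobar$ of type $(\eta,\tau)$, and its image on the generic fiber of $R_{\rhobar|_{G_L}}^{\mathrm{cris},\eta}$ is the point defined by $\rho|_{G_L}$. This image lies in $Z$, which also contains the sum-of-characters point above, so $\rho$ is potentially diagonalizable by definition. There is no real obstacle in this plan: the only thing worth double-checking is the well-definedness of $\iota$ on framed rings and the fact that the framing torsor preserves irreducibility of generic fibers, both of which are standard.
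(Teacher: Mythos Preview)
Your proposal is correct and takes essentially the same approach as the paper: the paper's one-line proof simply cites Theorem~\ref{thm:smoothdef} and Corollary~\ref{cor:existencelift}, and what you have written is exactly the argument those citations are meant to encode. The only difference is that you pass explicitly to the crystalline deformation ring over $L$, whereas one could argue slightly more briefly by noting that formal smoothness makes $\Spec R_{\rhobar}^{\eta,\tau}[1/p]$ connected, so every lift $\rho$ satisfies $\rho\sim\rho_x$ in the sense of \cite{BLGGT}, and potential diagonalizability depends only on the $\sim$-class; but your version has the virtue of being self-contained.
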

\begin{proof} This follows from Theorem \ref{thm:smoothdef} and Corollary \ref{cor:existencelift}.
\end{proof}

\section{Main results}\label{sec:main}

In this section, we deduce our main results using \S \ref{sec:local}.
In \S \ref{sec:global}, we deduce weight elimination in an axiomatic context and in the context of definite unitary groups.
In \S \ref{sec:obv}, we use weight elimination and the change of weight techniques of \cite{BLGGT} to deduce modularity of obvious weights.
In \S \ref{sec:red}, we use the above results to classify congruences between RACSDC $\GL_n$-automorphic representations of trivial weight and generic tame type $\tau$ in residually tame cases and solve the lifting problem for residually tame Galois representations to potentially crystalline representations of type $(\eta,\tau)$.
We introduce combinatorial results on Serre weights and affine Weyl groups as needed.
The key theme in these combinatorial results is the close relationship between certain reduced factorizations of admissible elements and Jantzen's description of the Jordan--H\"older factors of types.

Recall the (nonstandard) definition of the dot action in Definition \ref{defn:dot}.
We write $\bun{W}_a^+ \subset \tld{\bun{W}}^+$ for the subsets of $\bun{W}_a\subset\tld{\bun{W}}$, respectively, which map $\bun{C}_0$ to a dominant alcove under this dot action.

Let $\un{\Omega} \subset \tld{\bun{W}}$ be the stabilizer of $\bun{C}_0$.
Then we have the decomposition $\tld{\bun{W}} = \bun{W}_a \rtimes \un{\Omega}$.
We extend the Coxeter length function $\ell$ on $\bun{W}_a$ to $\tld{\bun{W}}$ by setting $\ell(\tld{w}\delta) = \ell(\tld{w})$ if $\tld{w} \in \bun{W}_a$ and $\delta\in \un{\Omega}$. 
Recall that one can calculate lengths from minimal galleries (cf. \cite[\S 2]{HN02}).
We will use galleries in a fixed direction (cf. \cite[Definition 5.2]{HN02}), which are necessarily minimal by \cite[Lemma 5.3]{HN02}.

Recall the upper arrow ($\uparrow$) ordering on $p$-alcoves (cf. \cite[\S II.6.5]{RAGS}), and extended to $\tld{\bun{W}}$ by writing $\tld{w}_1\uparrow \tld{w}_2$ if $\tld{w}_1 \cdot \bun{C}_0\uparrow \tld{w}_2 \cdot \bun{C}_0$ and $\bun{W}_a \tld{w}_1 = \bun{W}_a \tld{w}_2$ for $\tld{w}_1$ and $\tld{w}_2\in \tld{\bun{W}}$ (and elements of different right $\bun{W}_a$-cosets are incomparable).
We also use $\uparrow$ to denote ordering on $X^*(\bun{T})$ defined in \cite[\S II.6.4]{RAGS}.
Recall from \S \ref{sec:awg} the Bruhat ordering $\leq$ on $\bun{W}_a$ defined by the \emph{dominant} base alcove.
As with the upper arrow ordering, we extend this to a partial ordering on $\tld{\bun{W}}$ by setting $\tld{w}_1\delta \leq \tld{w}_2\delta$ if $\tld{w}_1$ and $\tld{w}_2\in \bun{W}_a$, $\tld{w}_1\leq \tld{w}_2$, and $\delta \in \un{\Omega}$ (and elements of different right $\bun{W}_a$-cosets are incomparable).

Let $\tld{w}_h = w_0 t_{-\eta} \in \tld{\bun{W}}$.
Note that $\tld{w}_h\cdot \bun{C}_0$ is the highest $p$-restricted alcove and $\tld{w}_h \cdot \lambda = \mathcal{R}(\lambda)$ from Definition \ref{defn:SWC}.

\subsection{Combinatorics of weights and types} \label{sec:we}

In this section, we deduce the key combinatorial results, especially Corollary \ref{cor:we}.
We will use the following theorem of Wang (see \cite[Theorem 4.3]{Wang}) without comment.

\begin{thm}\label{thm:pocomp}
If $\tld{w}_1$ and $\tld{w}_2 \in \tld{\bun{W}}^+$, then $\tld{w}_1\leq \tld{w}_2$ if and only if $\tld{w}_1\uparrow \tld{w}_2$.
\end{thm}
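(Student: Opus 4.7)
The plan is to deduce this from structural properties of both orderings on the extended affine Weyl group, exploiting the hypothesis that $\tld{w}_1, \tld{w}_2$ map $\bun{C}_0$ into the dominant chamber. Both the Bruhat order and the $\uparrow$-order have been defined so that comparable pairs must lie in the same right $\bun{W}_a$-coset, and in each case the comparison depends only on the $\bun{W}_a$-component via the decomposition $\tld{\bun{W}} = \bun{W}_a \rtimes \un{\Omega}$. Hence I would first reduce at once to showing $\tld{w}_1 \leq \tld{w}_2 \iff \tld{w}_1 \uparrow \tld{w}_2$ for $\tld{w}_1, \tld{w}_2 \in \bun{W}_a^+$.

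For the implication $\tld{w}_1 \leq \tld{w}_2 \Rightarrow \tld{w}_1 \uparrow \tld{w}_2$, I would induct on $\ell(\tld{w}_2) - \ell(\tld{w}_1)$. By the subword property of Bruhat order, the inductive step reduces to the case where $\tld{w}_2$ covers $\tld{w}_1$, so $\tld{w}_1 = s_\alpha \tld{w}_2$ for some affine root $\alpha$ with $\ell(\tld{w}_1) < \ell(\tld{w}_2)$. Geometrically, the reflection hyperplane $H_\alpha$ is a wall of the alcove $\tld{w}_2 \cdot \bun{C}_0$ separating it from $\bun{C}_0$; since $\tld{w}_2 \cdot \bun{C}_0$ is dominant, unraveling the definition of the $\uparrow$-order on $p$-alcoves shows that reflecting across such a wall lowers an alcove in $\uparrow$-order, yielding $\tld{w}_1 \uparrow \tld{w}_2$.

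For the converse, I would fix a minimal $\uparrow$-chain $\tld{w}_1 \cdot \bun{C}_0 = A_0 \uparrow A_1 \uparrow \cdots \uparrow A_n = \tld{w}_2 \cdot \bun{C}_0$, each step being a reflection $s_{\alpha_i}$ across a wall of $A_{i-1}$. I would then invoke the theory of galleries in a fixed direction from \cite{HN02} to assemble the data into a reduced gallery from $\bun{C}_0$ to $\tld{w}_2 \cdot \bun{C}_0$ passing through $\tld{w}_1 \cdot \bun{C}_0$. Such a gallery exhibits $\tld{w}_1$ as a subword of a reduced expression for $\tld{w}_2$, equivalent to $\tld{w}_1 \leq \tld{w}_2$. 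The key input is that dominance of both endpoints forces any minimal gallery from $\bun{C}_0$ to $\tld{w}_2 \cdot \bun{C}_0$ to be monotone in a suitable sense (each crossed wall separates $\bun{C}_0$ from the far endpoint), and this monotonicity is what allows an $\uparrow$-chain to be converted into a reduced gallery.

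The main obstacle is this second direction: the intermediate alcoves $A_i$ in the $\uparrow$-chain are typically not dominant, so one cannot simply invert the cover-by-cover argument from the first direction. The technical heart of the argument is showing that the ``gallery in a fixed direction'' attached to an $\uparrow$-chain between dominant alcoves is actually reduced, i.e. that its length equals $\ell(\tld{w}_2)$ rather than exceeding it; this is precisely the place where the hypothesis $\tld{w}_1, \tld{w}_2 \in \tld{\bun{W}}^+$ is essential and where \cite[Theorem 4.3]{Wang} does the real work.
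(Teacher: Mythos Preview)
The paper does not give a proof of this statement at all: it is quoted as a black box from \cite[Theorem 4.3]{Wang}. Your proposal, meanwhile, has two substantive gaps.

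The fatal one is circularity. In your final paragraph you say that for the direction $\uparrow \Rightarrow \leq$, ``\cite[Theorem 4.3]{Wang} does the real work.'' But \cite[Theorem 4.3]{Wang} \emph{is} the statement you are being asked to prove; the paper introduces the theorem with exactly that citation. So for the harder direction you have supplied no argument whatsoever, only a restatement of the goal.

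For the direction $\leq \Rightarrow \uparrow$, your induction on $\ell(\tld{w}_2)-\ell(\tld{w}_1)$ does not close. To pass from the general case to a covering relation you must interpolate some $\tld{w}'$ with $\tld{w}_1 \leq \tld{w}' \lessdot \tld{w}_2$ (or $\tld{w}_1 \lessdot \tld{w}' \leq \tld{w}_2$) and apply the induction hypothesis to one of the shorter pairs; but nothing forces $\tld{w}' \in \tld{\bun{W}}^+$, so the hypothesis is unavailable. Separately, your claim that in a Bruhat covering $\tld{w}_1 = s_\alpha \tld{w}_2$ the hyperplane $H_\alpha$ is a \emph{wall} of $\tld{w}_2 \cdot \bun{C}_0$ is false in general; what is true is only that $H_\alpha$ separates $\tld{w}_2 \cdot \bun{C}_0$ from $\bun{C}_0$. (That weaker fact, together with dominance of $\tld{w}_2 \cdot \bun{C}_0$, does yield $s_\alpha \tld{w}_2 \uparrow \tld{w}_2$ for a single step, so this particular error is repairable; the broken induction is not.)
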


We also use the following proposition without comment.

\begin{prop}
If $\tld{w}_1$ and $\tld{w}_2 \in \tld{\bun{W}}$, then $\tld{w}_1\uparrow \tld{w}_2$ if and only if $\tld{w}_h \tld{w}_2 \uparrow \tld{w}_h \tld{w}_1$ if and only if $\tld{w}_h^{-1} \tld{w}_2\uparrow \tld{w}_h^{-1}\tld{w}_1$.
\end{prop}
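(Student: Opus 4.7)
The plan is as follows. First observe that $\tld{w}_h$ is an involution in $\tld{\bun{W}}$: working component-wise in $\tld{\bun{W}} \cong \tld{W}^f$, we have $(w_0 t_{-\eta_0})^2 = w_0^2 \cdot t_{-w_0^{-1}(\eta_0)-\eta_0} = t_{\eta_0-\eta_0} = 1$, using that $w_0$ is an involution in $W(\GL_n)$ and $w_0(\eta_0) = -\eta_0$. Hence $\tld{w}_h = \tld{w}_h^{-1}$, so the second and third formulations are literally the same. Since $\bun{W}_a$ is normal in $\tld{\bun{W}}$, left multiplication by $\tld{w}_h$ also preserves right $\bun{W}_a$-cosets, so it remains to show that left multiplication by $\tld{w}_h$ reverses the $\uparrow$-ordering on $\tld{\bun{W}}$.

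Since left multiplication by $\tld{w}_h$ acts on alcoves via the dot action $\bun{C} \mapsto \tld{w}_h \cdot \bun{C}$, this reduces to showing that the affine involution
\[
\sigma(x) := \tld{w}_h \cdot x = w_0(x+\eta) + (p-1)\eta
\]
of $X^*(\bun{T})\otimes \mathbb{R}$ reverses the $\uparrow$-ordering on weights (equivalently on alcoves). I would use Jantzen's characterization \cite[\S II.6.5]{RAGS}: $\lambda \uparrow \mu$ iff there is a chain $\lambda = \lambda_0,\lambda_1,\dots, \lambda_r = \mu$ in which each step is of the form $\lambda_{i+1} = s_{\beta_i, m_i p}\cdot \lambda_i \geq \lambda_i$ for some positive root $\beta_i$ and some $m_i \in \mathbb{Z}$. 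Recall that for $\beta > 0$, the inequality $s_{\beta, mp}\cdot \lambda \geq \lambda$ is equivalent to $\langle \lambda + \eta, \beta^\vee\rangle \leq mp$.

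A direct calculation in $y = \lambda + \eta$ coordinates (where $\sigma$ has the simple form $y \mapsto w_0(y) + p\eta$) yields
\[
\sigma\, s_{\beta, mp}\, \sigma^{-1} = s_{\beta', m'p}, \qquad \beta' := -w_0(\beta), \qquad m' := \langle \eta, \beta^\vee\rangle - m,
\]
where $\beta' > 0$ since $w_0$ sends positive roots to negative roots. Combining this with $\sigma(\lambda) + \eta = w_0(\lambda+\eta) + p\eta$ and $\langle w_0(\eta),\beta^\vee\rangle = -\langle \eta,\beta^\vee\rangle$ gives
\[
\langle \sigma(\lambda) + \eta, (\beta')^\vee\rangle - m' p = -\bigl(\langle \lambda + \eta, \beta^\vee\rangle - mp\bigr),
\]
so $s_{\beta, mp}\cdot \lambda \geq \lambda$ if and only if $s_{\beta', m'p}\cdot \sigma(\lambda) \leq \sigma(\lambda)$.

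Combining these facts: given a chain realizing $\lambda \uparrow \mu$, applying $\sigma$ to each term and replacing each reflection by its $\sigma$-conjugate produces, after reversing the order of the chain, a chain realizing $\sigma(\mu) \uparrow \sigma(\lambda)$. Since $\sigma$ is an involution, the converse is symmetric. Translating from weights back to alcoves, and working component-wise over the $f$ embeddings, yields the proposition. The argument is essentially a sign-tracking exercise; the main (mild) obstacle is careful bookkeeping to confirm that the conjugate of an affine reflection is again an affine reflection indexed by a positive root and that the direction of the height inequality flips as claimed.
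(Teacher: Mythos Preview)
Your argument has a genuine error at the outset: the claim $w_0(\eta_0) = -\eta_0$ is false for $\GL_n$. With $\eta_0 = (n-1,n-2,\dots,0)$ and $w_0$ reversing entries, one has $w_0(\eta_0) = (0,1,\dots,n-1)$, so $w_0(\eta_0)+\eta_0 = (n-1,\dots,n-1)$ is central but nonzero. Consequently $\tld{w}_h^2 = t_{-(w_0(\eta)+\eta)}$ is a nontrivial central translation, so $\tld{w}_h \neq \tld{w}_h^{-1}$ and the second and third formulations are \emph{not} literally the same. Your formula $\sigma(x) = w_0(x+\eta)+(p-1)\eta$ is likewise off by a central shift, and $\sigma$ is not an involution on $X^*(\bun{T})\otimes\R$.

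That said, the error is harmless for your later computations: every step after the faulty reduction only uses pairings $\langle -, \alpha^\vee\rangle$ with coroots, which annihilate central elements. So your conjugation formula $\sigma s_{\beta,mp}\sigma^{-1} = s_{\beta',m'p}$ and the sign-flip identity both survive with the corrected $\sigma$. To repair the argument, replace ``$\tld{w}_h$ is an involution'' by ``$\tld{w}_h^2$ is a central translation, and central translations preserve $\uparrow$ on alcoves and permute right $\bun{W}_a$-cosets (being central in $\tld{\bun{W}}$)''; this immediately gives the second $\Leftrightarrow$ third equivalence and also the converse direction you obtained from ``$\sigma$ is an involution''.

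By contrast, the paper's proof is a one-liner: it simply observes that the dot action of $w_0$ reverses $\uparrow$ while the dot action of any $t_\nu$ preserves it, and then decomposes $\tld{w}_h = w_0 t_{-\eta}$ and $\tld{w}_h^{-1} = t_\eta w_0$. Your explicit conjugation of affine reflections is correct (once patched) and perhaps more self-contained, but it is doing considerably more work than necessary. The paper's decomposition also sidesteps the $\GL_n$-specific pitfall about $w_0(\eta)$ entirely, since no involution claim is needed.
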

\begin{proof}
From the definition of the up ordering, it is clear that $w_0$ reverses and translation preserves the ordering.
The proposition now follows from the definition of $\tld{w}_h$.
\end{proof}



\begin{prop}\label{prop:JH}
Suppose that $\mu$ is $2n$-deep in $\bun{C}_0$ and $\lambda$ is a $p$-restricted weight.
Then $F(\lambda) \in \JH(\ovl{R}_s(\mu + \eta))$ if and only if there exists $\tld{w} = w t_{\nu} \in \tld{\bun{W}}^+$ such that 
\begin{equation}\label{eqn:upord}
\tld{w}\cdot(\mu-s\pi\nu) \uparrow \tld{w}_h \cdot \lambda \quad\textrm{and}\quad \tld{w}\cdot \bun{C}_0 \uparrow \tld{w}_h\cdot \bun{C}_0.
\end{equation}
\end{prop}
\begin{proof}
We use \cite[Proposition 10.1.8]{GHS}.
(Note that by the depth assumption, the proof of \cite[Proposition 10.1.2]{GHS} based on \cite[Satz 4.3]{Jantzen} applies.) 
The Proposition cited shows that $F(\lambda) \in \JH(\ovl{R}_s(\mu + \eta))$ if and only if there is $\nu$ such that 
\begin{equation}\label{eqn:weylorbit}
\sigma t_{\nu}\cdot(\mu-s\pi\nu) \uparrow \tld{w}_h \cdot \lambda \qquad \text{for all } \sigma \in W(\bun{G}).
\end{equation}
It suffices to show that the existence of $\nu$ satisfying (\ref{eqn:weylorbit}) and $\tld{w}\in \tld{\bun{W}}^+$ satisfying (\ref{eqn:upord}) are equivalent.

We begin with the ``backwards" implication, for which the following remark is useful.
\begin{rmk} \label{rmk:bounderror} If $\tld{w} = w t_{\nu}$ satisfies $\tld{w}\cdot \bun{C}_0 \uparrow \tld{w}_h\cdot \bun{C}_0$ then $\nu$ lies in the convex hull of the Weyl orbit of $\eta$, and hence $\max_{\alpha^\vee} \{| \langle \nu,\alpha^\vee \rangle|\}\leq n-1$.
\end{rmk}
\noindent Suppose that $\tld{w} = wt_\nu$ satisfies (\ref{eqn:upord}).
With the depth assumption on $\mu$, the above remark implies that $\mu-s\pi\nu$ is in $\bun{C}_0$ so that $wt_\nu \cdot (\mu-s\pi\nu)$ is the unique dominant element of the $W(\bun{G})$-(dot) orbit of $\mu-s\pi\nu+p\nu$.
This implies that $\nu$ satisfies (\ref{eqn:weylorbit}).

For the ``forwards" implication, suppose that $\nu$ satisfies (\ref{eqn:weylorbit}) and take $\tld{w}$ to be the unique element $wt_\nu \in \tld{\bun{W}}^+$ with $w \in W(\bun{G})$.
Then $\tld{w}\cdot(\mu-s\pi\nu) \uparrow \tld{w}_h \cdot \lambda$ by assumption and it suffices to show that $\tld{w}\cdot \bun{C}_0 \uparrow \tld{w}_h\cdot \bun{C}_0$.
We claim that $\nu$ satisfying (\ref{eqn:weylorbit}) must automatically satisfy $|\langle \nu,\alpha^\vee \rangle|\leq n-1$ for all $\alpha^\vee\in \bun{R}^\vee$. 
Admitting this claim for the moment, we again have that $\tld{w}\cdot(\mu-s\pi\nu)$ is in alcove $\tld{w}\cdot \bun{C}_0$, so that $\tld{w}\cdot(\mu-s\pi\nu) \uparrow \tld{w}_h \cdot \lambda$ implies that $\tld{w}\cdot \bun{C}_0 \uparrow \tld{w}_h\cdot \bun{C}_0$.

Going back to our claim, (\ref{eqn:weylorbit}) implies that $\mu+p\nu-s\pi\nu+\eta$ is in the convex hull of the Weyl orbit of $\tld{w}_h \cdot \lambda+\eta$. The same argument as in the proof of Lemma \ref{bound} shows that  
\[\max\nolimits_{\alpha^{\vee}} \{ |\langle \mu+p\nu-s\pi\nu+\eta, \alpha^{\vee}\rangle|\}\leq  \max\nolimits_{\alpha^{\vee}} \{ |\langle \tld{w}_h \cdot \lambda+\eta, \alpha^{\vee}\rangle|\}\leq p(n-1).\]
The same argument as in the proof of Corollary \ref{cor:obvweight}, using $\mu$ is $2n$-deep in $\bun{C}_0$, shows that if $M= \max_{\alpha^{\vee}} \{ |\langle\nu, \alpha^{\vee}\rangle|\}$ then $(p-1)M<p(n-1)+p-2n=(p-2)n$, and thus $M\leq n-1$ as desired.
%
\end{proof}


We will often fix 
\begin{enumerate}[label={(\bfseries P\arabic*)}]
\item a generic semisimple Galois representation $\rhobar:G_K \ra \GL_n(\F)$; \label{item:rhobartau1}
\item a pair $(s_{\rhobar}, \mu_{\rhobar})$ such that $\rhobar|_{I_K} \cong \ovl{\tau}(s_{\rhobar},\mu_{\rhobar}+\eta)$ with $\mu_{\rhobar}$ 
in $\bun{C}_0$; and \label{item:rhobartau2}
\item A lowest alcove presentation $(s,\mu-\eta)$ of a tame inertial type $\tau\cong\tau(s,\mu):I_K \ra \GL_n(\cO)$ such that $\mu_{\rhobar}-\mu \in \un{\Lambda}_R$. Such a presentation is called compatible with $(s_{\rhobar}, \mu_{\rhobar})$. \label{item:rhobartau3}
\end{enumerate}
Note under \ref{item:rhobartau1} and \ref{item:rhobartau2}, $\mu_{\rhobar}$ is always $(3n-1)$-deep in $\bun{C}_0$ by Proposition \ref{prop:alcoveC0}.
\begin{lemma}\label{lemma:distincttype}
Suppose that $\tau(s,\mu) \cong \tau(s',\mu')$ is $1$-generic, and $\mu-\eta$ and $\mu'-\eta$ are both  in $\bun{C}_0$, and $\mu-\mu'\in \un{\Lambda}_R$. 
Then we have $(s,\mu) = (s',\mu')$.
\end{lemma}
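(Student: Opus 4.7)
The plan is to combine the classification of lowest alcove presentations from Proposition \ref{prop:alcoveC0} with a linear-algebra argument using the hypothesis $\mu - \mu' \in \un{\Lambda}_R$ (which is what distinguishes this lemma from Proposition \ref{prop:relabeltype}). Since $\mu - \eta$ and $\mu' - \eta$ both lie in $\bun{C}_0$ and the type is at least $0$-generic, Proposition \ref{prop:relabeltype} gives $(s,\mu) \sim (s',\mu')$; equivalently, by Proposition \ref{prop:alcoveC0}, there exists $(\nu,\sigma) \in X^*(\un{T})\times W(\un{G})$ with $t_{\nu}\sigma \in \un{\Omega}$ and $(s',\mu') = {}^{(\nu,\sigma)}(s,\mu)$. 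It therefore suffices to show that the extra hypothesis $\mu - \mu' \in \un{\Lambda}_R$ forces $t_{\nu}\sigma = 1$.

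First, I would unpack the equation $\mu' - \mu = (\sigma(\mu) - \mu) + p\nu - \sigma s \pi(\sigma)^{-1}\pi(\nu)$ modulo $\un{\Lambda}_R$. The standard fact that $W(\un{G})$ acts trivially on $X^*(\un{T})/\un{\Lambda}_R$ (because for $\GL_n$ the quotient $X^*(T)/\Lambda_R \cong \Z$ is the sum-of-coordinates map, on which $W(\GL_n) = S_n$ is the identity) kills the terms $\sigma(\mu) - \mu$ and $\sigma s\pi(\sigma)^{-1}\pi(\nu) - \pi(\nu)$. Combined with the assumption $\mu' - \mu \in \un{\Lambda}_R$, this leaves the congruence
\[
(p - \pi)\,\nu \equiv 0 \pmod{\un{\Lambda}_R}.
\]

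Second, I would observe that on $X^*(\un{T})/\un{\Lambda}_R \cong \Z^f$ (where $\pi$ acts by cyclically shifting indices), the endomorphism $p - \pi$ is represented by a circulant-like matrix with determinant $p^f - 1 \neq 0$, and is therefore injective. This forces $\nu \in \un{\Lambda}_R$, so $t_{\nu}\sigma \in \bun{W}_a$; combined with $t_{\nu}\sigma \in \un{\Omega}$ and the semidirect product decomposition $\tld{\bun{W}} = \bun{W}_a \rtimes \un{\Omega}$, this yields $t_{\nu}\sigma = 1$, hence $\nu = 0$ and $\sigma = 1$. Substituting back into the formula for $(s',\mu')$ gives $(s',\mu') = (s,\mu)$.

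There is no serious obstacle: the proof is essentially a cleanup of the ambiguity left by Proposition \ref{prop:relabeltype} in the presence of the root-lattice congruence. The only mild point worth highlighting is the injectivity of $p - \pi$ on $X^*(\un{T})/\un{\Lambda}_R$, which is a one-line determinant computation but is the crux of what makes the root-lattice hypothesis rigidify the presentation.
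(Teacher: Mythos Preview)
Your proof is correct and follows essentially the same approach as the paper. The only cosmetic difference is that the paper phrases the key step as restricting to the center $\bun{Z}$ (i.e., $(p-\pi)\nu|_{\bun{Z}} = 0$, hence $\nu|_{\bun{Z}} = 0$), whereas you phrase it as working modulo $\un{\Lambda}_R$; since the restriction map $X^*(\un{T}) \to X^*(\bun{Z})$ has kernel exactly $\un{\Lambda}_R$, these are the same argument.
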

\begin{proof}
By  Proposition \ref{prop:alcoveC0}, since $(s',\mu'-\eta)$ and $(s,\mu-\eta)$ are two lowest alcove presentation of $\tau(s,\mu)$, we have $(s,\mu) = ^{(\nu, \sigma)}(s',\mu')$ with $t_{\nu} \sigma \in \un{\Omega}$. 
Since $\mu - \mu' \in \un{\Lambda}_R$, we also must have that $(p-\pi)\nu|_{\bun{Z}} = 0$, or equivalently that $\nu|_{\bun{Z}} = 0$.
Combining these facts, we have that $t_{\nu} \sigma$ is the identity.
\end{proof}

\begin{defn} \label{defn:relshape} Fix \ref{item:rhobartau1}-\ref{item:rhobartau3} as above. 
Then, we define $w = s^{-1} s_{\rhobar}$, $\nu = s^{-1}(\mu_{\rhobar}+\eta-\mu)$, and 
$\tld{w}^*(\rhobar,\tau) \defeq t_\nu w$.  
\end{defn}

\begin{rmk}\label{rmk:changerelshape}
Definition \ref{defn:relshape} a priori depends on the choice of $(s_{\rhobar},\mu_{\rhobar})$ and the compatible presentation $(s,\mu-\eta)$ of the type $\tau$.
By Lemma \ref{lemma:distincttype}, if $\tau$ is $1$-generic, there is at most one compatible presentation, so that Definition \ref{defn:relshape} depends only on the choice of $(s_{\rhobar},\mu_{\rhobar})$. Furthermore if a compatible presentation $(s,\mu-\eta)$ exists for one choice of $(s_{\rhobar},\mu_{\rhobar})$, then a compatible presentation exists for all other choices, and changing the choice of $(s_{\rhobar},\mu_{\rhobar})$ conjugates $\tld{w}^*(\rhobar,\tau)$ by an element of $\un{\Omega}$.
\end{rmk}

\begin{prop}\label{rmk:equalshape} Let $\tau$ be a generic type with lowest alcove presentation $(s, \mu- \eta)$.  Assume there exists $\overline{\fM} \in Y^{\eta, \tau}(\F)$ as in Definition \ref{defn:shaperhobar} such that $T^*_{\mathrm{dd}}(\overline{\fM}) \cong \rhobar|_{G_{K_{\infty}}}$ where $\rhobar$ is moreover semisimple. Then there is a pair $(s_{\rhobar}, \mu_{\rhobar})$ as in \ref{item:rhobartau2} such that \ref{item:rhobartau3} holds and $\tld{w}(\rhobar,\tau)^* = \tld{w}^*(\rhobar,\tau)$.
\end{prop}
\begin{proof}
By Theorem \ref{fixedpoints} combined with Corollary \ref{cor:phiandKisin}, we have 
\[
\rhobar|_{I_K} \cong \ovl{\tau}(w, \nu + \eta)
\]
where $\tld{w}(\rhobar, \tau) s^* t_{\mu^* - \eta^*} = w^* t_{\nu^*}$.   Thus, $\tld{w}(\rhobar, \tau)^* = s^{-1} t_{\nu - \mu + \eta} w$.   Since $\tld{w}(\rhobar, \tau)^* \in \Adm(\eta)$, $\nu - \mu \in \un{\Lambda}_R$.
Moreover $\nu-\mu+\eta$ is in the convex hull of $\bun{W}\eta$ by Remark \ref{rmk:bounderror}, so that $\nu$ is $1$-deep in $\bun{C}_0$ since $\mu-\eta$ is $n$-deep in $\bun{C}_0$.
Then, letting $\mu_{\rhobar} = \nu$ and $s_{\rhobar} = w$, \ref{item:rhobartau2} and \ref{item:rhobartau3} are satisfied. Comparing, we see that $\tld{w}(\rhobar, \tau)^*$ agrees with Definition \ref{defn:relshape}.
\end{proof}


\begin{lemma}\label{lemma:gallery}
Suppose that $\tld{w}_1$ and $\tld{w}_2\in \tld{\bun{W}}^+$.
Then $\ell(\tld{w}_2^{-1}w_0\tld{w}_1) = \ell(\tld{w}_2^{-1})+\ell(w_0)+\ell(\tld{w}_1)$.
\end{lemma}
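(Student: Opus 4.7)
Plan. I would apply the standard product-length criterion for the extended Coxeter group $\tld{\bun{W}}$: for $x, y \in \tld{\bun{W}}$,
\[
\ell(xy) = \ell(x) + \ell(y) \iff N(y) \cap N(x^{-1}) = \emptyset,
\]
where $N(\tld{w}) = \{\alpha \in R^+_{\mathrm{aff}} : \tld{w}^{-1}\alpha < 0\}$ denotes the inversion set of $\tld{w}$ in the affine root system. Here affine roots have the form $\bar{\alpha} + k\delta$ with $\bar{\alpha}$ a finite root of $\bun{G}$ and $k \in \Z$ (where $\delta$ is the imaginary root), positive if $k \geq 1$, or $k = 0$ and $\bar{\alpha}$ is finite positive.

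The crucial geometric input is: for any $\tld{w} \in \tld{\bun{W}}^+$, each $\alpha \in N(\tld{w})$ has the form $\bar{\alpha} + k\delta$ with $\bar{\alpha}$ a \emph{negative} finite root and $k \geq 1$. Indeed, $\alpha \in N(\tld{w})$ means $\alpha$ is positive on $\bun{C}_0$ and negative on the dominant alcove $\tld{w}\cdot\bun{C}_0$; since every positive finite root is positive on every dominant alcove, $\bar{\alpha}$ cannot be finite positive.

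The proof proceeds by two applications of the criterion. First, applied to $(w_0, \tld{w}_2)$: since $N(w_0)$ consists of positive finite roots (with $\delta$-coefficient zero) while $N(\tld{w}_2)$ has $\delta$-coefficients $\geq 1$, the two are disjoint, yielding $\ell(w_0\tld{w}_2) = \ell(w_0) + \ell(\tld{w}_2)$. By length-inversion invariance, $\ell(\tld{w}_2^{-1}w_0) = \ell(\tld{w}_2^{-1}) + \ell(w_0)$. An analysis of the bijection underlying the length formula also gives the decomposition $N(w_0\tld{w}_2) = N(w_0) \sqcup w_0 N(\tld{w}_2)$.

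Second, applied to $(\tld{w}_2^{-1}w_0, \tld{w}_1)$: one reduces to showing $N(\tld{w}_1) \cap (N(w_0) \sqcup w_0 N(\tld{w}_2)) = \emptyset$. The intersection with $N(w_0)$ is empty by $\delta$-coefficients as before. The intersection with $w_0 N(\tld{w}_2)$ is empty because, applying the key observation to both $\tld{w}_1$ and $\tld{w}_2$, $N(\tld{w}_1)$ has negative finite parts while $w_0 N(\tld{w}_2)$ has positive finite parts (as $w_0$ negates the finite component while fixing $\delta$). Combining yields the desired identity $\ell(\tld{w}_2^{-1}w_0\tld{w}_1) = \ell(\tld{w}_2^{-1}) + \ell(w_0) + \ell(\tld{w}_1)$. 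The main technical obstacle is establishing the geometric observation about finite parts of $N(\tld{w})$ for $\tld{w} \in \tld{\bun{W}}^+$; once that is in hand, the remaining inversion-set combinatorics is routine.
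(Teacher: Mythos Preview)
Your argument is correct. The key geometric observation --- that for $\tld{w}\in\tld{\bun{W}}^+$ every $\alpha\in N(\tld{w})$ has strictly negative finite part --- follows exactly as you say (a positive affine root with positive finite part is positive on every dominant alcove), and the two applications of the product-length criterion go through cleanly once one notes $N(xy)=N(x)\sqcup xN(y)$ in the additive-length situation and that $w_0$ negates the finite part while fixing the $\delta$-coefficient.

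The paper takes a genuinely different route: it interprets $\ell(\tld{w}_2^{-1}w_0\tld{w}_1)$ as the gallery distance between $\tld{w}_2\cdot\bun{C}_0$ and $w_0\tld{w}_1\cdot\bun{C}_0$, and then exhibits an explicit gallery
\[
w_0\tld{w}_1\cdot\bun{C}_0 \;\rightsquigarrow\; w_0\cdot\bun{C}_0 \;\rightsquigarrow\; \bun{C}_0 \;\rightsquigarrow\; \tld{w}_2\cdot\bun{C}_0
\]
entirely in the dominant direction (hence minimal by \cite[Lemma 5.3]{HN02}), whose three segments have lengths $\ell(\tld{w}_1)$, $\ell(w_0)$, $\ell(\tld{w}_2)$. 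The two arguments are essentially dual: your inversion-set disjointness is the algebraic shadow of the paper's statement that no wall is crossed twice along this gallery. The gallery picture is shorter and more visual, and meshes directly with the gallery-in-a-direction machinery the paper invokes elsewhere (e.g.\ in the proof of Lemma~\ref{lemma:we} via \cite[Corollary 4.4]{HH}); your inversion-set argument is more self-contained and makes the role of the hypothesis $\tld{w}_i\in\tld{\bun{W}}^+$ completely explicit at the level of roots.
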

\begin{proof}
The length $\ell(\tld{w})$ is the length of a minimal gallery between  $\tld{w}_2^{-1}w_0\tld{w}_1 \cdot \bun{C}_0$ and $\bun{C}_0$, which is the length of a minimal gallery between $w_0\tld{w}_1 \cdot \bun{C}_0$ and $\tld{w}_2 \cdot \bun{C}_0$.
Such a gallery can be taken to start with a gallery from $w_0\tld{w}_1 \cdot \bun{C}_0$ to $w_0 \cdot \bun{C}_0$, then to $\bun{C}_0$, then to $\tld{w}_2 \cdot \bun{C}_0$, all in the dominant direction.
This decomposition of a minimal gallery in the dominant direction gives the desired equality. \end{proof}

\begin{lemma} \label{lemma:we} 
Fix \ref{item:rhobartau1}-\ref{item:rhobartau2} as above. 
Suppose that $\lambda \in X_1(\bun{T})$ is $3n$-deep in its alcove and such that
for all $s \in W(\bun{G})$, $\tau(s, \tld{w}_h\cdot \lambda+\eta)$ is a tame inertial type which admits a compatible presentation as in \ref{item:rhobartau3} and $\tld{w}^*(\rhobar,\tau(s, \tld{w}_h\cdot \lambda+\eta)) \in \mathrm{Adm}(\eta)$.
Then $F(\lambda) \in W^{?}(\rhobar)$.
\end{lemma}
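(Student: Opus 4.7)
The plan is to reformulate the conclusion via Proposition \ref{prop:JH} and then construct the required alcove-geometric element from the admissibility hypotheses by choosing $s$ carefully.

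By Definition \ref{defn:SWC} and the identification $V(\rhobar|_{I_K}) \cong R_{s_\rhobar}(\mu_\rhobar+\eta)$ from $(\ref{Vmap})$, the statement $F(\lambda) \in W^?(\rhobar)$ is equivalent to $F(\lambda') \in \JH(\ovl{R}_{s_\rhobar}(\mu_\rhobar+\eta))$ for the unique $\lambda' \in X_1(\bun{T})$ with $\tld{w}_h \cdot \lambda' \equiv \lambda \pmod{(p-\pi)X^0(\bun{T})}$. Since $\rhobar$ is generic, Proposition \ref{prop:alcoveC0} implies $\mu_\rhobar$ is $(3n-1)$-deep in $\bun{C}_0$, so Proposition \ref{prop:JH} applies and reduces the task to exhibiting $\tld{w}' = w't_{\nu'} \in \tld{\bun{W}}^+$ with
\[
\tld{w}' \cdot \bigl(\mu_\rhobar - s_\rhobar \pi(\nu')\bigr) \uparrow \lambda \qquad \text{and} \qquad \tld{w}' \cdot \bun{C}_0 \uparrow \tld{w}_h \cdot \bun{C}_0.
\]

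Take $\tld{w}'$ to be the unique element of $\tld{\bun{W}}^+$ with $\lambda \in \tld{w}' \cdot \bun{C}_0$. Since $\lambda$ is $p$-restricted the second condition is automatic. Unwinding Definition \ref{defn:relshape} gives
\[
\tld{w}^*(\rhobar, \tau(s, \tld{w}_h\cdot\lambda+\eta)) = \tld{s}_s^{-1} t_{\mu_\rhobar+\eta - \tld{\mu}_s} s_\rhobar,
\]
where $(\tld{s}_s, \tld{\mu}_s - \eta)$ denotes the compatible lowest alcove presentation of $\tau(s, \tld{w}_h\cdot\lambda+\eta)$. By Proposition \ref{prop:alcoveC0}, as $s$ varies the conjugator $(\xi_s, \sigma_s)$ relating $(s, \tld{w}_h\cdot\lambda+\eta)$ to $(\tld{s}_s, \tld{\mu}_s)$ is determined up to $\un{\Omega}$ (the alcove of $\tld{w}_h\cdot\lambda$ being independent of $s$), so $\tld{s}_s = \sigma_s s \pi(\sigma_s)^{-1}$ ranges over all of $W(\bun{G})$. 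Choose the unique $s_0$ with $\tld{s}_{s_0} = s_\rhobar(w')^{-1}$, which yields $\tld{w}^*(\rhobar, \tau_{s_0}) = w' t_{\nu_0}$ with $\nu_0 := s_\rhobar^{-1}(\mu_\rhobar+\eta-\tld{\mu}_{s_0})$.

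The admissibility hypothesis $w' t_{\nu_0} \in \Adm(\eta)$, together with Lemma \ref{bound} and the explicit formula $\tld{\mu}_{s_0} = \sigma_{s_0}(\tld{w}_h\cdot\lambda+\eta) + p\xi_{s_0} - \tld{s}_{s_0}\pi(\xi_{s_0})$, is used to identify the alcove $w' t_{\nu_0}\cdot\bun{C}_0$ with $\tld{w}'\cdot\bun{C}_0$ (the alcove of $\lambda$), using that $t_{\xi_{s_0}}\sigma_{s_0}$ takes the alcove of $\tld{w}_h\cdot\lambda$ to $\bun{C}_0$. Setting $\nu' := \nu_0$ and unwinding the first $\uparrow$-condition, using Remark \ref{rmk:bounderror} and the genericity of $\mu_\rhobar$ to verify both sides lie in the common alcove $\tld{w}'\cdot\bun{C}_0$, reduces it to a root-positivity check on $\lambda - w'(\mu_\rhobar+\eta) - w'(p\nu' - s_\rhobar\pi(\nu')) + \eta$ that follows from the explicit form of $\tld{\mu}_{s_0}$ combined with Wang's Theorem \ref{thm:pocomp}. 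The main obstacle is the final alcove-theoretic identification: tracking how the $\tld{w}_h$-factor in $\tau(s_0, \tld{w}_h\cdot\lambda+\eta)$ and the conjugator $(\xi_{s_0}, \sigma_{s_0})$ combine to force $w' t_{\nu_0}\cdot\bun{C}_0 = \tld{w}'\cdot\bun{C}_0$, and then carrying out the root-positivity check. Both steps hinge on the alcove-gallery machinery of Section \ref{sec:awg}, in particular Lemma \ref{lemma:gallery}, Theorem \ref{thm:pocomp}, and the translation bound from Remark \ref{rmk:bounderror}.
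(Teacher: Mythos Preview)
There is a genuine gap at the ``alcove-theoretic identification.'' You first fix $\tld{w}'=w't_{\nu'}\in\tld{\bun{W}}^+$ by $\lambda\in\tld{w}'\cdot\bun{C}_0$, then choose $s_0$ so that the shape has the same finite Weyl part, $\tld{w}^*(\rhobar,\tau_{s_0})=w't_{\nu_0}$, and assert that admissibility together with Lemma~\ref{bound} forces $w't_{\nu_0}\cdot\bun{C}_0=\tld{w}'\cdot\bun{C}_0$, i.e.\ $\nu_0=\nu'$. But Lemma~\ref{bound} only gives $\max_{\alpha^\vee}|\langle\nu_0,\alpha^\vee\rangle|\le n-1$; it cannot single out one translation among the many small ones with Weyl part $w'$. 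Concretely, with $\tld{w}_2=w_2t_{\nu_2}\in\tld{\bun{W}}^+$ the element satisfying $\tld{w}_2^{-1}\tld{w}_h\cdot\lambda\in\bun{C}_0$ (so $w'=w_0w_2$), one has $\nu'=(w')^{-1}\eta+\nu_2$, whereas unwinding the compatible presentation gives $\nu_0=s_\rhobar^{-1}(\mu_\rhobar-\lambda^{(0)})-(w')^{-1}\pi(\nu_2)$; only $\nu_0$ depends on $\mu_\rhobar$, so in general $\nu_0\neq\nu'$. Once you then ``set $\nu':=\nu_0$,'' you have lost both that $w't_{\nu_0}\in\tld{\bun{W}}^+$ and that $w't_{\nu_0}\cdot\bun{C}_0$ is $p$-restricted, so neither hypothesis needed for Proposition~\ref{prop:JH} is available (and the $\uparrow$-condition does not reduce to a root-positivity check when the two sides sit in different alcoves).

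The missing ingredient is that $\tld{w}^*\in\Adm(\eta)$ only says $\tld{w}^*\le t_{w(\eta)}$ for \emph{some} $w$; one must determine \emph{which} $w$ in order to extract information about $\lambda$. The paper does not try to match the finite Weyl part of the shape to that of the alcove of $\lambda$. Instead it chooses $s$ so that $\tld{w}^*=\pi(\tld{w}_2)^{-1}w_0\bigl(\sigma t_{s_\rhobar^{-1}\delta}\bigr)$ with $\sigma t_{s_\rhobar^{-1}\delta}\in\tld{\bun{W}}^+$, then observes that there is a minimal gallery from $\bun{C}_0$ to $\tld{w}^*\cdot\bun{C}_0$ in the $(w_0\pi(w_2))^{-1}$-direction and invokes \cite[Corollary~4.4]{HH} to obtain the \emph{specific} bound $\tld{w}^*\le t_{(w_0\pi(w_2))^{-1}\eta}$. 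Only after this do Lemma~\ref{lemma:gallery} (length-additivity, to cancel the prefix $\pi(\tld{w}_2)^{-1}w_0$) and Theorem~\ref{thm:pocomp} (to pass from $\le$ to $\uparrow$) enter, yielding $\sigma t_{s_\rhobar^{-1}\delta}\uparrow\tld{w}_h^{-1}\pi(\tld{w}_2)$ and hence the input for Proposition~\ref{prop:JH}. Your toolkit is the right one, but without the directional-gallery step (or an equivalent device pinning down the bounding translation), the argument cannot close.
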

\begin{proof}
Let $s\in W(\bun{G})$ and $\tau = \tau(s,\tld{w}_h\cdot \lambda+\eta)$.
If $\tld{w}_h \cdot \lambda \in \tld{\sigma}\cdot \bun{C}_0$ for $\tld{\sigma} = \sigma t_\nu$, then $\tau$ has a lowest alcove presentation $(\sigma^{-1}s\pi(\sigma),\tld{\sigma}^{-1}\tld{w}_h\cdot \lambda + \sigma^{-1}s\pi(\sigma\nu))$ with $\tld{\sigma}^{-1}\tld{w}_h\cdot \lambda + \sigma^{-1}s\pi(\sigma\nu)$ $2n$-deep in $\bun{C}_0$ by the first part of Proposition \ref{prop:relabeltype}.
By assumption, $\tau\cong \tau(s',\lambda')$ has a lowest alcove presentation $(s',\lambda'-\eta)$ as in \ref{item:rhobartau3}.
Combining these with the second part of Proposition \ref{prop:relabeltype}, we have that $s'=w_2^{-1}s\pi(w_2)$ and $\lambda' = \tld{w}_2^{-1}\tld{w}_h \cdot \lambda+s'\pi\nu_2+\eta$ for some $\tld{w}_2 = w_2t_{\nu_2}\in \tld{\bun{W}}^+$.
If we let $\lambda^{(0)}$ be $\tld{w}_2^{-1}\tld{w}_h \cdot \lambda$, then $\lambda^{(0)}+s'\pi\nu_2$ is in $\bun{C}_0$ since $\lambda$, and thus, $\lambda^{(0)}$ is $3n$-deep in its alcove and $\max_{\alpha^\vee} \{ |\langle \nu_2,\alpha^\vee \rangle|\}\leq n-1$ by Remark \ref{rmk:bounderror}.

Let $\tld{w} = \tld{w}^*(\rhobar,\tau)$.
Then by assumption, we have that $\mu_{\rhobar} - \lambda^{(0)}-\pi\nu_2  -\eta\in \un{\Lambda}_R$. Note that this condition and the condition that $\tld{w}_h \cdot \lambda\in \tld{w}_2\cdot \bun{C}_0$ uniquely determines $\tld{w}_2$, hence $\tld{w}_2$ does not change when $s$ changes. Thus as $s$ runs over $W(\bun{G})$, $s'$ runs over all of $W(\bun{G})$.

By definition, we have
\[
\tld{w} = s'^{-1} t_{\mu_{\rhobar} - \lambda^{(0)}-s' \pi \nu_2} s_{\rhobar} = s'^{-1} t_{\delta-s' \pi \nu_2} s_{\rhobar} = t_{- \pi \nu_2} s'^{-1} t_{\delta}s_{\rhobar} \in \mathrm{Adm}(\eta),
\]
where $\delta = \mu_{\rhobar} - \lambda^{(0)}$.

Let $\sigma \in W(\bun{G})$ (unrelated to the use of $\sigma$ in the first paragraph) be the unique element such that $\sigma t_{s_{\rhobar}^{-1}\delta} \in \tld{\bun{W}}^+$.
Note that the $\pi$-action on $\tld{\bun{W}}$ preserves $\tld{\bun{W}}^+$.
We now take $s' = s_{\rhobar}\sigma^{-1}w_0 \pi(w_2)$, so that $\tld{w} = \pi(t_{-\nu_2}w_2^{-1}) w_0\sigma t_{s_{\rhobar}^{-1}\delta} = \pi(\tld{w}_2^{-1}) w_0\sigma t_{s_{\rhobar}^{-1}\delta}$.
Since $\sigma t_{s_{\rhobar}^{-1}\delta}\in \tld{\bun{W}}^+$, we have that $w_0 \pi (\tld{w}_2) \tld{w}\in \tld{\bun{W}}^+$.
Thus, $w_0 \pi (w_2) \tld{w} = t_{-\pi (w_0 w_2 \nu_2)} w_0 \pi(\tld{w}_2) \tld{w}$ is in $\tld{\bun{W}}^+$ since $-\pi (w_0 w_2 \nu_2)$ is a dominant weight. 
Note that there is a gallery in the dominant direction from $w_0\pi (w_2)(\bun{C}_0)$ to $w_0\pi (w_2) \tld{w}(\bun{C}_0)$ passing through $\bun{C}_0$, and hence a gallery in the $(w_0 \pi (w_2))^{-1}$-direction from $\bun{C}_0$ to $\tld{w}(\bun{C}_0)$.
By \cite[Corollary 4.4]{HH}, $\tld{w} \leq t_{(w_0 \pi (w_2))^{-1}\eta}$. (The reference \cite{HH} uses the Bruhat order defined with respect to the \emph{anti-dominant} base alcove. However, by applying $w_0$-conjugation which interchanges the two Bruhat orders, the cited corollary holds for the Bruhat order defined with respect to the \emph{dominant} base alcove.)

Note that
\[t_{(w_0 \pi (w_2))^{-1}\eta} = (w_0 \pi (w_2))^{-1}t_\eta(w_0 \pi (w_2))=(\tld{w}_h^{-1} \pi (\tld{w}_2))^{-1}\tld{w}_h^{-1}w_0(\tld{w}_h^{-1} \pi (\tld{w}_2)) =  \pi (\tld{w}_2)^{-1} w_0(\tld{w}_h^{-1}\pi(\tld{w}_2)).\]
Since $\ell(\tld{w}) = \ell(\pi(\tld{w}_2)^{-1})+\ell(w_0)+\ell(\sigma t_{s_{\rhobar}^{-1}\delta})$ by Lemma \ref{lemma:gallery}, we conclude that
$\sigma t_{s_{\rhobar}^{-1}\delta}\leq \tld{w}_h^{-1}\pi(\tld{w}_2)$ by standard facts about Coxeter groups.
Since both sides of this inequality are elements of $\tld{\bun{W}}^+$, we conclude that $\sigma t_{s_{\rhobar}^{-1}\delta}\uparrow \tld{w}_h^{-1}\pi(\tld{w}_2)$.
In other words, we have 
\[\pi^{-1}(\sigma) t_{\pi^{-1}(s_{\rhobar}^{-1}\delta)} \cdot (\mu_{\rhobar}-\delta) \uparrow \lambda.\]
By Proposition \ref{prop:JH}, $F(\tld{w}_h^{-1}\lambda) \in \JH(\ovl{R}_{s_{\rhobar}}(\mu_{\rhobar}+\eta))$ or equivalently, $F(\lambda)\in W^?(\rhobar)$.
\end{proof}

\begin{rmk} Regarding the hypotheses in Lemma \ref{lemma:we}, if $\lambda$ is $n$-deep in a $p$-restricted alcove, then by Proposition \ref{prop:lambdagen}, $\tau = \tau(s, \tld{w}_h \cdot \lambda + \eta)$ is $1$-generic. By Proposition \ref{prop:alcoveC0}, all lowest alcove presentations $(s',\mu'-\eta)$ of $\tau$ are of the form
\[(s',\mu')=^{(\nu, \sigma)} ( \tld{w}_h \cdot \lambda + \eta)\]
with $(t_\nu\sigma \tld{w}_h) \cdot \lambda\in \bun{C}_0$. The condition that there is one compatible lowest alcove presentation as in \ref{item:rhobartau3} above is equivalent to $(\tld{w}_h \cdot \lambda+\eta-\mu_{\rhobar})|_{\bun{Z}}\in (p-\pi)X^*(\bun{Z})$. This condition is the condition that the central character of $F(\lambda)$ agrees with the central character of any element, or equivalently all elements, of $W^?(\rhobar)$.

\end{rmk}

\begin{cor}\label{cor:we}
Suppose that $\rhobar:G_K\ra\GL_n(\F)$ is a $5n$-generic Galois representation. 
If $\lambda$ is in $X_1(\bun{T})$ and $(2n-1)$-deep in its alcove and $F(\lambda)$ is not in $W^{?}(\rhobar^{\mathrm{ss}})$, then there exists an $n$-generic tame inertial type $\tau$ such that $F(\lambda) \in \JH(\overline{\sigma}(\tau))$ and $\rhobar$ does not have a potentially crystalline lift of type $(\eta, \tau)$.  
\end{cor}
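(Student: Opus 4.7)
The plan is to combine the contrapositive of Lemma \ref{lemma:we} with Theorem \ref{thm:admcrit}(2), invoking the results of \cite{Enns} to handle weights close to an alcove wall. The candidate type will be $\tau := \tau(s, \tld{w}_h\cdot \lambda + \eta)$ for an appropriately chosen $s \in W(\bun{G})$.

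Assume first that $\lambda$ is $3n$-deep in its $p$-restricted alcove, so that Lemma \ref{lemma:we} is available. Fix $(s_{\rhobar}, \mu_{\rhobar})$ as in \ref{item:rhobartau1}--\ref{item:rhobartau2}. By Proposition \ref{prop:lambdagen}, each type $\tau(s, \tld{w}_h\cdot \lambda+\eta)$ is $n$-generic, and (by the remark following Lemma \ref{lemma:we}) admits a compatible presentation as in \ref{item:rhobartau3} precisely when the central characters of $F(\lambda)$ and of the weights in $W^?(\rhobar^{\mathrm{ss}})$ agree; if they do not, then the conclusion of the corollary follows by inspection of central characters of $\ovl{\sigma}(\tau)$ for any $\tau$ containing $F(\lambda)$ as a Jordan--H\"older constituent. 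Otherwise, since $F(\lambda)\notin W^?(\rhobar^{\mathrm{ss}})$, the contrapositive of Lemma \ref{lemma:we} produces $s\in W(\bun{G})$ with $\tld{w}^*(\rhobar,\tau)\notin \Adm(\eta)$.

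Re-presenting $\tau\cong \tau(s',\mu'+\eta)$ via Proposition \ref{prop:alcoveC0} with $\mu'\in \bun{C}_0$ (and using the $3n$-depth of $\lambda$ together with Proposition \ref{prop:lambdagen} to ensure $\mu'$ is $2n$-deep in $\bun{C}_0$), Proposition \ref{prop:JH} yields $F(\lambda)\in \JH(\ovl{R}_{s'}(\mu'+\eta))=\JH(\ovl{\sigma}(\tau))$. If $\rhobar$ admitted a potentially crystalline lift of type $(\eta,\tau)$, then Theorem \ref{thm:admcrit}(2), combined with Proposition \ref{rmk:equalshape} which identifies the combinatorial shape $\tld{w}^*(\rhobar,\tau)$ (Definition \ref{defn:relshape}) with the Kisin-module shape $\tld{w}(\rhobar,\tau)^*$ (Definition \ref{defn:shaperhobar}), would force $\tld{w}^*(\rhobar,\tau)\in \Adm(\eta)$, contradicting our choice of $s$.

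When $\lambda$ is only $(2n-1)$-deep (i.e., close to an alcove wall), both Lemma \ref{lemma:we} and Proposition \ref{prop:JH} are unavailable; in this regime we invoke directly the near-wall weight-elimination results of \cite{Enns}, whose $5n$-genericity hypothesis on $\rhobar$ matches the hypothesis of the corollary exactly (cf.\ Remark \ref{rmk:comparison}), producing the desired $n$-generic $\tau$. The main delicate point of the entire argument is conceptual: bridging the representation-theoretic side (Lemma \ref{lemma:we}, phrased via the shape of Definition \ref{defn:relshape}) with the integral $p$-adic Hodge theory side (Theorem \ref{thm:admcrit}, phrased via the Kisin-module shape of Definition \ref{defn:shaperhobar}). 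This bridge is supplied by Proposition \ref{rmk:equalshape}, so the remaining work reduces to careful bookkeeping with lowest alcove presentations.
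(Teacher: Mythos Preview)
Your handling of the $3n$-deep case is essentially the paper's argument run in contrapositive, and is fine in outline. Two minor points: the central-character digression is unnecessary, since Proposition \ref{rmk:equalshape} already produces a compatible lowest alcove presentation whenever a potentially crystalline lift of type $(\eta,\tau)$ exists; and when you invoke Proposition \ref{rmk:equalshape} you should note (via Remark \ref{rmk:changerelshape}) that the pair $(s_{\rhobar},\mu_{\rhobar})$ it outputs may differ from the one you fixed, but that membership in $\Adm(\eta)$ is unaffected since $\Adm(\eta)$ is $\un{\Omega}$-conjugation stable.

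The genuine gap is in the range where $\lambda$ is $(2n-1)$-deep but not $3n$-deep. Your appeal to \cite{Enns} is misplaced: the results of \cite{Enns} used in this paper (specifically \cite[Theorem~8]{Enns}, as invoked in Theorem~\ref{thm:we}) concern weights that are \emph{not} $(2n-1)$-deep and produce \emph{semistable} obstructions via the principal series type $\tau(1,\lambda)$, which need not be $n$-generic. They do not supply an $n$-generic tame inertial type $\tau$ with $F(\lambda)\in\JH(\ovl{\sigma}(\tau))$ and no potentially crystalline lift, which is what the corollary demands.

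The correct tool here is Proposition~\ref{telim2}. Take $\tau=\tau(1,\tld{w}_h\cdot\lambda+\eta)$. By Proposition~\ref{prop:lambdagen} (and its proof), $\tau$ is $n$-generic but not $4n$-generic, since $\lambda$ is $(2n-1)$-deep but not $3n$-deep. The containment $F(\lambda)\in\JH(\ovl{\sigma}(\tau))$ follows from Corollary~\ref{explicitill} and \cite[Lemma~2.5]{herzig-satake} (Proposition~\ref{prop:JH} is indeed unavailable here, as you noted). Now if $\rhobar$ had a potentially crystalline lift of type $(\eta,\tau)$, then so would $\rhobar^{\mathrm{ss}}$, and Proposition~\ref{telim2} would force $\tau$ to be $(5n-n)=4n$-generic, a contradiction.
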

\begin{proof}
Suppose that $\lambda$ is not $3n$-deep.
By Proposition \ref{prop:lambdagen} and its proof, the tame type $\tau\defeq \tau(1,\tld{w}_h\cdot \lambda+\eta)$ is $n$-generic, but not $4n$-generic (by comparing $(1,\tld{w}_h\cdot \lambda+\eta)$ with a lowest alcove presentation). By Corollary \ref{explicitill} and \cite[Lemma 2.5]{herzig-satake}, 
 $F(\lambda) \in \JH(\ovl{\sigma}(\tau(1,\tld{w}_h\cdot \lambda+\eta)))$. 
Then $\rhobar$ is not the reduction of a potentially crystalline representation of type $(\eta,\tau)$ by Proposition \ref{telim2}.

Now suppose that $\lambda$ is $3n$-deep in its alcove.
Suppose that $\rhobar$ is the reduction a potentially crystalline representation of type $(\eta,\tau)$ for all $\tau$ such that $F(\lambda) \in \JH(\overline{\sigma}(\tau))$.
We claim that $\tau$ can be taken to be $\tau(s, \tld{w}_h\cdot\lambda+\eta)$ for any $s\in \bun{W}$.
Suppose that $\tld{w} = wt_\nu$ and that $\tld{w}_h \cdot \lambda\in \tld{w}\cdot \bun{C}_0$.
Let $\mu$ be $\tld{w}^{-1} \tld{w}_h \cdot \lambda + w^{-1}s\pi(w\nu)$.
Then $\tau(w^{-1}s\pi(w),\mu+\eta) \cong \tau(s,\tld{w}_h\cdot \lambda+\eta)$ by Proposition \ref{prop:relabeltype}, and $\mu$ is $2n+1$-deep in $\bun{C}_0$ by Remark \ref{rmk:bounderror}.
Then $\ovl{\sigma}(\tau(s, \tld{w}_h\cdot\lambda+\eta)) = \ovl{R}_{w^{-1}s\pi(w)}(\mu+\eta)$ contains $F(\lambda)$ as a Jordan--H\"older factor by Proposition \ref{prop:JH} ($\tld{w}\cdot(\mu - w^{-1}s\pi(w\nu)) = \tld{w}_h\cdot \lambda$).
Thus for all $s \in W(\bun{G})$, $\rhobar$ is the reduction of a potentially crystalline representation of type $(\eta,\tau(s, \tld{w}_h\cdot\lambda+\eta))$.

By Theorem \ref{thm:admcrit} and Proposition \ref{rmk:equalshape}, for each $s\in W(\bun{G})$, there is a pair $(s_{\rhobar}, \mu_{\rhobar})$ for $\rhobar^{\mathrm{ss}}$ as in \ref{item:rhobartau2} such that $\tld{w}^*(\rhobar^{\mathrm{ss}},\tau(s, \tld{w}_h\cdot\lambda+\eta))\in \Adm(\eta)$ (which implies \ref{item:rhobartau3} holds). 
By Remark \ref{rmk:changerelshape}, these conditions hold for any choice of $(s_{\rhobar}, \mu_{\rhobar})$ satisfying \ref{item:rhobartau2}, since $\Adm(\eta)$ is stable under $\un{\Omega}$-conjugation. Thus we can use the same pair $(s_{\rhobar}, \mu_{\rhobar})$ for all choices of $s$.
Then by Lemma \ref{lemma:we}, we see that $F(\lambda)\in W^?(\rhobar^{\mathrm{ss}})$.
%
\end{proof}

\begin{rmk}
In the proof of Corollary \ref{cor:we}, we used Proposition \ref{prop:JH} to show that $F(\lambda) \in \JH(\ovl{\sigma}(\tau(s,\tld{w}_h\cdot \lambda+\eta))$.
This weight is the reflection of an obvious weight or a \emph{diagonal prediction} in \cite[Definition 12.8]{herzig-thesis}, so the above membership is likely true with weaker genericity hypotheses on the Deligne--Lusztig representation.
Corollary \ref{cor:we} also probably holds with weaker genericity hypotheses.
\end{rmk}

\subsection{Weight elimination}\label{sec:global}

In this section, we deduce our main weight elimination result (Corollary \ref{cor:globalwe}).
We begin with an axiomatic setup for our method, and then proceed to the case of modular forms for definite unitary groups.

\subsubsection{Axiomatic setup}

We begin with an axiomatic setup for modular Serre weights.
This is related to the axiomatic setup of \cite[\S 4.2]{EGH}.
Let $\rhobar:G_K \ra \GL_n(\F)$ be a Galois representation. (We no longer assume \ref{item:rhobartau1}-\ref{item:rhobartau3}.)
We write $\F[\GL_n(k)]\textrm{-mod}$ for the category of finite $\F[\GL_n(k)]$-modules and $\mathrm{Vect}_{/\F}$ for the category of $\F$-modules (i.e.~vector spaces).

\begin{defn}\label{defn:cfunct}
We say that a functor $S:\F[\GL_n(k)]\textrm{-mod} \ra \mathrm{Vect}_{/\F}$ is an {\it arithmetic cohomology functor for} $\rhobar$ if 
\begin{itemize}
\item $S \neq 0$;
\item whenever $S(V) \neq 0$ for a Serre weight $V$ and $V\in \JH(\taubar)$ for a regular tame inertial type $\tau$, $\rhobar$ has a potentially crystalline lift of type $(\eta, \tau)$; and
\item whenever $S(V) \neq 0$ for a Serre weight $V$ and $V\in \JH(\ovl{R}_1(\mu))$, $\rhobar$ has a potentially semistable lift of type $(\eta, \tau(1,\mu))$.
\end{itemize}
\end{defn}

We now fix an arithmetic cohomology functor $S$ for $\rhobar$.

\begin{defn}\label{defn:W}
We say that $\rhobar$ is {\it modular of weight} $V$ if $S(V) \neq 0$.
Let $W(\rhobar)$ be the set of isomorphism classes of Serre weights for which $\rhobar$ is modular.
Let $W_{\mathrm{elim}}(\rhobar)$ be the subset of $W(\rhobar)$ consisting of isomorphism classes of Serre weights represented by $F(\lambda)$ where $\lambda$ is $2n-1$-deep in its alcove.
\end{defn}

\begin{thm}\label{thm:we}
If $\rhobar$ is $5n$-generic, then $W_{\mathrm{elim}}(\rhobar) \subset W^?(\rhobar^\mathrm{ss})$.
\end{thm}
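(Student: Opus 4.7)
The plan is a contradiction argument using the axioms of Definition \ref{defn:cfunct} together with Corollary \ref{cor:we}. Let $V = F(\lambda)$ be a Serre weight with $S(V) \neq 0$, and assume for contradiction that $F(\lambda) \notin W^?(\rhobar^\mathrm{ss})$.

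Suppose first that $\lambda$ is $(2n-1)$-deep in its alcove. Corollary \ref{cor:we} then supplies an $n$-generic (hence regular, by Definition \ref{defn:regular}) tame inertial type $\tau$ with $F(\lambda) \in \JH(\overline{\sigma}(\tau))$ such that $\rhobar$ admits no potentially crystalline lift of type $(\eta,\tau)$. On the other hand, the second bullet of Definition \ref{defn:cfunct}, applied to $V \in \JH(\overline{\sigma}(\tau))$ and $S(V) \neq 0$, produces precisely such a lift. This is a contradiction, so in this range we are done.

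The main obstacle is the remaining wall case, where $\lambda$ lies within distance $2n-1$ of some alcove wall; Corollary \ref{cor:we} does not apply directly. Here the plan is to follow the boundary-weight elimination of \cite{Enns}. For such $\lambda$, one chooses a dominant weight $\mu$ so that $F(\lambda)$ appears as a Jordan--H\"older factor of $\ovl{R}_1(\mu)$, and then invokes the third bullet of Definition \ref{defn:cfunct} to obtain a potentially semistable lift of $\rhobar$ of type $(\eta,\tau(1,\mu))$. After passing to an unramified extension over which $\rhobar^\mathrm{ss}$ becomes a sum of characters (using the stability of genericity under base change, Lemma \ref{lemma:bc}), the principal series case of Theorem \ref{thm:admcrit}(1), combined with its semistable adaptation as in \cite{Enns}, constrains the shape $\tld{w}^*(\rhobar^\mathrm{ss},\tau(1,\mu))$ to lie in $\Adm(\eta)$. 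A combinatorial argument in the spirit of Lemma \ref{lemma:we}, exploiting the $5n$-genericity of $\rhobar^\mathrm{ss}$, then rules out $\lambda$ being this close to the wall, yielding the required contradiction. The gap between the $5n$-genericity hypothesis and the milder genericity appearing in Corollary \ref{cor:we} is precisely what is consumed by this wall-elimination step, as anticipated by the remark following Definition \ref{defi:generic}.
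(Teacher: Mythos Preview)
Your approach is essentially the same as the paper's: split into the $(2n-1)$-deep case (handled via Corollary \ref{cor:we} and the second axiom of Definition \ref{defn:cfunct}) and the near-wall case (handled via \cite{Enns} and the third axiom). The deep case matches the paper exactly.

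For the wall case, however, your sketch is imprecise in two places. First, the paper makes the concrete choice $\mu = \lambda$: one has $F(\lambda) \in \JH(\ovl{R}_1(\lambda))$ by \cite[Lemma 2.5]{herzig-satake}, and the relevant elimination is then a direct citation of \cite[Theorem 8]{Enns}, which shows that a $5n$-generic $\rhobar$ cannot have a potentially semistable lift of type $(\eta,\tau(1,\lambda))$ when $\lambda$ fails to be $(4n-2)$-generic in the sense of \cite[Definition 1]{Enns}. Your phrase ``semistable adaptation as in \cite{Enns}'' is doing all the work here, and there is no need to pass to an unramified extension or to invoke Theorem \ref{thm:admcrit}(1) separately. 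Second, the combinatorial step you describe is not ``in the spirit of Lemma \ref{lemma:we}'': that lemma derives membership in $W^?(\rhobar)$ from admissibility for \emph{all} $s\in W(\bun{G})$, whereas what is needed here is to deduce genericity of $\tau$ from admissibility plus genericity of $\rhobar$. The argument you want is that of Proposition \ref{telim2} (or rather its semistable analogue, which is the content of \cite{Enns}). A further technical point you elide is that your formulation via $\tld{w}^*(\rhobar^{\mathrm{ss}},\tau(1,\mu))$ presupposes that $\tau(1,\mu)$ admits a lowest alcove presentation, which is exactly what fails when $\lambda$ lies on a wall; the paper sidesteps this by citing \cite{Enns} directly rather than reformulating in terms of shapes.
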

\begin{proof}
If $\lambda$ is $(2n-1)$-deep in its alcove and $F(\lambda) \notin W^?(\rhobar^{\mathrm{ss}})$, then by Corollary \ref{cor:we} there is a regular ($n$-generic even) tame inertial type $\tau$ such that $F(\lambda) \in \JH(\overline{\sigma}(\tau))$ and $\rhobar$ does not have a potentially crystalline lift of type $(\eta, \tau)$. 
We conclude that $F(\lambda) \notin W(\rhobar)$.
\end{proof}

\cite[Theorem 8]{Enns} shows that for sufficiently generic $\rhobar$, $W(\rhobar)$ does not contain weights near the boundary of alcoves.
Combining this with Theorem \ref{thm:we}, we obtain an upper bound for $W(\rhobar)$ rather than $W_{\mathrm{elim}}(\rhobar)$.

\begin{cor}\label{cor:awe}
If $\rhobar$ is $(6n-2)$-generic, then $W(\rhobar) \subset W^?(\rhobar^\mathrm{ss})$.
\end{cor}
\begin{proof}
Suppose that $\lambda\in X_1(\un{T})$ is $0$-deep but not $(2n-1)$-deep in its alcove.
Let $\lambda^{(0)}$ be the unique weight in $\bun{C}_0$ linked to $\lambda$.
Then $\lambda^{(0)}$ is also not $(2n-1)$-deep in its alcove so that $\lambda^{(0)}+\eta$ is not $2n$-generic in the sense of \cite[Definition 2]{Enns}.
We conclude that $\lambda$ is not $(4n-2)$-generic in the sense of \cite[Definition 1]{Enns}. Note that even if $\lambda$ is not $0$-deep in its alcove, this last conclusion still holds.
Now $\rhobar$ is $(6n-2)$-generic in the sense of \cite[Definition 2]{Enns} by Remark \ref{rmk:comparison}. 
Then the proof of \cite[Theorem 8]{Enns} shows that $\rhobar$ does not have a potentially semistable lift of type $(\eta,\tau(1,\lambda))$, while $F(\lambda) \in \JH(\ovl{R}_1(\lambda))$ by \cite[Lemma 2.5]{herzig-satake}.
We conclude that $F(\lambda) \notin W(\rhobar)$.
\end{proof}

\subsubsection{Algebraic modular forms for unitary groups}\label{sec:algmf}

We closely follow the setup of \cite[\S 7.1]{EGH} (see also \cite[\S 4.1]{HLM}).
Let $F/\Q$ be a CM field and $F^+$ its maximal totally real subfield. Assume that $F^+\neq \Q$, and that all places of $F^+$ dividing $p$ split in $F$ and are unramified.
We write $c$ for the generator of $\Gal(F/F^+)$.
For $u\nmid \infty$ (resp.\ $v\nmid \infty$) a place of $F$ (resp.\ $F^+$) we denote by $k_u$ (resp.\ $k_v$) the residue field of $F_u$ (resp.\ $F^+_v$).

We let $G_{/F^+}$ be a reductive group, which is an outer form of $\GL_n$ which splits over~$F$. We assume that $G(F^{+}_{v})\cong U_n(\R)$ for all $v|\infty$.  
By the argument of \cite{CHT}, \S3.3, $G$ admits a model $\cG$ over $\cO_{F^+}$ such that $\cG \times \cO_{F^+_v}$ is reductive for all places $v$ of $F^+$ that
split in $F$.
For any such place $v$ of $F^+$ and $u|v$ of $F$ we get an isomorphism $\iota_u:\,G(F^+_v)\stackrel{\sim}{\longrightarrow}\GL_n(F_u)$
which restricts moreover to an isomorphism $\iota_u:\,\cG(\cO_{F^+_v})\stackrel{\sim}{\longrightarrow}\GL_n(\cO_{F_u})$.
Let $\Sigma$ be a finite set of places in $F^+$. 
If $U^\Sigma\leq G(\bA_{F^+}^{\infty,\Sigma})$ is a compact open subgroup, the space $S(U^\Sigma)$ of infinite level algebraic automorphic forms on $G$ is defined to be the set of continuous functions 
$f:\,G(F^{+})\backslash G(\bA^{\infty}_{F^{+}})/ U^\Sigma\rightarrow \F,$
where $\F$ is given the discrete topology.

We recall that the level $U\leq G(\bA_{F^+}^\infty)$ is said to be \emph{sufficiently small} if for all $t \in G(\bA^{\infty}_{F^+})$, the finite group $t^{-1} G(F^+) t \cap U$ is of order prime to $p$.
We say that $U^\Sigma$ is \emph{sufficiently small} if 
\[U = U^\Sigma \prod_{v\in \Sigma} \cG(\cO_{F^+_v})\]
is sufficiently small.
For a finite place $v$ of $F^+$ we say that $U$ is \emph{unramified} at $v$ if $v$ splits in $F$ and one has a decomposition $U=\cG(\cO_{F_v^+})U^{v}$ for some compact open subgroup $U^v\leq G(\bA^{\infty,v}_{F^+})$. 

Let $\cP_U$ denote the set consisting of finite places $u$ of $F$ such that $v\defeq u\vert_{F^+}$ is split in~$F$, $v\nmid p$, and $U$ is unramified at $v$.
If $\cP\subseteq \cP_U$ is a subset of finite complement that is closed under complex conjugation and disjoint from $\Sigma$, we write $\bT^{\cP}=\cO[T^{(i)}_u : u\in\cP,\,i\in\{0,1,\ldots,n\}]$ for the abstract Hecke algebra on $\cP$, where the Hecke operator $T_u^{(i)}$ acts on the space $S(U^\Sigma)$ as the usual double coset operator
$$
\iota_u^{-1}\left[ \GL_n(\cO_{F_u}) \left(\begin{matrix}
      \varpi_{u}\mathrm{Id}_i &  \cr  & \mathrm{Id}_{n-i} \end{matrix} \right)
\GL_n(\cO_{F_u}) \right],
$$
where $\varpi_u$ denotes a uniformizer of $F_u$.
If $\rbar:G_F\rightarrow \GL_n(\F)$ is a continuous, absolutely irreducible Galois representation, we further write $\mathfrak{m}_{\rbar}$ for the maximal ideal of $\bT^{\cP}$ with residue field $\F$ defined by the formula
$$
\det\left(1-\overline{r}^{\vee}(\mathrm{Frob}_u)X\right)=\sum_{j=0}^n (-1)^j \mathbf{N}_{F_u/\Q_p}(u)^{\binom{j}{2}}(T_u^{(j)}\bmod \mathfrak{m}_{\rbar})X^j\quad \forall u\in \cP.
$$

\begin{defn} \label{defn:rbarmod}
We say that $\rbar$ is {\it modular} of (prime-to-$\Sigma$) level $U^\Sigma$ if $S(U^\Sigma)_{\mathfrak{m}_{\rbar}} \neq 0$.
We say $\rbar$ is modular if $\rbar$ is modular of some level $U^\Sigma$.
\end{defn}

Assume that $\rbar$ is modular of level $U^\Sigma$ and that $\Sigma$ contains all places dividing $p$ and all places divisible by places in $F$ where $\rbar$ is ramified.
Fix places $u|v|p$ of $F$ and $F^+$.
Then we define the functor $S:\F[\GL_n(k_u)]\textrm{-mod} \ra \mathrm{Vect}_{/\F}$ by $S(V) \defeq \Hom_{\GL_n(\cO_{F_u})}(V^\vee,S(U^\Sigma)_{\mathfrak{m}_{\rbar}})$, where $(\cdot)^\vee$ denotes the contragradient representation and $\GL_n(\cO_{F_u})$ acts on $V$ by inflation and on $S(U^\Sigma)_{\mathfrak{m}_{\rbar}}$ via $\iota_u^{-1}$.

\begin{prop}\label{prop:acf}
If $U^\Sigma$ is sufficiently small, then $S$ is an arithmetic cohomology functor for $\rbar|_{G_{F_u}}$.
\end{prop}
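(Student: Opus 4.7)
The plan is to verify the three axioms of Definition~\ref{defn:cfunct}.

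\emph{Axiom~1 $(S \neq 0)$.} The modularity hypothesis gives $S(U^\Sigma)_{\mathfrak{m}_{\rbar}} \neq 0$. Under the sufficiently small hypothesis on $U^\Sigma$, this is a non-zero smooth admissible $\F$-representation of $\GL_n(\cO_{F_u})$ via $\iota_u^{-1}$, whose irreducible subrepresentations factor through $\GL_n(k_u)$; picking any such, written as $V^\vee$ for a Serre weight $V$, yields $S(V) \neq 0$.

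\emph{Axioms~2 and 3.} These will follow a uniform ``classical lift'' strategy, which I describe for Axiom~2 (Axiom~3 being identical upon replacing $\sigma(\tau)$ by the full, possibly reducible, principal series associated to $\mu$, and applying the second part of Theorem~\ref{thm:ill} to obtain a potentially semistable lift with possibly non-trivial monodromy). Suppose $V \in \JH(\taubar)$ with $S(V) \neq 0$ and $\tau$ regular tame. First I would use the fact that any Jordan--H\"older factor of the reduction of a characteristic zero $\GL_n(\cO_{F_u})$-representation can be placed in the cosocle of the reduction of some $\cO$-lattice to choose $L \subset \sigma(\tau)^\vee$ with $\ovl{L} \twoheadrightarrow V^\vee$. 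Composing this surjection with a non-zero element of $S(V) = \Hom_{\GL_n(\cO_{F_u})}(V^\vee, S(U^\Sigma)_{\mathfrak{m}_{\rbar}})$ produces a non-zero element of $\Hom_{\GL_n(\cO_{F_u})}(\ovl{L}, S(U^\Sigma,\F)_{\mathfrak{m}_{\rbar}})$. Since $L$ is $\cO$-free and $S(U^\Sigma,\cO)_{\mathfrak{m}_{\rbar}}$ is $\cO$-flat (again by the sufficiently small hypothesis), Nakayama's lemma lifts this to a non-zero element of
\[ \Hom_{\GL_n(\cO_{F_u})}\!\left(\sigma(\tau)^\vee, S(U^\Sigma,E)_{\mathfrak{m}_{\rbar}}\right), \]
giving a classical Hecke eigenform of $\sigma(\tau)^\vee$-type at $u$.

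Next, I would transfer this eigenform via Jacquet--Langlands to a RACSDC automorphic representation $\pi$ of $\GL_n(\bA_F)$ as in \cite[\S 7.2]{EGH}, noting that (after suitable conventions) $\pi_u$ has $\sigma(\tau)$-type, so that $\mathrm{rec}_{F_u}(\pi_u)|_{I_{F_u}} \cong \tau$ with $N = 0$ by Theorem~\ref{thm:ill}, and that $\pi$ has trivial algebraic infinitesimal character (so Hodge--Tate weights $\eta$ in our convention). Local-global compatibility for $\GL_n$ applied to the associated Galois representation $r_\pi : G_F \to \GL_n(\overline{\Q}_p)$ then produces a lift of $\rbar$ whose restriction to $G_{F_u}$ is potentially crystalline of type $(\eta,\tau)$, which is the desired lift of $\rbar|_{G_{F_u}}$. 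The main technical care required will be in bookkeeping the dualization and Hodge--Tate weight normalizations, which are arranged so as to match the inertial local Langlands conventions of Theorem~\ref{thm:ill} and the shift by $\eta$ relative to \cite{GHS}; the individual steps are standard, so the difficulty is primarily notational rather than conceptual.
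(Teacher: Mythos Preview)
Your proposal is correct and follows essentially the same strategy as the paper, which itself largely defers to \cite[Proposition~7.4.4]{EGH}. One small caveat: the lifting you attribute to ``Nakayama'' is really the surjectivity of $\Hom_{\GL_n(\cO_{F_u})}(L,S(U^\Sigma,\cO))\to\Hom_{\GL_n(\cO_{F_u})}(\overline{L},S(U^\Sigma,\F))$, which is where the sufficiently small hypothesis genuinely enters (it makes finite-level pieces of $S$ projective over the relevant group algebra, not merely $\cO$-flat), and to actually produce a classical eigenform amenable to Jacquet--Langlands transfer you must also pass to finite level at the other places in $\Sigma$---a step the paper carries out explicitly by taking invariants at places away from $p$ and fixing types at the remaining $p$-adic places.
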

\begin{proof}
This essentially follows from the proof of Proposition 7.4.4 of \cite{EGH}.
Note that modularity of $\rbar$ of level $U^\Sigma$ implies that the functor $S$ is nonzero.
Suppose $S(V) \neq 0$ and $V \in \JH(\overline{\sigma}(\tau))$. 
Let $v' \in \Sigma$ be a place not dividing $p$ (if one exists).
Since $S(V)$ is a smooth representation of $G(F^+_{v'})$, there exists a compact open subgroup $K_{v'}$ of $G(\cO_{F^+_{v'}})$ with nonzero invariants.
Inductively choosing $K_{v'}$ and replacing $S(U^\Sigma)_{\mathfrak{m}_{\rbar}}$ with 
\[S(U^\Sigma\prod_{v'\in \Sigma, v'\nmid p} K_{v'})_{\mathfrak{m}_{\rbar}},\]
we assume without loss of generality that $\Sigma$ is exactly the places of $F^+$ dividing $p$, and write $U^{\Sigma}=U^p$.
For each place $v'|p$ of $F^+$, choose a place $\tld{v}'|v'$ of $F$ such that $\tld{v}'|_{F^+} = v'$ and that $\tld{v} = u$.
For an $\cO[\![\GL_n(\cO_{F,\tld{v}'})]\!]$-representation $V_{\tld{v}'}$, let $V_{v'}$ be the corresponding $\cO[\![\cG(\cO_{F^+,v'})]\!]$-representation via $\iota_{\tld{v}}$.
There exist irreducible $\F[\![\GL_n(\cO_{F,\tld{v}'})]\!]$-representations $V_{\tld{v}'}$ for every $v'|p$ such that $V_{\tld{v}} = V$ and $\Hom_{\cG(\cO_{F^+,p})}(\otimes_{v'|p} V_{v'}^\vee,S(U^p)_{\mathfrak{m}_{\rbar}}) \neq 0$.
Choose an $\cO[\![\cG(\cO_{F^+,p})]\!]$-representation $W = \otimes_{v'|p} W_{v'}$ such that $W_{\tld{v}}$ is an $\cO$-lattice in $\sigma(\tau)$, $W_{\tld{v'}}$ is an $\cO$-lattice in an algebraic representation over $E$ for $v'\neq v$ (say, a suitable Weyl module), and $V_{\tld{v}'} \in \JH(\overline{W}_{\tld{v}'})$ for all $v'|p$ (where the bar denotes mod $\varpi$ reduction). For $U^p$ sufficiently small, $S(U^p)_{\mathfrak{m}_{\rbar}}$ is an injective $\F[\![\cG(\cO_{F^+,p})]\!]$-module, hence $\Hom_{\F[\![\cG(\cO_{F^+,p})]\!]}(-,S(U^p)_{\mathfrak{m}_{\rbar}})$ is exact. Thus $\Hom_{\cG(\cO_{F^+,p})}(\overline{W}^\vee,S(U^p)_{\mathfrak{m}_{\rbar}})\neq 0$.
Then the proof of \cite[Proposition 7.4.4 \~A1]{EGH} holds without modification and one then constructs $\pi$ as in the proof of {\it loc.~cit.}
Then $r_\pi^\vee|_{G_{F_u}}$ provides the required lift by \cite[Theorem 7.2.1]{EGH}, Theorem \ref{thm:ill}, and Proposition \ref{illtame}.
\end{proof}


Assume that $\rbar$ is modular.
Then $\rbar$ is modular of some sufficiently small level $U^\Sigma$.
Let $W(\rbar)$ be the set of isomorphism classes of irreducible $\mathcal{G}(\cO_{F^+,p})$-representations $V$ over $\F$ such that \[\Hom_{\cG(\cO_{F^+,p})}(V^\vee,S(U^\Sigma)_{\mathfrak{m}_{\rbar}}) \neq 0.\]
For each place $v|p$ of $F^+$ choose a place $\tld{v}|v$ of $F$.
Let $W_{\tld{v},\mathrm{ss}}^?(\rbar)$ be $W^?((\rbar|_{G_{F_{\tld{v}}}})^{\mathrm{ss}})$ and $W_{v,\mathrm{ss}}^?(\rbar)$ be the set of isomorphism classes of $\cG(\cO_{F^+_v})$-representations over $\F$ corresponding to $W_{\tld{v},\mathrm{ss}}^?(\rbar)$ via $\iota_{\tld{v}}$. 
Note that the definition of $W^?_{v,\mathrm{ss}}(\rbar)$ does not depend on the choice of place $\tld{v}$.
Let $W^?_\mathrm{ss}(\rbar)$ be $\underset{v|p}{\otimes} W_{v,\mathrm{ss}}^?(\rbar)$.

\begin{cor}\label{cor:globalwe}
Suppose that $(\rbar|_{G_{F_{\tld{v}}}})^{\mathrm{ss}}$ is $(6n-2)$-generic for all places $v|p$ of $F^+$.
We have $W(\rbar)\subset W^?_{\mathrm{ss}}(\rbar)$.
\end{cor}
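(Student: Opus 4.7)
The plan is to reduce the global weight elimination to the local Theorem \ref{thm:we} applied one place above $p$ at a time. Fix a sufficiently small level $U^\Sigma$ for which $\rbar$ is modular, set $M \defeq S(U^\Sigma)_{\mathfrak{m}_{\rbar}}$, and suppose $V = \otimes_{v\mid p} V_v \in W(\rbar)$, so that $\Hom_{\cG(\cO_{F^+,p})}(V^\vee, M) \neq 0$.

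Fix a place $v_0\mid p$ of $F^+$. Using the factorization $\cG(\cO_{F^+,p}) = \prod_{v\mid p}\cG(\cO_{F^+_v})$ together with the tensor-product form of $V$, we see that as a $\cG(\cO_{F^+_{v_0}})$-module $V^\vee$ is isomorphic to a direct sum of $\dim(\otimes_{v\neq v_0}V_v)$ copies of $V_{v_0}^\vee$ (the other factors act trivially on this tensor slot). Restricting any nonzero $\cG(\cO_{F^+,p})$-equivariant map $V^\vee \to M$ to $\cG(\cO_{F^+_{v_0}})$ and identifying $\cG(\cO_{F^+_{v_0}})$ with $\GL_n(\cO_{F_{\tld{v}_0}})$ via $\iota_{\tld{v}_0}$ therefore yields
\[
\Hom_{\GL_n(\cO_{F_{\tld{v}_0}})}(V_{v_0}^\vee,\, M) \neq 0.
\]

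Now the functor $W \mapsto \Hom_{\GL_n(\cO_{F_{\tld{v}_0}})}(W^\vee, M)$ on $\F[\GL_n(k_{\tld{v}_0})]$-modules is precisely the arithmetic cohomology functor for $\rbar|_{G_{F_{\tld{v}_0}}}$ provided by Proposition \ref{prop:acf}. The non-vanishing above asserts that $V_{v_0}$ (viewed via $\iota_{\tld{v}_0}$) is a modular weight for this functor in the sense of Definition \ref{defn:W}. Since $(\rbar|_{G_{F_{\tld{v}_0}}})^{\mathrm{ss}}$ is $5n$-generic by hypothesis, Theorem \ref{thm:we} applies and gives $V_{v_0} \in W^?((\rbar|_{G_{F_{\tld{v}_0}}})^{\mathrm{ss}})$, i.e.\ $V_{v_0} \in W^?_{v_0,\mathrm{ss}}(\rbar)$ by definition of the latter set. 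Letting $v_0$ range over all places above $p$ then yields $V \in W^?_{\mathrm{ss}}(\rbar)$, as desired.

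The deduction is essentially formal: the substantive content lies in Proposition \ref{prop:acf} (realizing the global automorphic setup as a local arithmetic cohomology functor at each $\tld{v}_0$) and Theorem \ref{thm:we} (the axiomatic local weight elimination, which ultimately rests on Corollary \ref{cor:we} and the $p$-adic Hodge theory of Section \ref{sec:local}). There is no new obstacle at the global level; the only point requiring a little care is the tensor-product decomposition of $V$ used to isolate the contribution at a single place $v_0$.
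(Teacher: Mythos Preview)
Your proposal is correct and follows essentially the same approach as the paper's own proof: isolate a single place $v_0\mid p$, observe that the nonvanishing of $\Hom_{\cG(\cO_{F^+,p})}(V^\vee,M)$ implies nonvanishing of $\Hom_{\GL_n(\cO_{F_{\tld{v}_0}})}(V_{v_0}^\vee,M)$, invoke Proposition~\ref{prop:acf} to recognize this as an arithmetic cohomology functor, and then apply Theorem~\ref{thm:we}. Your explicit unpacking of the tensor-decomposition step is a little more detailed than the paper's, but the argument is the same.
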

\begin{proof}
Suppose that $\Hom_{\cG(\cO_{F^+,p})}(\otimes_{v|p} F(\lambda_v)^\vee,S(U^p)_{\mathfrak{m}_{\rbar}}) \neq 0$.
For each place $v|p$ of $F^+$, let $S_{\tld{v}}$ be $\Hom_{\GL_n(\cO_{\tld{v}})}((-)^\vee, S(U^p)_{\mathfrak{m}_{\rbar}})$.
By Proposition \ref{prop:acf}, $S_{\tld{v}}$ is an arithmetic cohomology functor for $\rbar|_{G_{F_{\tld{v}}}}$, from which we define $W(\rbar|_{G_{F_{\tld{v}}}})$ as in Definition \ref{defn:W}.
If $F(\lambda_{\tld{v}})$ corresponds to $F(\lambda_v)$ via $\iota_{\tld{v}}$, then $S_{\tld{v}}(F(\lambda_{\tld{v}}))$ is nonzero, and so $F(\lambda_{\tld{v}}) \in W(\rbar|_{G_{F_{\tld{v}}}})$.
By Corollary \ref{cor:awe}, $F(\lambda_{\tld{v}}) \in W^?_{\mathrm{ss}}(\rbar|_{G_{F_{\tld{v}}}})$.
Thus $\otimes_{v|p} F(\lambda_v) \in W^?_{\mathrm{ss}}(\rbar)$.
\end{proof}

\subsection{Modularity of ``obvious'' Serre weights}\label{sec:obv}

In this section, we deduce the modularity of obvious Serre weights for sufficiently generic semisimple local Galois representations under mild hypotheses.
We show that for each obvious weight there is a type containing it as a Jordan--H\"older factor so that no other Jordan--H\"older factor is modular (Corollary \ref{cor:globalwe} and Proposition \ref{prop:obvtype}).
Fortunately, these are precisely the types to which we can apply Corollary \ref{cor:pd} and the results of \cite{BLGGT} to deduce the modularity of obvious Serre weights.
That these types isolate weights can be seen as a consequence of Theorem \ref{thm:smoothdef} and the Breuil--M\'ezard philosophy (see Remark \ref{rmk:bm}).
We first summarize the results we need from \cite{BLGGT} and refer the reader to \emph{ibid.}~for any undefined notation and terminology.

\begin{thm}\label{thm:cow}
Let $p>2$ and $F$ be a CM field with maximal totally real subfield $F^+$ such that $\zeta_p \notin F$.
Assume that $F/F^+$ is split at all places dividing $p$.
Suppose that $\rbar:G_F\ra \GL_n(\overline{\F}_p)$ is an irreducible representation with the following additional properties.
\begin{enumerate}
\item \label{pdaut} $\rbar$ is potentially diagonalizably automorphic, i.e.\ there is a RACSDC automorphic representation $\Pi$ of $\GL_n(\A_F)$ such that 
\begin{itemize}
\item $\rbar\cong \rbar_{p,\iota}(\Pi)$; and
\item For each place $u|p$ of $F$, $r_{p,\iota}(\Pi)|_{G_{F_u}}$ is potentially diagonalizable.
\end{itemize}
\item The image of $\rbar(G_{F(\zeta_p)})$ is adequate.
\end{enumerate}
Let $\Sigma$ be a finite set of places of $F^+$ containing all places dividing $p$ and all the places of $F^+$ divisible by places at which $\rbar$ ramifies.
For each place $v\in \Sigma$, choose a place $\tld{v}|v$ of $F$ and a lift $\rho_{\tld{v}}:G_{F_{\tld{v}}} \ra \GL_n(\overline{\Z}_p)$.
Suppose that for $v|p$, $\rho_{\tld{v}}$ is potentially crystalline and potentially diagonalizable with distinct Hodge--Tate weights for every embedding $F_{\tld{v}} \into \overline{\Q}_p$.

Then there is a RACSDC automorphic representation $\pi$ such that
\begin{itemize}
\item $\rbar \cong \rbar_{p,\iota}(\pi)$;
\item $\pi_u$ is unramified at all places $u$ of $F$ that do not divide a place in $\Sigma$; and 
\item $r_{p,\iota(\pi)}|_{G_{F_{\tld{v}}}} \sim \rho_{\tld{v}}$ for all places $v\in \Sigma$.
\end{itemize}
\end{thm}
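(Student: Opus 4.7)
The plan is to follow the proof of \cite[Theorem 3.1.3]{BLGG} essentially verbatim, with the technical caveat that crystalline lifts are replaced by potentially crystalline ones. The strategy has two main stages: first construct a suitable global Galois lift of $\rbar$ with the prescribed local properties, then promote that lift to automorphy via a potentially diagonalizable automorphy lifting theorem.

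For the first stage, I would produce a global lift $r:G_F\to \GL_n(\overline{\Z}_p)$ of $\rbar$ that is unramified outside $\Sigma$ and satisfies $r|_{G_{F_{\tld{v}}}} \sim \rho_{\tld{v}}$ (equivalence in the sense of potential diagonalizability) at every $v\in \Sigma$ dividing $p$, with compatible minimal local conditions at the auxiliary places in $\Sigma$ not dividing $p$. This is produced by the standard Khare--Wintenberger-style global lifting argument as used in \cite{BLGGT}; the adequacy of $\rbar(G_{F(\zeta_p)})$ feeds in here to ensure the global deformation problem is unobstructed enough to prescribe these local conditions, and the assumption that each $\rho_{\tld{v}}$ at $v\mid p$ has distinct Hodge--Tate weights for every embedding ensures that the local potentially crystalline deformation rings are nice (formally smooth after inverting $p$) on their potentially diagonalizable components.

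For the second stage, I would invoke the potentially diagonalizable automorphy lifting theorem of Barnet-Lamb--Gee--Geraghty--Taylor (cf.\ \cite{BLGGT}). The key observation is that both $r_{p,\iota}(\Pi)|_{G_{F_{\tld{v}}}}$ (supplied by hypothesis (1) of the theorem) and $r|_{G_{F_{\tld{v}}}}$ (constructed above) are potentially diagonalizable lifts of $\rbar|_{G_{F_{\tld{v}}}}$, hence they lie on a common irreducible component of the appropriate framed potentially crystalline deformation ring. Combined with the adequacy hypothesis on $\rbar(G_{F(\zeta_p)})$, this permits a direct application of the automorphy lifting theorem, producing a RACSDC automorphic representation $\pi$ with $r_{p,\iota}(\pi) \cong r$, and hence with $r_{p,\iota}(\pi)|_{G_{F_{\tld{v}}}} \sim \rho_{\tld{v}}$ for each $v\in \Sigma$. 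Unramifiedness of $\pi_u$ outside $\Sigma$ is a standard consequence of the local--global compatibility together with the fact that $r$ is unramified at such places.

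The main technical point --- and the only genuine change from the cited reference --- is verifying that the automorphy lifting machinery goes through when the reference lift is potentially crystalline rather than crystalline. This causes no substantive difficulty because the deformation-theoretic arguments underlying the relevant theorems of \cite{BLGGT} are already formulated in terms of potentially crystalline deformation rings. The single observable consequence is that the conclusion ``$\pi_u$ is unramified at places $u\mid p$'' is lost; unramifiedness at places of $F$ not dividing a place in $\Sigma$ is unaffected. I do not expect any step here to present a genuine obstacle beyond tracking this bookkeeping change through the cited arguments.
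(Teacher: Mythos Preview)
Your proposal is correct and follows essentially the same approach as the paper: both reduce to the proof of \cite[Theorem 3.1.3]{BLGG}, noting that the only change is allowing potentially crystalline (rather than crystalline) lifts at places above $p$, with the sole cost being the loss of unramifiedness of $\pi_u$ at places $u\mid p$. The paper simply cites this reduction in two sentences, whereas you have unpacked the underlying two-stage argument (global lifting plus automorphy lifting via \cite{BLGGT}) that constitutes the cited proof.
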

\begin{proof}
This is \cite[Theorem 3.1.3]{BLGG} except for two differences:
\begin{enumerate}
\item for places $v|p$ of $F^+$, $\rhobar_{\tld{v}}$ allowed to be potentially crystalline rather than crystalline and
\item $\Sigma$ may contain places which do not split in $F$.
\end{enumerate}
However, the proof of \cite[Theorem 3.1.3]{BLGG} still applies with two corresponding modifications:
\begin{enumerate}
\item $\pi_u$ is not necessarily unramified if $u$ is a place of $F$ dividing $p$ and
\item we replace the use of \cite[Theorem A.4.1]{BLGG} with \cite[Theorem 5.2.1]{BG}. 
\end{enumerate}
\end{proof}


Define
\[
W^{?}(\rhobar, \tau) := W^{?}(\rhobar) \cap \JH(\taubar)
\] 
(see Definition \ref{defn:SWC} and \S \ref{sec:ill}). We can characterize $W^{?}(\rhobar,\tau)$ in terms of the element $\tld{w}^*(\rhobar, \tau)$ from Definition \ref{defn:relshape}.

\begin{prop}\label{prop:alcovecrit}
Fix \S \ref{sec:we}\ref{item:rhobartau1}-\ref{item:rhobartau3} with $\mu-\eta$ $2n$-deep in $\bun{C}_0$, and let $\widetilde{w} = \widetilde{w}^*(\rhobar,\tau)=t_{\nu}w$. 
Then for $\lambda$ a dominant $p$-restricted character, $F(\lambda)$ is in $W^?(\rhobar,\tau)$ if and only if there exist $\tld{w}_\lambda$, $\tld{w}_1$, and $\tld{w}_2\in \tld{\bun{W}}^+$ with $\tld{w}_\lambda\cdot \bun{C}_0$ $p$-restricted, and $w'\in W(\bun{G})$ such that 
\begin{itemize}
\item $\pi^{-1}(\tld{w}) = \tld{w}_2^{-1} w' \tld{w}_1$;
\item $\tld{w}_1 \uparrow \tld{w}_\lambda \uparrow \tld{w}_h^{-1} \tld{w}_2$; and 
\item $\lambda=\tld{w}_\lambda\cdot (\mu-s\pi\nu_2-\eta)$ where $\tld{w}_2 = w_2 t_{\nu_2}$.
\end{itemize}
\end{prop}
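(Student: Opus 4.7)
The approach is to interpret $F(\lambda)\in W^?(\rhobar,\tau)$ by combining two applications of Proposition \ref{prop:JH}. By Definition \ref{defn:SWC} together with \eqref{Vmap}, membership $F(\lambda)\in W^?(\rhobar)$ is equivalent to $F(\tld{w}_h^{-1}\cdot\lambda)\in \JH(\ovl{R}_{s_{\rhobar}}(\mu_{\rhobar}+\eta))$, while $F(\lambda)\in \JH(\taubar)$ is $F(\lambda)\in \JH(\ovl{R}_s(\mu))$. By Proposition \ref{prop:alcoveC0} and Definition \ref{defi:generic}, $\mu_{\rhobar}$ is at least $(3n-1)$-deep in $\bun{C}_0$, and $\mu-\eta$ is $2n$-deep in $\bun{C}_0$ by hypothesis, so both factors satisfy the depth hypothesis of Proposition \ref{prop:JH}.

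Apply Proposition \ref{prop:JH} to $F(\lambda) \in \JH(\ovl{R}_s(\mu))$ to obtain $\tld{w}_2 = w_2 t_{\nu_2} \in \tld{\bun{W}}^+$ with $\tld{w}_2\cdot \bun{C}_0 \uparrow \tld{w}_h\cdot \bun{C}_0$ and $\tld{w}_2\cdot(\mu-\eta-s\pi\nu_2)\uparrow \tld{w}_h\cdot\lambda$. Apply it again to $F(\tld{w}_h^{-1}\cdot\lambda)\in \JH(\ovl{R}_{s_{\rhobar}}(\mu_{\rhobar}+\eta))$ to obtain $\tld{w}_1 = w_1 t_{\nu_1}\in \tld{\bun{W}}^+$ with $\tld{w}_1\cdot \bun{C}_0 \uparrow \tld{w}_h\cdot\bun{C}_0$ and $\tld{w}_1\cdot(\mu_{\rhobar}-s_{\rhobar}\pi\nu_1)\uparrow \lambda$. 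Define $\tld{w}_\lambda$ to be the unique element of $\tld{\bun{W}}$ for which the $p$-restricted alcove $\tld{w}_\lambda\cdot\bun{C}_0$ contains $\lambda$; since this alcove is $p$-restricted hence dominant, $\tld{w}_\lambda\in \tld{\bun{W}}^+$.

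The heart of the argument is to promote the upper arrow conditions on weights to conditions on alcoves. By Remark \ref{rmk:bounderror}, $|\langle \nu_i,\alpha^\vee\rangle|\leq n-1$ for all coroots, so the depth hypothesis guarantees that $\mu-\eta-s\pi\nu_2$ and $\mu_{\rhobar}-s_{\rhobar}\pi\nu_1$ both lie in the interior of $\bun{C}_0$. Consequently $\tld{w}_2\cdot(\mu-\eta-s\pi\nu_2)$ lies in the interior of $\tld{w}_2\cdot\bun{C}_0$ and analogous statements hold for the other weights, so the upper arrow relations translate to upper arrow relations between the corresponding alcoves. For the two derived expressions for $\lambda$ to coincide with $\tld{w}_\lambda\cdot(\mu-s\pi\nu_2-\eta)$ we must have
\[
\mu_{\rhobar}-s_{\rhobar}\pi\nu_1 \;=\; \mu-s\pi\nu_2-\eta.
\]
Using that left multiplication by $\tld{w}_h$ reverses the upper arrow ordering, the pair of alcove relations then collapses into the sandwich $\tld{w}_1\uparrow \tld{w}_\lambda\uparrow \tld{w}_h^{-1}\tld{w}_2$.

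Finally, using the explicit formula $\tld{w}^*(\rhobar,\tau) = t_{s^{-1}(\mu_{\rhobar}+\eta-\mu)}s^{-1}s_{\rhobar}$ from Definition \ref{defn:relshape}, a direct computation shows that the weight identity displayed above is precisely equivalent to the existence of $w'\in W(\bun{G})$ with $\pi^{-1}(\tld{w}) = \tld{w}_2^{-1}w'\tld{w}_1$: equating the translation parts of both sides gives exactly this identity, while the Weyl group parts can always be matched by taking $w' = w_2\pi^{-1}(s^{-1}s_{\rhobar})w_1^{-1}$. The reverse direction of the proposition is obtained by reading these equivalences backwards: given the data $\tld{w}_\lambda, \tld{w}_1, \tld{w}_2, w'$ satisfying the listed conditions, one recovers the two weight-level up-arrow relations from which Proposition \ref{prop:JH} yields both $F(\lambda)\in \JH(\taubar)$ and $F(\tld{w}_h^{-1}\cdot\lambda)\in \JH(\ovl{R}_{s_{\rhobar}}(\mu_{\rhobar}+\eta))$. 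The main obstacle is the passage between upper arrow on weights and upper arrow on alcoves, which is what requires the depth hypothesis on $\mu-\eta$.
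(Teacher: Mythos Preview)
Your approach is essentially the same as the paper's: both proofs unwind the definition of $W^?(\rhobar,\tau)$ as two applications of Proposition~\ref{prop:JH}, use the depth hypothesis plus Remark~\ref{rmk:bounderror} to place both base weights $\mu_{\rhobar}-s_{\rhobar}\pi\nu_1$ and $\mu-s\pi\nu_2-\eta$ in $\bun{C}_0$, deduce that linkage forces them to coincide, and then verify that this coincidence is exactly the translation part of the factorization $\pi^{-1}(\tld{w})=\tld{w}_2^{-1}w'\tld{w}_1$.

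One expositional point: you define $\tld{w}_\lambda$ as ``the unique element of $\tld{\bun{W}}$ for which the $p$-restricted alcove $\tld{w}_\lambda\cdot\bun{C}_0$ contains $\lambda$'' before establishing that $\lambda$ actually lies in the interior of an alcove, and this description does not determine $\tld{w}_\lambda$ uniquely in $\tld{\bun{W}}$ (only up to $\un{\Omega}$). The paper orders things more cleanly: it first observes that the two $\uparrow$ relations force $\lambda$ to be linked to both base weights, hence the base weights are linked to each other, hence equal (being both in $\bun{C}_0$); only then does $\lambda=\tld{w}_\lambda\cdot(\mu-s\pi\nu_2-\eta)$ pin down $\tld{w}_\lambda$. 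This is a matter of presentation rather than a gap in the argument.
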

\begin{proof}
Recall from Definition \ref{defn:SWC} that $F(\lambda) \in W^?(\rhobar)$ if and only if $F(\tld{w}_h^{-1}\cdot \lambda) \in \JH(\overline{R}_{sw}(\mu+s\nu))$ since $\tld{w}_h \cdot \lambda = \mathcal{R}(\lambda)$ (recall that $\mu+s\nu = \mu_{\rhobar}+\eta$ so that $\nu\equiv \eta\pmod {\un{\Lambda}_R}$ and $sw = s_{\rhobar}$). 
By Proposition \ref{prop:JH}, $F(\lambda) \in \JH(\taubar)$ (resp. $F(\tld{w}_h^{-1} \cdot \lambda) \in \JH(\overline{R}_{sw}(\mu+s\nu))$) if and only if there exists a $\tld{w}_2 \in \tld{\bun{W}}^+$ (resp. $\tld{w}_1\in \tld{\bun{W}}^+$) such that $\tld{w}_2\cdot(\mu-s\pi\nu_2-\eta) \uparrow \tld{w}_h \cdot \lambda$ (resp. $\tld{w}_1\cdot(\mu+s\nu-sw\pi\nu_1-\eta) \uparrow \lambda$) where $\tld{w}_2 = w_2t_{\nu_2}$ (resp. $\tld{w}_1 = w_1t_{\nu_1}$) is such that $\tld{w}_2\cdot \bun{C}_0\uparrow \tld{w}_h\cdot \bun{C}_0$ (and $\tld{w}_1\cdot \bun{C}_0\uparrow \tld{w}_h\cdot \bun{C}_0$).
In summary, $F(\lambda) \in W^?(\rhobar,\tau)$ if and only if there exist $\tld{w}_1$ and $\tld{w}_2 \in \tld{\bun{W}}^+$ with $\tld{w}_i\cdot \bun{C}_0\uparrow \tld{w}_h\cdot \bun{C}_0$ for $i=1,2$ such that \[ \tld{w}_1\cdot (\mu+s\nu - sw\pi\nu_1 -\eta) \uparrow \lambda \uparrow \tld{w}_h^{-1} \tld{w}_2 \cdot (\mu-s\pi\nu_2 -\eta). \]

By our assumption, $\mu_{\rhobar}=\mu+s\nu-\eta$ and $\mu-\eta$ are both $2n$-deep in $\bun{C}_0$, so Remark \ref{rmk:bounderror} implies both $\mu+s\nu - sw\pi\nu_1 -\eta$ and $\mu-s\pi\nu_2 -\eta$ are in $\bun{C}_0$.
Thus the above condition is equivalent to the existence of $\tld{w}_\lambda$, $\tld{w}_1$, and $\tld{w}_2\in \tld{\bun{W}}^+$ with $\tld{w}_\lambda\cdot \bun{C}_0$ $p$-restricted such that 
\begin{itemize}
\item $\tld{w}_1\cdot(\mu+s\nu-sw\nu_1-\eta)$ is linked to $\tld{w}_h^{-1}\tld{w}_2\cdot (\mu-s\pi\nu_2-\eta)$;
\item $\tld{w}_1\uparrow \tld{w}_\lambda\uparrow\tld{w}_h^{-1}\tld{w}_2$; and 
\item $\lambda = \tld{w}_\lambda\cdot(\mu-s\pi\nu_2-\eta)$.
\end{itemize}
We claim that $\tld{w}_1\cdot(\mu+s\nu-sw\pi\nu_1-\eta)$ and $\tld{w}_h^{-1}\tld{w}_2\cdot (\mu-s\pi\nu_2-\eta)$ are linked if and only if $\tld{w}_2 \pi^{-1}(\tld{w}) \tld{w}_1^{-1} \in W(\bun{G})$.

We first show that $\tld{w}_1\cdot(\mu+s\nu-sw\pi\nu_1-\eta)$ and $\tld{w}_h^{-1}\tld{w}_2\cdot (\mu-s\pi\nu_2-\eta)$ are linked if and only if $\nu+\pi\nu_2 = w\pi \nu_1$.
If $\nu+\pi\nu_2 = w\pi \nu_1$, then one sees directly that $\tld{w}_1^{-1}\tld{w}_h^{-1}\tld{w}_2$ is in $\bun{W}_a$, from which we see that $\tld{w}_1\cdot(\mu+s\nu-sw\pi\nu_1-\eta)$ and $\tld{w}_h^{-1}\tld{w}_2\cdot (\mu-s\pi\nu_2-\eta)$ are linked.
Now suppose that $\tld{w}_1\cdot(\mu+s\nu-sw\pi\nu_1-\eta)$ and $\tld{w}_h^{-1}\tld{w}_2\cdot (\mu-s\pi\nu_2-\eta)$ are linked.
Then the restriction of the difference of $\tld{w}_1\cdot(\mu+s\nu-sw\pi\nu_1-\eta)$ and $\tld{w}_h^{-1}\tld{w}_2\cdot (\mu-s\pi\nu_2-\eta)$ to the center $\bun{Z}$ of $\bun{G}$ must be trivial.
Equivalently, the restriction of $(p-\pi)(\nu-\nu_1+\nu_2)$ and therefore $\nu-\nu_1+\nu_2$ to $\bun{Z}$ must be trivial.
Noting that $\nu\equiv \eta \mod \un{\Lambda}_R$, this implies that $\tld{w}^{-1}\tld{w}_h^{-1}\tld{w}_2$ is in $\bun{W}_a$.
We conclude that $\mu+s\nu-sw\pi\nu_1-\eta$ and $\mu-s\pi\nu_2-\eta$ are linked.
Since $\mu+s\nu-sw\pi\nu_1-\eta$ and $\mu-s\pi\nu_2-\eta$ are both in $\bun{C}_0$, and must therefore be equal.
This equality implies the equality $\nu+\pi\nu_2 = w\pi \nu_1$.

Finally, $\nu+\pi\nu_2 = w\pi \nu_1$ if and only if 
\[\tld{w}_2 \pi^{-1}(\tld{w}) \tld{w}_1^{-1} = w_2 t_{\nu_2} \pi^{-1} (t_\nu w) t_{-\nu_1} w_1^{-1} = w_2 \pi^{-1}(t_{\pi\nu_2+\nu-w\pi \nu_1} w) w_1^{-1} \in W(\bun{G}).\]
\end{proof}
\begin{rmk}\label{rmk:obv}
Note that $F(\lambda)$ is an obvious weight (Definition \ref{defn:obvweight}) if and only if $\tld{w}_\lambda = \tld{w}_1$.
\end{rmk}
\begin{lemma}\label{lemma:domineq}
Let $\tld{w} \in \tld{\bun{W}}$ and let $w\in W(\bun{G})$, $\tld{w}^+ \in \tld{\bun{W}}^+$ be the unique elements such that $\tld{w} = w \tld{w}^+$.
Then $\tld{w}^+ \leq \tld{w}$.
\end{lemma}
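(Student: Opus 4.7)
The plan is to prove the length identity $\ell(w\tld{w}^+) = \ell(w) + \ell(\tld{w}^+)$, from which the Bruhat inequality $\tld{w}^+\leq \tld{w}$ follows by concatenation of reduced expressions and the subword characterization of the Bruhat order.

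First I would reduce to the Coxeter group $\bun{W}_a$. Writing $\tld{w} = \tld{w}_a\tau$ and $\tld{w}^+ = \tld{w}^+_a\tau$ with $\tau\in\un{\Omega}$ and $\tld{w}_a, \tld{w}^+_a\in \bun{W}_a$ (the same $\tau$ on both sides, since $w\in W(\bun{G})\subset\bun{W}_a$ implies that $\tld{w}$ and $\tld{w}^+$ differ by an element of $\bun{W}_a$), we have $\tld{w}_a = w\tld{w}^+_a$. Since elements of $\tld{\bun{W}}$ lying in different right $\bun{W}_a$-cosets are incomparable, and those lying in the same coset are compared through their $\bun{W}_a$-components, it suffices to establish $\tld{w}^+_a \leq \tld{w}_a$ inside $\bun{W}_a$.

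Next I would invoke Lemma \ref{lemma:gallery} with $\tld{w}_1 = \tld{w}^+$ and $\tld{w}_2 = 1$ (noting that $1\in\tld{\bun{W}}^+$, since $\bun{C}_0$ is itself dominant) to obtain $\ell(w_0\tld{w}^+) = \ell(w_0) + \ell(\tld{w}^+)$. Using the standard identity $\ell(w_0) = \ell(w) + \ell(w^{-1}w_0)$ for the longest element of $W(\bun{G})$, together with subadditivity of length, I would chain
\[
\ell(w_0\tld{w}^+) \;\leq\; \ell(w) + \ell(w^{-1}w_0\tld{w}^+) \;\leq\; \ell(w) + \ell(w^{-1}w_0) + \ell(\tld{w}^+) \;=\; \ell(w_0) + \ell(\tld{w}^+),
\]
forcing equality throughout. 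In particular $\ell(w^{-1}w_0\tld{w}^+) = \ell(w^{-1}w_0) + \ell(\tld{w}^+)$, and since $w\mapsto w^{-1}w_0$ is a bijection of $W(\bun{G})$, this yields $\ell(w\tld{w}^+) = \ell(w) + \ell(\tld{w}^+)$ for every $w\in W(\bun{G})$.

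Length additivity then implies that concatenating any reduced expression for $w$ with any reduced expression for $\tld{w}^+_a$ produces a reduced expression for $\tld{w}_a = w\tld{w}^+_a$. Since $\tld{w}^+_a$ appears as a (terminal) subword of this reduced expression, the subword characterization of the Bruhat order on the Coxeter group $\bun{W}_a$ delivers $\tld{w}^+_a \leq \tld{w}_a$, hence $\tld{w}^+ \leq \tld{w}$. The only nontrivial input is Lemma \ref{lemma:gallery}; the remaining manipulations are standard Coxeter-group formalities.
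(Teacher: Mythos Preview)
Your proof is correct. Both you and the paper establish the length identity $\ell(w\tld{w}^+)=\ell(w)+\ell(\tld{w}^+)$ and then deduce $\tld{w}^+\leq\tld{w}$ via the subword property, so the overall architecture is the same. The difference lies in how the length identity is obtained: the paper gives a direct two-line gallery argument (translate the problem to a minimal gallery from $w^{-1}\cdot\bun{C}_0$ to $\tld{w}^+\cdot\bun{C}_0$, which can be taken through $\bun{C}_0$ in the dominant direction, yielding the additivity immediately), whereas you first extract the special case $\ell(w_0\tld{w}^+)=\ell(w_0)+\ell(\tld{w}^+)$ from Lemma~\ref{lemma:gallery} and then run a sandwich argument with subadditivity and the identity $\ell(w_0)=\ell(w)+\ell(w^{-1}w_0)$ to bootstrap to arbitrary $w$. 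Your route is slightly more circuitous but has the virtue of treating Lemma~\ref{lemma:gallery} as a black box rather than reproducing a parallel gallery computation; the paper's route is shorter and more self-contained.
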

\begin{proof}
The length $\ell(w\tld{w}^+)$ is the length of a minimal gallery from $\bun{C}_0$ to $w\tld{w}^+ \cdot \bun{C}_0$, which is the length of a minimal gallery from $w^{-1} \cdot \bun{C}_0$ to $\tld{w}^+ \cdot \bun{C}_0$.
A minimal gallery can be taken through $\bun{C}_0$ in the dominant direction.
Hence $\ell(w\tld{w}^+) = \ell(w)+\ell(\tld{w}^+)$, and therefore $\tld{w}^+\leq \tld{w}$.
\end{proof}

\begin{prop}\label{prop:comp}
If $\tld{w}_2\uparrow \tld{w}_1$ and $\tld{w}_2\in \tld{\bun{W}}^+$, then $\tld{w}_2\leq \tld{w}_1$.
\end{prop}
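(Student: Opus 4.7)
The plan is to reduce to Wang's theorem (Theorem~\ref{thm:pocomp}) by passing to the dominant representative of $\tld{w}_1$. I will write $\tld{w}_1 = w\tld{w}_1^+$ with $w \in W(\bun{G})$ and $\tld{w}_1^+ \in \tld{\bun{W}}^+$; such a decomposition exists and is unique because $W(\bun{G})$ acts simply transitively on Weyl chambers, so there is a unique $w$ for which $w^{-1}\tld{w}_1\cdot\bun{C}_0$ lies in the dominant Weyl chamber. Since $w \in W(\bun{G}) \subset \bun{W}_a$, the elements $\tld{w}_1$, $\tld{w}_1^+$, and $\tld{w}_2$ all lie in a common right $\bun{W}_a$-coset, making them pairwise comparable in both orderings. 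Lemma~\ref{lemma:domineq} immediately gives $\tld{w}_1^+ \leq \tld{w}_1$, so it suffices to show $\tld{w}_2 \leq \tld{w}_1^+$, which by Wang's theorem reduces to proving $\tld{w}_2 \uparrow \tld{w}_1^+$. By transitivity of $\uparrow$, this in turn follows from the single remaining step $\tld{w}_1 \uparrow \tld{w}_1^+$.

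To construct the chain $\tld{w}_1 \uparrow \tld{w}_1^+$, I will fix a reduced expression $w = s_{i_1}\cdots s_{i_r}$ and set $\tld{u}_k := s_{i_k}\cdots s_{i_1}\,\tld{w}_1$, so that $\tld{u}_0 = \tld{w}_1$, $\tld{u}_r = \tld{w}_1^+$, and each step $\tld{u}_k = s_{i_k}\tld{u}_{k-1}$ corresponds geometrically to reflecting $\tld{u}_{k-1}\cdot \bun{C}_0$ across the Weyl wall $H_{\alpha_{i_k}, 0}$ via the dot action. The condition $\tld{u}_{k-1} \uparrow \tld{u}_k$ then becomes the requirement $\langle \tld{u}_{k-1}\cdot\bun{C}_0 + \eta,\, \alpha_{i_k}^\vee \rangle < 0$. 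A short computation using the dominance of $\tld{w}_1^+\cdot\bun{C}_0$ rewrites this as the assertion that $(s_{i_r}\cdots s_{i_k})\alpha_{i_k}$ is a negative root; and since $w^{-1} = s_{i_r}\cdots s_{i_1}$ is reduced, so is its prefix $s_{i_r}\cdots s_{i_k}$, whence the standard Coxeter-group fact that right-multiplying a reduced word by its final simple reflection shortens it delivers the desired negativity.

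The substantive content is the second step; the remainder, including the combination with Wang's theorem and Lemma~\ref{lemma:domineq}, is a direct assembly. The main care required is aligning conventions between left multiplication in $\tld{\bun{W}}$, the dot action on alcoves, and the direction of the $\uparrow$ relation, but once this bookkeeping is pinned down I do not anticipate any genuine obstacle.
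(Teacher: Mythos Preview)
Your proposal is correct and follows essentially the same approach as the paper: both decompose $\tld{w}_1 = w\tld{w}_1^+$ with $\tld{w}_1^+\in\tld{\bun{W}}^+$, establish $\tld{w}_1 \uparrow \tld{w}_1^+$, apply Wang's theorem to obtain $\tld{w}_2 \leq \tld{w}_1^+$, and then invoke Lemma~\ref{lemma:domineq} for $\tld{w}_1^+ \leq \tld{w}_1$. The only difference is that the paper cites \cite[II 6.5(5)]{RAGS} for the step $\tld{w}_1 \uparrow \tld{w}_1^+$, whereas you reprove it directly via the chain of simple reflections.
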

\begin{proof}
Let $w\in W$ be the unique element such that $w\tld{w}_1\in \tld{\bun{W}}^+$.
Then $\tld{w}_1 \uparrow w\tld{w}_1$ by II 6.5(5) of \cite{RAGS}.
Then $\tld{w}_2 \leq w\tld{w}_1$ by Theorem \ref{thm:pocomp}.
Since $w\tld{w}_1 \leq \tld{w}_1$ by Lemma \ref{lemma:domineq}, $\tld{w}_2\leq \tld{w}_1$.
\end{proof}

\begin{prop}\label{prop:obvtype}
Let $\rhobar$, $\tau$, and $\tld{w}$ be as in Proposition \ref{prop:alcovecrit} and suppose that $\tld{w} = t_{\pi s^{-1}(\eta)}$.
Then $W^?(\rhobar,\tau) = \{F(\lambda)\}$, where $F(\lambda)\in W_{\mathrm{obv}}(\rhobar)$ is the obvious weight corresponding to $s$ $($see Definition \ref{defn:obvweight}$)$.
\end{prop}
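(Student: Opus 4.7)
The plan is to apply Proposition~\ref{prop:alcovecrit} directly, exploiting the very specific form of $\tld{w}$ as a pure translation by a Weyl image of $\eta$.

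I first unpack the hypothesis. Writing $\tld{w}^*(\rhobar,\tau) = t_{s^{-1}(\mu_\rhobar+\eta-\mu)}\cdot s^{-1}s_\rhobar$ via Definition~\ref{defn:relshape} and comparing Weyl and translation components with $t_{\pi w^{-1}(\eta)}$, I obtain $s_\rhobar = s$ and $\mu_\rhobar+\eta-\mu = s\pi w^{-1}(\eta)$. In particular $\pi^{-1}(\tld{w}) = t_{w^{-1}(\eta)}$.

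Next, I parameterize the data in Proposition~\ref{prop:alcovecrit}. Writing $\tld{w}_i = w_i t_{\nu_i}$, expansion of $\pi^{-1}(\tld{w}) = \tld{w}_2^{-1}w'\tld{w}_1 = t_{-\nu_2 + (w_2^{-1}w'w_1)(\nu_1)}\cdot(w_2^{-1}w'w_1)$ forces $w' = w_2 w_1^{-1}$ and $\nu_1 - \nu_2 = w^{-1}(\eta)$.

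For existence, the candidate for $\tld{w}_\lambda$ is the unique element $\tld{w}_\lambda = t_{\nu_0}w \in \tld{\bun{W}}^+$ with $\tld{w}_\lambda\cdot\bun{C}_0$ $p$-restricted, where $\nu_0 \in X^*(\bun{T})$ is determined modulo $X^0(\bun{T})$; this is precisely the alcove data parameterizing the obvious weight associated to $w$ in the bijection of Corollary~\ref{cor:obvweight}. I set $\tld{w}_1 = \tld{w}_\lambda$ and take $\tld{w}_2$ so that $\tld{w}_h^{-1}\tld{w}_2 = \tld{w}_\lambda$ (equivalently $\tld{w}_2 = \tld{w}_h\tld{w}_\lambda$), which forces $\nu_2 = w^{-1}(\nu_0-\eta)$ and determines $w'$; then the up-arrow chain $\tld{w}_1 \uparrow \tld{w}_\lambda \uparrow \tld{w}_h^{-1}\tld{w}_2$ is trivially satisfied. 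Unpacking $\lambda = \tld{w}_\lambda\cdot(\mu-s\pi\nu_2-\eta)$ and using the identity $\mu_\rhobar+\eta-\mu = s\pi w^{-1}(\eta)$, a direct computation matches $\lambda$ with the explicit formula $\mu' = (t_{\nu_0}w)\cdot\mu_\rhobar - w s_\rhobar \pi(w)^{-1}\pi(\nu_0)$ from the proof of Corollary~\ref{cor:obvweight}.

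The main obstacle is uniqueness: I must argue that any other admissible quadruple $(\tld{w}_\lambda', \tld{w}_1', \tld{w}_2', w')$ satisfying Proposition~\ref{prop:alcovecrit} produces the same $\lambda$. The constraint $\tld{w}_1', \tld{w}_2' \in \tld{\bun{W}}^+$, together with the implicit bound $\tld{w}_i'\cdot\bun{C}_0 \uparrow \tld{w}_h\cdot\bun{C}_0$ from the proof of Proposition~\ref{prop:alcovecrit} (derived from Proposition~\ref{prop:JH}), confines $\tld{w}_1'$ and $\tld{w}_2'$ to the right $\bun{W}_a$-coset of $\tld{w}_h$ and satisfies $\tld{w}_i' \leq \tld{w}_h$ in Bruhat. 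Proposition~\ref{prop:comp} then gives $\tld{w}_1' \leq \tld{w}_\lambda' \leq \tld{w}_h^{-1}\tld{w}_2'$ in Bruhat, while $\tld{w}_\lambda'$ is pinned to the unique $p$-restricted alcove in the correct $\bun{W}_a$-coset. A length count using $\ell(\pi^{-1}(\tld{w})) = \ell(t_{w^{-1}(\eta)}) = \ell(t_\eta)$ and Lemma~\ref{lemma:gallery} forces equality throughout the sandwich, giving $\tld{w}_1' = \tld{w}_\lambda' = \tld{w}_h^{-1}\tld{w}_2'$ and $w' = w_0$; this recovers the candidate from the existence step, and the common value of $\lambda$ is the obvious weight corresponding to $w$.
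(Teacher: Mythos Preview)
Your approach is essentially the same as the paper's: both arguments hinge on the length count
\[
\ell(t_{w^{-1}\eta}) \;\leq\; \ell(\tld{w}_2') + \ell(w') + \ell(\tld{w}_1') \;\leq\; \ell(\tld{w}_2') + \ell(w_0) + \ell(\tld{w}_h^{-1}\tld{w}_2') \;=\; \ell(t_{w_2'^{-1}\eta}),
\]
with Lemma~\ref{lemma:gallery} supplying the final equality and Proposition~\ref{prop:comp} justifying the middle inequality. The paper runs the symmetric chain (bounding $\ell(\tld{w}_2')$ by $\ell(\tld{w}_h\tld{w}_1')$ instead), but the content is identical. Two small remarks:

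\begin{itemize}
\item Your existence step is redundant: the paper's length argument already forces any decomposition to collapse to $\tld{w}_1 = \tld{w}_h^{-1}\tld{w}_2$, $w' = w_0$, with $\tld{w}_1\cdot\bun{C}_0$ $p$-restricted and Weyl part $w_1 = w$ (the last from $t_{w_1^{-1}\eta} = t_{w^{-1}\eta}$ and regularity of $\eta$). This simultaneously establishes existence and uniqueness of the datum in Proposition~\ref{prop:alcovecrit}, hence of $F(\lambda)$.
\item The assertion that ``$\tld{w}_\lambda'$ is pinned to the unique $p$-restricted alcove in the correct $\bun{W}_a$-coset'' is false as stated (there are $|W(\GL_n)|^f$ such alcoves), and in any case plays no role: once the length count forces $\tld{w}_1' = \tld{w}_h^{-1}\tld{w}_2'$, the sandwich $\tld{w}_1' \uparrow \tld{w}_\lambda' \uparrow \tld{w}_h^{-1}\tld{w}_2'$ collapses by itself.
\end{itemize}
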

\begin{proof}
Suppose that $\pi^{-1}(\tld{w}) = t_{s^{-1}(\eta)} = \tld{w}_2^{-1}w'\tld{w}_1$ where $w'\in W(\bun{G})$, $\tld{w}_1$ and $\tld{w}_2\in \tld{\bun{W}}^+$, and $\tld{w}_1\uparrow \tld{w}_\lambda\uparrow \tld{w}_h^{-1}\tld{w}_2$ for some $\tld{w}_\lambda$ with $\tld{w}_\lambda\cdot \bun{C}_0$ $p$-restricted. 
We have
\[\ell(t_{s^{-1}(\eta)}) \leq \ell(\tld{w}_2^{-1})+\ell(w')+\ell(\tld{w}_1) \leq \ell((\tld{w}_h\tld{w}_\lambda)^{-1})+\ell(w_0)+\ell(\tld{w}_\lambda) = \ell((\tld{w}_h\tld{w}_\lambda)^{-1}w_0\tld{w}_\lambda) = \ell(t_{w_\lambda^{-1}(\eta)}),\]
where $w_\lambda\in W(\bun{G})$ is the projection of $\tld{w}_\lambda$.
 The first inequality is obvious, while the first equality follows from Lemma \ref{lemma:gallery}.
 For the second inequality, since $w'\leq w_0$, it suffices to show that $\tld{w}_2\leq \tld{w}_h\tld{w}_\lambda$ and $\tld{w}_1 \leq \tld{w}_\lambda$.
Since $\tld{w}_\lambda\uparrow \tld{w}_h^{-1}\tld{w}_2$, we have that $\tld{w}_2\uparrow \tld{w}_h\tld{w}_\lambda$.
Since $\tld{w}_1$ and $\tld{w}_2 \in \tld{\bun{W}}^+$, we have that $\tld{w}_2\leq \tld{w}_h\tld{w}_\lambda$ and $\tld{w}_1\leq \tld{w}_\lambda$ by Proposition \ref{prop:comp}.

Since $\ell(t_{s^{-1}(\eta)}) = \ell(t_{w_\lambda^{-1}(\eta)})$, we have $w' = w_0$ and $\tld{w}_2 = \tld{w}_h\tld{w}_\lambda = \tld{w}_h\tld{w}_1$.
This implies that $s = w_\lambda$. 
Now suppose that $F(\lambda) \in W^?(\rhobar, \tau)$.
We now use notation from Proposition \ref{prop:alcovecrit}, particularly from \ref{item:rhobartau2} and Definition \ref{defn:relshape}.
Then by Proposition \ref{prop:alcovecrit}, $\lambda = \tld{w}_\lambda \cdot (\mu_{\rhobar}-s_{\rhobar}\pi\nu_\lambda)$ where we write $\tld{w}_\lambda = w_\lambda t_{\nu_\lambda}$ and $\rhobar|_{I_K} = \tau(s_{\rhobar},\mu_{\rhobar} + \eta)$.
This is exactly the obvious weight corresponding to $s = w_\lambda$ (Definition \ref{defn:obvweight}).
\end{proof}

\begin{rmk}\label{rmk:bm}
One could show using Theorem \ref{thm:smoothdef}, Corollary \ref{cor:globalwe}, and Kisin's approach to the Breuil--M\'ezard conjecture that with the hypotheses of Proposition \ref{prop:obvtype}, $\#W^?(\rhobar,\tau) \leq 1$.
This leads to an alternate proof of Proposition \ref{prop:obvtype}, which we eschew in favor of our more direct approach.
\end{rmk}

In the setting of Proposition \ref{prop:obvtype}, if $\tld{w}^*(\rhobar,\tau) = t_{\pi w^{-1}\eta}$ for some $w\in W(\bun{G})$, we say that $\tau$ is the \emph{obvious type} for the obvious weight of $\rhobar$ corresponding to $w$ (note that this notion depends on the choice of $(s_{\rhobar},\mu_{\rhobar} )$). Such a type $\tau$ always exists, and is uniquely determined by the corresponding obvious weight.

We use the setup and notation of \S \ref{sec:algmf}.
For each place $v|p$ of $F^+$ choose a place $\tld{v}|v$ of $F$.
Let $\rbar:G_F\ra \GL_n(\F)$ be a modular Galois representation such that for each place $v|p$ of $F^+$, $\rbar|_{G_{F_{\tld{v}}}}$ is semisimple.
Let $W_{\mathrm{obv},\tld{v}}(\rbar) \defeq W_{\mathrm{obv}}(\rbar|_{G_{F_{\tld{v}}}})$.
Let $W_{\mathrm{obv},v}(\rbar)$ be the set of isomorphism classes of $\cG(\cO_{F^+_v})$-representations over $\F$ corresponding to $W_{\mathrm{obv},\tld{v}}(\rbar)$.
Note that the definition of $W_{\mathrm{obv},v}(\rbar)$ does not depend on the choice of place $\tld{v}$.
Let $W_{\mathrm{obv}}(\rbar)$ be $\underset{v|p}{\otimes} W_{\mathrm{obv},v}(\rbar)$.

\begin{thm} \label{thm:obvwt}
Suppose that $\zeta_p \notin F$ and that $\rbar:G_F \ra \GL_n(\F)$ is a modular Galois representation such that $\rbar(G_{F(\zeta_p)})$ is adequate.  Assume that for all $\tld{v} \mid p,$ $\rbar|_{G_{F_{\tld{v}}}}$ is semisimple and $(6n-2)$-generic.
Then the following are equivalent:
\begin{enumerate}
\item \label{item:nonempt} $W_{\mathrm{obv}}(\rbar) \cap W(\rbar)\neq \emptyset$;
\item \label{item:pdaut} $\rbar$ is potentially diagonalizably automorphic \emph{(}see Theorem \ref{thm:cow}\emph{(}\ref{pdaut}\emph{))}; and
\item \label{item:obv} $W_{\mathrm{obv}}(\rbar) \subset W(\rbar)$.
\end{enumerate}
\end{thm}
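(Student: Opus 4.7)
The plan is to establish the cyclic chain $(\ref{item:obv}) \Rightarrow (\ref{item:nonempt}) \Rightarrow (\ref{item:pdaut}) \Rightarrow (\ref{item:obv})$. The first implication is immediate since $W_{\mathrm{obv}}(\rbar|_{G_{F_{\tld{v}}}})$ is non-empty (in fact of size $(n!)^f$) by Corollary \ref{cor:obvweight} applied at each $v \mid p$, so $W_{\mathrm{obv}}(\rbar)$ is non-empty. The real content is in the two non-trivial implications, which couple the inertial local Langlands dictionary and the automorphy lifting machinery of \cite{BLGGT} to the local results of \S \ref{sec:local}.

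For $(\ref{item:nonempt}) \Rightarrow (\ref{item:pdaut})$, I would fix $F(\lambda) = \otimes_{v \mid p} F(\lambda_v) \in W_{\mathrm{obv}}(\rbar) \cap W(\rbar)$ and attach to each place $v \mid p$ the obvious type $\tau_{\tld{v}}$ for $F(\lambda_v)$, characterized as in Proposition \ref{prop:obvtype} by $\tld{w}^*(\rbar|_{G_{F_{\tld{v}}}},\tau_{\tld{v}}) = t_{\pi w_v^{-1}(\eta)}$ for the appropriate $w_v$. Using that $F(\lambda_v) \in \JH(\overline{\sigma(\tau_{\tld{v}})})$, I would pick an $\cO$-lattice $W = \otimes_{v \mid p} W_{\tld{v}} \subset \otimes_{v \mid p} \sigma(\tau_{\tld{v}})$ admitting $F(\lambda)$ as a Jordan--H\"older factor of $\overline{W}$; at sufficiently small level, the injectivity of $S(U^\Sigma)_{\mathfrak{m}_{\rbar}}$ then lifts a non-zero element of $S(F(\lambda))$ to a non-zero element of $\Hom(W^\vee, S(U^\Sigma)_{\mathfrak{m}_{\rbar}})$, from which the argument of Proposition \ref{prop:acf} produces a RACSDC automorphic representation $\pi$ with $\rbar_{p,\iota}(\pi) \cong \rbar$ and $\sigma(\tau_{\tld{v}}) \hookrightarrow \pi_{\tld{v}}$. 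Theorem \ref{thm:ill} then shows $r_{p,\iota}(\pi)|_{G_{F_{\tld{v}}}}$ is potentially crystalline of type $(\eta, \tau_{\tld{v}})$, and Corollary \ref{cor:pd} (whose shape hypothesis is satisfied precisely because $\tau_{\tld{v}}$ is the obvious type) upgrades this to potential diagonalizability.

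For $(\ref{item:pdaut}) \Rightarrow (\ref{item:obv})$, I would pick any $F(\lambda) = \otimes_{v \mid p} F(\lambda_v) \in W_{\mathrm{obv}}(\rbar)$ and again let $\tau_{\tld{v}}$ be the obvious type for $F(\lambda_v)$. A short computation shows that if $(s,\mu-\eta)$ is a lowest alcove presentation of $\tau_{\tld{v}}$ then $\mu - \eta$ differs from $\mu_{\rhobar}$ by an element with coroot pairings bounded by $\langle \eta, \alpha_h^\vee \rangle = n-1$, so the $5n$-genericity of $\rbar|_{G_{F_{\tld{v}}}}$ guarantees $\mu-\eta$ is much deeper than $(2n-1)$-deep in $\bun{C}_0$; Theorem \ref{thm:smoothdef} then gives a non-zero formally smooth deformation ring $R_{\rbar|_{G_{F_{\tld{v}}}}}^{\eta, \tau_{\tld{v}}}$ with potentially diagonalizable $\cO$-points (Corollary \ref{cor:pd}). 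Feeding these local lifts together with the assumed potentially diagonalizable automorphic lift into Theorem \ref{thm:cow} produces a RACSDC automorphic representation $\pi$ with $\rbar_{p,\iota}(\pi) \cong \rbar$ and $r_{p,\iota}(\pi)|_{G_{F_{\tld{v}}}}$ potentially crystalline of type $(\eta, \tau_{\tld{v}})$. By Theorem \ref{thm:ill}, $\pi$ contributes to $S(\sigma(\tau))$ for $\tau = \otimes_v \tau_{\tld{v}}$, and exactness of $S$ (again via injectivity at small level) yields $S(V) \neq 0$ for some $V \in \JH(\overline{\sigma(\tau)})$. Weight elimination (Corollary \ref{cor:globalwe}) confines $V$ to $W^?_{\mathrm{ss}}(\rbar) \cap \JH(\overline{\sigma(\tau)})$, and Proposition \ref{prop:obvtype} applied componentwise collapses this to the singleton $\{F(\lambda)\}$, giving $F(\lambda) \in W(\rbar)$.

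The main obstacle throughout is the uniqueness statement $W^?(\rbar|_{G_{F_{\tld{v}}}}, \tau_{\tld{v}}) = \{F(\lambda_v)\}$ for the obvious type (Proposition \ref{prop:obvtype}): this is the combinatorial cornerstone that pins down exactly which Jordan--H\"older factor of $\overline{\sigma(\tau)}$ must occur after the automorphy lifting, and hence what lets us turn a single modular obvious weight (in direction $(\ref{item:nonempt}) \Rightarrow (\ref{item:pdaut})$) or a single potentially diagonalizable automorphic lift (in direction $(\ref{item:pdaut}) \Rightarrow (\ref{item:obv})$) into control of the full set $W_{\mathrm{obv}}(\rbar)$. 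Beyond this, the proof is a careful assembly of already established pieces.
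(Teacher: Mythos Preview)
Your proposal is correct and follows essentially the same approach as the paper: the same cyclic chain, the same use of the obvious type $\tau_{\tld{v}}$ at each place, the injectivity of $S(U^\Sigma)_{\mathfrak{m}_{\rbar}}$ to pass between Jordan--H\"older factors and full types, Corollary \ref{cor:pd} for potential diagonalizability, Theorem \ref{thm:cow} for the automorphy lifting, and the combination of weight elimination (Corollary \ref{cor:globalwe}) with Proposition \ref{prop:obvtype} to pin down the unique surviving weight. The only cosmetic differences are that the paper invokes \cite[Theorem 7.2.1]{EGH} explicitly for local--global compatibility where you appeal to Theorem \ref{thm:ill}, and the paper asserts the $2n$-genericity of $\tau_{\tld{v}}$ ``directly from the definition'' whereas you sketch the coroot-pairing bound; neither affects the substance of the argument.
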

\begin{proof}
Clearly, (\ref{item:obv}) implies (\ref{item:nonempt}).
We next show that (\ref{item:nonempt}) implies (\ref{item:pdaut}).
For each place $v|p$ of $F^+$, choose a place $\tld{v}|v$ of $F$.
Suppose that $\otimes_{v|p} F(\lambda_v) \in W_{\mathrm{obv}}(\rbar)\cap W(\rbar)$.
Then 
\[\Hom_{\cG(\cO_{F^+,p})}(\otimes_{v|p} F(\lambda_v)^\vee, S(U^p)_{\mathfrak{m}_{\rbar}}) \neq 0\]
for some sufficiently small compact open subgroup $U^p\leq G(\A_{F^+}^{\infty,p})$ (we can replace $\Sigma$ with the set of places dividing $p$ as in the proof of Proposition \ref{prop:acf}).
Say $F(\lambda_v)$ corresponds to $F(\lambda_{\tld{v}}) \in W_{\mathrm{obv}}(\rbar|_{G_{F_{\tld{v}}}})$ via $\iota_{\tld{v}}$, and that $F(\lambda_{\tld{v}}) \in W_{\mathrm{obv}}(\rbar|_{G_{F_{\tld{v}}}})$ (resp. $\tau_{\tld{v}}$) is the obvious weight (resp. the obvious type) for $\rbar|_{G_{F_{\tld{v}}}}$ corresponding to $w_v$ (after choosing a lowest alcove presentation of $\rbar|_{I_{F_{\tld{v}}}}$).
One checks directly from the definition of obvious type that $\tau_{\tld{v}}$ is $2n$-generic,
and hence any lowest alcove presentation of $\tau_{\tld{v}}$ satisfies the hypothesis of Theorem \ref{thm:smoothdef}, by Proposition \ref{prop:alcoveC0}.
Let $\sigma(\tau_v)$ be the $\cG(\cO_{F^+_v})$-representation corresponding to $\sigma(\tau_{\tld{v}})$ via $\iota_{\tld{v}}$.
Note that $S(U^p)_{\mathfrak{m}_{\rbar}}$ is an injective $\F[\![\cG(\cO_{F^+,p})]\!]$-module as $U^p$ is sufficiently small, thus $\Hom_{\F[\![\cG(\cO_{F^+,p})]\!]}(-, S(U^p)_{\mathfrak{m}_{\rbar}})$ is exact.
Since 
\[\Hom_{\cG(\cO_{F^+,p})}(\otimes_{v|p} F(\lambda_v)^\vee, S(U^p)_{\mathfrak{m}_{\rbar}}) \neq 0,\]
we have that
\[\Hom_{\cG(\cO_{F^+,p})}(\otimes_{v|p} \overline{\sigma}(\tau_v)^\vee, S(U^p)_{\mathfrak{m}_{\rbar}}) \neq 0,\]
where $\overline{\sigma}(\tau_v)$ is the reduction of some $\cO$-lattice for each $v|p$.
A nonzero element of 
\[
\Hom_{\cG(\cO_{F^+,p})}(\otimes_{v|p} \overline{\sigma}(\tau_v)^\vee, S(U^p)_{\mathfrak{m}_{\rbar}}) \neq 0
\]
gives an automorphic lift $r_{p,\iota}(\Pi)$ of $\rbar$ whose restriction at $\tld{v}$ is potentially crystalline of type $(\eta,\tau_{\tld{v}})$ by \cite[Theorem 7.2.1]{EGH}, Theorem \ref{thm:ill}, and Proposition \ref{illtame}.
Thus $r_{p,\iota}(\Pi)|_{G_{F_u}}$ is potentially diagonalizable for each place $u|p$ of $F$ by Corollary \ref{cor:pd}.

Finally, we show that (\ref{item:pdaut}) implies (\ref{item:obv}).
Assuming (\ref{item:pdaut}), we see that $\rbar$ satisfies the enumerated hypotheses of Theorem \ref{thm:cow}.
Suppose now that $\otimes_{v|p} F(\lambda_v) \in W_{\mathrm{obv}}(\rbar)$ is arbitrary and let $F(\lambda_{\tld{v}})$, $\tau_{\tld{v}}$, $\sigma(\tau_{\tld{v}})$, and $\sigma(\tau_v)$ be as in the last paragraph.
For each place $v|p$ of $F^+$, let $\rho_{\tld{v}}$ be a potentially diagonalizable potentially crystalline lift of $\rbar|_{G_{F_{\tld{v}}}}$ of type $(\eta,\tau_{\tld{v}})$ (say the lift from Corollary \ref{cor:existencelift}).
By Theorem \ref{thm:cow}, there is an automorphic lift $r_{p,\iota}(\pi)$ of $\rbar$ whose restriction at $\tld{v}$ is potentially crystalline of type $(\eta,\tau_{\tld{v}})$, which is unramified outside the places where $\rbar$ is ramified.
Thus $\Hom_{\cG(\cO_{F^+,p})}(\otimes_{v|p} \overline{\sigma}(\tau_v)^\vee, S(U^\Sigma)_{\mathfrak{m}_{\rbar}}) \neq 0$ for any $\Sigma$ containing all places dividing $p$ and all places divisible by places in $F$ where $\rbar$ is ramified and any sufficiently small $U^\Sigma$.
By Corollary \ref{cor:globalwe} and Proposition \ref{prop:obvtype}, we conclude that $\Hom_{\cG(\cO_{F^+,p})}(\otimes_{v|p} F(\lambda_v)^\vee, S(U^\Sigma)_{\mathfrak{m}_{\rbar}}) \neq 0$. 
Thus $\otimes_{v|p} F(\lambda_v)\in W(\rbar)$.
\end{proof}

\begin{rmk}
In \cite{BLGG}, it is shown that if $\rbar$ is modular of a Fontaine--Laffaille weight, $\otimes_{v|p} F(\lambda_v) \in W_{\mathrm{obv}}(\rbar)$, and $p$ splits completely in $F$, then 
\[\Hom_{\cG(\cO_{F^+,p})}(\otimes_{v|p} \overline{W}(\lambda_v)^\vee, S(U^\Sigma)_{\mathfrak{m}_{\rbar}}) \neq 0,\]
which is strictly weaker than Theorem \ref{thm:obvwt}.

In \S 6 of \cite{gee-geraghty}, it is shown that if $\rbar$ is assumed to be modular and ordinary at $p$, then $\rbar$ is modular of all ordinary obvious weights (this is all obvious weights if $p$ splits completely but is strictly smaller otherwise). In \emph{loc.\ cit.}, $\rbar$ is no longer assumed to be semisimple above $p$.   
\end{rmk}


\subsection{Type changing congruences and a local lifting problem}\label{sec:red}

In this section, we give a classification (Theorem \ref{thm:lift}) of congruences between RACSDC $\GL_n$-automorphic representations of trivial weight and generic tame type whose associated Galois representations are residually tamely ramified at $p$.
We also solve the corresponding local Galois lifting problem.
Throughout this section, we are in the setting of \S \ref{sec:we}\ref{item:rhobartau1}-\ref{item:rhobartau3}, that is, we fix
\begin{enumerate}
\item a generic semisimple Galois representation $\rhobar:G_K \ra \GL_n(\F)$; 
\item a pair $(s_{\rhobar}, \mu_{\rhobar})$ such that $\rhobar|_{I_K} \cong \ovl{\tau}(s_{\rhobar},\mu_{\rhobar}+\eta)$ with $\mu_{\rhobar}$ 
in $\bun{C}_0$; and 
\item A lowest alcove presentation $(s,\mu-\eta)$ of a tame inertial type $\tau\cong\tau(s,\mu):I_K \ra \GL_n(\cO)$ such that $\mu_{\rhobar}-\mu \in \un{\Lambda}_R$.     
\end{enumerate}
\begin{prop}\label{prop:obvintersect}
Let $\rhobar$ and $\tau$ be as above with $\mu-\eta$ $2n$-deep in $\bun{C}_0$.
The set $W^?(\rhobar,\tau)$ is nonempty if and only if the set $W_{\mathrm{obv}}(\rhobar)\cap \JH(\taubar)$ is nonempty.
\end{prop}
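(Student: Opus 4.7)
The forward implication is immediate. Since every obvious Serre weight of $\rhobar$ lies in $W^?(\rhobar)$ (Proposition \ref{prop:obvwt} combined with Definition \ref{defn:SWC}), we obtain
$$W_{\mathrm{obv}}(\rhobar) \cap \JH(\taubar) \subset W^?(\rhobar) \cap \JH(\taubar) = W^?(\rhobar,\tau),$$
so nonemptiness of the left-hand side implies nonemptiness of the right-hand side.

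For the reverse implication, I plan to use the combinatorial characterization of Proposition \ref{prop:alcovecrit}. Suppose $F(\lambda) \in W^?(\rhobar,\tau)$; that proposition supplies $\tld{w}_\lambda,\tld{w}_1,\tld{w}_2 \in \tld{\bun{W}}^+$ with $\tld{w}_\lambda \cdot \bun{C}_0$ $p$-restricted, and $w' \in W(\bun{G})$, satisfying
$$\pi^{-1}(\tld{w}) = \tld{w}_2^{-1} w' \tld{w}_1, \qquad \tld{w}_1 \uparrow \tld{w}_\lambda \uparrow \tld{w}_h^{-1}\tld{w}_2,$$
and $\lambda = \tld{w}_\lambda \cdot (\mu - s\pi\nu_2 - \eta)$, where $\tld{w} = \tld{w}^*(\rhobar,\tau)$ and $\tld{w}_i = w_i t_{\nu_i}$. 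Writing $\tld{w}_1 = w_1 t_{\nu_1}$, I expect the natural candidate obvious weight to be the one associated (in the sense of Definition \ref{defn:obvweight}) to $w_1 \in W(\bun{G})$. By Corollary \ref{cor:obvweight}, this obvious weight $F(\lambda'')$ has the explicit form $\lambda'' = (t_{\nu''} w_1) \cdot \mu_{\rhobar} - w_1 s_{\rhobar} \pi(w_1)^{-1} \pi(\nu'')$, where $\nu''$ is uniquely determined modulo $X^0(\bun{T})$ by the condition that $(t_{\nu''} w_1) \cdot \bun{C}_0$ be $p$-restricted.

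To show $F(\lambda'') \in \JH(\taubar)$, I will invoke Proposition \ref{prop:JH}. The candidate element of $\tld{\bun{W}}^+$ in that criterion will be built from $\tld{w}_2$: the factorization $\pi^{-1}(\tld{w}) = \tld{w}_2^{-1} w' \tld{w}_1$ imposes a rigid compatibility between $\nu_1$, $\nu_2$, and the data of $\tld{w}$, which propagates (via the obvious-weight formula) to a corresponding compatibility for $\nu''$ and produces the translation part of the required $\tld{w}_2''$. Its dominance, together with the bound $\tld{w}_2'' \uparrow \tld{w}_h$, should follow from $\tld{w}_2 \uparrow \tld{w}_h$ and the fact that the modification from $\tld{w}_2$ to $\tld{w}_2''$ is by an element of $\un{\Omega}$, which preserves length. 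The computations will closely parallel those in the proof of Proposition \ref{prop:obvtype}, which treats the degenerate case $\tld{w} = t_{\pi w_1^{-1}(\eta)}$, and in the change-of-variables argument in Corollary \ref{cor:obvweight}.

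The main obstacle is the verification of the weight-level up-ordering
$$\tld{w}_2'' \cdot (\mu - \eta - s\pi\nu_2'') \uparrow \tld{w}_h \cdot \lambda'',$$
rather than the easier alcove-level statement. Here the depth hypothesis that $\mu - \eta$ is $2n$-deep in $\bun{C}_0$ is crucial: as in Remark \ref{rmk:bounderror} and the proof of Proposition \ref{prop:alcovecrit}, it ensures that all relevant weights lie sufficiently deep in their alcoves for the alcove-level inequality $\tld{w}_1 \uparrow \tld{w}_h^{-1}\tld{w}_2$ to upgrade to the desired weight-level statement after bookkeeping the central character constraints (coming from $\mu_{\rhobar} - \mu \in \un{\Lambda}_R$) and the Weyl-group/translation substitution encoded in the obvious-weight formula. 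Once this reduction to a linkage calculation in $\bun{C}_0$ is carried out, the explicit translations can be matched directly using $\pi^{-1}(\tld{w}) = \tld{w}_2^{-1} w' \tld{w}_1$, completing the argument.
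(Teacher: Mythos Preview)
Your setup is correct and you have identified the right candidate: the obvious weight attached to $w_1$ (the finite Weyl part of $\tld{w}_1$). However, the heart of the argument is missing, and your sketch of how to fill it contains a genuine error.

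The claim that ``the modification from $\tld{w}_2$ to $\tld{w}_2''$ is by an element of $\un{\Omega}$'' is not correct, and the inference you draw from it (that length preservation yields dominance and the bound $\tld{w}_2'' \uparrow \tld{w}_h$) would not follow even if it were. The paper's proof modifies the factorization in a different way: one chooses a \emph{dominant} weight $\omega$ so that $t_{-\omega}\tld{w}_1 \cdot \bun{C}_0$ is $p$-restricted (dominance is forced because $\tld{w}_1 \in \tld{\bun{W}}^+$ and the restricted alcoves are exactly the dominant translates down to the fundamental box). This gives a new factorization
\[
\pi^{-1}(\tld{w}) = (t_{-w'\omega}\tld{w}_2)^{-1} w' (t_{-\omega}\tld{w}_1),
\]
and one writes $t_{-w'\omega}\tld{w}_2 = w^{-1}\tld{w}_3$ with $\tld{w}_3 \in \tld{\bun{W}}^+$. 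The required inequality $t_{-\omega}\tld{w}_1 \uparrow \tld{w}_h^{-1}\tld{w}_3$ then reduces (via $\tld{w}_1 \uparrow \tld{w}_h^{-1}\tld{w}_2$) to showing $t_{w_0\omega}\tld{w}_3 \uparrow \tld{w}_2$; but $t_{w_0\omega}\tld{w}_3 = t_{w_0\omega - ww'\omega} w\tld{w}_2$, and since $\omega$ is dominant, $w_0\omega - ww'\omega$ is a sum of negative roots. Standard properties of $\uparrow$ (\cite[II.6.5(3),(5)]{RAGS}) then give $t_{w_0\omega - ww'\omega} w\tld{w}_2 \uparrow w\tld{w}_2 \uparrow \tld{w}_2$. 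One then applies Proposition~\ref{prop:alcovecrit} with $\tld{w}_\lambda = t_{-\omega}\tld{w}_1$.

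Your ``main obstacle'' paragraph is also somewhat off target. The depth hypothesis is consumed by Proposition~\ref{prop:alcovecrit} itself (to ensure the relevant weights land in $\bun{C}_0$); once you work through that proposition, the problem is purely the alcove-level inequality above, not an upgrade from alcove-level to weight-level. No further use of depth or central-character bookkeeping is needed at this step. The missing ingredient in your proposal is precisely the dominance of $\omega$ and the negative-roots argument; without it, there is no mechanism producing the required $\uparrow$-bound on the modified $\tld{w}_2$-side.
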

\begin{proof} 
The ``if" part of the claim is clear.
Suppose that $W^?(\rhobar,\tau)$ is nonempty.
Let $\widetilde{w}=\widetilde{w}^*(\rhobar,\tau)$.
By Proposition \ref{prop:alcovecrit}, $\pi^{-1}(\widetilde{w}) = \widetilde{w}_2^{-1} w' \widetilde{w}_1$ where $w' \in W$, $\widetilde{w}_1$ and $\widetilde{w}_2\in \tld{\bun{W}}^+$, and $\widetilde{w}_1 \uparrow \tld{w}_h^{-1}\widetilde{w}_2$.
Let $\omega$ be a weight (unique up to weights whose restrictions to the derived group are trivial) such that $t_{-\omega}\widetilde{w}_1 \cdot (\bun{C}_0)$ is $p$-restricted.
Note that $\omega$ is dominant since the set of dominant alcoves is exactly the set of dominant translates of the restricted ones.
Then $\pi^{-1}(\widetilde{w}) = (t_{-w'\omega}\widetilde{w}_2)^{-1} w' (t_{-\omega}\widetilde{w}_1)$.
Let $t_{-w'\omega}\widetilde{w}_2 = w^{-1} \widetilde{w}_3$ where $w \in W$ and $\widetilde{w}_3\in \tld{\bun{W}}^+$.
It suffices to show that $t_{-\omega}\widetilde{w}_1 \uparrow \tld{w}_h^{-1}\widetilde{w}_3$,
since then by Proposition \ref{prop:alcovecrit}, taking $\tld{w}_\lambda=t_{-\omega}\widetilde{w}_1$ we see that $W^{?}(\rhobar,\tau)$ contains the obvious weight corresponding to the permutation part of $t_{-\omega}\widetilde{w}_1$ via the bijection in the proof of Corollary \ref{cor:obvweight} (see also Remark \ref{rmk:obv}).

Using that $\widetilde{w}_1 \uparrow \tld{w}_h^{-1}\widetilde{w}_2$,
it suffices to show that
$\tld{w}_h^{-1}\widetilde{w}_2 \uparrow \tld{w}_h^{-1}t_{w_0\omega} \widetilde{w}_3,$
or equivalently that $t_{w_0\omega} \widetilde{w}_3 \uparrow \widetilde{w}_2$. 
Now $t_{w_0\omega} \widetilde{w}_3 = t_{w_0\omega-ww'\omega} w\widetilde{w}_2$ by definition.
Note that $w_0\omega-ww'\omega$ is a sum of negative roots since $\omega$ is dominant.
Then $t_{w_0\omega-ww'\omega} w\widetilde{w}_2 \uparrow w\widetilde{w}_2 \uparrow \widetilde{w}_2$ by II 6.5(3) and (5) of \cite{RAGS}.
\end{proof}

\begin{prop}\label{prop:intersect}
Let $\rhobar$ and $\tau$ be as in Proposition \ref{prop:obvintersect}.
Then $\tld{w}^*(\rhobar,\tau) \in \Adm(\eta)$ if and only if $W^?(\rhobar,\tau)$ is nonempty.
\end{prop}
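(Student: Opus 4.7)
The plan is to prove the equivalence by two implications, using Proposition \ref{prop:alcovecrit} as the combinatorial bridge between Serre weights in $W^?(\rhobar,\tau)$ and factorizations of $\pi^{-1}(\tld{w}^*(\rhobar,\tau))$ in $\tld{\bun{W}}$, together with Proposition \ref{prop:obvintersect} to reduce the converse direction to producing an obvious weight.

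For $W^?(\rhobar,\tau) \neq \emptyset \Rightarrow \tld{w}^*(\rhobar,\tau) \in \Adm(\eta)$, the strategy is to extract from a witness $F(\lambda) \in W^?(\rhobar,\tau)$ and Proposition \ref{prop:alcovecrit} a factorization $\pi^{-1}(\tld{w}^*(\rhobar,\tau)) = \tld{w}_2^{-1} w' \tld{w}_1$ with $\tld{w}_1, \tld{w}_2 \in \tld{\bun{W}}^+$, $w' \in W(\bun{G})$, and a $p$-restricted $\tld{w}_\lambda$ sandwiched between $\tld{w}_1$ and $\tld{w}_h^{-1}\tld{w}_2$ in the up-arrow order. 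Since $\tld{w}_h \cdot \bun{C}_0$ is the highest $p$-restricted alcove, these conditions force $\tld{w}_1 \uparrow \tld{w}_h$ and, by propagating inequalities through $\tld{w}_h$-multiplication, $\tld{w}_2 \uparrow \tld{w}_h\tld{w}_1$. Proposition \ref{prop:comp} upgrades these to Bruhat inequalities $\tld{w}_2 \leq \tld{w}_h\tld{w}_1$, together with $w' \leq w_0$ (for $w_0$ the longest element of $W(\bun{G})$). Replicating the length-additivity computation from the proof of Proposition \ref{prop:obvtype} with Lemma \ref{lemma:gallery} applied to $w'$ in place of $w_0$ (immediate from the minimal-gallery argument in the dominant direction), I will concatenate reduced subexpressions to exhibit $\pi^{-1}(\tld{w}^*(\rhobar,\tau))$ as a subword of $(\tld{w}_h\tld{w}_1)^{-1} w_0 \tld{w}_1 = t_{w_1^{-1}(\eta)}$. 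The subword characterization of Bruhat order then gives $\pi^{-1}(\tld{w}^*(\rhobar,\tau)) \leq t_{w_1^{-1}(\eta)}$, whence $\tld{w}^*(\rhobar,\tau) \in \Adm(\eta)$ by $\pi$-stability of the admissible set.

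For the converse direction, assume $\tld{w}^*(\rhobar,\tau) \in \Adm(\eta)$ and aim to produce $F(\lambda) \in W_\mathrm{obv}(\rhobar) \cap \JH(\taubar)$. Writing $\pi^{-1}(\tld{w}^*(\rhobar,\tau)) \leq t_{u(\eta)}$ for some $u \in W(\bun{G})$, I plan to fix $\tld{w}_1 \in \tld{\bun{W}}^+$ of the form $\tld{w}_1 = u^{-1} t_{\nu_1}$ with $\tld{w}_1 \cdot \bun{C}_0$ $p$-restricted, so that $t_{u(\eta)} = (\tld{w}_h\tld{w}_1)^{-1} w_0 \tld{w}_1$ is a length-additive factorization by Lemma \ref{lemma:gallery}. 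Slicing a reduced subword realization of $\pi^{-1}(\tld{w}^*(\rhobar,\tau))$ inside this factorization into three consecutive pieces matching $\tld{w}_2^{-1}, w', \tld{w}_1$ respectively produces $\tld{w}_2 \in \tld{\bun{W}}^+$ with $\tld{w}_2 \leq \tld{w}_h\tld{w}_1$, $w' \in W(\bun{G})$ with $w' \leq w_0$, and length additivity. Setting $\tld{w}_\lambda = \tld{w}_1$, the sandwich condition $\tld{w}_1 \uparrow \tld{w}_\lambda \uparrow \tld{w}_h^{-1}\tld{w}_2$ in Proposition \ref{prop:alcovecrit} collapses to $\tld{w}_1 \uparrow \tld{w}_h^{-1}\tld{w}_2$, which follows from $\tld{w}_2 \leq \tld{w}_h\tld{w}_1$ by standard manipulation with the up-arrow order. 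Proposition \ref{prop:alcovecrit} then produces $F(\lambda) = F(\tld{w}_1 \cdot (\mu - s\pi\nu_2 - \eta)) \in \JH(\taubar)$, which is an obvious weight of $\rhobar$ via the bijection of Corollary \ref{cor:obvweight} since $\tld{w}_1 \cdot \bun{C}_0$ is $p$-restricted; central character compatibility is automatic from the running hypothesis $\mu_\rhobar - \mu \in \un{\Lambda}_R$.

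The main obstacle will be the converse direction, specifically producing the rigid tripartite factorization $\pi^{-1}(\tld{w}^*(\rhobar,\tau)) = \tld{w}_2^{-1} w' \tld{w}_1$ with dominant outer factors, length additivity, and $\tld{w}_1 \cdot \bun{C}_0$ $p$-restricted from only the Bruhat inequality $\pi^{-1}(\tld{w}^*(\rhobar,\tau)) \leq t_{u(\eta)}$. While subword realizations exist by the subword property, matching one to this rigid tripartite shape requires a careful gallery argument in $\tld{\bun{W}}^+$, particularly to guarantee dominance of the ``prefix'' $\tld{w}_2^{-1}$. An alternative approach that avoids this combinatorial lemma is to construct the obvious weight directly via Proposition \ref{prop:obvwt}, choosing its Weyl parameter $\sigma$ to match the Weyl part of $\tld{w}^*(\rhobar,\tau)$ shifted by $u$, and then verifying JH-membership of the resulting $F(\lambda)$ in $\taubar$ via Proposition \ref{prop:JH} using the permissibility characterization of $\Adm(\eta)$ (namely that the translation part of $\tld{w}^*(\rhobar,\tau)$ lies in $\Conv(W(\bun{G})\eta)$, as in Lemma \ref{bound}).
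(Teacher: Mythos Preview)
Your forward direction is essentially the paper's argument with a symmetric choice of anchor: you bound $\tld{w}_2^{-1}w'\tld{w}_1$ by $(\tld{w}_h\tld{w}_1)^{-1}w_0\tld{w}_1 = t_{w_1^{-1}\eta}$, whereas the paper (after first reducing via the proof of Proposition~\ref{prop:obvintersect} applied to $\pi^{-1}(\tld{w}^{-1})$ so that $\tld{w}_2\cdot\bun{C}_0$ is $p$-restricted) bounds by $\tld{w}_2^{-1}w_0(\tld{w}_h^{-1}\tld{w}_2) = t_{w_2^{-1}w_0\eta}$. One wording issue: ``Lemma~\ref{lemma:gallery} applied to $w'$ in place of $w_0$'' is wrong---Lemma~\ref{lemma:gallery} is specific to $w_0$. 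What you actually need is Lemma~\ref{lemma:gallery} as stated, applied to the pair $(\tld{w}_h\tld{w}_1,\tld{w}_1)$, and for this you must justify $\tld{w}_h\tld{w}_1\in\tld{\bun{W}}^+$; this does follow from $\tld{w}_1\cdot\bun{C}_0\uparrow\tld{w}_\lambda\cdot\bun{C}_0\uparrow\tld{w}_h\cdot\bun{C}_0$ together with the order-reversing property of left multiplication by $\tld{w}_h$.

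Your converse direction has a genuine gap, which you yourself flag as ``the main obstacle'' but do not close. When you slice $\pi^{-1}(\tld{w}^*(\rhobar,\tau))$ as a subword of the length-additive product $(\tld{w}_h\tld{w}_1)^{-1}\cdot w_0\cdot\tld{w}_1$, the resulting pieces $A',B',C'$ satisfy only $A'\leq(\tld{w}_h\tld{w}_1)^{-1}$, $B'\leq w_0$, $C'\leq\tld{w}_1$. There is no reason $(A')^{-1}$ or $C'$ lies in $\tld{\bun{W}}^+$, no reason $C'$ equals your pre-chosen $\tld{w}_1$, and no length-additivity for the triple $A'B'C'$; but Proposition~\ref{prop:alcovecrit} requires both outer factors in $\tld{\bun{W}}^+$. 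The paper closes exactly this gap: it writes each outer piece as a finite Weyl element times its dominant part via Lemma~\ref{lemma:domineq}, absorbs the finite Weyl elements into the middle to obtain a factorization $(\tld{w}_2^+)^{-1}w''\tld{w}_1^+$ with $\tld{w}_i^+\in\tld{\bun{W}}^+$, checks $\tld{w}_1^+\uparrow\tld{w}_h^{-1}\tld{w}_2^+$ by chasing the up-arrow relations through $\tld{w}_h$, and then passes once more through the proof of Proposition~\ref{prop:obvintersect} to arrange $\tld{w}_1^+\cdot\bun{C}_0$ $p$-restricted before invoking Proposition~\ref{prop:alcovecrit}. Your proposed alternative via convexity (Lemma~\ref{bound}) is too weak: the convex-hull bound on the translation part of $\tld{w}^*(\rhobar,\tau)$ falls well short of the alcove and linkage conditions required in Proposition~\ref{prop:JH}.
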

\begin{proof}
Suppose that $W^?(\rhobar,\tau)$ is nonempty and let $\tld{w} = \tld{w}^*(\rhobar,\tau)$.
By Proposition \ref{prop:alcovecrit}, $\pi^{-1}(\tld{w}) = \tld{w}_2^{-1}w'\tld{w}_1$ where $w'\in W$, $\tld{w}_1$ and $\tld{w}_2 \in \tld{\bun{W}}^+$, and $\tld{w}_1\uparrow \tld{w}_h^{-1}\tld{w}_2$.
By an argument analogous to the proof of Proposition \ref{prop:obvintersect} applied to $\pi^{-1}(\tld{w}^{-1}) = \tld{w}_1^{-1} w'^{-1} \tld{w}_2$ (that is we replace $\tld{w}$, $w'$, $\tld{w}_1$, and $\tld{w}_2$ with $\tld{w}^{-1}$, $w'^{-1}$, $\tld{w}_2$, and $\tld{w}_1$, respectively), we can assume without loss of generality that $\tld{w}_2$ is $p$-restricted.
It suffices to show that $\pi^{-1}(\tld{w})\leq t_{w_2^{-1}w_0\eta}$.
Note that 
\[t_{w_2^{-1}w_0\eta} = \tld{w}_2^{-1}\tld{w}_h t_\eta \tld{w}_h^{-1} \tld{w}_2 = \tld{w}_2^{-1}w_0(\tld{w}_h^{-1} \tld{w}_2),\] and that
$\ell(\tld{w}_2^{-1}w_0(\tld{w}_h^{-1} \tld{w}_2)) = \ell(\tld{w}_2^{-1})+\ell(w_0)+\ell(\tld{w}_h^{-1} \tld{w}_2)$ by Lemma \ref{lemma:gallery}.
Then since $w'\leq w_0$ and $\tld{w}_1 \leq \tld{w}_h^{-1} \tld{w}_2$ by Proposition \ref{prop:comp} (since $\tld{w}_1 \uparrow \tld{w}_h^{-1} \tld{w}_2$), we have that $\pi^{-1}(\tld{w})\leq t_{w_2^{-1}w_0\eta}$.

Conversely, suppose that $\tld{w} \in \Adm(\eta)$.
Then there exists $w_2\in W$ such that $\pi^{-1}(\tld{w}) \leq t_{w_2^{-1}w_0\eta}$.
Let $\tld{w}_2 \in \tld{\bun{W}}^+$ be such that $\tld{w}_2$ has projection $w_2\in W$ and $\tld{w}_2\cdot (\bun{C}_0)$ is $p$-restricted.
(Such elements differ by weights whose restrictions to the derived group are trivial.)
Since $t_{w_2^{-1}w_0\eta} = \tld{w}_2^{-1}w_0(\tld{w}_h^{-1} \tld{w}_2)$ and $\ell(\tld{w}_2^{-1}w_0(\tld{w}_h^{-1} \tld{w}_2)) = \ell(\tld{w}_2^{-1})+\ell(w_0)+\ell(\tld{w}_h^{-1} \tld{w}_2)$ as in the last paragraph, $\pi^{-1}(\tld{w}) = (\tld{w}_2')^{-1}w'\tld{w}_1'$ where $\tld{w}_2'\leq \tld{w}_2$, $\tld{w}_1'\leq \tld{w}_h^{-1}\tld{w}_2$ and $w' \leq w_0$.
In particular, $w'\in W(\bun{G})$.
If $w_1'$ and $w_2'\in W(\bun{G})$ and $\tld{w}_1^+$ and $\tld{w}_2^+\in \tld{\bun{W}}^+$ are the unique elements such that $\tld{w}_2' = w_2'\tld{w}_2^+$ and $\tld{w}_1' = w_1'\tld{w}_1^+$, then $\tld{w}_2^+\leq\tld{w}'_2\leq \tld{w}_2$ and $\tld{w}_1^+\leq \tld{w}'_1\leq \tld{w}_h^{-1}\tld{w}_2$ by Lemma \ref{lemma:domineq}.
Thus $\tld{w}_2^+\uparrow \tld{w}_2$ and $\tld{w}_1^+ \uparrow \tld{w}_h^{-1}\tld{w}_2$.
Letting $w'' = (w'_2)^{-1}w'w'_1$, we get that $\pi^{-1}(\tld{w}) = (\tld{w}_2^+)^{-1} w'' \tld{w}_1^+$.
Since $\tld{w}_1^+\uparrow \tld{w}_h^{-1}\tld{w}_2$ and $\tld{w}_2^+\uparrow \tld{w}_2$, or equivalently $\tld{w}_h^{-1}\tld{w}_2 \uparrow \tld{w}_h^{-1}\tld{w}_2^+$, we have that $\tld{w}_1^+\uparrow \tld{w}_h^{-1} \tld{w}_2^+$.
By the proof of Proposition \ref{prop:obvintersect} applied to $\pi^{-1}(\tld{w}) = (\tld{w}_2^+)^{-1} w'' \tld{w}_1^+$ (that is we replace $w'$, $\tld{w}_1$, and $\tld{w}_2$ with $w''$, $\tld{w}_1^+$, and $\tld{w}_2^+$, respectively), modifying the factorization if necessary, we can assume without loss of generality that $\tld{w}_1^+\cdot \bun{C}_0$ is $p$-restricted.
By Proposition \ref{prop:alcovecrit} taking $\tld{w}_\lambda$ to be $\tld{w}_1^+$, we see that $W^?(\rhobar,\tau)$ is nonempty.
\end{proof}

\begin{thm}\label{thm:lift}
Let $\rhobar:G_K\ra \GL_n(\F)$ be a $(6n-2)$-generic semisimple Galois representation and let $\tau$ be $2n$-generic tame inertial type.
Let $F$ be a CM field such that $\zeta_p\notin F$ and let $\rbar:G_F \ra \GL_n(\F)$ be a Galois representation as in \S \ref{sec:algmf} satisfying the following hypotheses.
\begin{itemize}
\item  $\rbar$ is potentially diagonalizably automorphic, i.e.\ there is a RACSDC automorphic representation $\Pi$ of $\GL_n(\A_F)$ such that 
\begin{itemize}
\item $\rbar\cong \rbar_{p,\iota}(\Pi)$; and
\item For each place $u|p$ of $F$, $r_{p,\iota}(\Pi)|_{G_{F_u}}$ is potentially diagonalizable.
\end{itemize}
\item The image of $\rbar(G_{F(\zeta_p)})$ is adequate.
\item $\rbar$ is generic and semisimple at all places dividing $p$ and $\rbar|_{G_{F_{\tld{v}}}} \cong \rhobar$ for a place $\tld{v}|p$ of $F$.
\end{itemize}
Then the following are equivalent:
\begin{enumerate}
\item \label{item:globallift} There is a RACSDC representation $\Pi$ of $\GL_n(\A_F)$ such that $\rbar\cong \rbar_{p,\iota}(\Pi)$ and the restriction of $r_{p,\iota}(\Pi)$ at $\tld{v}$ is potentially crystalline of type $(\eta,\tau)$;
\item \label{item:locallift} $\rhobar$ has a potentially crystalline lift of type $(\eta,\tau)$;
\item \label{item:adm} $\tld{w}^*(\rhobar,\tau) \in \Adm(\eta)$;
\item \label{item:intersect} $W^?(\rhobar,\tau) \neq \emptyset$; and
\item \label{item:obvintersect} $W_{\mathrm{obv}}(\rhobar) \cap \JH(\taubar) \neq \emptyset$.
\end{enumerate}
\end{thm}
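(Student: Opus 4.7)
The proof will establish the cyclic chain of implications $(1) \Rightarrow (2) \Rightarrow (3) \Rightarrow (4) \Rightarrow (5) \Rightarrow (1)$, though most of these follow from results already proved. The implication $(1) \Rightarrow (2)$ is immediate by restricting $r_{p,\iota}(\Pi)$ to a decomposition group at $\tld{v}$. For $(2) \Rightarrow (3)$, Theorem \ref{thm:potcris} in case (2) (applicable since $\tau$ is $n$-generic and $\lambda = \eta$) yields a Kisin module $\ovl{\fM} \in Y^{\eta,\tau}(\F)$ with $T^*_{dd}(\ovl{\fM}) \cong \rhobar|_{G_{K_\infty}}$; Proposition \ref{rmk:equalshape} combined with Lemma \ref{lem:adjadm} then gives $\tld{w}^*(\rhobar,\tau) = \tld{w}(\rhobar,\tau)^* \in \Adm(\eta)$. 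The equivalences $(3) \Leftrightarrow (4) \Leftrightarrow (5)$ follow directly from Propositions \ref{prop:intersect} and \ref{prop:obvintersect}, whose depth hypotheses hold since $\tau$ is $2n$-generic and hence $\mu - \eta$ is $2n$-deep in $\bun{C}_0$ by Proposition \ref{prop:alcoveC0}.

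The substantive step is $(5) \Rightarrow (1)$. Since $\rbar$ is potentially diagonalizably automorphic with adequate image and sufficiently generic at all places above $p$, Theorem \ref{thm:obvwt} gives $W_{\mathrm{obv}}(\rbar) \subset W(\rbar)$. Given $F(\lambda) \in W_{\mathrm{obv}}(\rhobar) \cap \JH(\taubar)$ from (5), I would construct a global obvious weight with $\tld{v}$-component $F(\lambda)$ by choosing arbitrary obvious weights $F(\lambda_{v'})$ at the remaining places $v'|p$; let $\tau_{v'}$ denote the obvious type corresponding to $F(\lambda_{v'})$ (in the sense following Proposition \ref{prop:obvtype}), and set $\tau_{\tld{v}} = \tau$. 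Then $\otimes_{v|p} F(\lambda_v)$ is modular for $\rbar$: for some sufficiently small compact open $U^\Sigma$ we have
\[
\Hom_{\cG(\cO_{F^+,p})}\!\left(\otimes_{v|p} F(\lambda_v)^\vee,\, S(U^\Sigma)_{\mathfrak{m}_{\rbar}}\right) \neq 0.
\]

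To conclude, I would propagate this modularity up to the full Deligne--Lusztig types. Since $U^\Sigma$ is sufficiently small, $S(U^\Sigma)_{\mathfrak{m}_{\rbar}}$ is injective over $\F[\![\cG(\cO_{F^+,p})]\!]$; applying $\Hom(-, S(U^\Sigma)_{\mathfrak{m}_{\rbar}})$ to Jordan--H\"older filtrations of each $\ovl{\sigma}(\tau_v)$ (using that $F(\lambda_v) \in \JH(\ovl{\sigma}(\tau_v))$ by construction, which at $\tld{v}$ is part of hypothesis (5) and at other places follows from Proposition \ref{prop:obvtype}) and chasing through the resulting long exact sequences yields
\[
\Hom_{\cG(\cO_{F^+,p})}\!\left(\otimes_{v|p} \ovl{\sigma}(\tau_v)^\vee,\, S(U^\Sigma)_{\mathfrak{m}_{\rbar}}\right) \neq 0.
\]
Lifting to characteristic zero via Nakayama applied to the $\cO$-valued analogue of $S(U^\Sigma)_{\mathfrak{m}_{\rbar}}$ and then inverting $p$, we extract a Hecke eigensystem in the space of characteristic zero forms of type $\otimes_v \sigma(\tau_v)$, which via base change from the unitary group $G$ to $\GL_n$ produces a RACSDC representation $\Pi$ of $\GL_n(\A_F)$ with $\rbar \cong \rbar_{p,\iota}(\Pi)$ and such that $r_{p,\iota}(\Pi)|_{G_{F_{\tld{v}}}}$ is potentially crystalline of type $(\eta,\tau)$ by local-global compatibility (\cite[Theorem 7.2.1]{EGH}). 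The main obstacle is the injectivity-based propagation of modularity through the Jordan--H\"older filtration and the subsequent lift from characteristic $p$ to characteristic $0$; these are standard automorphic-form arguments but require careful bookkeeping of lattices and central characters to ensure the long exact sequences and the Nakayama step actually apply.
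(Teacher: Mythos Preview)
Your proof is correct and follows the same strategy as the paper: the implications $(1)\Rightarrow(2)\Rightarrow(3)$ and the equivalences $(3)\Leftrightarrow(4)\Leftrightarrow(5)$ are handled identically (the paper cites Theorem~\ref{thm:admcrit} rather than unpacking it into Theorem~\ref{thm:potcris} plus Lemma~\ref{lem:adjadm}, but this is the same argument), and for $(5)\Rightarrow(1)$ both you and the paper invoke Theorem~\ref{thm:obvwt} to get modularity of a global obvious weight whose $\tld{v}$-component lies in $\JH(\taubar)$, then propagate via injectivity to $\otimes_v\ovl{\sigma}(\tau_v)$ (with $\tau_{\tld{v}}=\tau$ at the distinguished place and obvious types elsewhere), and lift. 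One small correction: the claim that $\mu-\eta$ is $2n$-deep follows directly from the \emph{definition} of $2n$-generic (Definition~\ref{defi:generic}), not from Proposition~\ref{prop:alcoveC0}, which would only give $(2n-1)$-deep for an arbitrary lowest alcove presentation; you may then need Remark~\ref{rmk:changerelshape} to pass to a compatible presentation without affecting admissibility.
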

\begin{rmk}
The assumption that $\tau$ is $2n$-generic can be relaxed to $n$-generic.
Suppose that $\tau$ is $n$-generic, but not $2n$-generic.
Then (\ref{item:intersect}) can be checked to be false since $\JH(\taubar)$ will contain only weights which are not $3n$-deep in their alcoves by \cite[Theorem 5.2]{herzig-duke} (note that by the linkage principle, the depth of the Jordan--H\"older factors appearing in \emph{loc.~cit.}~coincides with that of $\mu-w\epsilon'_{w_0\tau}$ and $\max_{\alpha^\vee}|\langle \epsilon'_{w_0 \tau},\alpha^\vee \rangle| \leq n-1$ for all $\tau \in W$ by Remark \ref{rmk:bounderror}).
This implies that (\ref{item:obvintersect}) is false as well.
If (\ref{item:adm}) holds, then a direct computation shows that $\tau$ is $4n$-generic, using that $\max_{\alpha^\vee}|\langle \tld{w}(0),\alpha^\vee\rangle| \leq n-1$ for any $\tld{w} \in \Adm(\eta)$.
So (\ref{item:adm}) is false.
Then (\ref{item:locallift}) is false by Theorem \ref{thm:admcrit}, which immediately implies that (\ref{item:globallift}) is false.
\end{rmk}
\begin{rmk}
For $\rhobar$ as in the theorem, there always exists a representation $\rbar$ as in the theorem.
Indeed, since $\rhobar$ is Fontaine--Laffaille, \cite[Conjecture A.3]{EG} holds for $\rhobar$ (alternatively, one can use Corollary \ref{cor:pd}).
Let $\rbar:G_F \ra \GL_n(\F)$ be a suitable globalization of $\rhobar$ as constructed by \cite[Corollary A.7]{EG}.
By \emph{loc.~cit.}~and \cite[Lemma A.5]{EG}, $\rbar$ satisfies the required hypotheses.
Thus, by removing sentences containing $\rbar$, the above theorem can be interpreted as giving existence criteria for potentially crystalline lifts of type $(\eta,\tau)$ for generic semisimple $\rhobar$ and tame generic types $\tau$.
\end{rmk}
\begin{proof}
(\ref{item:globallift}) immediately implies (\ref{item:locallift}). 
(\ref{item:locallift}) implies (\ref{item:adm}) by Theorem \ref{thm:admcrit} and Proposition \ref{rmk:equalshape}.
The equivalence of (\ref{item:adm}), (\ref{item:intersect}), and (\ref{item:obvintersect}) follows from Propositions \ref{prop:obvintersect} and \ref{prop:intersect}.
It remains to show that (\ref{item:obvintersect}) implies (\ref{item:globallift}).

Assume that $W_{\mathrm{obv}}(\rhobar) \cap \JH(\taubar) \neq \emptyset$.
By Theorem \ref{thm:obvwt}, $W_{\mathrm{obv}}(\rbar) \subset W(\rbar)$ in the notation of \S \ref{sec:algmf}.
Following the notation of the proof of Theorem \ref{thm:obvwt}, let $\otimes_{v'|p} F(\lambda_{v'})\in W_{\mathrm{obv}}(\rbar)$ be such that $F(\lambda_{\tld{v}}) \in W_{\mathrm{obv}}(\rhobar) \cap \JH(\taubar)$.
Then, as in the proof of Theorem \ref{thm:obvwt}, we obtain the required automorphic representation $\Pi$.
\end{proof}

\newpage
\bibliography{Biblio}
\bibliographystyle{amsalpha}

\end{document}